\renewenvironment{abstract}
{\small
	\begin{center}
		\bfseries \abstractname\vspace{-.5em}\vspace{0pt}
	\end{center}
	\list{}{%
		\setlength{\leftmargin}{15mm}
		\setlength{\rightmargin}{\leftmargin}%
	}%
	\item\relax}
{\endlist}
\newtheorem{thm}{Theorem}[section]
\newtheorem{defn}[thm]{Definition}
\newtheorem{cor}[thm]{Corollary}
\newtheorem{prop}[thm]{Proposition}
\newtheorem{lem}[thm]{Lemma}
\newtheorem{rk}[thm]{Remark}
\newtheorem{ass}[thm]{Assumption}
\title{The KPZ Limit of ASEP with Boundary}
\author{Shalin Parekh}
\date{\today}
\begin{document}
	
	\maketitle
	
	\begin{abstract}It was recently proved in [CS16] that under weak asymmetry scaling, the height functions for open ASEP on the half-line and on a bounded interval converge to the Hopf-Cole solution of the KPZ equation with Neumann boundary conditions. In their assumptions [CS16] chose positive values for the Neumann boundary conditions, and they assumed initial data which is close to stationarity. By developing more extensive heat-kernel estimates, we extend their results to negative values of the Neumann boundary parameters, and we also show how to generalize their results to narrow-wedge initial data (which is very far from stationarity). As a corollary via [BBCW17], we obtain the Laplace transform of the one-point distribution for half-line KPZ, and use this to prove $t^{1/3}$-scale GOE Tracy-Widom long-time fluctuations.
	\end{abstract}
	
	\textbf{}
	\\
	\tableofcontents
	
	\newpage
	
	\section{Introduction}
	
	Interacting particle systems in one spatial dimension have been extensively studied in recent years. Of particular interest is the Asymmetric Simple Exclusion Process (ASEP), in which particles on $\Bbb Z$ jump independently to the left and right at exponential rates in a collectively Markovian way, subject to a drift in one chosen direction and with other particles acting as a deterrent to the general movement. Such processes fall within what has come to be known as the ``KPZ Universality Class," a broad collection of space-time processes coming from both mathematics and physics, which are unified by some salient features [Cor12]. The ``universality" of this class refers to the \textit{long-term} behavior of the model: how it looks on large scales in both time and space. Objects which fall into this class will generally have temporal fluctuations on the order of $t^{1/3}$ and the statistics will be described by probability distributions which come out of random matrix theory, such as the GUE and GOE eigenvalue distributions. Moreover, the space-time fluctuations of such objects are generally seen to converge under suitable \textit{weak} scaling to the solution of the KPZ \textit{equation} [BG97, ACQ11, CT15, DT16, CS16, CST16]: $$\partial_T H = \frac{1}{2}\partial_X^2H +\lambda (\partial_X H)^2 + \xi$$ where $\xi$ is a Gaussian space-time white noise and $\lambda>0$ is a constant.
	\\
	\\
	The subject of the current paper is how to deal with ASEP on intervals with boundary (namely, $\Bbb Z_{\geq 0}$ and $\{0,...,N\}$), and to prove a small step towards KPZ universality of such a model. The main features of such a model were physically analyzed as early as the seventies: see [CS16, Section 1] for more physical motivation as well as a discussion of the phase diagram for Open ASEP. [Lig75] is also a great resource for the foundational work in this area. Convergence to the KPZ equation (in certain cases) was first considered and proved in [CS16]. The key feature of Open ASEP is that the boundary conditions are governed by \textit{sources} and \textit{sinks}, where particles are created and annihilated at certain exponential rates. Under the weak scaling, the choice of rates corresponds directly to choosing Neumann boundary parameters in the KPZ equation.
	\\
	\\
	At this point, let us state (slightly informally) our main results: Fix parameters $\alpha,\gamma, p, q \geq 0$. We define half-space ASEP (or ASEP-H) to be the interacting particle system on positive integers $\Bbb Z_{> 0}$ where particles at site $x>1$ jump to site $x-1$ at exponential rate $q$ if site $x-1$ is unoccupied, and particles at site $x>0$ jump to site $x+1$ at rate $p$ if site $x+1$ is unoccupied. If a neighboring site is occupied, then a particle feels no inclination to jump there (i.e., rate $0$). All jumps are independent of each other. Furthermore, particles at site $x=1$ are created at rate $\alpha$ if site $x=1$ is unoccupied, and particles at site $x=1$ are destroyed at rate $\gamma$ if site $x=1$ is occupied. We may define a height function associated to this particle system as follows: let $h_t(0)$ denote twice the number of particles annihilated minus twice the number of particles created at site $x=0$ up to time $t$. Then define $h_t(x+1)$ to be $h_t(x)+1$ if there is a particle at site $x+1$, and define $h_t(x+1)$ to be $h_t(x)-1$ otherwise. By linear interpolation, we view the height function as a random element of $C([0,\infty),\Bbb R)$.
	\\
	\\
	The preceding paragraph gives a notion of ASEP on the half-line $\Bbb Z_{\geq 0}$, but we can also define a similar particle system on the bounded interval $\{0,...,N\}$. To do this, we fix two more parameters $\beta, \delta \geq 0$, and we create and annihilate particles at site $N$ with rates $\delta, \beta$, respectively. Our main result may now be stated as follows:
	\begin{thm}[Main Result]\label{mr} Fix some parameters $A,B \in \Bbb R$. For half-line ASEP, we let $I=[0,\infty)$ and for bounded-interval ASEP, we set $I=[0,1]$. For $\epsilon>0$, we define: $$p = \frac{1}{2} e^{\sqrt{\epsilon}}, \;\;\;\;\;\;\;\;\;\;\;\; q = \frac{1}{2}e^{-\sqrt \epsilon}, \;\;\;\;\;\;\;\;\;\;\;\; \mu_A = 1-A\epsilon, \;\;\;\;\;\;\;\;\;\;\;\; \mu_B = 1-B\epsilon.$$ Let us also define the creation/annihilation rates:
	$$\alpha= \frac{p^{3/2}(p^{1/2}-\mu_Aq^{1/2})}{p-q}\;,\;\;\;\;\;\;\;\; \beta = \frac{p^{3/2}(p^{1/2}-\mu_Bq^{1/2})}{p-q},$$ $$\gamma=\frac{q^{3/2}(q^{1/2}-\mu_Ap^{1/2})}{q-p}\;,\;\;\;\;\;\;\;\; \delta=\frac{q^{3/2}(q^{1/2}-\mu_Bp^{1/2})}{q-p}.$$ Let $h_t^{\epsilon}(x)$ denote the height function associated to this particle system. We then define $$H^{\epsilon}(T,X) := \epsilon^{1/2} h^{\epsilon}_{\epsilon^{-2}T}(\epsilon^{-1}X)- \big( \frac{1}{2}\epsilon^{-1}+\frac{1}{24} \big) T .$$ Assume that $H^{\epsilon}(0,X)$ converges weakly to some initial data $H_0$ which is near equilibrium (see Definition \ref{51}). Then $H^{\epsilon}(T,X)$ converges in distribution to the Hopf-Cole solution of the KPZ equation (cf. Definition \ref{69}) on the interval $I$ with Neumann boundary parameters $A$ $($at $X=0)$ and $B$ $($at $X=1$ if $I=[0,1])$, started from $H_0$. The convergence occurs in the Skorokhod Space $D([0,\infty),C(I))$.
	
	\end{thm}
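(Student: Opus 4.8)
The plan is to run the microscopic Hopf--Cole (Gärtner) transform scheme of [BG97, ACQ11, CS16], upgrading the boundary analysis so that it works for Neumann parameters of either sign. First I would introduce the discrete Hopf--Cole transform $Z^\epsilon_T(x) = \exp\!\big(-\lambda_\epsilon h^\epsilon_{\epsilon^{-2}T}(x) + \nu_\epsilon T\big)$, with $\lambda_\epsilon$ and $\nu_\epsilon$ (the latter carrying the usual $\tfrac{1}{24}$ correction) chosen as in [BG97] so that the bulk drift of $Z^\epsilon$ is exactly a discrete Laplacian. A generator computation then shows that in the bulk $Z^\epsilon$ solves a discrete stochastic heat equation $dZ^\epsilon_T(x) = \tfrac12\Delta Z^\epsilon_T(x)\,dT + dM^\epsilon_T(x)$, where $M^\epsilon$ is a martingale whose predictable bracket is an explicit local functional of the occupation variables. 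The precise point of the rate choices for $\alpha,\gamma$ (and $\beta,\delta$) in the statement is an algebraic identity guaranteeing that the boundary sites $x=0$ (and $x=N$) contribute, after the transform, a \emph{discrete Robin/Neumann-type boundary condition} whose normalized coefficient converges to $A$ (resp. $B$), together with a residual boundary martingale of negligible bracket. Verifying this reduction carefully --- in particular checking that no surviving lower-order boundary drift remains and that the boundary martingale vanishes in the limit --- is the first task, and it is exactly the point where the positivity hypothesis of [CS16] is replaced by an identity valid for both signs. The half-line ($I=[0,\infty)$) and bounded-interval ($I=[0,1]$) cases are treated in parallel, the latter simply carrying a second boundary at $x=N$ handled identically.

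Next I would establish uniform-in-$\epsilon$ moment bounds for $Z^\epsilon$. Rewriting the discrete SHE in its mild (Duhamel) form against the heat kernel $\bar p^\epsilon_T(x,y)$ of the discrete operator \emph{with the discrete boundary condition}, one obtains a closed Gronwall-type inequality for $\mathbb{E}\big[(Z^\epsilon_T(x))^{2k}\big]$, as in [BG97, CS16]. \emph{This is where the genuinely new input enters.} When $A<0$ (or $B<0$) the continuum Neumann semigroup $e^{T\partial_X^2/2}$ is no longer a contraction --- it has a bound state and grows like $e^{cT}$ --- so one needs discrete heat-kernel bounds that reflect this: pointwise Gaussian-type upper bounds carrying an extra $e^{cT}$ factor, together with matching gradient estimates and time-regularity estimates, holding uniformly in $\epsilon$ and uniformly in $x$ up to and including the boundary, and valid first for the discrete Robin kernel that literally appears and then for its $\epsilon\to0$ limit. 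Developing these estimates on $\mathbb{Z}_{\geq 0}$ and on $\{0,\dots,N\}$ is the technical heart of the argument and the step I expect to be the main obstacle; every later step is a perturbation of the flat-boundary arguments of [CS16, BG97] once these bounds are available.

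With the heat-kernel estimates and moment bounds in hand, tightness of $\{Z^\epsilon\}$ in $D([0,\infty),C(I))$ follows from a Kolmogorov-type criterion applied to the Duhamel formula, the boundary kernel estimates supplying spatial and temporal Hölder control up to $x=0$ and $x=N$. To identify the limit I would show that any subsequential limit $Z$ solves the martingale problem characterizing the stochastic heat equation on $I$ with Neumann boundary parameters $A$ (and $B$): the drift passes to the limit by convergence of the discrete Laplacian together with the discrete boundary condition, and the bracket of the limiting martingale is identified as $\int_0^T Z_S^2\,dS$ by a self-averaging ("replacement") argument showing that the microscopic bracket density may be replaced, after local space-time averaging, by $(Z^\epsilon_T(x))^2$, with the boundary contributions shown negligible via the near-boundary heat-kernel bounds. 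Invoking uniqueness for the Neumann SHE (cf. Definition \ref{69}) then upgrades subsequential convergence to full convergence, and transferring back through the relation $H^\epsilon = -\lambda_\epsilon^{-1}\log Z^\epsilon + (\text{linear in } T)$, using strict positivity of $Z^\epsilon$ and a lower-tail bound, yields the stated convergence of $H^\epsilon$. The near-equilibrium hypothesis on the initial data (Definition \ref{51}) is used only to guarantee that $Z^\epsilon(0,\cdot)$ converges with the moment bounds needed to initialize the Gronwall estimate.
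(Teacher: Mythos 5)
Your proposal follows essentially the same route as the paper: the Gärtner transform with exactly these boundary rates yields the discrete Robin condition $Z_t(-1)=\mu_A Z_t(0)$ (Lemma \ref{74}), the genuinely new input is the Robin heat-kernel analysis on $\Bbb Z_{\geq 0}$ and $\{0,\dots,N\}$ valid for negative $A,B$ (Section 3, including the long-time bounds carrying the $e^{K\epsilon^2 t}$ growth you anticipate), and the limit is identified through Duhamel/Gronwall moment bounds, tightness, and the SHE martingale problem with a self-averaging cancellation of the $\nabla^+Z\,\nabla^-Z$ bracket term (Propositions \ref{85}, \ref{58}, Theorem \ref{126}). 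One minor correction: the sign restriction in [CS16] never entered the transform identity at the boundary (which holds for all $A,B$), only the kernel/Green's-function estimates, so no new boundary identity is needed there; otherwise your outline matches the paper's proof.
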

	
	This result is proved as Theorem \ref{126} below. Before this, we will first introduce the Hopf-Cole solution of KPZ with Neumann boundary conditions, and prove some existence results about it (see Section 4). We get a similar result when the initial data is not stationary, but we must subtract a logarithmically-divergent height-shift:
	
	\begin{thm}[Extension to Narrow-Wedge Initial data]\label{128} Let $A,B$ and $I$ as before. For $\epsilon>0$, let $p,q,\alpha,\beta,\gamma,\delta$, and $h_t^{\epsilon}(x)$ be the same as in Theorem \ref{mr}. We define $$H^{\epsilon}(T,X) := \epsilon^{1/2} h^{\epsilon}_{\epsilon^{-2}T}(\epsilon^{-1}X)- \big( \frac{1}{2}\epsilon^{-1}+\frac{1}{24} \big) T - \frac{1}{2}\log \epsilon .$$ Assume that we start from the initial configuration of zero particles $($i.e., $H^{\epsilon}(0,X) = -\epsilon^{-1/2}X)$. Then we have that $H^{\epsilon}(T,X)$ converges in distribution to the Hopf-Cole solution of the KPZ equation on the interval $I$ with Neumann boundary parameters $A$ $($at $X=0)$ and $B$ $($at $X=1$ if $I=[0,1])$. The initial data is narrow-wedge (more precisely, the associated stochastic heat equation starts from $\delta_0$). The convergence occurs in the Skorokhod Space $D((0,\infty),C(I))$, see Definition \ref{127}.
	
	\end{thm}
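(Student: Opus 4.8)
The plan is to run exactly the machinery behind Theorem \ref{126}, replacing only the one ingredient that genuinely fails for the empty initial configuration — the near-equilibrium a priori bound — by a bound built from the discrete heat-kernel estimates. Pass to the microscopic Hopf--Cole transform $Z^\epsilon(T,X)$ of $h^\epsilon$ (with the deterministic normalization prescribed in the statement, so that $H^\epsilon$ equals $\log Z^\epsilon$ up to that shift), which solves a discrete stochastic heat equation on $I$ with a reflecting-type generator encoding the parameters $A$ (and $B$), equivalently the mild equation
\[
 Z^\epsilon(T,X) \;=\; \big(\mathbf p^\epsilon_T\, Z^\epsilon(0,\cdot)\big)(X) \;+\; \int_0^T \!\!\sum_{Y} \mathbf p^\epsilon_{T-S}(X,Y)\,Z^\epsilon(S,Y)\,dM^\epsilon(S,Y),
\]
where $\mathbf p^\epsilon$ is the associated discrete heat kernel and $M^\epsilon$ the driving martingale with quadratic variation of order $(Z^\epsilon)^2$ times the lattice spacing. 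Granting a suitable $\epsilon$-uniform a priori bound on $Z^\epsilon$, the rest — tightness of $(Z^\epsilon)$, identification of every subsequential limit as a mild solution of the SHE with Neumann parameters $A$ (and $B$), and the transfer $Z^\epsilon\Rightarrow\mathcal Z \Longrightarrow H^\epsilon=\log Z^\epsilon+(\text{shift})\Rightarrow\log\mathcal Z$ using strict positivity of $\mathcal Z$ — is carried out verbatim as in the near-equilibrium proof. So the work reduces to two points: the initial data, and the a priori estimate, which may no longer use stationarity.

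For the initial data, the empty configuration gives $h^\epsilon_0(x)=-x$, hence $Z^\epsilon(0,x)$ is of order one near $x=0$ and decays on the macroscopic-vanishing scale $x\sim\epsilon^{-1/2}$; a short computation using the explicit rates $\alpha,\gamma$ in terms of $p,q,\mu_A$ shows that the shift $-\tfrac12\log\epsilon$ in the definition of $H^\epsilon$ multiplies $Z^\epsilon$ by exactly $\epsilon^{-1/2}$, which is the unique normalization making $\epsilon\sum_x Z^\epsilon(0,x)\,\delta_{\epsilon x}$ have total mass $\to 1$ and converge weakly to $\delta_0$. This identifies the narrow-wedge datum for the limiting SHE. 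Correspondingly $H^\epsilon(0,X)=-\epsilon^{-1/2}X\to-\infty$ for every $X>0$, so there is genuinely no convergence at $T=0$ and the Skorokhod space must exclude it, i.e.\ $D((0,\infty),C(I))$ as in Definition \ref{127}.

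The heart of the argument is an $\epsilon$-uniform second-moment bound of heat-kernel type: for each $T_1>0$,
\[
\mathbb E\big[Z^\epsilon(T,X)^2\big] \;\lesssim\; \mathbf p^\epsilon_T(0,X)^2 \;\lesssim\; T^{-1}, \qquad 0<T\le T_1,
\]
uniformly in $\epsilon$ and $X\in I$, the implied constant depending on $A,B,T_1$. Squaring the mild equation and using the Itô/Burkholder isometry for $M^\epsilon$ turns this into a renewal inequality for a normalized quantity such as $f_\epsilon(T):=\sup_X \mathbb E[Z^\epsilon(T,X)^2]/\mathbf p^\epsilon_{2T}(0,0)$: the inhomogeneous term is controlled because $\mathbf p^\epsilon$ applied to the width-$\epsilon^{-1/2}$ spike behaves like the on-diagonal kernel $\sim T^{-1/2}$, and the convolution term is controlled from $\sum_Y \mathbf p^\epsilon_{T-S}(X,Y)^2 \lesssim (T-S)^{-1/2}$ together with $\int_0^T (T-S)^{-1/2}S^{-1/2}\,dS=B(\tfrac12,\tfrac12)<\infty$, so the inequality bootstraps to a bound on $f_\epsilon$ uniform in $\epsilon$. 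The same mild-equation manipulations yield the spatial and temporal increment bounds for tightness in $D([T_0,\infty),C(I))$ for each $T_0>0$ (constants blowing up as $T_0\downarrow0$ but finite for fixed $T_0$), hence tightness in $D((0,\infty),C(I))$. Each limit point solves the SHE on $I$ with Neumann parameters $A$ (and $B$) and datum $\delta_0$; by the uniqueness of Section 4 (Definition \ref{69}) it is unique, upgrading subsequential convergence to convergence and finishing the proof.

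The main obstacle is precisely this a priori estimate \emph{up to the boundary}: one needs sharp Gaussian on- and off-diagonal bounds for the discrete kernel $\mathbf p^\epsilon$ of the Robin-type generator near $x=0$ (and $x=N$), uniform in $\epsilon$, and for negative Neumann parameters that kernel is expansive near the boundary — exactly where the spike initial datum is concentrated. This is where the extensive heat-kernel estimates developed earlier in the paper are essential, and it is the only genuinely new analytic input compared with Theorem \ref{126}.
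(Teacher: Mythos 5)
Your overall strategy---replace the near-equilibrium input by a heat-kernel-type a priori bound $\mathbb E[Z^\epsilon(T,X)^2]\lesssim T^{-1}$, obtain tightness on $D([\delta,\tau],C(I))$ for every $\delta>0$, and then reuse the Section 5 machinery from positive times onward---is the same as the paper's (Lemma \ref{34}, Proposition \ref{36}, Corollary \ref{50}). The genuine gap is at the final identification step. You assert that ``each limit point solves the SHE \dots and datum $\delta_0$'' and then invoke uniqueness, claiming the identification is carried out ``verbatim as in the near-equilibrium proof.'' It is not: the Section 5 identification, applied from a positive time $\delta$, only shows that a limit point restricted to $[\delta,\infty)$ solves the SHE started from its own (a priori unknown) time-$\delta$ marginal $\mathcal L_\delta$. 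Your bound $\mathbb E[\mathcal L_T(X)^2]\lesssim T^{-1}$ places the limit in the uniqueness class of Proposition \ref{71}, but that uniqueness can only be applied once one knows the limit satisfies the mild equation whose inhomogeneous term is $P_T(X,0)$; a bound of order $T^{-1/2}$ in $L^2$ does not by itself pin down the initial datum as $\delta_0$. Closing this gap is where most of the paper's Section 6 effort goes: (i) the quantitative small-time comparison $\|\mathcal L_T(X)-P_T(X,0)\|_p^2\le C\,P_T(X,0)$ (Lemma \ref{56}), which requires both the convergence $P_T^{\epsilon}*_{\epsilon}\mathcal Z_0^{\epsilon}\to P_T(\cdot,0)$ and a bound on $\|\mathcal Z_T^{\epsilon}-P_T^{\epsilon}*_{\epsilon}\mathcal Z_0^{\epsilon}\|_2$ with \emph{no} $T^{-1/2}$ blow-up; (ii) the construction of a single white noise $W$, consistent over all positive starting times, so that $\mathcal L_T=P_{T-S}*\mathcal L_S+\int_S^T P_{T-U}*(\mathcal L_U\,dW_U)$ holds simultaneously for all $0<S<T$ (Lemma \ref{63}); and (iii) the limit $S\to0$ in this identity, using (i) to show $P_{T-S}*\mathcal L_S\to P_T(\cdot,0)$ in $L^2$ and the It\^o isometry to make the stochastic integral over $[0,S]$ vanish (proof of Theorem \ref{61}). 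One also needs a diagonal/Kolmogorov-extension argument (Lemma \ref{62}) to produce a limit point on all of $C((0,\infty),C(I))$ rather than subsequences depending on the window $[\delta,\tau]$; none of this is covered by your concluding sentence.

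A secondary caveat concerns your renewal scheme for $f_\epsilon$: the discrete $L^p$ martingale bound (Lemma \ref{73}) does not produce the kernel $\mathbf p^R_{t-s}$ but the time-shifted local supremum, effectively $\mathbf p^R_{t-s+1}$, and iterating the resulting inequality produces kernels evaluated at times $t+k$ with $k$ unbounded, i.e.\ beyond the horizon $\epsilon^{-2}\tau$ where the short-time estimates apply. This is precisely the flaw the paper identifies in [CST16], and it is repaired there by the long-time estimates (Propositions \ref{lt1} and \ref{lt2}), which hold uniformly in $t$ at the cost of a factor $e^{K\epsilon^2 t}$ that must then be resummed in the iterated series. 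Your clean Beta-function bootstrap is fixable along these lines, but as written it glosses over exactly the technical point that makes the a priori estimate delicate.
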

	
	This will be proved as Theorem \ref{61} below. See Section 6 for the precise result.
	\\
	\\
	Let us discuss for a moment why our results are more general than those of [CS16]. There, the authors prove convergence of Open ASEP to the KPZ equation under the assumption of non-negative values for the Neumann boundary conditions. However, the non-negativity only seems to be a technical restriction which simply makes the analysis a little bit easier. In other words, the zero value for the Neumann boundary condition does not actually correspond to any meaningful phase transition in the associated particle system [CS16, Remark 2.11], hence one would expect that it is just a superficial restriction. Thus, the main purpose of the current work is to generalize their results to the case when the boundary parameters for the PDE are negative. This corresponds to branching in the associated diffusion process, hence the associated kernels will be super-probability measures in general. This brings about some new challenges which were not seen in [CS16], and we develop several new heat-kernel estimates in order to resolve these challenges. We also found a few small mistakes in the proofs of tightness in [CS16], [CST16] and related papers; therefore we will show how to fix these issues in the current work (see the discussions at the beginning of the proofs of Propositions \ref{85} and \ref{36}).
	\\
	\\
	One further restriction which the authors used in [CS16] was to assume near-equilibrium initial data, which roughly means that the sequence of initial conditions of the particle system are close to stationarity. We also remove that assumption in the current work, and (as stated in Theorem \ref{128} above) we extend their results to narrow-wedge initial data for the ASEP (which corresponds to $\delta_0$ initial data for the stochastic heat equation).
	\\
	\\
	It may seem a little bit uncertain what the immediate usage of such a technical generalization might be, however the first and foremost application is that it implies another result in KPZ half-space universality, namely [BBCW17, Theorem B and Remark 1.1]. More specifically, it gives us the exact one-point statistics (via the moment-generating function) for half-line KPZ. Due to importance we will combine our result with theirs and reiterate it here:
	
	\begin{cor}\label{bbcw} Let $H(T,X)$ denote the solution to the KPZ equation on $[0,\infty)$ with Neumann boundary parameter $A=-1/2$ (see definition 2.4 below; this choice of $A$ corresponds exactly to the triple point of half-line ASEP; see [CS16, Section 1 and Remark 2.11]). Then for $\xi>0$ we have that $$\Bbb E \bigg[ \exp \bigg( -\xi \exp \big(H(T,0)+\frac{T}{24} \big) \bigg) \bigg] = \Bbb E \bigg[ \prod_{k=1}^{\infty} \frac{1}{\sqrt{1+4\xi \exp[(T/2)^{1/3}\mathbf{a}_k]}} \bigg] $$ where $\mathbf{a}_1 > \mathbf{a}_2 > ...$ forms the GOE Point process (see [BBCW17, Definition 6.1]).
		
	\end{cor}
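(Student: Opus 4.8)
The strategy is to obtain the identity by combining the narrow-wedge convergence of Theorem \ref{128}, specialized to the half-line $I=[0,\infty)$, the single space point $X=0$, and the Neumann parameter $A=-1/2$, with the exact half-line ASEP one-point formula of [BBCW17]. Indeed, [BBCW17, Theorem B] computes, via its Fredholm-Pfaffian machinery, the $\epsilon\to 0$ limit of a $q$-deformed Laplace transform of the half-line ASEP height function tuned to the triple point, and identifies that limit with the right-hand side of the displayed equation; the one ingredient missing there (see [BBCW17, Remark 1.1]) is precisely the statement that the weakly-scaled ASEP height function converges to the Hopf--Cole solution of half-line KPZ, and Theorem \ref{128} now supplies it. So the proof consists of matching two limits that compute the same quantity.

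First I would record the probabilistic input. By Theorem \ref{128} with $I=[0,\infty)$, $A=-1/2$, $B$ irrelevant, and zero-particle (narrow-wedge) initial data, the process $H^\epsilon$ converges in $D((0,\infty),C(I))$ to the Hopf--Cole solution $H$ with Neumann parameter $A=-1/2$; since $H$ is a.s. continuous in $T$, evaluation at any fixed $T>0$ and at $X=0$ is a continuous functional on a set of full measure, so $H^\epsilon(T,0)\Rightarrow H(T,0)$. As $x\mapsto\exp(-\xi e^{x})$ lies in $C_b(\Bbb R)$ for every $\xi>0$, this weak convergence gives
\[
\Bbb E\Big[\exp\!\big(-\xi\, e^{\,H^\epsilon(T,0)+\frac{T}{24}}\big)\Big]\ \longrightarrow\ \Bbb E\Big[\exp\!\big(-\xi\, e^{\,H(T,0)+\frac{T}{24}}\big)\Big].
\]

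Second, I would import the algebraic input. [BBCW17] works with a microscopic $q$-Laplace transform of the form $\Bbb E\big[\, ((\,\zeta\,\tau^{\,h^\epsilon_{\epsilon^{-2}T}(0)};\tau)_\infty)^{-1}\big]$, with $\tau=q/p=e^{-2\sqrt\epsilon}$ and $\zeta$ a suitably $\epsilon$-scaled spectral parameter, for which it derives a Pfaffian formula; Theorem B of that paper shows this quantity converges, under the weak-asymmetry scaling and at the triple point, to $\Bbb E\big[\prod_{k\geq 1}(1+4\xi e^{(T/2)^{1/3}\mathbf a_k})^{-1/2}\big]$. Here one must check two bookkeeping points: that the triple-point tuning of [BBCW17] coincides with our parametrization at $A=-1/2$ (this is the identification asserted in the corollary, justified by [CS16, Section 1 and Remark 2.11] together with the explicit formulas for $\alpha,\gamma$ in Theorem \ref{mr}), and that the additive and multiplicative normalizations --- our centering $(\tfrac12\epsilon^{-1}+\tfrac1{24})T$ and the shift $-\tfrac12\log\epsilon$, versus the ones used in [BBCW17] and the factor $e^{T/24}$ in the statement --- line up, which amounts to tracking constants through the substitution $p=\tfrac12 e^{\sqrt\epsilon}$, $q=\tfrac12 e^{-\sqrt\epsilon}$.

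The substantive step --- the one I expect to be the main obstacle --- is to show that the microscopic $q$-Laplace transform of [BBCW17] has the same $\epsilon\to0$ limit as $\Bbb E[\exp(-\xi e^{H^\epsilon(T,0)+T/24})]$. Pointwise, the $q$-exponential $1/(\zeta\tau^{h};\tau)_\infty$ converges to the classical $\exp(-\xi e^{\,\epsilon^{1/2}h-\cdots})$ as $\tau\uparrow 1$, with an error that is negligible except on the event where the appropriately centered height is very negative; to pass this to the limit one needs an a priori lower-tail estimate, say $\Bbb P(H^\epsilon(T,0)\le -M)\le C e^{-cM}$ uniformly in small $\epsilon$ (a weaker one-sided bound suffices), together with the uniform boundedness of the $q$-Laplace transform. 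Such estimates are of the same type as --- and can be extracted from --- the heat-kernel and moment bounds developed for Theorems \ref{mr} and \ref{128} (cf. the estimates entering Proposition \ref{85} and the narrow-wedge analysis of Section 6). Granting the comparison, the proof concludes: the two sequences of expectations share a limit; the first limit is $\Bbb E[\exp(-\xi e^{H(T,0)+T/24})]$ by the display above; the second is the GOE product by [BBCW17, Theorem B]; hence the two are equal, which is the claimed identity.
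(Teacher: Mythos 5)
Your outline is exactly the paper's route: the paper offers no separate argument for Corollary \ref{bbcw} beyond combining the narrow-wedge convergence of Theorem \ref{128} (proved as Theorem \ref{61}) at $A=-1/2$, $X=0$ with the exact asymptotics of [BBCW17, Theorem B], whose Remark 1.1 identifies precisely this convergence as the missing hypothesis; your first two steps (weak convergence of $H^{\epsilon}(T,0)$ plus continuity of $x\mapsto e^{-\xi e^{x}}$, and the bookkeeping of the triple-point parametrization and centering constants) are the whole content. The one place you overcomplicate, and where your proposed fix would not actually go through, is the ``substantive step'': matching the microscopic $q$-Laplace transform with $\Bbb E[\exp(-\xi e^{H^{\epsilon}(T,0)+T/24})]$ does \emph{not} require a uniform-in-$\epsilon$ exponential lower-tail bound for $H^{\epsilon}(T,0)$. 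The standard argument (as in [ACQ11], [BG97] and in [BBCW17] itself) uses only that the $q$-deformed exponential is a $[0,1]$-valued, monotone function of the height which converges uniformly on compact sets to $x\mapsto e^{-\xi e^{x}}$, together with tightness of $H^{\epsilon}(T,0)$, which is already implied by the weak convergence you invoked; splitting the expectation on a large compact set finishes it. Moreover, the bound you suggest extracting, $\Bbb P(H^{\epsilon}(T,0)\leq -M)\leq Ce^{-cM}$, does not follow from the heat-kernel and $L^p$ moment estimates of Proposition \ref{85} or Section 6 --- those control positive moments of $\mathcal Z^{\epsilon}$ (hence upper tails of the height), not the probability that $\mathcal Z^{\epsilon}$ is small --- so it is fortunate that no such estimate is needed.
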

	\textbf{}\\
	As an easy corollary of this fact, we obtain a limit theorem for the half-space KPZ equation with Neumann$(-1/2)$ boundary condition on $[0,\infty)$. As expected we get a random-matrix-type of distribution in the limit.
	
	\begin{cor} Let $H(T,X)$ as in the previous theorem. Then one has weak convergence $$\Bbb P \bigg(\frac{H(T,0)+\frac{T}{24}}{(T/2)^{1/3}} \leq x \bigg) \stackrel{T \to \infty}{\longrightarrow} F_{GOE}(x)$$ where $F_{GOE}(x) = \Bbb P(\mathbf{a}_1 \leq x)$ is the Tracy-Widom GOE distribution.
		
	\end{cor}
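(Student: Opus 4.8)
The plan is to deduce this from the Laplace transform identity in Corollary \ref{bbcw}, using the elementary fact that $z \mapsto \exp(-e^{\theta z})$ converges as $\theta \to +\infty$ to the indicator $\mathbf{1}_{z<0}$ (uniformly away from $z=0$), so that a Laplace transform evaluated along a correctly tuned family of parameters detects the cumulative distribution function of a rescaled random variable. Concretely, write $s := (T/2)^{1/3}$ and $\chi_T := \big(H(T,0)+\tfrac{T}{24}\big)/s$, and specialize the identity of Corollary \ref{bbcw} at $\xi = \xi_T := \tfrac14 e^{-sx}$. Then the left-hand side becomes $\Bbb E\big[\exp(-\tfrac14 e^{s(\chi_T - x)})\big]$, and since $4\xi_T e^{s\mathbf{a}_k} = e^{s(\mathbf{a}_k - x)}$ the right-hand side becomes $\Bbb E\big[\prod_{k\ge1}(1+e^{s(\mathbf{a}_k-x)})^{-1/2}\big]$. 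It therefore suffices to identify the $s\to\infty$ limits of the two sides.

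For the right-hand side I would argue pointwise (almost surely) in the randomness of the GOE point process. On the event $\{\mathbf{a}_1 > x\}$ the $k=1$ factor tends to $0$ while every factor lies in $[0,1]$, so the product tends to $0$. On the event $\{\mathbf{a}_1 < x\}$, using $\mathbf{a}_k \to -\infty$ pick a (random) $M$ with $\mathbf{a}_k < x-1$ for all $k > M$; then $\sum_{k>M} e^{s(\mathbf{a}_k-x)} \le e^{-(s-1)}\sum_{k>M} e^{\mathbf{a}_k - x} \to 0$, since $\sum_k e^{\mathbf{a}_k}<\infty$ almost surely (a standard property of the GOE point process, coming from its rigidity and the edge asymptotics $\mathbf{a}_k \asymp -k^{2/3}$), while the finite sum $\sum_{k\le M} e^{s(\mathbf{a}_k - x)} \to 0$ because each $\mathbf{a}_k \le \mathbf{a}_1 < x$; hence $-\tfrac12\sum_k \log(1+e^{s(\mathbf{a}_k-x)})\to 0$ and the product tends to $1$. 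Since $\{\mathbf{a}_1 = x\}$ is null (the Tracy--Widom GOE law is continuous), the product converges almost surely to $\mathbf{1}_{\mathbf{a}_1<x}$, and bounded convergence gives that the right-hand side tends to $\Bbb P(\mathbf{a}_1 \le x) = F_{GOE}(x)$.

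For the left-hand side, observe that $\varphi_s(z) := \exp(-\tfrac14 e^{s(z-x)})$ is non-increasing in $z$ and valued in $(0,1)$, and for every $\delta>0$ satisfies $\varphi_s(z) \ge \exp(-\tfrac14 e^{-\delta s})\,\mathbf{1}_{z \le x-\delta}$ and $\varphi_s(z) \le \mathbf{1}_{z\le x+\delta} + \exp(-\tfrac14 e^{\delta s})$. Evaluating at $z=\chi_T$, taking expectations, and using the limit $F_{GOE}(x)$ of the common value just established, I obtain $\limsup_{T\to\infty}\Bbb P(\chi_T \le x-\delta) \le F_{GOE}(x) \le \liminf_{T\to\infty}\Bbb P(\chi_T \le x+\delta)$; replacing $x$ by $x\pm\delta$ and letting $\delta\downarrow 0$ with $F_{GOE}$ continuous yields $\Bbb P(\chi_T \le x)\to F_{GOE}(x)$, which is precisely the claimed convergence.

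I expect the only genuine subtlety to be the almost-sure convergence of the infinite product on the right-hand side, i.e.\ the control of the tail $\prod_{k>M}(\cdot)$; this rests on $\mathbf{a}_k\to-\infty$ together with the a.s.\ finiteness of the exponential functional $\sum_k e^{\theta \mathbf{a}_k}$ for $\theta>0$, which is a known consequence of the edge behavior of the GOE point process. Everything else is the soft Laplace-transform-to-CDF argument above, together with continuity of $F_{GOE}$.
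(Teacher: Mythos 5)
Your proposal is correct, and its skeleton is the same as the paper's: specialize the Laplace-transform identity of Corollary \ref{bbcw} at $\xi$ tuned like $e^{-(T/2)^{1/3}x}$, identify the $T\to\infty$ limit of the right-hand side as $\Bbb P(\mathbf a_1\le x)$ via almost-sure convergence of the infinite product to an indicator plus bounded convergence, and then convert the Laplace-transform information into convergence of distribution functions. The difference lies in the last conversion step. The paper keeps $\xi>0$ free, shows $\Bbb E[e^{-\xi e^{Q(T)}}]\to\Bbb P(\mathbf a_1\le x)$ for \emph{every} $\xi$, and then invokes a general lemma (proved by Stone--Weierstrass on $C([0,\infty])$) stating that such convergence forces $e^{Q(T)}$ to converge in law to a $\{0,\infty\}$-valued random variable, after which the claim follows by evaluating at the continuity point $1$. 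You instead use a single tuned $\xi_T$ per $T$ and a monotone sandwich of the indicator by $z\mapsto\exp(-\tfrac14 e^{s(z-x)})$, concluding via continuity of $F_{GOE}$ at $x\pm\delta$; this is a slightly more hands-on but equally valid route, and it avoids formulating the abstract lemma at the cost of explicitly shifting $x$ by $\delta$ (note both arguments ultimately use continuity of $F_{GOE}$, the paper implicitly through $\Bbb P(\mathbf a_1=x)=0$ in its dominated-convergence step). A further point in your favor: you make explicit the tail control for the infinite product on $\{\mathbf a_1<x\}$, namely $\mathbf a_k\to-\infty$ and the a.s. finiteness of $\sum_k e^{\mathbf a_k}$ coming from the $-k^{2/3}$ edge asymptotics of the GOE point process, which the paper passes over silently when it applies dominated convergence; if you wanted to be fully rigorous you should cite a reference (e.g. properties of the GOE/Airy point process as in [BBCW17]) for that finiteness, but the step itself is sound.
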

	
	\begin{proof} Fix $x$, and let $$Q(T) := H(T,0)+\frac{T}{24}-(T/2)^{1/3}x$$ Replacing $\xi$ with  $\xi e^{-(T/2)^{1/3}x}$ in Corollary \ref{bbcw}, we find that $$\Bbb E \bigg[ \exp \bigg( -\xi \exp \big( Q(T) \big) \bigg) \bigg] = \Bbb E \bigg[ \prod_{k=1}^{\infty} \frac{1}{\sqrt{1+4\xi \exp\big[(T/2)^{1/3}(\mathbf{a}_k-x) \big]}} \bigg]$$ Letting $T \to \infty$ on both sides, applying the dominated convergence theorem, and noting that $\mathbf a_1 = \max_k \mathbf a_k$, we find that for all $\xi>0$,$$\lim_{T \to \infty}\Bbb E \bigg[ \exp \bigg( -\xi \exp \big( Q(T) \big) \bigg) \bigg] = \Bbb E [ 1_{[\mathbf a_1-x \leq 0]} ] = \Bbb P(\mathbf a_1 \leq x) . $$ 
	At this point in the proof, we state a general fact: If $\{X_n\}$ is any collection of non-negative random variables such that for every $\xi>0$ we have that $\Bbb E[ e^{-\xi X_n}] \stackrel{n\to \infty}{\longrightarrow} c \in [0,1]$, then we necessarily have that $X_n$ converges in distribution (as $n \to \infty$) to a random variable which equals $0$ with probability $c$, and equals $+\infty$ with probability $1-c$. To prove this, we clearly have that $\Bbb E[f(X_n)] \stackrel{n\to \infty}{\longrightarrow} cf(0)+(1-c)f(\infty)$ for any function $f \in C([0,\infty], \Bbb R)$ which is a finite linear combination of functions of the form $x \mapsto e^{-\xi x}$, with $\xi \geq 0$. By Stone-Weierstrass, such functions are uniformly dense in $C([0,\infty], \Bbb R)$, therefore we conclude the same result for every $f \in C([0,\infty], \Bbb R)$.
	\\
	\\
	From the previous paragraph and the preceding computations, we conclude that $$e^{Q(T)} \;\stackrel{d}{\longrightarrow}\;\; 0 \cdot 1_{[ \mathbf a_1 \leq x]} + \infty \cdot 1_{ [\mathbf a_1 >x]}, \;\;\;\;\;\; \text{as }T \to \infty .$$ Consequently, we find that $$\lim_{T \to \infty} \Bbb P(Q(T) \leq 0) = \lim_{T \to \infty} \Bbb P(e^{Q(T)} \leq 1) = \Bbb P(\mathbf a_1 \leq x)$$ which is indeed the desired result.
	\end{proof}

    \textbf{}\\
	Although the primary application of our results is the aforementioned limit theorem, we will mention that there are also other applications which come to mind. The main key behind our approach is the fine heat kernel estimates which are given in Section 3 below, and these are based on the work of [DT16] and [CS16]. These heat kernels $\mathbf p_t^R$ may be interpreted as transition probabilities for random walks with branching or killing in discrete-space/continuous-time. Indeed, positive choices for the boundary parameter $A$ correspond to the random walk particle being killed at the boundary with some probability depending on $A$, whereas negative choices for $A$ correspond to a new particle being created at the boundary at some rate depending on $A$ (so we get a branching process). As an indirect corollary, we get some fairly intricate information about the behavior of continuous-time random walks and branching processes.
	\\
	\\
	Two weeks ago, there was a related result which was posted by Goncalves, Perkowski, and Simon in [GPS17]. In that paper, they prove (under the assumption of stationary initial data and boundary parameters $A=B=-\frac{1}{2}$) weakly asymmetric convergence of ASEP to stochastic Burgers using very different methods than the ones we use here (the Cole-Hopf transform). Instead, they use the notion of energy solutions which was developed in [GJ14] and proved to be unique in [GP16]. While their method has the advantage of avoiding the Cole-Hopf transform, it is only able to deal with stationary initial data due to the fact that they use various hydrodynamic estimates which are only available for stationary (Bernoulli) initial data with boundary parameters $A=B=-\frac{1}{2}$. Despite these restrictions, [GPS17] has made the important observation that the energy-solution approach leads to additional correction terms at the boundary, which are not observed in our work.
	\\
	\\
	\textbf{Outline:} The paper will be organized as follows. In Section 2 we will formally define ASEP on the half-line and on bounded intervals. Furthermore, we will define the appropriate scaling of the model under which it converges to the KPZ equation. Section 3 contains a number of heat kernel estimates for the Robin-boundary Laplacian in discrete-space and continuous-time. We also give a construction of the continuum Robin heat kernel and prove a couple of useful estimates about it. Section 3 is the main key behind our approach, hence this is the longer section which is extensively used in the remainder of the paper. In Section 4, we will define what it means to solve the KPZ equation on $[0,\infty)$ and $[0,1]$ with Neumann boundary conditions. This involves the usual notions of mild solutions and weak solutions. In Section 5, we will prove that the model defined in Section 2 converges to the KPZ equation with Neumann boundary conditions, under the assumption of ``near-equilibrium" initial data. The key behind this is to prove some uniform Hölder estimates for the weakly-scaled, exponentially transformed height functions of the ASEP, and to prove a ``crucial cancellation" as in [CS16, Section 5]. The key novelty of our approach will be that we avoid the Green's function analysis of [CS16] but instead rely purely on the heat kernel estimates to prove this cancellation. In Section 6, we will try to generalize our results to the case when the initial data in the stochastic heat equation is $\delta_0$, which is very far from equilibrium. The idea here is to note that in short time, the solution to the stochastic heat equation started from $\delta_0$ stabilizes to equilibrium.
	\\
	\\
    \textbf{Acknowledgements:} The author wishes to thank Ivan Corwin for suggesting the problem, for providing helpful discussions about various issues which came up during the writing of the paper, and also for thoroughly reading the preliminary drafts of this paper. We also wish to thank Hao Shen and Li-Cheng Tsai, who provided some very useful discussions.
	
	\section{Definition of the Model and Scalings}
	
	Much of this section is adapted directly from the primary reference [CS16], but we reiterate all of the definitions provided there (albeit emphasizing different points) so that the reader will have easy access to this material.
	\\
	\\
	Let us first introduce some notational conventions. Firstly, we will always use lowercase letters $s,t,x,y$ when working with our particle system on a microscopic (discrete) scale, or looking at ``local" properties. In contrast, we will use capital letters $S,T,X,Y$ when working on a macroscopic scale or dealing with objects which are continuous (or limiting to a continuous object). Hopefully this will become clearer as the reader progresses through the paper.
	\\
	\\
	Let us define a few operators which we will work with throughout the paper. Firstly, if $f: \Bbb Z \to \Bbb R$ is any function then $$\nabla^+f(x) := f(x+1)-f(x)$$
	$$\nabla^-f(x) := f(x-1)-f(x)$$ $$\Delta f(x): = f(x+1)+f(x-1)-2f(x).$$ One should note that $\nabla^+ \nabla^- = \nabla^- \nabla^+ = \Delta$.
	\\
	\\
	Also, we will always use $\Lambda$ to denote either the discrete interval $\{0,...,N\}$ or the discrete half-line $\Bbb Z_{\geq 0}$ or both (depending on context). Similarly, we will use $I$ to denote the continuous interval $[0,1]$ or $[0,\infty)$.
	
	\begin{defn}[ASEP-H]
		Fix $p,q,\alpha,\gamma\geq 0$. We define the half-line ASEP (or ASEP-H for brevity), to be the following continuous-time Markov process. We let $\eta(x)\in \{-1,1\}$ denote the occupation variable at site $x\ge 1$, where the value $-1$ denotes an unoccupied site and $+1$ denotes an occupied site. The state space is then $\eta\in \{-1,1\}^{\Bbb Z_{\geq 1}}$. The dynamics are given as follows: For $x>0$, a particle jumps from site $x$ to $x+1$ at exponential rate $$ \frac{p}{4}(1+\eta_t(x))(1-\eta_t(x+1)).$$ In other words, it jumps at rate $p$ if site $x$ is occupied and site $x+1$ is not, otherwise it feels no inclination to jump (or the site $x$ may be unoccupied). Similarly, for $x\geq 1$, a particle jumps from site $x+1$ to $x$ at rate $$\frac{q}{4} (1-\eta_t(x))(1+\eta_t(x+1)).$$ Furthermore, a particle at site $x=1$ is annihilated and created (respectively) at rates $$\frac{\gamma}{2}(1+\eta_t(0))\;\; \;\;\;\;\;\text{and}\;\;\;\;\;\;\;\;\frac{\alpha}{2}(1-\eta_t(0)) .$$ All jumps and annihilations/creations occur independently of each other.
	\end{defn}
	
	\begin{defn}[ASEP-B]
		Fix $p,q,\alpha,\gamma, \beta,\delta \geq 0$. We define the half-line ASEP (or ASEP-B for brevity), to be the following continuous-time Markov process. We let $\eta(x)\in \{-1,1\}$ denote the occupation variable at site $x$, where we think of $-1$ denoting an unoccupied site and $+1$ denoting an occupied site. The state space is then $\eta\in \{-1,1\}^{\{1,...,N\}}$. The dynamics are the same as those of the ASEP-H, except that a particle at site $x=N$ is annihilated or created (respectively) at rates $$\frac{\beta}{2}(1+\eta_t(0)) \;,\;\;\;\;\;\;\;\;\;\frac{\delta}{2}(1-\eta_t(0)). $$ All jumps and annihilations/creations occur independently of each other.
	\end{defn}
	
	\begin{defn}[Height Functions] Consider the model ASEP-H or ASEP-B defined above. For $t \geq 0$ we define $h_t(0)$ to be twice the net number of particles removed (i.e., twice annihilations minus creations) that have occurred up to time $t$. We then define $$h_t(x):= h_t(0)+\sum_{j=0}^x\eta_t(j)$$ which is an integrated version of the ASEP (in the sense that $\nabla^+h_t(x)= \eta_t(x+1)$).
		
	\end{defn}
	
	The purpose of the above definition of $h_t(0)$ is so that the annihilation or creation of particles does not affect the value of the height function on the interior points of the interval.
	\\
	\\
	We now move onto the appropriate scaling for our models. A well-known fact is that there is no way to scale time, space, and fluctuations in the KPZ equation so as to leave its solution invariant in law (the universal fixed point for the [1:2:3] scaling is highly nontrivial and was recently just proved for TASEP in [MQR16]). However there are a couple of well-known $weak$ scalings which fix the law, in which we simultaneously scale the model parameters $together$ with the time, space, and fluctuations. The weak scaling considered for the ASEP height functions in this paper is the following:
	
	\begin{defn}[Weakly Asymmetric Scaling]\label{124} Throughout this paper we will fix $A,B \in \Bbb R$. Let $\epsilon>0$ small enough so that all rates defined below are positive. We define $p =\frac{1}{2} e^{\sqrt{\epsilon}}$ and $q=\frac{1}{2}e^{-\sqrt{\epsilon}}$, and we also define $\mu_A=1-A\epsilon$ and $\mu_B=1-B\epsilon$. We then define $$\alpha= \frac{p^{3/2}(p^{1/2}-\mu_Aq^{1/2})}{p-q}\;,\;\;\;\;\;\;\;\; \beta = \frac{p^{3/2}(p^{1/2}-\mu_Bq^{1/2})}{p-q},$$ $$\gamma=\frac{q^{3/2}(q^{1/2}-\mu_Ap^{1/2})}{q-p}\;,\;\;\;\;\;\;\;\; \delta=\frac{q^{3/2}(q^{1/2}-\mu_Bp^{1/2})}{q-p},$$ and we will always consider ASEP-H and ASEP-B with these parameters. For ASEP-B, we make the further assumption that $\epsilon = \frac{1}{N}$, where $N$ is the length of the bounded interval.
		
	\end{defn}
	
	\begin{rk}
	As in [CS16, Remark 2.11], we note that we have the following asymptotics: $$p = \frac{1}{2} +\frac{1}{2} \sqrt \epsilon+O(\epsilon),$$ $$q = \frac{1}{2}-\frac{1}{2}\sqrt \epsilon+O(\epsilon).$$ For the  creation/annihilation rates, we have
	$$\alpha = \frac{1}{4} + \big(\frac{3}{8}+\frac{1}{4}A\big) \sqrt \epsilon+O(\epsilon), \;\;\;\;\;\;\; \beta = \frac{1}{4} + \big(\frac{3}{8}+\frac{1}{4}B\big) \sqrt \epsilon+O(\epsilon),$$ $$\gamma = \frac{1}{4} - \big(\frac{3}{8}+\frac{1}{4}A\big) \sqrt \epsilon+O(\epsilon), \;\;\;\;\;\;\; \delta = \frac{1}{4} - \big(\frac{3}{8}+\frac{1}{4}B\big) \sqrt \epsilon+O(\epsilon).$$ As stated in [CS16], this physically corresponds to ``zooming into an $\epsilon^{1/2}$-window" around the critical triple point of the Open ASEP.
	\end{rk}
	
	\begin{figure}[h]
	\centering
	\includegraphics[scale=0.6]{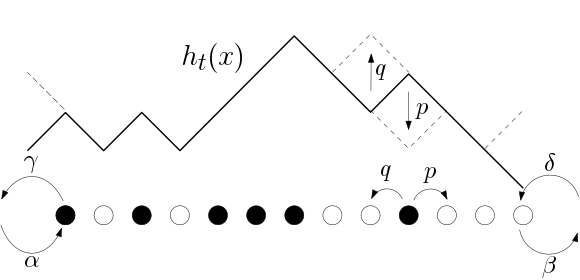}
	\caption{A depiction of the height function at a time $t$, in bold. We also show the jump/creation/annihilation rates and the way the height function changes when jumps of this rate occur.}
	\end{figure}
	
	\begin{defn}[Gartner Transform]\label{125} For $x\in \Bbb Z_{\geq 0}$ we define the ASEP-H Gartner transformed height function as $$Z_t(x):=e^{-\lambda h_t(x)+\nu t},$$ where $$\lambda = \frac{1}{2}\log \frac{q}{p} ,\;\;\;\;\;\;\;\;\; \nu = p+q-2\sqrt{pq}.$$
		The definition is the same for ASEP-B, but we only consider $x \in \{0,...,N\}$.
	\end{defn}
	
	We remark that although the above $Z$ depends on $\epsilon$ via the parameters $p,q$, we chose not to make this explicit because of notational convenience. The choice for these specific values of $\lambda,\nu$ is not immediately obvious, but becomes clear in the following theorem, which is really the key behind proving weak convergence to SHE in the limit as $\epsilon \to 0$. We also remark that $$\lambda= -\sqrt \epsilon,\;\;\;\;\;\;\;\;\;\; \nu = \frac{1}{2} \epsilon+ \frac{1}{24}\epsilon^2 +O(\epsilon^3)$$which is the reason behind the temporal drift in Theorem \ref{mr}.
	
	\begin{lem}[Hopf-Cole-Gartner Transform]\label{74} The Gartner-transformed height functions $Z_t(x)$ for ASEP-H satisfy the following discrete SHE:
		
		$$dZ_t(x) = \frac{1}{2}\Delta Z_t(x)dt +dM_t(x)$$ where $M_t(x)$ is a pure-jump martingale (i.e., a sum of compensated Poisson processes) whose predictable bracket $\langle M(x),M(y)\rangle_t$ satisfies the following asymptotics as $\epsilon \to 0$:
		
		\begin{equation}\label{75}\frac{d}{dt} \langle M(x),M(y) \rangle_t = \begin{cases} 0 & x \neq y \\ \epsilon Z_t(x)^2-\nabla^+Z_t(x)\nabla^-Z_t(x)+o(\epsilon)Z_t(x)^2 & x=y>0 \\ \epsilon Z_t(x)^2 + o(\epsilon) Z_t(x)^2 & x=y=0
		\end{cases}\end{equation} for all $x \in \Bbb Z_{\geq 0}$. Moreover, $Z_t$ satisfies a discrete Robin boundary condition for all $t$ $$Z(-1)=\mu_AZ_t(0).$$ For ASEP-B, the same holds true for $x \in\{0,...,N-1\}$, but we have that the third asymptotic in (\ref{75}) holds for $x=y=N$ and moreover $$Z_t(N+1) = \mu_B Z_t(N).$$
		
	\end{lem}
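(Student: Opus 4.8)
\emph{Proof idea.} The natural approach is to apply Dynkin's formula (It\^o's formula for the pure-jump Markov process $\eta$) to the explicitly time-dependent functional $Z_t(x) = e^{\nu t}\,e^{-\lambda h_t(x)}$. For each fixed $x$, $h_t(x)$ changes only at the ringing times of a fixed finite family of Poisson clocks, so $Z_\cdot(x)$ is a nice semimartingale, and setting $M_t(x) := Z_t(x) - Z_0(x) - \int_0^t \big(\nu Z_s(x) + e^{\nu s}\mathcal L^\epsilon e^{-\lambda h_s(x)}\big)\,ds$, where $\mathcal L^\epsilon$ is the generator of the particle system, produces a pure-jump martingale --- a compensated sum of the driving Poisson processes. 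The lemma is then (i) the identification of the drift $\nu Z_t(x) + e^{\nu t}\mathcal L^\epsilon e^{-\lambda h_t(x)}$ with $\tfrac12\Delta Z_t(x)$ at interior sites, (ii) the boundary relations, which fall out of the same identification at $x=0$ and $x=N$, and (iii) the expansion of the bracket of $M$.

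For a bulk site $x\ge 1$ the only events changing $h_t(x)$ are the jump $x\to x+1$ (rate $p$, sending $h_t(x)\mapsto h_t(x)-2$) and the jump $x+1\to x$ (rate $q$, $h_t(x)\mapsto h_t(x)+2$); at the outset one checks that a creation or annihilation at site $1$ moves $h_t(0)$ but not $h_t(1)$ --- this is precisely the reason $h_t(0)$ was defined the way it was --- so nothing extra enters at $x=1$. A short computation gives $\mathcal L^\epsilon e^{-\lambda h_t(x)} = \tfrac{q-p}{2}\big(\eta_t(x)-\eta_t(x+1)\big)e^{-\lambda h_t(x)}$, so the drift of $Z_t(x)$ is $\big[\nu + \tfrac{q-p}{2}(\eta_t(x)-\eta_t(x+1))\big]Z_t(x)$. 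On the other hand, using $h_t(x+1)=h_t(x)+\eta_t(x+1)$ and $h_t(x-1)=h_t(x)-\eta_t(x)$ together with $e^{\pm\lambda\eta}=\cosh\lambda\pm\eta\sinh\lambda$ for $\eta\in\{-1,1\}$, one gets $\tfrac12\Delta Z_t(x) = \big[(\cosh\lambda-1)+\tfrac12\sinh\lambda\,(\eta_t(x)-\eta_t(x+1))\big]Z_t(x)$. The key point is that in this scaling $pq=\tfrac14$, whence $\sinh\lambda = \tfrac{q-p}{2\sqrt{pq}} = q-p$ and $\cosh\lambda-1 = \tfrac{p+q-2\sqrt{pq}}{2\sqrt{pq}} = p+q-1 = \nu$; the two expressions agree, which is the discrete SHE in the bulk.

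For the bracket I would use that the predictable co-variation of pure-jump martingales is the time integral of the rate-weighted product of the induced jumps: $\tfrac{d}{dt}\langle M(x),M(y)\rangle_t = \sum_e \mathrm{rate}_t(e)\,\Delta_e\!\big(e^{\nu t-\lambda h_t(x)}\big)\,\Delta_e\!\big(e^{\nu t-\lambda h_t(y)}\big)$. Since (by the same bookkeeping as above) no single event alters two distinct $h_t(\cdot)$'s, this vanishes for $x\ne y$. For $x=y>0$ it equals $(p-q)^2\,Z_t(x)^2$ times a bounded function of $(\eta_t(x),\eta_t(x+1))$; using $(p-q)^2 = \epsilon + O(\epsilon^2)$, $p,q = \tfrac12 + O(\sqrt\epsilon)$, $\nu = O(\epsilon)$ and $\nabla^\pm Z_t(x) = O(\sqrt\epsilon)\,Z_t(x)$, I would Taylor-expand this and the candidate $\epsilon Z_t(x)^2 - \nabla^+Z_t(x)\nabla^-Z_t(x)$ and check they agree up to an error of the form $o(\epsilon)\cdot(\text{bounded})\cdot Z_t(x)^2$. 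For $x=y=0$ the driving clocks are the site-$1$ creation/annihilation clocks, each contributing $(p-q)^2 Z_t(0)^2(1+O(\sqrt\epsilon))$, giving $\epsilon Z_t(0)^2 + o(\epsilon)Z_t(0)^2$; the right endpoint of ASEP-B is the same with the site-$N$ clocks.

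The boundary relation is where I expect the real work to be. At $x=0$ the drift of $Z_t(0)$ picks up only the site-$1$ creation (rate $\alpha$, $h_t(0)\mapsto h_t(0)-2$) and annihilation (rate $\gamma$, $h_t(0)\mapsto h_t(0)+2$), yielding drift $= \big[\nu + (p-q)\big(\tfrac{\gamma}{2q}(1+\eta_t(1)) - \tfrac{\alpha}{2p}(1-\eta_t(1))\big)\big]Z_t(0)$. Demanding that this equal $\tfrac12\Delta Z_t(0) = \tfrac12\big(Z_t(1)+Z_t(-1)-2Z_t(0)\big)$ and solving for $Z_t(-1)$, with $Z_t(1) = (\cosh\lambda - \eta_t(1)\sinh\lambda)Z_t(0)$, the coefficient of $\eta_t(1)$ in $Z_t(-1)/Z_t(0)$ comes out equal to $(p-q)\big(\tfrac{\alpha}{p}+\tfrac{\gamma}{q}-1\big)$ and the $\eta$-free part equals $\nu + (p-q)\big(\tfrac{\gamma}{q}-\tfrac{\alpha}{p}\big) + 1$. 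So the lemma reduces to the two identities
\[
\alpha q + \gamma p = pq, \qquad \nu + (p-q)\big(\tfrac{\gamma}{q}-\tfrac{\alpha}{p}\big) + 1 = \mu_A
\]
(the second equivalent to $4(p-q)(\gamma p-\alpha q) = \mu_A-(p+q)$), and plugging in the explicit $\alpha = \tfrac{p^{3/2}(p^{1/2}-\mu_A q^{1/2})}{p-q}$, $\gamma = \tfrac{q^{3/2}(q^{1/2}-\mu_A p^{1/2})}{q-p}$ and using $pq=\tfrac14$, $\nu=p+q-1$, both hold --- indeed these two identities are exactly what pins down the stated formulas for $\alpha,\gamma$. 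Hence $Z_t(-1) = \mu_A Z_t(0)$. The computation at the right end of ASEP-B is identical in structure, with $\alpha,\gamma,\mu_A$ replaced by the creation/annihilation rates $\delta,\beta$ at site $N$ and by $\mu_B$ (the only change being that at site $N$ the height function is moved by $\eta_t(N)$ directly, which flips a couple of intermediate signs but not the conclusion), and gives $Z_t(N+1) = \mu_B Z_t(N)$. So the main obstacle is not conceptual: it is the careful bookkeeping --- keeping $h_t(0)$ decoupled from the bulk via its definition, chasing every factor through the two boundary identities, and making sure the bracket remainder is genuinely a uniform $o(\epsilon)$.
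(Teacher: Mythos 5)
Your proposal is, in substance, the proof the paper points to: the paper does not reprove this lemma but cites [CS16, Lemmas 3.1 and 3.3], and what you sketch (Dynkin's formula for the jump process applied to $e^{-\lambda h_t(x)+\nu t}$; the bulk identification via $\mathcal L^\epsilon e^{-\lambda h_t(x)}=\tfrac{q-p}{2}(\eta_t(x)-\eta_t(x+1))e^{-\lambda h_t(x)}$ together with $\sinh\lambda=q-p$ and $\cosh\lambda-1=\nu$, both consequences of $pq=\tfrac14$; and the boundary relation reduced to the two identities $\alpha q+\gamma p=pq$ and $p+q+(p-q)\big(\tfrac{\gamma}{q}-\tfrac{\alpha}{p}\big)=\mu_A$) is exactly that computation. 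I checked the generator formula, the expansion of $\tfrac12\Delta Z$, and both boundary identities against the stated $\alpha,\gamma$; they are correct, as is your observation that creation/annihilation at site $1$ moves $h_t(0)$ but no $h_t(x)$ with $x\geq1$, which gives the off-diagonal vanishing of the bracket.

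The one step that does not close as you wrote it is the bracket at $x=y>0$. Carrying out the rate-weighted squared-jump computation you propose, configuration by configuration: a clock moving $h_t(x)$ exists only when $\eta_t(x)=-\eta_t(x+1)$ (a local extremum of $h_t$ at $x$), and there the bracket rate is $p(e^{2\lambda}-1)^2Z_t(x)^2$ or $q(e^{-2\lambda}-1)^2Z_t(x)^2\approx 2\epsilon Z_t(x)^2$, while with this paper's convention $\nabla^-f(x)=f(x-1)-f(x)$ one has $\nabla^+Z_t(x)\nabla^-Z_t(x)\approx+\epsilon Z_t(x)^2$ in exactly those configurations and $\approx-\epsilon Z_t(x)^2$ in the monotone ones, where the bracket rate is $0$. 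So the expansion you assert ``will agree'' with $\epsilon Z^2-\nabla^+Z\nabla^-Z$ in fact differs from it by $2\nabla^+Z\nabla^-Z$, an order-$\epsilon Z^2$ discrepancy, not $o(\epsilon)Z^2$; what actually holds under the paper's sign conventions is $\tfrac{d}{dt}\langle M(x)\rangle_t=\epsilon Z_t(x)^2+\nabla^+Z_t(x)\nabla^-Z_t(x)+o(\epsilon)Z_t(x)^2$, i.e.\ the display (\ref{75}) is correct only if $\nabla^-$ is read as the backward difference $f(x)-f(x-1)$ (and note the paper itself silently uses the ``$+$'' form inside the proof of Theorem \ref{126}; nothing downstream is affected, since only bounds of the type $|\nabla^+Z\nabla^-Z|\leq C\epsilon Z^2$ and the kernel contraction are ever used). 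So: right method, but you must actually perform the four-configuration check rather than asserting its outcome — that check is where the content of (\ref{75}) lives, and it pins down the sign. Your treatment of $x=y=0$ (one active reservoir clock, rate $\approx\tfrac14$, jump $(e^{\pm2\lambda}-1)Z_t(0)$, giving $\epsilon Z_t(0)^2+o(\epsilon)Z_t(0)^2$) is correct.
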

	
	\begin{proof} The complete proof is found in [CS16, Lemmas 3.1 and 3.3]. We also note that the original paper [Gar88] was the first to recognize this transformation in the whole-line case, while [BG97] used it to prove the first of these KPZ-type convergence results. We further remark that (for fixed choices of $p,q,\mu_A,\mu_B$) this lemma $only$ holds true for the exact values of $\alpha, \gamma, \beta, \delta$ chosen in Definition \ref{124} as well as the values of $\lambda, \nu$ chosen in Definition \ref{125}. See the proof in [CS16]. \end{proof}

	\section{Heat Kernel Estimates}
	
	In this (fairly lengthy) section, we will provide all of the technical estimates which will be used in the analysis used to prove convergence of the Gartner-transformed height functions to the solutions of the SHE. There will not be very much motivation, but the purpose of the estimates will become clear as the reader progresses through Sections 5 and 6. Therefore, the reader may wish to skip some of the more technical results of this section, and come back as needed while progressing through the remainder of the paper.
	\\
	\\
	Before moving onto the estimates, one should remark that the Robin heat kernels $\mathbf p_t^R(x,y)$ which are introduced below may be interpreted as transition probabilities for a continuous-time, discrete-space random walk with killing and branching at the boundary with certain probability, according the the corresponding parameters $A,B$. The case when $A>0$ corresponds to killing, while the case $A<0$ corresponds to branching (and similarly for $B$). Likewise, the continuous-space kernels $P_T$ which are introduced in Section 3.3 may be interpreted as transition densities for Brownian motion with branching/killing. See for instance [CS16, Section 4.1.1], [NMW82], or [IM63] for more on this subject.
	\\
	\\
	As a notational convention, we will usually write $C$ for constants, and we will not generally specify when irrelevant terms are being absorbed into the constants. We will also write $C(A), C(A,T), C(a,b,A,B,T)$ whenever we want to specify exactly which parameters the constant depends on. This will not always be specified, though.
	\\
	\\
	We also mention that we will occasionally use estimates from [DT16, Appendix A], where they derive very useful estimates for the (standard) whole-line heat kernel $p_t$. Hence the reader may wish to briefly look at that appendix while proceeding.
	
	\subsection{Half-Line Estimates}

	Unless otherwise specified, $A\in \Bbb R$ will be fixed. For $\epsilon>0$, we set $\mu_A = 1-A\epsilon$, and we let $\textbf{p}_t^{R}(x,y)$ be the semi-discrete heat kernel on $\mathbb Z_{\geq -1}$ with Robin boundary parameter $\mu_A$: this is defined as the fundamental solution to the discrete-space, continuous-time heat equation: $$\partial_t \mathbf p_t^R(x,y) = \frac{1}{2}\Delta \mathbf p_t^R(x,y),\;\;\;\;\;\;\;\;\; \mathbf p_0^R(x,y)=1_{\{x=y\}}, \;\;\;\;\;\;\;\;\;\; x,y \in \Bbb Z_{\geq 0}$$ with the boundary condition $$\mathbf p_t^R(-1,y) = \mu_A \mathbf p_t^R(0,y).$$Here, the discrete Laplacian $\Delta$ is taken in the second spatial coordinate. The ``generalized image method" of [CS16, Section 4.1] says that
	\begin{equation}\label{1}
	\textbf{p}_t^R(x,y) = p_t(x-y)+\mu_A p_t(x+y+1) + (1-\mu_A^{-2})\sum_{z=2}^{\infty}p_t(x+y+z)\mu_A^{z}
	\end{equation}
	where $p_t(x)$ is the standard heat kernel on the whole line $\Bbb Z$ (i.e., the unique solution to the continuous-time, discrete-space equation $\partial_tp_t(x) = \frac{1}{2} \Delta p_t(x) $ with $p_0(x) = 1_{\{x=0\}}$). One may also directly check that \eqref{1} holds true.
	\\
	\begin{prop}\label{2}
		Fix $A\in \Bbb R$ and $T>0$. For $b \geq 0$, there is a constant $C(A,b,T)$ (not depending on $\epsilon$) such that for all $t \in [0,\epsilon^{-2}T]$, and all $x,y \in \mathbb Z_{\geq 0}$ we have that $$\mathbf{p}_t^{R}(x,y) \leq C(A,b,T) (1 \wedge t^{-1/2}) e^{-b|x-y| (1 \wedge t^{-1/2})}.$$
	\end{prop}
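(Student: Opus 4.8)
The plan is to prove the bound term-by-term in the image expansion \eqref{1}, reducing everything to the corresponding Gaussian-type estimate for the whole-line kernel $p_t$. From [DT16, Appendix A] we have a bound of the form $p_t(x) \leq C(1\wedge t^{-1/2})e^{-c|x|(1\wedge t^{-1/2})}$ valid on the relevant time range (after adjusting constants, one may take $c$ as large as we like at the cost of increasing $C$, since $p_t$ has Gaussian-type tails; more precisely one uses the estimate $p_t(x) \le C_b (1\wedge t^{-1/2}) e^{-b|x|(1\wedge t^{-1/2})}$ for any $b>0$, which follows from the standard local CLT / Bessel-function representation of the discrete heat kernel together with the fact that for $|x| \gtrsim t$ the kernel decays super-exponentially). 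The first term $p_t(x-y)$ is then immediately of the desired form. The hard part is to handle the reflected term $\mu_A p_t(x+y+1)$ and the geometric-type sum $(1-\mu_A^{-2})\sum_{z\geq 2} p_t(x+y+z)\mu_A^z$ when $\mu_A > 1$ (i.e.\ $A<0$), because then the coefficients $\mu_A^z$ grow and one must exploit the decay of $p_t(x+y+z)$ in $z$ to beat this growth.

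First I would fix $b\geq 0$ and note it suffices to prove the bound with $b$ replaced by, say, $2b+1$; then reserve a factor $e^{-(b+1)|x-y|(1\wedge t^{-1/2})}$ from the reflected terms using $x+y+z \geq |x-y|$ for $x,y\geq 0$ and $z\geq 0$. For the second term, $|\mu_A| \leq 1 + |A|\epsilon \leq 1+|A|$ is bounded uniformly, and $p_t(x+y+1) \leq C_{2b+1}(1\wedge t^{-1/2})e^{-(2b+1)(x+y+1)(1\wedge t^{-1/2})} \leq C(1\wedge t^{-1/2})e^{-b|x-y|(1\wedge t^{-1/2})}$, so this term is fine. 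For the infinite sum, write $s:=1\wedge t^{-1/2}$ and $m:=x+y\geq |x-y|$, and bound
\[
\Big|(1-\mu_A^{-2})\sum_{z=2}^\infty p_t(x+y+z)\mu_A^z\Big| \leq C\, s \sum_{z=2}^\infty e^{-(2b+1)(m+z)s}\,|\mu_A|^z.
\]
The key observation is that $|\mu_A| = |1-A\epsilon| \leq e^{|A|\epsilon}$, so $|\mu_A|^z \leq e^{|A|\epsilon z}$, and we need $e^{|A|\epsilon z} \cdot e^{-(2b+1)zs}$ to be summable with a controlled sum. When $t \leq 1$ we have $s=1$, so the exponent is $-(2b+1)z + |A|\epsilon z$, which is negative and bounded away from $0$ for $\epsilon$ small; the geometric series sums to a constant $C(A,b)$, leaving the factor $e^{-(2b+1)ms}$ which gives more than enough decay in $|x-y|$. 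When $t\geq 1$ we have $s = t^{-1/2} \leq 1$; here $\epsilon^2 t \leq T$ gives $t^{-1/2}\geq \epsilon/\sqrt T$, hence $s \geq \epsilon/\sqrt T$, so $|A|\epsilon z \leq |A|\sqrt T\, z s$, and choosing to have reserved enough room (this is why I inflated $b$ to $2b+1$: one uses, say, $(b+1) zs$ worth of decay to kill the factor $|A|\sqrt{T}\,zs$ once $\epsilon$, equivalently $s$-dependent thresholds, are small — more carefully, $e^{-(b+1)zs}e^{|A|\sqrt T zs}\le 1$ provided $s \le (b+1)/(|A|\sqrt T)$, i.e.\ $t \ge (|A|\sqrt T/(b+1))^2$, and the finitely many smaller $t\ge 1$ are handled by absorbing into the constant since the sum is then a convergent geometric series in $z$ with ratio $e^{-(2b+1)s + |A|\epsilon}\cdot$const bounded below $1$ for $\epsilon$ small). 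In all cases the surviving sum is $\le C(A,b,T)$ and the prefactor $s\, e^{-b m s}\le s\, e^{-b|x-y|s}$ matches the claim.

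The main obstacle, as indicated, is the bookkeeping in the regime $t\geq 1$ with $A<0$: one has to verify that the exponential decay in $z$ coming from $p_t$ genuinely dominates the exponential growth $\mu_A^z$ uniformly in $\epsilon\in(0,\epsilon_0]$ and $t\in[1,\epsilon^{-2}T]$, which is exactly where the constraint $t\le \epsilon^{-2}T$ (equivalently $\epsilon \le \sqrt{T/t}$, i.e.\ $\epsilon s^{-1}\le \sqrt T$) gets used to convert the $\epsilon z$ in the exponent into a small multiple of $zs$. Once that conversion is in place, summing the geometric series and collecting the prefactors is routine. A final remark: the statement is for $t\in[0,\epsilon^{-2}T]$ and $\epsilon$ implicitly small; if one wants the constant genuinely independent of $\epsilon$ for all $\epsilon>0$ one should note the rates (hence the relevant range) force $\epsilon$ bounded, so no loss of generality in assuming $\epsilon\le\epsilon_0(A)$ at the outset.
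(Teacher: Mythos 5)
Your overall strategy --- term-by-term bounds in the image expansion \eqref{1}, writing $s=1\wedge t^{-1/2}$, converting $\mu_A^z\le e^{|A|\epsilon z}$ and using $\epsilon\le \sqrt{T}\,s$ (from $t\le\epsilon^{-2}T$) to beat the growth in $z$ with the Gaussian decay of $p_t$ --- is the same as the paper's, and your treatment of the first two terms is fine. But the final bookkeeping for the third term has a genuine gap: after you cancel the growth $e^{|A|\sqrt{T}zs}$ against part of the reserved decay $e^{-(2b+1)zs}$, the ``surviving sum'' is a geometric series of the form $\sum_{z\ge 2}e^{-czs}$ for some fixed $c>0$, and this is \emph{not} bounded by a constant $C(A,b,T)$ uniformly over $t\in[1,\epsilon^{-2}T]$: it is of order $1/(cs)=c^{-1}t^{1/2}$ as $s\to 0$. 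Consequently your bound for the third term comes out as $C\,e^{-b|x-y|s}$ rather than $C\,s\,e^{-b|x-y|s}$, i.e.\ the crucial prefactor $(1\wedge t^{-1/2})$ is lost precisely in the large-$t$ regime (take $x=y$: your estimate then gives a constant, while the proposition demands $Ct^{-1/2}$).

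The missing ingredient is the coefficient $1-\mu_A^{-2}$, which you discarded by absorbing it into the constant at the very first step. Since $\mu_A=1-A\epsilon$, one has $|1-\mu_A^{-2}|\le C(A)\epsilon\le C(A,T)(1\wedge t^{-1/2})$ on the range $t\le\epsilon^{-2}T$, and this is exactly what restores the factor of $s$ that the geometric series consumes: bound $\sum_{z\ge2}p_t(x+y+z)\mu_A^z\le C(A,b,T)e^{-b(x+y)s}$ with \emph{no} factor of $s$ (for instance via the elementary inequality $e^{-2q}/(1-e^{-q})\le 1/q$ applied with $q$ proportional to $s$), and only then multiply by $|1-\mu_A^{-2}|\le C(A,T)s$. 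This is how the paper closes the argument, and with that correction your proof goes through. (A minor side remark: your ``finitely many smaller $t\ge1$'' should read ``the compact range $1\le t\le(|A|\sqrt{T}/(b+1))^2$, on which $s$ is bounded below''; that part is harmless, unlike the issue above.)
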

	
	\begin{proof} Note that the first two terms appearing in the right-hand-side of Equation \eqref{1} already satisfy a bound of the desired form, by the standard (whole-line) heat kernel estimates [DT16, (A.12)]. Therefore it suffices to show that the third term appearing in the right side of (1) also satisfies a bound of the desired type. Since $\mu_A=1-A\epsilon$ it follows that $\log(\mu_A) \leq C(A)\epsilon$. For $t \in [0,\epsilon^{-2}T]$ we then have $\epsilon \leq C(T) (1 \wedge t^{-1/2})$ and therefore $\log(\mu_A) \leq C(A,T) (1 \wedge t^{-1/2})$. Summarizing, there exists $C(A,T)$ such that for all $t \in [0,\epsilon^{-2}T]$ we have that $$\mu_A \leq e^{C(A,T)(1\wedge t^{-1/2})}.$$ 
	
	If $A \geq 0$ then we can just take $C(A,T)=0$, otherwise $C$ will be positive. Using the standard heat kernel bound [DT16, (A.12)] it follows that for $b>C(A,T)$ we have 
	\begin{align*}
	\sum_{z=2}^{\infty} p_t(x+y+z)\mu_A^z &\leq C(b) \sum_{z=2}^{\infty} (1 \wedge t^{-1/2})e^{-b(x+y+z)(1 \wedge t^{-1/2})} \cdot e^{C(A,T)(1\wedge t^{-1/2})z} \\&=C(b) e^{-b(x+y) (1 \wedge t^{-1/2})} (1 \wedge t^{-1/2}) \frac{e^{2(C(A,T)-b)(1\wedge t^{-1/2})}}{1-e^{(C(A,T)-b)(1 \wedge t^{-1/2})}} \\&\leq C(A,b,T) e^{-b(x+y) (1 \wedge t^{-1/2})} 
	\end{align*}
	where we used the fact that $e^{-2q}/(1-e^{-q}) \leq 1/q$ for any $q \geq 0$. Since $t\leq \epsilon^{-2}T$ we have that $|1-\mu_A^{-2}| \leq C(A)\epsilon \leq C(A,T) (1 \wedge t^{-1/2})$ and the result follows. \end{proof}

	\begin{prop}\label{3}
		
		Fix $A\in \Bbb R$ and $T>0$. For $b \geq 0$, there is a constant $C(A,b,T)$ (not depending on $\epsilon$) such that for all $t \in [0,\epsilon^{-2}T]$, all $x,y \in \mathbb Z_{\geq 0}$, all $|n| \leq \lceil t^{1/2} \rceil$, and all $v\in[0,1]$ we have that
		$$|\mathbf{p}_t^R(x+n,y)-\mathbf{p}_t^R(x,y)| \leq C(A,b,T) (1 \wedge t^{-(1+v)/2}) |n|^v e^{-b|x-y| (1 \wedge t^{-1/2})}.$$ When $b=0$ this bound holds for all $n \in \Bbb Z$, not just for $n\leq t^{1/2}$.
	\end{prop}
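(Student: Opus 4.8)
The plan is to estimate the difference $\mathbf p_t^R(x+n,y)-\mathbf p_t^R(x,y)$ by differencing the explicit image-method formula \eqref{1} term by term, so that the task reduces to controlling spatial increments of the whole-line heat kernel $p_t$. Writing out \eqref{1}, we have
\begin{align*}
\mathbf p_t^R(x+n,y)-\mathbf p_t^R(x,y) &= \big(p_t(x+n-y)-p_t(x-y)\big) + \mu_A\big(p_t(x+n+y+1)-p_t(x+y+1)\big)\\
&\quad + (1-\mu_A^{-2})\sum_{z=2}^\infty \big(p_t(x+n+y+z)-p_t(x+y+z)\big)\mu_A^z.
\end{align*}
For the first two terms I would invoke the standard whole-line increment bound from [DT16, Appendix A], which gives $|p_t(w+n)-p_t(w)| \le C(1\wedge t^{-(1+v)/2})|n|^v e^{-b|w|(1\wedge t^{-1/2})}$ for $|n|\le \lceil t^{1/2}\rceil$ and $v\in[0,1]$ (obtained by interpolating between the $v=0$ case, which is just the pointwise bound from Proposition \ref{2}, and the $v=1$ case, the gradient estimate); here one uses $|x+n-y|\ge |x-y| - |n|$ together with $|n|\le\lceil t^{1/2}\rceil$ to replace the exponent $|x+n-y|$ by $|x-y|$ at the cost of a constant. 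The second term is handled identically, noting $x+y+1\ge |x-y|$ so its exponential decay dominates the required one.

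The third term is where the branching case $A<0$ (so $\mu_A>1$) introduces the new difficulty, and it is the main obstacle. As in the proof of Proposition \ref{2}, I would use the bound $\mu_A \le e^{C(A,T)(1\wedge t^{-1/2})}$ valid for $t\in[0,\epsilon^{-2}T]$, together with $|1-\mu_A^{-2}|\le C(A,T)(1\wedge t^{-1/2})$. Applying the whole-line increment estimate to each summand, with the argument $w = x+y+z \ge z$, gives
$$
\Big|\sum_{z=2}^\infty(\cdots)\mu_A^z\Big| \le C(A,T)(1\wedge t^{-1/2})\cdot (1\wedge t^{-(1+v)/2})|n|^v e^{-b'(x+y)(1\wedge t^{-1/2})}\sum_{z=2}^\infty e^{-(b'-C(A,T))z(1\wedge t^{-1/2})}
$$
for any $b'>C(A,T)$. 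Summing the geometric series and using $e^{-2q}/(1-e^{-q})\le 1/q$ absorbs one factor of $(1\wedge t^{-1/2})^{-1}$, which exactly cancels the prefactor $(1\wedge t^{-1/2})$ coming from $|1-\mu_A^{-2}|$; what remains is $C(A,b',T)(1\wedge t^{-(1+v)/2})|n|^v e^{-b'(x+y)(1\wedge t^{-1/2})}$, and since $x+y\ge |x-y|$ and we may take $b'\ge b$ (enlarging the constant), this is bounded by the desired expression. One subtlety to check is that the increment bound for $p_t$ must hold with an exponent $b'$ strictly larger than $b$ so there is room for the $C(A,T)$ correction in the geometric sum; since the whole-line estimates of [DT16] hold for every decay rate (at the expense of the constant), this is not a problem.

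Finally, for the case $b=0$: here there is no exponential factor to preserve, so the restriction $|n|\le\lceil t^{1/2}\rceil$ is unnecessary — the whole-line increment bound $|p_t(w+n)-p_t(w)|\le C(1\wedge t^{-(1+v)/2})|n|^v$ holds for all $n\in\Bbb Z$ and $v\in[0,1]$ by the same interpolation (the $v=1$ gradient bound is global in $n$ after summing telescopically, and $v=0$ is trivial), and the geometric sum in the third term converges simply because $\mu_A<1+C(A,T)\epsilon$ while the factor $(1\wedge t^{-1/2})$ from $|1-\mu_A^{-2}|$ controls the sum exactly as above. Assembling the three contributions yields the claimed estimate.
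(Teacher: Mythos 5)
Your proposal is correct and follows essentially the same route as the paper: the paper simply cites the whole-line increment estimate [DT16, (A.13)] and repeats the geometric-series argument of Proposition \ref{2} with $p_t$ replaced by $\nabla_n p_t$ in \eqref{1}, which is exactly your term-by-term differencing, and it handles the $b=0$ case for all $n$ by the same telescoping/triangle-inequality observation you make. Your extra remarks (enlarging the decay rate $b'$ to absorb the $C(A,T)$ correction, and shifting the exponent from $|x+n-y|$ to $|x-y|$ using $|n|\leq\lceil t^{1/2}\rceil$) are just the details the paper leaves implicit.
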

	\begin{proof} The corresponding bounds already hold for the whole-line kernel $p_t(x)$, by [DT16, (A.13)]. So we proceed exactly as in Proposition \ref{2}, using \eqref{1} with $p_t$ replaced by its $n$-point gradient $\nabla_np_t$. The fact that the bound holds for all $n$ when $b=0$ is a consequence of the triangle inequality applied to the case when $n=1$ and $v=1$. \end{proof}
	
	\begin{cor}\label{4}
		For any $T \geq0$ and $a_1,a_2\geq 0$, there exists some constant $C=C(a_1,a_2,A,T)$ such that for all $x \geq 0$ and $t \leq \epsilon^{-2}T$ we have \begin{align*}\sum_{y \geq 0} \mathbf p_t^R(x,y) e^{a_1|x-y|(1 \wedge t^{-1/2})} e^{a_2 \epsilon y} &\leq Ce^{a_2 \epsilon x}, \\ 
		\sum_{y \geq 0} |\nabla^{\pm}\mathbf p_t^R(x,y)| e^{a_1|x-y|(1 \wedge t^{-1/2})} e^{a_2 \epsilon y} &\leq C(1 \wedge t^{-1/2})e^{a_2 \epsilon x}. \end{align*}
		If we replace $e^{a_2 \epsilon y}$ with $e^{a_2 \epsilon |x-y|}$ in the first two expressions, then these bounds hold without the factor $e^{a_2 \epsilon x}$ on the RHS.
	\end{cor}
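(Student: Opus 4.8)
The plan is to reduce both weighted sums to plain geometric sums by folding all the exponential weights into the Gaussian-type decay already supplied by Propositions \ref{2} and \ref{3}. The only arithmetic input needed is that, since $t\le\epsilon^{-2}T$, one has $\epsilon\le C(T)(1\wedge t^{-1/2})$: when $t\ge 1$ this is just a restatement of $t\le\epsilon^{-2}T$ (with $C(T)=T^{1/2}$), and when $t\le 1$ it holds for $\epsilon$ small since then $1\wedge t^{-1/2}=1$. Consequently $e^{a_2\epsilon|x-y|}\le e^{a_2C(T)|x-y|(1\wedge t^{-1/2})}$, so the $a_2\epsilon$-weight is dominated by a multiple of the natural decay rate; this is the mechanism that lets the $\epsilon^{-2}T$ time window be absorbed with $\epsilon$-independent constants.

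For the first inequality I would write $y=x+(y-x)\le x+|x-y|$, so that $e^{a_2\epsilon y}\le e^{a_2\epsilon x}e^{a_2\epsilon|x-y|}\le e^{a_2\epsilon x}e^{a_2C(T)|x-y|(1\wedge t^{-1/2})}$. Combining this with the factor $e^{a_1|x-y|(1\wedge t^{-1/2})}$ and applying Proposition \ref{2} with the fixed exponent $b:=a_1+a_2C(T)+1$, each summand is at most
$$C(A,b,T)\,e^{a_2\epsilon x}\,(1\wedge t^{-1/2})\,e^{-|x-y|(1\wedge t^{-1/2})}.$$
It then remains to observe that $\sum_{y\ge 0}e^{-|x-y|(1\wedge t^{-1/2})}\le\sum_{k\in\Bbb Z}e^{-|k|(1\wedge t^{-1/2})}\le C(1\wedge t^{-1/2})^{-1}$, using the elementary bound $1-e^{-u}\ge u/2$ for $u\in[0,1]$. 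This cancels the prefactor $(1\wedge t^{-1/2})$ and leaves $C(a_1,a_2,A,T)\,e^{a_2\epsilon x}$, as claimed.

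The gradient estimate runs identically, except that Proposition \ref{3} (with $n=\pm 1$ and $v=1$; its proof via \eqref{1} applies to the gradient in either spatial variable, since the gradient distributes over the terms of the image expansion) replaces the prefactor $(1\wedge t^{-1/2})$ by $(1\wedge t^{-1})=(1\wedge t^{-1/2})^2$, so after the same geometric summation exactly one power of $(1\wedge t^{-1/2})$ survives, matching the stated bound. For the final variant, in which $e^{a_2\epsilon y}$ is replaced by $e^{a_2\epsilon|x-y|}$, the first step is unnecessary: one absorbs $e^{a_2\epsilon|x-y|}$ directly into the decay without splitting off $e^{a_2\epsilon x}$, and the identical computation yields the bound with no $e^{a_2\epsilon x}$ factor on the right. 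I do not anticipate a genuine obstacle here; the only points requiring care are the implicit smallness of $\epsilon$ in the inequality $\epsilon\le C(T)(1\wedge t^{-1/2})$, and the bookkeeping that the exponent $b$ — and hence the final constant — depends only on $a_1,a_2,A,T$.
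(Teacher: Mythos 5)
Your proposal is correct and follows essentially the same route as the paper: split $e^{a_2\epsilon y}\le e^{a_2\epsilon x}e^{a_2\epsilon|x-y|}$, absorb $\epsilon\le C(T)(1\wedge t^{-1/2})$ into the exponential decay, apply Proposition \ref{2} (resp. Proposition \ref{3} with $v=1$) with a sufficiently large fixed $b$ depending only on $a_1,a_2,T$, and finish with the geometric-series bound $\sum_{k\in\Bbb Z}e^{-|k|(1\wedge t^{-1/2})}\le C(1\wedge t^{-1/2})^{-1}$. The treatment of the $e^{a_2\epsilon|x-y|}$ variant also matches the paper's (omitted) argument.
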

	
	\begin{proof} Using Proposition \ref{2}, we find that $\mathbf p_t^R(x,y) \leq C(b,T,A) (1 \wedge t^{-1/2}) e^{-b|x-y|(1 \wedge t^{-1/2})}$, and moreover $$e^{a_2 \epsilon y} \leq e^{a_2 \epsilon x}e^{a_2 \epsilon |x-y|} \leq e^{a_2 \epsilon x} e^{a_2 T^{1/2} (1 \wedge t^{-1/2}) |x-y|}$$ since $ t \leq \epsilon^{-2}T$. Consequently,
	\begin{align*}
	\sum_{y \geq 0} \mathbf p_t^R(x,y) e^{a_1|x-y|(1 \wedge t^{-1/2})} e^{a_2 \epsilon y} & \leq C e^{a_2 \epsilon x} \sum_{y \in \Bbb Z}(1 \wedge t^{-1/2} ) e^{(-b+a_1+a_2T^{1/2})|x-y|(1 \wedge t^{-1/2})} .
	\end{align*}
	Letting $b :=1+a_1+a_2T^{1/2}$ we compute the sum on the RHS:
	\begin{align*} 
	\sum_{y \in \Bbb Z}(1 \wedge t^{-1/2} ) e^{-|x-y|(1 \wedge t^{-1/2})} &=(1 \wedge t^{-1/2}) \frac{1+e^{-(1\wedge t^{-1/2})}}{1-e^{-(1\wedge t^{-1/2})}} \leq C
	\end{align*}
	where we used the fact that $(1+e^{-q})/(1-e^{-q}) \leq 1+2/q$ for $q \geq 0$. This proves the first inequality, and the second one is proved similarly using Proposition \ref{3} instead of \ref{2}. The final statement is proved in a similar way.\end{proof}

	We now turn to proving temporal bounds for the semi-discrete heat kernel, which will be useful in proving tightness. This involves different methods that the ones used to prove the spatial bounds above.
	\\
	\begin{lem}\label{5}
		Let $p_t(x)$ denote the standard heat kernel on the whole line $\mathbb Z$ (as defined below Equation \eqref{1}). Then for $\mu>0$, $t \geq 0$ and $x \in \mathbb Z_{\geq 0}$ we have the equality $$\sum_{z=-\infty}^{\infty} p_t(x+z) \mu^z = \mu^{-x} \exp{\bigg[ \frac{1}{2} \big(\mu + \mu^{-1} -2\big)t\bigg]}.$$ 
	\end{lem}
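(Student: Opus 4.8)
The plan is to view the left-hand side as a function of $t$ solving a one-dimensional linear ODE. Fix $x\in\mathbb Z_{\geq 0}$ and $\mu>0$, and set $F(t):=\sum_{z\in\mathbb Z}p_t(x+z)\mu^z$. The first step is to check that this series, as well as the one obtained by differentiating it term by term in $t$, converges absolutely and locally uniformly in $t$. This follows from the standard whole-line heat kernel bound [DT16, (A.12)]: for any $b\geq 0$ and $T>0$ there is $C(b,T)$ with $p_t(w)\leq C(b,T)(1\wedge t^{-1/2})e^{-b|w|(1\wedge t^{-1/2})}$, so on a fixed compact interval $t\in[0,T]$ one has $p_t(w)\leq C(b,T)e^{-b|w|T^{-1/2}}$; taking $b>T^{1/2}|\log\mu|$ bounds $|p_t(x+z)\mu^z|$ by a summable sequence independent of $t\in[0,T]$. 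Since $\partial_tp_t(w)=\tfrac12\big(p_t(w+1)+p_t(w-1)-2p_t(w)\big)$ satisfies the same kind of estimate, $F$ is continuous on $[0,\infty)$, differentiable on $(0,\infty)$, and $F'$ is the termwise $t$-derivative of the series.

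The second step is the computation. Using $\partial_tp_t(y)=\tfrac12\big(p_t(y+1)+p_t(y-1)-2p_t(y)\big)$ with $y=x+z$ and shifting the summation index by $\mp1$ in the two translated sums, one gets $\sum_z p_t(x+z+1)\mu^z=\mu^{-1}F(t)$ and $\sum_z p_t(x+z-1)\mu^z=\mu F(t)$, so that $F'(t)=\tfrac12(\mu+\mu^{-1}-2)F(t)$ for $t>0$. Combined with the initial value $F(0)=\sum_z 1_{\{x+z=0\}}\mu^z=\mu^{-x}$, uniqueness for this scalar linear ODE forces $F(t)=\mu^{-x}\exp\big[\tfrac12(\mu+\mu^{-1}-2)t\big]$, which is exactly the claimed identity.

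There is no real obstacle here; the only steps needing care are the interchange of differentiation and summation and the two index shifts, both routine given the uniform spatial decay of $p_t$ on compact time intervals. As an alternative, one can bypass the ODE: the continuous-time random walk with generator $\tfrac12\Delta$ has $p_t(w)=e^{-t}I_{|w|}(t)$, where $I_n$ is the modified Bessel function of the first kind, and the classical generating-function identity $\sum_{w\in\mathbb Z}I_w(t)s^w=e^{(t/2)(s+s^{-1})}$ gives $\sum_w p_t(w)\mu^w=e^{(t/2)(\mu+\mu^{-1}-2)}$; substituting $w=x+z$ and pulling out $\mu^{-x}$ yields the statement at once. I would present the ODE argument as the main proof, since it is self-contained, and mention the Bessel computation only as a remark.
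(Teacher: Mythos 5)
Your argument is correct and is essentially the paper's own proof: the paper also observes that $F$ solves $\partial_t F=\tfrac12\Delta F=\tfrac12(\mu+\mu^{-1}-2)F$ with $F(0,\cdot)=\mu^{-x}$ (i.e.\ $\mu^z$ is an eigenfunction of $\Delta$) and concludes by solving the scalar ODE. Your added justification of the termwise differentiation and index shifts, and the Bessel-function remark, are fine but do not change the route.
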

	\begin{proof} Fixing $\mu$, let $F(t,x)$ denote the sum on the left side. Note that $F$ is defined by a convolution involving $p_t$, therefore $$\partial_t F(t,x) = \frac{1}{2} \Delta F(t,x) = \frac{1}{2}(\mu+\mu^{-1}-2)F(t,x).$$ Furthermore $F(0,x) = \mu^{-x}$, because $p_0(x) = 1_{[x=0]}$. This proves the given formula. Another way of putting this is that $F$ is defined by convolving the semigroup $p_t$ with an eigenfunction of $\Delta$, whose eigenvalue is $\mu+\mu^{-1}-2$. \end{proof}
	
	\begin{prop}\label{6}
		
		Fix $A\in \Bbb R$ and $T>0$. There is a constant $C(A,T)$ (not depending on $\epsilon$) such that for all $s<t \in [0,\epsilon^{-2}T]$, all $x,y \in \mathbb Z_{\geq 0}$, and all $v\in[0,1]$ we have that
		$$|\mathbf{p}_t^R(x,y)-\mathbf{p}_s^R(x,y)| \leq C(A,T) (1 \wedge s^{-1/2-v}) (t-s)^v.$$
	\end{prop}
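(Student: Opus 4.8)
The plan is to follow the scheme already used for Propositions \ref{2} and \ref{3}: feed the image representation \eqref{1} the corresponding estimate for the whole-line kernel $p_t$, and then control the resulting geometric series in the ratio $\mu_A$. The whole-line input I would use is the temporal bound: for any fixed $b\ge 0$ there is a constant $C$ with
$$|p_t(w)-p_s(w)|\ \le\ C\,(1\wedge s^{-1/2-v})\,(t-s)^v\, e^{-b|w|(1\wedge t^{-1/2})},\qquad 0\le s<t,\ w\in\Bbb Z,\ v\in[0,1].$$
This is the temporal companion of [DT16, (A.12)--(A.13)] (see [DT16, Appendix A]); if one prefers to stay self-contained it follows by interpolating the trivial $v=0$ bound against the $v=1$ case, the latter obtained by integrating $\partial_r p_r=\tfrac12\Delta p_r$ over $r\in[s,t]$ and bounding the second difference through an identity of the form $\Delta p_{2r}=(\nabla^+p_r)*(\nabla^+p_r)$ together with [DT16, (A.13)].

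Differencing \eqref{1} in time then gives
$$|\mathbf p_t^R(x,y)-\mathbf p_s^R(x,y)|\ \le\ |p_t(x-y)-p_s(x-y)|+|\mu_A|\,|p_t(x+y+1)-p_s(x+y+1)|+|1-\mu_A^{-2}|\sum_{z\ge 2}|p_t(x+y+z)-p_s(x+y+z)|\,|\mu_A|^z.$$
The first two summands are immediately of the claimed size because $|\mu_A|=|1-A\epsilon|$ is bounded for small $\epsilon$. For the third summand I would argue exactly as in the proof of Proposition \ref{2}: since $\log|\mu_A|\le C(A)\epsilon\le C(A,T)(1\wedge t^{-1/2})$ on $t\le\epsilon^{-2}T$, choosing $b>C(A,T)$ makes $\sum_{z\ge2}e^{-bz(1\wedge t^{-1/2})}|\mu_A|^z\le \tfrac{C}{1\wedge t^{-1/2}}$ (using $e^{-2q}/(1-e^{-q})\le 1/q$), while the prefactor obeys $|1-\mu_A^{-2}|\le C(A)\epsilon\le C(A,T)(1\wedge t^{-1/2})$; the two powers of $(1\wedge t^{-1/2})$ cancel and the factor $(1\wedge s^{-1/2-v})(t-s)^v$ survives, uniformly in $x,y$.

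The one genuinely delicate point — and the reason this is more than a restatement of [CS16] — is that when $A<0$ we have $\mu_A>1$, so the image series \eqref{1} contains geometrically growing terms; what rescues the estimate is precisely that the Gaussian-type decay rate $(1\wedge t^{-1/2})$ of the whole-line bound dominates $\log\mu_A=O(\epsilon)$ uniformly for $t\le\epsilon^{-2}T$, and getting this comparison (and the choice of $b$) right is essentially the whole task. A slightly more hands-on route that avoids invoking a whole-line temporal estimate: first show $|\Delta\mathbf p_r^R(x,y)|\le C(A,T)(1\wedge r^{-3/2})$ by plugging a second-difference whole-line bound into \eqref{1} and summing as above, then integrate the defining equation $\partial_r\mathbf p_r^R=\tfrac12\Delta\mathbf p_r^R$ to obtain the $v=1$ case $|\mathbf p_t^R-\mathbf p_s^R|\le C(A,T)(1\wedge s^{-3/2})(t-s)$, and finally interpolate against the $v=0$ bound $|\mathbf p_t^R-\mathbf p_s^R|\le C(1\wedge s^{-1/2})$ coming from Proposition \ref{2}, using $(1\wedge s^{-1/2})^{1-v}(1\wedge s^{-3/2})^{v}=(1\wedge s^{-1/2-v})$.
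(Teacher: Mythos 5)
Your proposal is correct, but it takes a genuinely different route to the key term than the paper does. Both proofs reduce to the cases $v=0$ (from Proposition \ref{2}) and $v=1$ and interpolate, and both handle the first two terms of \eqref{1} by a whole-line temporal estimate. The difference is in the tail series when $A<0$ (so $\mu_A>1$): you keep the series $\sum_{z\ge2}p_t(x+y+z)\mu_A^z$ as written and beat the geometric growth with a temporal whole-line bound that carries the Gaussian-type decay $e^{-b|w|(1\wedge t^{-1/2})}$, exactly mirroring the argument of Proposition \ref{2}; the paper instead avoids ever needing spatial decay in the temporal estimate by rearranging \eqref{1} via Lemma \ref{5}: the tail is written as $(1-\mu_A^{-2})F(t,x+y)-(1-\mu_A^{-2})\sum_{z\ge-1}p_t(x+y-z)\mu_A^{-z}$, where $F(t,x)=\mu_A^{-x}\exp[\tfrac12(\mu_A+\mu_A^{-1}-2)t]$ is computed in closed form (its time increment is controlled by $\mu_A+\mu_A^{-1}-2\le CA^2\epsilon^2$), and the remaining sum now has \emph{decaying} weights $\mu_A^{-z}$, so the crude bound $|p_t-p_s|\le Cs^{-3/2}(t-s)$ of [DT16, (A.10)] suffices. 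What each buys: your version is more uniform with Propositions \ref{2}--\ref{3} and in fact yields the stronger estimate with an extra factor $e^{-b|x-y|(1\wedge t^{-1/2})}$, but it rests on a spatially-decaying temporal estimate for $p_t$ that is not the one the paper quotes; your derivation sketch of it (integrate $\partial_r p_r=\tfrac12\Delta p_r$, bound the second difference by a convolution of gradients --- up to the sign convention it is $\Delta p_{2r}=(\nabla^+p_r)*(\nabla^-p_r)$ --- and interpolate with the trivial $v=0$ case) is sound, so this is not a gap, just an extra input you must justify. The paper's rearrangement is slightly more ad hoc but needs only off-the-shelf inputs, and the same Lemma \ref{5} computation is reused for the long-time estimate (Proposition \ref{lt1}). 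Your closing ``hands-on'' variant (bound $\Delta\mathbf p_r^R$ and integrate the semigroup equation) also works, with the minor caveat that at $y=0$ the second difference must be taken with the boundary convention $\mathbf p_r^R(x,-1)=\mu_A\mathbf p_r^R(x,0)$ built into \eqref{1}.
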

	
	\begin{proof}First note that it suffices to prove these formulas when $v=0$ and $v=1$ since the middle cases follow by a straightforward interpolation. The $v=0$ case follows from Proposition \ref{2}, and thus we only need to prove the $v=1$ case. We will only consider the case when $A \leq 0$ (so $\mu_A \geq 1$), because the $A>0$ case easier and involves similar methods (see [CS16, Proposition 4.11]). To this end, we define a function $$F(t,x)= \sum_{z=-\infty}^{\infty} p_t(x+z)\mu_A^z= \mu_A^{-x}\exp{\bigg[ \frac{1}{2}(\mu_A+\mu_A^{-1}-2) t\bigg]}.$$
	
	We rewrite Equation \eqref{1} as 
	\begin{align*}
	\textbf{p}_t^R(x,y) &= p_t(x-y)+\mu_A p_t(x+y+1) + (1-\mu_A^{-2})F(t,x+y) -  (1-\mu_A^{-2})\sum_{z=-1}^{\infty}p_t(x+y-z)\mu_A^{-z}\\ &= J_1(t,x,y)\;\;\;\;+\;\;\;\;J_2(t,x,y)\;\;\;\;+\;\;\;\;J_3(t,x,y)\;\;\;\;-\;\;\;\;J_4(t,x,y).
	\end{align*}
	Now we only need to bound each of the differences $|J_i(t,x,y)-J_i(s,x,y)|$ for $1 \leq i \leq 4$. When $i=1,2$, the desired bounds follow directly from [DT16. (A.10)]. For the $i=3$ bound, note that \begin{align*}|F(t,x)-F(s,x)| &= \mu_A^{-x} e^{ \frac{1}{2}(\mu_A+\mu_A^{-1}-2) s}\bigg( e^{ \frac{1}{2}(\mu_A+\mu_A^{-1}-2) (t-s)}-1 \bigg)\\&\leq 1 \cdot e^{ \frac{1}{2}(\mu_A+\mu_A^{-1}-2) s} \cdot \bigg( \frac{1}{2}(\mu_A+\mu_A^{-1}-2) (t-s)\; e^{ \frac{1}{2}(\mu_A+\mu_A^{-1}-2) (t-s)}\bigg) \\ &= e^{ \frac{1}{2}(\mu_A+\mu_A^{-1}-2) t} \cdot \frac{1}{2}(\mu_A+\mu_A^{-1}-2) (t-s)
	\end{align*}
	where we used the bound $e^q-1 \leq qe^q$ with $q= \frac{1}{2}(\mu_A+\mu_A^{-1}-2) (t-s)$. Now we again recall that $\mu_A=1-A\epsilon$ so that $\mu_A^{-1}=1+A\epsilon+A^2\epsilon^2+O(\epsilon^3)$, and therefore $\mu_A+\mu_A^{-1}-2 \leq CA^2\epsilon^2$ for small enough $\epsilon$. Hence when $s,t\leq \epsilon^{-2}T$, the previous bound gives $$|F(t,x)-F(s,x)| \leq e^{\frac{1}{2}CA^2T} \cdot \frac{1}{2}CA^2\epsilon^2 (t-s)=C(A,T) \; \epsilon^2 (t-s).$$ Now we recall that $1-\mu_A^{-2} \leq C(A)\epsilon$ and $\epsilon< C(T) s^{-1/2}$ so that
	\begin{align*}|J_3(t,x,y)-J_3(s,x,y)| &= (1-\mu_A^{-2})|F(t,x+y)-F(s,x+y)| \\&\leq C(A,T) \; \epsilon^3 (t-s) \\&\leq C(A,T) s^{-3/2}(t-s)
	\end{align*} 
	which proves the desired bound for $J_3$.
	
	\textbf{}\\
	To prove the bound for $J_4$, we again apply the whole-line estimate [DT16, (A.10)] 
	\begin{align*}
	|J_4(t,x,y)-J_4(s,x,y)| & \leq (1-\mu_A^{-2}) \sum_{z=-1}^{\infty} \big|p_t(x+y-z)-p_s(x+y-z) \big| \mu_A^{-z} \\&\leq (1-\mu_A^{-2}) \sum_{z=-1}^{\infty} C s^{-3/2} (t-s) \mu_A^{-z}  \\&= Cs^{-3/2}(t-s) \mu_A(1+\mu_A^{-1}) 
	\end{align*}
	and we implicitly assume that $\epsilon$ is small enough so that $\mu_A(1+\mu_A^{-1}) \leq C(A)$. This proves the claim. \end{proof}
	
	\begin{prop}[Long-time Estimate]\label{lt1}
		There exist constants $C=C(A,B)$ and $K=K(A,B)$ such that for every $t\geq 0$ and $x,y \geq 0$ we have that $$\mathbf p_t^R(x,y)\leq C(t^{-1/2}+\epsilon)e^{K\epsilon^2t}. $$
	\end{prop}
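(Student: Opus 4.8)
The plan is to start from the ``generalized image'' representation \eqref{1} of $\mathbf p_t^R(x,y)$, and to bound each of the three terms on its right-hand side separately, distinguishing the killing regime $A\ge 0$ (i.e.\ $\mu_A\le 1$) from the branching regime $A<0$ (i.e.\ $\mu_A>1$) only when handling the third term. Throughout I would use two completely elementary facts about the whole-line kernel: the uniform bound $p_t(w)\le C(1\wedge t^{-1/2})\le Ct^{-1/2}$ for all $t\ge 0$, $w\in\Bbb Z$ (this is [DT16, (A.12)] with $b=0$), and conservativity $\sum_{w\in\Bbb Z}p_t(w)=1$. Since $\mu_A=1-A\epsilon$, for $\epsilon$ small we have $\mu_A\le C(A)$ and $|1-\mu_A^{-2}|\le C(A)\epsilon$; note that no restriction $t\le\epsilon^{-2}T$ is imposed, which is precisely why one pays the growth factor $e^{K\epsilon^2 t}$.

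The first two terms are immediate: $p_t(x-y)+\mu_A p_t(x+y+1)\le C(A)t^{-1/2}\le C(A)(t^{-1/2}+\epsilon)e^{K\epsilon^2 t}$ for any $K\ge 0$, since $e^{K\epsilon^2 t}\ge 1$. For the third term $(1-\mu_A^{-2})\sum_{z=2}^\infty p_t(x+y+z)\mu_A^z$ in the killing regime $A\ge 0$, every $\mu_A^z\le 1$, so the sum is at most $\sum_{w\ge 0}p_t(w)\le 1$, and the term is bounded by $C(A)\epsilon$; here one may take $K=0$ and the proposition is already proved in this case.

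The one step that requires an idea is the third term in the branching regime $A<0$, where $\mu_A>1$ and the geometric weights $\mu_A^z$ grow, so that termwise estimates are useless. Here I would bound the truncated sum by the full bilateral sum (all summands $p_t(m+z)\mu_A^z$ with $m:=x+y\ge 0$ being nonnegative) and evaluate the latter \emph{exactly} via Lemma \ref{5}:
$$\sum_{z=2}^\infty p_t(m+z)\mu_A^z\;\le\;\sum_{z\in\Bbb Z}p_t(m+z)\mu_A^z\;=\;\mu_A^{-m}\exp\!\Big[\tfrac12(\mu_A+\mu_A^{-1}-2)t\Big]\;\le\;\exp\!\Big[\tfrac12(\mu_A+\mu_A^{-1}-2)t\Big],$$
using $\mu_A^{-m}\le 1$. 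The crucial algebraic observation is then that $\mu_A+\mu_A^{-1}-2=(\mu_A-1)^2/\mu_A=A^2\epsilon^2/\mu_A\le A^2\epsilon^2$ is of order $\epsilon^2$, not order $\epsilon$ — this is the whole content of the estimate, and it turns the bound into $e^{(A^2/2)\epsilon^2 t}$. Hence the third term is at most $C(A)\epsilon\,e^{(A^2/2)\epsilon^2 t}$, and combining the three pieces gives $\mathbf p_t^R(x,y)\le C(A)(t^{-1/2}+\epsilon)e^{K\epsilon^2 t}$ with $K=A^2/2$.

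This establishes the claim with constants depending only on $A$ (the dependence on $B$ in the statement is vacuous for the half-line kernel; the analogous bounded-interval estimate, handled later, will genuinely involve both). I expect the only point a careful reader should verify is the bilateral-to-unilateral comparison together with the $O(\epsilon^2)$ size of $\mu_A+\mu_A^{-1}-2$, which is exactly what distinguishes $e^{K\epsilon^2 t}$ from the naive $e^{K\epsilon t}$ growth; everything else is a routine application of the whole-line heat-kernel bounds.
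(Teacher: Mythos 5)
Your proposal is correct and follows essentially the same route as the paper: decompose via the image formula \eqref{1}, bound the first two terms by the whole-line estimate $Ct^{-1/2}$, and control the third term through the exact identity of Lemma \ref{5} together with the key observation that $\mu_A+\mu_A^{-1}-2\le K\epsilon^2$ while $|1-\mu_A^{-2}|\le C\epsilon$. Your explicit case split at $A\ge 0$ (where $\mu_A^z\le 1$ makes the sum trivially bounded) is a minor extra care that the paper leaves implicit, but the argument is the same.
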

	
	We remark that these are ``long-time" estimates because they are true uniformly over all $t>0$, i.e., constants don't depend on any terminal time $\epsilon^{-2}T$ (as opposed to most results here).
	
	\begin{proof} In Equation (\ref{1}), the first two terms are clearly bounded by $Ct^{-1/2}$ by [DT16, (A.10)] and the third term is bounded in absolute value by $|1-\mu_A^{-2}|\exp{\big[(\mu_A+\mu_A^{-1}-2)t\big]}$ by Lemma \ref{5}. Since $\mu_A=1-A\epsilon$, it follows that $|1-\mu_A^{-2}| <C\epsilon$ and $\mu_A+\mu_A^{-1}-2 \leq K\epsilon^2$. This completes the proof.
	\end{proof}
	
	\begin{prop}\label{7}
		For all $A$ and $T>0$, there exists $C=C(A,T)$ such that for $t\in[0,\epsilon^{-2}T]$ and $v\in [0,1]$ we have $$\sup_{ x\in \Bbb Z_{\geq 0}}\bigg| \sum_{y\geq 0} \textbf{p}^R_t(x,y) \;- \;1 \bigg| \leq C \epsilon^{v}   t^{v/2}.$$
	\end{prop}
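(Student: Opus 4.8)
The plan is to compute $\frac{d}{dt}\sum_{y\ge 0}\mathbf p_t^R(x,y)$ exactly and recognize it as a boundary flux of order $\epsilon$. Fix $x\ge 0$ and write $g(y):=\mathbf p_t^R(x,y)$. Using the exponential spatial decay from Proposition \ref{2} one may differentiate the sum over $y$ term by term and insert $\partial_t\mathbf p_t^R(x,y)=\frac12\Delta_y\mathbf p_t^R(x,y)$ (the heat equation in the second coordinate). Writing $\Delta_y g(y)=[g(y+1)-g(y)]-[g(y)-g(y-1)]$, the partial sum $\sum_{y=0}^M\Delta_y g(y)$ telescopes to $[g(M+1)-g(M)]+[g(-1)-g(0)]$. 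The first bracket tends to $0$ as $M\to\infty$ by Proposition \ref{2}, and since the image formula \eqref{1} is symmetric in $x$ and $y$, the Robin relation holds in the second argument: $g(-1)=\mathbf p_t^R(x,-1)=\mu_A\mathbf p_t^R(x,0)=\mu_A g(0)$. Hence
$$\frac{d}{dt}\sum_{y\ge 0}\mathbf p_t^R(x,y)=\frac{\mu_A-1}{2}\,\mathbf p_t^R(x,0)=-\frac{A\epsilon}{2}\,\mathbf p_t^R(x,0),$$
and integrating from $0$, using $\sum_{y\ge 0}\mathbf p_0^R(x,y)=1$ for $x\ge 0$, gives the exact identity
$$\sum_{y\ge 0}\mathbf p_t^R(x,y)-1=-\frac{A\epsilon}{2}\int_0^t\mathbf p_s^R(x,0)\,ds.$$

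It then remains to bound this integral uniformly in $x$. Here I would apply the long-time estimate Proposition \ref{lt1}: $\mathbf p_s^R(x,0)\le C(s^{-1/2}+\epsilon)e^{K\epsilon^2 s}$, and since $s\le t\le\epsilon^{-2}T$ the exponential factor is at most $e^{KT}$. This yields $\int_0^t\mathbf p_s^R(x,0)\,ds\le C(A,T)(t^{1/2}+\epsilon t)$, so
$$\Big|\sum_{y\ge 0}\mathbf p_t^R(x,y)-1\Big|\le C(A,T)\big(\epsilon t^{1/2}+\epsilon^2 t\big).$$
Now I invoke $t\le\epsilon^{-2}T$ twice: it gives $\epsilon t^{1/2}\le T^{1/2}$ and $\epsilon^2 t\le T$, so the right side is $\le C(A,T)$ (the $v=0$ case); and it gives $\epsilon^2 t=(\epsilon t^{1/2})^2\le T^{1/2}\cdot\epsilon t^{1/2}$, so the right side is $\le C(A,T)\,\epsilon t^{1/2}$ (the $v=1$ case). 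For intermediate $v\in(0,1)$, writing the left side as its $(1-v)$-th power times its $v$-th power and combining the two endpoint bounds gives $\le C(A,T)^{1-v}\big(C(A,T)\epsilon t^{1/2}\big)^v=C(A,T)\,\epsilon^v t^{v/2}$, uniformly in $x$, as claimed.

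The one delicate step is the first one: justifying termwise differentiation and, above all, identifying the telescoped boundary term with the Robin flux $(\mu_A-1)\mathbf p_t^R(x,0)$ — this is the source of the whole $\epsilon$-smallness of $\big|\sum_{y\ge 0}\mathbf p_t^R(x,y)-1\big|$. The decay needed to discard the flux at $+\infty$ and to differentiate under the sum is already supplied by Proposition \ref{2}, and the symmetry of \eqref{1} is what lets one apply the Robin condition in the $y$-slot. A secondary point worth flagging is that Proposition \ref{2} by itself does not suffice for the final estimate: it only gives $\mathbf p_s^R(x,0)\le C(1\wedge s^{-1/2})$, whose integral over $[0,t]$ contributes a term of order $\epsilon t$, which is \emph{not} uniformly bounded on $t\le\epsilon^{-2}T$; this is precisely why the sharper long-time bound Proposition \ref{lt1} (with its controlled $e^{K\epsilon^2 t}$ growth) must be used instead.
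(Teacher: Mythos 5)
Your argument is correct and is essentially the paper's proof: differentiate $f(t,x)=\sum_{y\ge 0}\mathbf p_t^R(x,y)$ in time, telescope the Laplacian (using the symmetry of \eqref{1} to apply the Robin condition in the $y$-slot), obtain the exact flux identity $\partial_t f(t,x)=\tfrac12(\mu_A-1)\mathbf p_t^R(x,0)$, integrate from $f(0,x)=1$, and interpolate in $v$.

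One correction to your closing remark: your claim that Proposition \ref{2} is insufficient is wrong. With $b=0$ it gives $\mathbf p_s^R(x,0)\le C(A,T)(1\wedge s^{-1/2})\le C(A,T)s^{-1/2}$, and $\int_0^t s^{-1/2}\,ds=2t^{1/2}$, so the flux term is of order $\epsilon t^{1/2}\le T^{1/2}$ — there is no $\epsilon t$ contribution (that would only arise from the crude bound $1\wedge s^{-1/2}\le 1$). This is exactly the estimate the paper uses, so your detour through the long-time bound of Proposition \ref{lt1} is harmless but unnecessary; its real purpose in the paper is for genuinely unbounded time horizons (e.g.\ in Section 6), not for $t\le\epsilon^{-2}T$.
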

	
	\begin{proof} One may use Lemma \ref{8} (below) in order to prove this claim. However, we choose to give a different proof which will generalize easily to the bounded-interval case. Let $$f(t,x) = \sum_{y \geq 0}  \textbf{p}^R_t(x,y).$$
	By (1) we know $ \textbf{p}^R_t(x,y) = \textbf{p}^R_t(y,x)$ and therefore
	\begin{align*}
	\partial_tf(t,x) &= \frac{1}{2}\Delta f(t,x) = \frac{1}{2}\sum_{y \geq 0}\big(  \textbf{p}^R_t(y+1,x)+\textbf{p}^R_t(y-1,x) - 2\textbf{p}^R_t(y,x) \big)\\ &= \frac{1}{2} \big( \textbf{p}^R_t(-1,x)-\textbf{p}^R_t(0,x) \big)= \frac{1}{2}(\mu_A-1)\textbf{p}^R_t(0,x)
	\end{align*} 
	where we canceled out many terms in the first equality of the second line. Note that $f(0,x)=1$, and $|\mu_A-1| = |A|\epsilon$. Finally (by Proposition 1.2) $\textbf{p}^R_t(0,x) \leq C(A,T) t^{-1/2}$, therefore when $v\in [0,1]$ we have
	\begin{align*}
	|f(t,x) -1| &\leq \int_0^t |\partial_sf(s,x)| ds =\frac{1}{2}|A| \epsilon \int_0^t \textbf{p}^R_s(0,x) ds \\ &\leq C(A,T) \;\epsilon \int_0^t s^{-1/2} ds = C(A,T) \;\epsilon \;t^{1/2} \leq C(A,T) \epsilon^v t^{v/2}
	\end{align*}
	where in the last inequality we used the fact that $\epsilon = \epsilon^{v}\epsilon^{1-v} \leq C(T)  \epsilon^{v} t^{(v-1)/2}$ since $t \in [0,\epsilon^{-2}T]$. This proves the claim. \end{proof}
	
	\textbf{}\\
	We now turn to proving certain ``cancellation estimates" which will be used in identifying the limiting measure on $C([0,T], C([0,\infty))$ as the solution to the stochastic heat equation.
	\\
	\begin{lem}\label{8}
		For the next few estimates we will distinguish between different values of $A$ by writing $\textbf{p}_t^R(x,y;A)$ for the ($\epsilon$-dependent) Robin heat kernel of parameter $A$. For all $A$, all $T>0$, and all $b \geq 0$, there exists $C(A,b,T)$ such that for all $x,y \in \Bbb Z_{\geq 0}$ and $t \in [0,\epsilon^{-2}T]$ we have
		\begin{align*}
		|\mathbf{p}^R_t(x,y;A)-\mathbf{p}^R_t(x,y;0)| &\leq C(A,b,T)\; \epsilon\; e^{-b(x+y)(1 \wedge t^{-1/2})},\\
		|\nabla^{\pm}\mathbf{p}^R_t(x,y;A)-\nabla ^{\pm} \mathbf{p}^R_t(x,y;0)| &\leq C(A,b,T) \; \epsilon \; (1 \wedge t^{-1/2}) e^{-b(x+y)(1 \wedge t^{-1/2})}.
		\end{align*}
		where $\nabla^{\pm}$ denotes the discrete gradient in the first spatial coordinate.
	\end{lem}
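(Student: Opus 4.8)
The plan is to work directly from the image formula \eqref{1}, using that the Neumann kernel $\mathbf{p}_t^R(x,y;0)$ (the case $\mu_0=1$) is especially simple: the infinite sum in \eqref{1} vanishes, leaving $\mathbf{p}_t^R(x,y;0)=p_t(x-y)+p_t(x+y+1)$. Subtracting this from \eqref{1} at parameter $A$ gives
\begin{align*}
\mathbf{p}_t^R(x,y;A)-\mathbf{p}_t^R(x,y;0)=(\mu_A-1)\,p_t(x+y+1)+(1-\mu_A^{-2})\sum_{z=2}^{\infty}p_t(x+y+z)\mu_A^z .
\end{align*}
Both prefactors are $O(\epsilon)$: one has $|\mu_A-1|=|A|\epsilon$, and since $\mu_A^{-2}=1+2A\epsilon+O(\epsilon^2)$ also $|1-\mu_A^{-2}|\le C(A)\epsilon$ for small $\epsilon$. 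So the task reduces to bounding $p_t(x+y+1)$ and the displayed sum by heat-kernel-type quantities carrying no factor of $\epsilon$.

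First I would bound $p_t(x+y+1)$ using the whole-line estimate [DT16, (A.12)], which gives $p_t(x+y+1)\le C(b)(1\wedge t^{-1/2})e^{-b(x+y)(1\wedge t^{-1/2})}$; absorbing $(1\wedge t^{-1/2})\le 1$ yields the desired $C(A,b,T)\epsilon\, e^{-b(x+y)(1\wedge t^{-1/2})}$. For the sum I would rerun the geometric-series computation from the proof of Proposition \ref{2} verbatim: on $t\in[0,\epsilon^{-2}T]$ one has $\log\mu_A\le C(A,T)(1\wedge t^{-1/2})$, so applying [DT16, (A.12)] termwise and summing (with the bound $e^{-2q}/(1-e^{-q})\le 1/q$ absorbing exactly one factor of $(1\wedge t^{-1/2})$) gives $\sum_{z\ge 2}p_t(x+y+z)\mu_A^z\le C(A,b,T)e^{-b(x+y)(1\wedge t^{-1/2})}$. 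Multiplying by the $O(\epsilon)$ prefactor proves the first inequality.

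For the gradient bound I would apply $\nabla^{\pm}$ in the first coordinate to the displayed difference, obtaining $(\mu_A-1)\nabla^{\pm}_x p_t(x+y+1)+(1-\mu_A^{-2})\sum_{z\ge 2}\nabla^{\pm}_x p_t(x+y+z)\mu_A^z$, and repeat the previous argument with [DT16, (A.12)] replaced by the gradient estimate [DT16, (A.13)] (taken with $v=1$), which supplies the extra $(1\wedge t^{-1/2})$ appearing in the claimed bound. The one point requiring a word of care is the case $x=0$ in $\nabla^-$, where $\mathbf{p}_t^R(-1,y)$ enters: one checks that \eqref{1} in fact remains valid for $x=-1$ (equivalently, that it is consistent with the Robin boundary condition $\mathbf{p}_t^R(-1,y)=\mu_A\mathbf{p}_t^R(0,y)$), so the computation above applies literally; alternatively one treats $x=0$ directly by writing $\nabla^-\mathbf{p}_t^R(0,y;A)-\nabla^-\mathbf{p}_t^R(0,y;0)=(\mu_A-1)\mathbf{p}_t^R(0,y;A)$ and bounding via Proposition \ref{2}. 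Either way the stated estimate follows.

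Altogether this lemma is essentially a corollary of the identity \eqref{1} together with the techniques already developed in Propositions \ref{2}--\ref{3}; the only things to watch are the bookkeeping of the powers of $(1\wedge t^{-1/2})$ in the geometric sums and the boundary case $x=0$ in the gradient estimate, and I expect no genuinely new obstacle.
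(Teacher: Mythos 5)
Your proposal is correct and follows essentially the same route as the paper: subtract the $\mu_A=1$ case of the image formula \eqref{1}, bound the prefactors $|\mu_A-1|$ and $|1-\mu_A^{-2}|$ by $C(A)\epsilon$, control $p_t(x+y+1)$ via [DT16, (A.12)] and the geometric sum exactly as in Proposition \ref{2}, and repeat with the gradient estimate ($v=1$ in Proposition \ref{3}) for the second bound. Your extra remark about the boundary case $x=0$ for $\nabla^-$ is a sensible precaution that the paper leaves implicit, but it introduces nothing beyond the paper's argument.
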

	
	\begin{proof} Note by (\ref{1}) that $$|\textbf{p}^R_t(x,y;A)-\textbf{p}^R_t(x,y;0)| = (\mu_A-1)p_t(x+y+1)+ (1-\mu_A^{-2})\sum_{z=2}^{\infty} p_t(x+y+z)\mu_A^z$$ where $p_t$ is the standard (whole-line) heat kernel. Since $\mu_A = 1-A\epsilon$, it follows that $|\mu_A-1|$ and $|1-\mu_A^{-2}|$ are both bounded by $C(A)\epsilon$. Moreover $p_t(x+y+1) \leq C(b) e^{-b(x+y)(1 \wedge t^{-1/2})}$ by the standard heat kernel bound [DT16, (A.12)]. Hence we just need to show that $$\sum_{z=2}^{\infty} p_t(x+y+z)\mu_A^z \leq C(A,b,T) e^{-b(x+y)(1 \wedge t^{-1/2})}.$$
	But this was done during Proposition \ref{2}. The proof for the gradient estimates is similar, but we get left with an extra $1\wedge t^{-1/2}$ by setting $v=1$ in Proposition \ref{3}. \end{proof}
	
	\begin{lem}\label{9} For $A\in \Bbb R$, $t \geq 0$, and $x,y \in \Bbb Z_{\geq 0}$ we define $$K_t(x,y;A):=\nabla^+ \mathbf{p}^R_t(x,y;A) \nabla^- \mathbf{p}^R_t(x,y;A).$$ For $a,T \geq 0$ there exists a constant $C=C(a,A,T)$ such that $$\sum_{y \geq 1}\int_0^{\epsilon^{-2}T} |K_t(x,y;A)-K_t(x,y;0)| e^{a \epsilon|x-y|} dt  \leq C\epsilon^{1/2}.$$ 
		
	\end{lem}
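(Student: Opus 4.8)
The plan is to expand the difference of products by the identity $ab-cd=a(b-d)+(a-c)d$ and then feed each factor into the pointwise kernel bounds already proved above. Taking $a=\nabla^+\mathbf p_t^R(x,y;A)$, $b=\nabla^-\mathbf p_t^R(x,y;A)$, $c=\nabla^+\mathbf p_t^R(x,y;0)$, $d=\nabla^-\mathbf p_t^R(x,y;0)$ gives
$$K_t(x,y;A)-K_t(x,y;0)=\nabla^+\mathbf p_t^R(x,y;A)\big(\nabla^-\mathbf p_t^R(x,y;A)-\nabla^-\mathbf p_t^R(x,y;0)\big)+\big(\nabla^+\mathbf p_t^R(x,y;A)-\nabla^+\mathbf p_t^R(x,y;0)\big)\nabla^-\mathbf p_t^R(x,y;0).$$
For the two ``bare'' gradient factors I would invoke Proposition \ref{3} with $v=1$ (and $|n|=1$), which yields $|\nabla^{\pm}\mathbf p_t^R(x,y;A)|\le C(A,b,T)(1\wedge t^{-1})e^{-b|x-y|(1\wedge t^{-1/2})}$, and likewise for $A=0$; for the two difference factors I would use Lemma \ref{8}, i.e. $|\nabla^{\pm}\mathbf p_t^R(x,y;A)-\nabla^{\pm}\mathbf p_t^R(x,y;0)|\le C(A,b,T)\,\epsilon\,(1\wedge t^{-1/2})e^{-b(x+y)(1\wedge t^{-1/2})}$. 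Multiplying the factors, using $x+y\ge|x-y|$ to replace $e^{-b(x+y)(1\wedge t^{-1/2})}$ by $e^{-b|x-y|(1\wedge t^{-1/2})}$, and adding the two contributions, one arrives at
$$\big|K_t(x,y;A)-K_t(x,y;0)\big|\le C(A,b,T)\,\epsilon\,(1\wedge t^{-3/2})\,e^{-b|x-y|(1\wedge t^{-1/2})}.$$
(The $\nabla^-$ at the edge $x=0$ is read off from the Robin relation $\mathbf p_t^R(-1,y)=\mu_A\mathbf p_t^R(0,y)$, which only contributes a further factor of order $\epsilon$ and is in any case covered by Lemma \ref{8} as stated.)

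It then remains to absorb the weight $e^{a\epsilon|x-y|}$, sum over $y$, and integrate. Since $t\le\epsilon^{-2}T$ we have $\epsilon\le T^{1/2}(1\wedge t^{-1/2})$, hence $a\epsilon|x-y|\le aT^{1/2}|x-y|(1\wedge t^{-1/2})$, and choosing $b:=1+aT^{1/2}$ in the previous display gives $|K_t(x,y;A)-K_t(x,y;0)|e^{a\epsilon|x-y|}\le C(a,A,T)\,\epsilon\,(1\wedge t^{-3/2})e^{-|x-y|(1\wedge t^{-1/2})}$. Summing over $y\ge1$ and using $\sum_{y\in\Bbb Z}e^{-|x-y|(1\wedge t^{-1/2})}\le 1+2/(1\wedge t^{-1/2})\le C(1\vee t^{1/2})$ yields
$$\sum_{y\ge1}\big|K_t(x,y;A)-K_t(x,y;0)\big|e^{a\epsilon|x-y|}\le C(a,A,T)\,\epsilon\,(1\wedge t^{-3/2})(1\vee t^{1/2})=C(a,A,T)\,\epsilon\,(1\wedge t^{-1}),$$
where the last identity is a two-case check ($t\le1$ versus $t>1$). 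Integrating in time and using $\int_0^{\epsilon^{-2}T}(1\wedge t^{-1})\,dt\le 1+\log(\epsilon^{-2}T)\le C(T)\log(1/\epsilon)$ for small $\epsilon$, the quantity in the lemma is at most $C(a,A,T)\,\epsilon\log(1/\epsilon)$, which is $\le C(a,A,T)\,\epsilon^{1/2}$ since $\epsilon^{1/2}\log(1/\epsilon)$ stays bounded on $(0,1)$.

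The only step where anything beyond bookkeeping happens is the use of the \emph{sharp} gradient decay $(1\wedge t^{-1})$ for a single copy of the kernel (Proposition \ref{3} with $v=1$) in place of the cruder $(1\wedge t^{-1/2})$ that comes straight out of Proposition \ref{2}. With $(1\wedge t^{-1/2})$ on both factors one only reaches $\sum_{y\ge1}(\cdots)\le C\epsilon(1\wedge t^{-1/2})$, hence an $O(1)$ bound after integrating over $[0,\epsilon^{-2}T]$, which is useless; it is exactly the extra half-power of $t^{-1}$, paid for by the logarithmic cost of $\int^{\epsilon^{-2}T}t^{-1}\,dt$, that buys the factor $\epsilon^{1/2}$ (the logarithm being dominated by $\epsilon^{-1/2}$ and thus harmless). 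So the one thing requiring care is which factor gets the $t^{-1}$ bound and which gets the $\epsilon\,t^{-1/2}$ difference bound; after that it is just tracking the dependence of constants on $a,A,T$.
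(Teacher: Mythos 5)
Your proof is correct, and it starts exactly as the paper does: the same splitting $K_t(\cdot;A)-K_t(\cdot;0)=J_1+J_2$ with one bare gradient and one difference factor, and the same use of Lemma \ref{8} for the difference. Where you diverge is the endgame. The paper never invokes the sharp pointwise $v=1$ gradient bound: it splits $\epsilon=\epsilon^{1/2}\cdot\epsilon^{1/2}$, converts one half into time decay via $\epsilon^{1/2}\leq C(T)(1\wedge t^{-1/4})$, keeps the bare factor as $|\nabla^{\mp}\mathbf p_t^R(x,y;A)|$ and sums it with Corollary \ref{4}, ending with $\int_0^\infty(1\wedge t^{-5/4})\,dt<\infty$ and a clean $\epsilon^{1/2}$ with no logarithm. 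You instead keep the full factor $\epsilon$, use Proposition \ref{3} with $v=1$ pointwise, pay the logarithm from $\int^{\epsilon^{-2}T}(1\wedge t^{-1})\,dt$, and conclude via $\epsilon\log(1/\epsilon)\lesssim\epsilon^{1/2}$; this actually gives a slightly stronger bound ($\epsilon\log(1/\epsilon)$ rather than $\epsilon^{1/2}$), at the cost of a borderline-divergent time integral that must be cut off at $\epsilon^{-2}T$, whereas the paper's bookkeeping is absolutely convergent in $t$ and is the same pattern reused in Corollary \ref{11} and Proposition \ref{10}. Your handling of the weight $e^{a\epsilon|x-y|}$ (absorbing it into $b$ via $\epsilon\leq C(T)(1\wedge t^{-1/2})$) and of the $x=0$ edge case are consistent with how the paper treats them; the only cosmetic caveat is that Proposition \ref{3} with $n=\pm1$ formally requires $t>0$, which is harmless since the integrand vanishes at $t=0$.
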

	
	\begin{proof} Write \begin{align*}|K_t(x,y;A)-K_t(x,y;0)|&\leq |\nabla^+ \textbf{p}^R_t(x,y;A)-\nabla^+ \textbf{p}^R_t(x,y;0)| |\nabla^- \textbf{p}^R_t(x,y;A)| \\ &\;\;\;\;+ |\nabla^+ \textbf{p}^R_t(x,y;0)| | \nabla^- \textbf{p}^R_t(x,y;A)-\nabla^- \textbf{p}^R_t(x,y;0)| \\&=: J_1(t,x,y) +J_2(t,x,y).
	\end{align*}
	By Lemma \ref{8} we have that
	\begin{align*}
	J_1(t,x,y) &= |\nabla^+ \textbf{p}^R_t(x,y;A)-\nabla^+ \textbf{p}^R_t(x,y;0)| |\nabla^- \textbf{p}^R_t(x,y;A)| \\ &\leq C(A,T) \; \epsilon\; (1 \wedge t^{-1/2}) |\nabla^- \textbf{p}^R_t(x,y;A)|.
	\end{align*}
	Since $t \in [0,\epsilon^{-2}T]$ we have that $\epsilon=\epsilon^{1/2}\epsilon^{1/2} \leq C(T) \epsilon^{1/2} (1 \wedge t^{-1/4})$ and thus we see that $$J_1(t,x,y) \leq C(A,T) \; \epsilon^{1/2} (1 \wedge t^{-3/4} ) |\nabla^- \textbf{p}^R_t(x,y;A)|.$$ Consequently
	\begin{align*}\sum_{y \geq 1} J_1(t,x,y) e^{a\epsilon |x-y|} &\leq C(A,T) \epsilon^{1/2}(1 \wedge t^{-3/4}) \sum_{y \geq 1} |\nabla^- \textbf{p}^R_t(x,y;A)|e^{a \epsilon |x-y|}\\&\leq C(a,A,T) \epsilon^{1/2} (1 \wedge t^{-5/4}).
	\end{align*}
	In the last inequality we used the second bound in Corollary \ref{4}. Now integrating both sides of this inequality from $0$ to $\epsilon^{-2}T$ we find that \begin{align*}
	\sum_{y \geq 1}\int_0^{\epsilon^{-2}T} J_1(t,x,y) e^{a \epsilon|x-y|} dt  &= \int_0^{\epsilon^{-2}T}\bigg( \sum_{y \geq 1} J_1(t,x,y) e^{a\epsilon |x-y|}\bigg)dt \\ & \leq \int_0^{\infty} C(A,T,a) \epsilon^{1/2} (1 \wedge t^{-5/4}) dt \\ &=C(A,T,a) \epsilon^{1/2}.
	\end{align*}
	A similar argument shows that $J_2$ satisfies a similar bound. This proves the claim.\end{proof}
	
	\begin{lem}\label{108}
		Let us write $\mathbf p_t^R(x,y;A)$ as in the preceding lemma. For $x, \bar x \in \Bbb Z_{\geq 0}$, we have that $$\sum_{y \geq 0}\int_0^{\infty} \nabla^+ \mathbf p_t^R(x,y;0) \nabla^+ \mathbf p_t^R (\bar x, y ;0) dt = 1_{\{x=\bar x\}}.$$ 
	\end{lem}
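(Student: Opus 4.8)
The plan is to use the fact that at the parameter value $A=0$, i.e.\ $\mu_A=1$, the image formula \eqref{1} collapses to the exact expression $\mathbf{p}_t^R(x,y;0)=p_t(x-y)+p_t(x+y+1)$, with no $\epsilon$-dependence at all, so that every property we need reduces to standard whole-line heat-kernel facts from [DT16, Appendix A] that hold for \emph{all} $t\geq 0$. Throughout, $\nabla^+$ denotes the gradient in the first spatial variable, as in Lemmas \ref{8}--\ref{9}; note that $\nabla^+_x\mathbf{p}_t^R(x,y;0)=(\nabla^+p_t)(x-y)+(\nabla^+p_t)(x+y+1)$.

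First I would justify swapping the sum and the integral. Since $x+y+1\geq|x-y|$ for $x,y\geq 0$, the whole-line gradient bound [DT16, (A.13)] gives, for every $b\geq 0$, the estimate $|\nabla^+\mathbf{p}_t^R(x,y;0)|\leq C(b)\,(1\wedge t^{-1})\,e^{-b|x-y|(1\wedge t^{-1/2})}$, valid for all $t\geq 0$. Bounding one of the two gradient factors by this and one crudely by $C(b)(1\wedge t^{-1})$, then summing $\sum_{y}e^{-b|x-y|(1\wedge t^{-1/2})}\leq C(1\vee t^{1/2})$ and integrating (the resulting $\int_0^1 1\,dt$ and $\int_1^\infty(t^{-2}+t^{-3/2})\,dt$ both converge), shows that $\int_0^\infty\sum_{y\geq 0}\big|\nabla^+\mathbf{p}_t^R(x,y;0)\,\nabla^+\mathbf{p}_t^R(\bar x,y;0)\big|\,dt<\infty$. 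Hence Fubini applies and the left-hand side equals $\int_0^\infty\big(\sum_{y\geq 0}\nabla^+_x\mathbf{p}_t^R(x,y;0)\,\nabla^+_{\bar x}\mathbf{p}_t^R(\bar x,y;0)\big)\,dt$.

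Next, using the symmetry $\mathbf{p}_t^R(x,y;0)=\mathbf{p}_t^R(y,x;0)$ and the Chapman--Kolmogorov identity for $\mathbf{p}^R$ (which follows from the semigroup property, or directly from \eqref{1} and the whole-line convolution identity $\sum_{z\in\Bbb Z}p_t(a-z)p_t(z-c)=p_{2t}(a-c)$), one has $\sum_{y\geq 0}\mathbf{p}_t^R(x,y;0)\mathbf{p}_t^R(\bar x,y;0)=\mathbf{p}_{2t}^R(x,\bar x;0)$. Applying $\nabla^+_x$ and $\nabla^+_{\bar x}$ to this identity and substituting $u=2t$ reduces the claim to showing $\int_0^\infty\nabla^+_x\nabla^+_{\bar x}\mathbf{p}_u^R(x,\bar x;0)\,du=2\cdot 1_{\{x=\bar x\}}$.

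Finally I would compute the mixed gradient directly from $\mathbf{p}_u^R(x,\bar x;0)=p_u(x-\bar x)+p_u(x+\bar x+1)$. Expanding gives $\nabla^+_x\nabla^+_{\bar x}\mathbf{p}_u^R(x,\bar x;0)=-\Delta p_u(x-\bar x)+\Delta p_u(x+\bar x+2)$, and since $\partial_u p_u=\tfrac12\Delta p_u$ this equals $2\,\partial_u\big(p_u(x+\bar x+2)-p_u(x-\bar x)\big)$. Now $u\mapsto p_u(k)$ is continuous on $[0,\infty)$ with $p_0(k)=1_{\{k=0\}}$, tends to $0$ as $u\to\infty$, and $|\partial_u p_u(k)|=|\tfrac12\Delta p_u(k)|$ is integrable on $(0,\infty)$ (bounded near $0$, and $O(u^{-3/2})$ near $\infty$ by [DT16, (A.10)]); since $x+\bar x+2\geq 2$, the fundamental theorem of calculus yields $\int_0^\infty\nabla^+_x\nabla^+_{\bar x}\mathbf{p}_u^R(x,\bar x;0)\,du=2\big(0-(0-1_{\{x=\bar x\}})\big)=2\cdot 1_{\{x=\bar x\}}$, and the factor $\tfrac12$ from $u=2t$ gives the result. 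The only mildly delicate steps are the Fubini justification above and keeping the combinatorics of $\nabla^+_x\nabla^+_{\bar x}$ applied to the reflected kernel straight; the rest is the fundamental theorem of calculus together with $\partial_u p_u=\tfrac12\Delta p_u$.
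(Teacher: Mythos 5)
Your argument is correct, and it takes a genuinely different (more computational) route than the paper. The paper proves the identity by summation by parts in the $y$-variable: with $\mu_A=1$ the boundary terms vanish, the heat equation $\Delta\mathbf p_t^R=2\partial_t\mathbf p_t^R$ plus the semigroup property turn the sum into $-\partial_t\,\mathbf p_{2t}^R(x,\bar x;0)$, and integrating in $t$ gives $0-1_{\{x=\bar x\}}$. You instead exploit the explicit $A=0$ reflection formula $\mathbf p_t^R(x,y;0)=p_t(x-y)+p_t(x+y+1)$, use the same Chapman--Kolmogorov identity $\sum_y\mathbf p_t^R(x,y;0)\mathbf p_t^R(\bar x,y;0)=\mathbf p_{2t}^R(x,\bar x;0)$, pass the outer finite-difference operators $\nabla^+_x,\nabla^+_{\bar x}$ through the (absolutely convergent) sum, and finish with the fundamental theorem of calculus for the whole-line kernel; your explicit mixed-difference computation $-\Delta p_u(x-\bar x)+\Delta p_u(x+\bar x+2)$ checks out, and the image term contributes $0$ upon integration since $x+\bar x+2\geq 2$. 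Both proofs ultimately rest on the same two ingredients (semigroup property and FTC in time via the heat equation), but they differ in where the Laplacian is placed: the paper differentiates in the summation variable $y$, you differentiate in the external variables $x,\bar x$, which incidentally matches the stated convention of Lemma \ref{8} that $\nabla^\pm$ acts on the first coordinate (the two versions differ pointwise in $t$ but integrate to the same answer, so there is no conflict). What your route buys is an explicit Fubini/absolute-integrability justification, which the paper's proof glosses over, and a completely elementary verification using only [DT16, Appendix A]; what it gives up is portability, since the paper's summation-by-parts argument carries over verbatim to the bounded interval (Lemma \ref{107}), where no simple closed-form kernel is available and your explicit-formula computation would not apply.
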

	
	\begin{proof} Let us recall the summation-by-parts identity: if $u,v: \Bbb Z_{\geq -1} \to \Bbb R$ are absolutely summable, then $$\sum_{y=0}^{\infty} u(y) \Delta v(y) = u(-1)\nabla^- v(0) - \sum_{y=-1}^{\infty} \nabla^+u(y) \nabla^+ v(y).$$ Letting $u = \mathbf p_t^R(x, \cdot; 0)$ and $v= \mathbf p_t^R(\bar x, \cdot;0)$, the boundary terms will vanish and we get \begin{align*}-\sum_{y=0}^{\infty} \nabla^+ \mathbf p_t^R(x,y;0) \nabla^+ \mathbf p_t^R (\bar x, y ;0) &= \sum_{y=0}^{\infty}\mathbf p_t^R(x,y;0) \Delta \mathbf p_t^R (\bar x, y ;0)=\sum_{y=0}^{\infty}\Delta\mathbf p_t^R(x,y;0) \mathbf p_t^R (\bar x, y ;0).
		\end{align*}
		Since $\Delta \mathbf p_t^R = 2\partial_t \mathbf p_t^R$, this implies that $$-\sum_{y=0}^{\infty} \nabla^+ \mathbf p_t^R(x,y;0) \nabla^+ \mathbf p_t^R (\bar x, y ;0) = \sum_{y=0}^{\infty}\mathbf p_t^R(x,y;0) \partial_t \mathbf p_t^R (\bar x, y ;0)+\partial_t\mathbf p_t^R(x,y;0) \mathbf p_t^R (\bar x, y ;0).$$ Integrating both sides from $t=0$ to $\infty$ and using the semigroup property, we find that \begin{align*}
		-\sum_{y=0}^{\infty}\int_0^{\infty} \nabla^+ \mathbf p_t^R(x,y;0) \nabla^+ \mathbf p_t^R (\bar x, y ;0) dt&= \sum_{y=0}^{\infty} \mathbf p_t^R(x,y;0)\mathbf p_t^R(\bar x,y,0) \bigg|_{t=0}^{\infty} \\ &= \mathbf p_{2t}^R(x, \bar x;0)\bigg|_{t=0}^{\infty} \\ &= 0-1_{\{x=\bar x\}}
		\end{align*}which proves the claim.
	\end{proof}
	
	\begin{prop}\label{10}
		Let $A\in \Bbb R$, $T>0$, and $a>0$. Let $K_t$ be as in Lemma \ref{9}. There exists some $\epsilon_0=\epsilon_0(A,T,a)$ and some $c_*=c_*(A,T,a)<1$ such that for $\epsilon<\epsilon_0$ and $x \geq 1$ $$\sum_{y \geq 1}\int_0^{\epsilon^{-2}T} |K_t(x,y,A)| e^{a \epsilon|x-y|} dt  \leq c_*.$$ 
	\end{prop}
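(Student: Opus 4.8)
The plan is to reduce everything to the reflecting case $A=0$, for which Lemma \ref{108} furnishes the exact identity $\sum_{y\geq 0}\int_0^\infty(\nabla^+\mathbf p_t^R(x,y;0))^2\,dt=1$, and then to show that the quantity we must estimate falls strictly below this ``budget'' of $1$ by a definite amount coming from the behaviour of the kernel near $t=0$. By Lemma \ref{9}, $\sum_{y\geq 1}\int_0^{\epsilon^{-2}T}|K_t(x,y;A)-K_t(x,y;0)|\,e^{a\epsilon|x-y|}\,dt\leq C(A,a,T)\epsilon^{1/2}$, so it is enough to bound the $A=0$ version of the left side by some $c'<1$ for small $\epsilon$, with the $\epsilon^{1/2}$ error absorbed by choosing $\epsilon_0$ small. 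To remove the weight I would write $e^{a\epsilon|x-y|}\leq 1+a\epsilon|x-y|\,e^{a\epsilon|x-y|}$ and use that $t\leq\epsilon^{-2}T$ forces $\epsilon|x-y|\leq\sqrt T\,|x-y|(1\wedge t^{-1/2})$; the decay estimates of Proposition \ref{3} (with $v=1$ in each of the two gradients making up $K_t$, and with the decay parameter $b$ chosen large enough to swallow the remaining factor $e^{a\epsilon|x-y|}$) then give $\sum_{y\geq1}|K_t(x,y;0)|\,a\epsilon|x-y|\,e^{a\epsilon|x-y|}\leq C(a,T)\,\epsilon\,(1\wedge t^{-3/2})$, which integrates in $t$ to $C(a,T)\epsilon$. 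Thus it remains to prove $\sum_{y\geq1}\int_0^{\epsilon^{-2}T}|K_t(x,y;0)|\,dt\leq 1-\delta$ for some $\delta>0$ independent of $x\geq1$ and of $\epsilon$.

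For this, set $g_t(y):=\nabla^+\mathbf p_t^R(x,y;0)$ for $y\geq-1$. Since $\mu_0=1$, the Robin condition gives $g_t(-1)=\mathbf p_t^R(x,0;0)-\mathbf p_t^R(x,-1;0)=0$, and $\nabla^-\mathbf p_t^R(x,y;0)=-g_t(y-1)$, so $|K_t(x,y;0)|=|g_t(y)|\,|g_t(y-1)|$. The $y=0$ term vanishes, so $\sum_{y\geq1}|K_t(x,y;0)|=\sum_{y\geq0}|g_t(y)|\,|g_t(y-1)|$; inserting $|g_t(y)|\,|g_t(y-1)|=\frac12 g_t(y)^2+\frac12 g_t(y-1)^2-\frac12(|g_t(y)|-|g_t(y-1)|)^2$ and reindexing the shifted sum (legitimate because $g_t(-1)=0$ and $g_t$ is summable) yields
\[\sum_{y\geq1}|K_t(x,y;0)|=\sum_{y\geq0}\big(\nabla^+\mathbf p_t^R(x,y;0)\big)^2-\frac12\sum_{y\geq0}\big(|g_t(y)|-|g_t(y-1)|\big)^2.\]
Integrating in $t$ over $[0,\epsilon^{-2}T]$, the first term contributes at most $\int_0^\infty\sum_{y\geq0}(\nabla^+\mathbf p_t^R(x,y;0))^2\,dt=1$ (interchange of sum and integral is free since the integrand is nonnegative, and the value is $1$ by Lemma \ref{108} with $\bar x=x$). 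So the whole problem is reduced to a \emph{lower} bound, uniform in $x\geq1$, for $\int_0^{t_0}\sum_{y\geq0}(|g_t(y)|-|g_t(y-1)|)^2\,dt$, where $t_0$ is a fixed small constant and $\epsilon$ is small enough that $\epsilon^{-2}T\geq t_0$.

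This uniform short-time lower bound is the one genuinely non-routine point. I would obtain it by keeping only the single summand $y=x-1$ and estimating it as $t\to0$ via the explicit reflecting kernel $\mathbf p_t^R(x,y;0)=p_t(x-y)+p_t(x+y+1)$. From $p_t(0)\geq 1-t$ one gets $p_t(k)\leq\sum_{j\geq1}p_t(j)=\frac12(1-p_t(0))\leq\frac t2$ for every $k\geq1$, hence $\mathbf p_t^R(x,x;0)\geq p_t(0)\geq 1-t$, while $\mathbf p_t^R(x,x-1;0)$ and $\mathbf p_t^R(x,x-2;0)$ are each $\leq t$ (with the convention $g_t(-1)=0$ when $x=1$). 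Therefore $g_t(x-1)\geq 1-2t$ and $|g_t(x-2)|\leq 2t$, uniformly in $x\geq1$, so $(|g_t(x-1)|-|g_t(x-2)|)^2\geq\frac14$ once $t\leq t_0$ for a suitable fixed $t_0$, giving $\int_0^{t_0}\sum_{y\geq0}(|g_t(y)|-|g_t(y-1)|)^2\,dt\geq t_0/4$. Combining the three steps, $\sum_{y\geq1}\int_0^{\epsilon^{-2}T}|K_t(x,y;0)|\,dt\leq 1-t_0/8$, and hence for $\epsilon<\epsilon_0(A,a,T)$ small enough that $C(a,T)\epsilon+C(A,a,T)\epsilon^{1/2}<t_0/16$ we obtain $\sum_{y\geq1}\int_0^{\epsilon^{-2}T}|K_t(x,y;A)|\,e^{a\epsilon|x-y|}\,dt\leq 1-t_0/16=:c_*<1$, as required.
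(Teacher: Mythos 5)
Your proof is correct, and while its overall architecture matches the paper's (reduce to $A=0$ via Lemma \ref{9}, use the ``budget'' of $1$ coming from Lemma \ref{108}, and exhibit a uniform short-time defect below that budget), the two key steps are carried out by genuinely different means. The paper quantifies the Cauchy--Schwarz defect through the Lagrange identity, lower-bounds a single cross term using the short-time bounds $\nabla^{\pm}\mathbf p_t^R(x,x)\le -\tfrac45$ and $|\nabla^-\mathbf p_t^R(x,x+1)|\le\tfrac15$ (justified softly, by continuity in $t$ and the values at $t=0$), and then converts the product-form gap $f^2g^2-h^2>\tfrac16$ into $fg-h>c$ using a priori upper bounds on $f,g$; you instead use the exact pointwise identity $|ab|=\tfrac12a^2+\tfrac12b^2-\tfrac12(|a|-|b|)^2$ together with the reflecting-boundary relation $g_t(-1)=0$ (which makes your reindexing exact), so that the unweighted $A=0$ quantity equals $1$ minus an explicit nonnegative defect, and you bound that defect below by elementary continuous-time random-walk facts ($p_t(0)\ge e^{-t}\ge 1-t$, $p_t(k)\le t/2$ for $k\ge1$ applied to the image formula $\mathbf p_t^R(x,y;0)=p_t(x-y)+p_t(x+y+1)$). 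This buys fully explicit constants and avoids both the Lagrange identity and the continuity-at-$t=0$ argument. The handling of the weight also differs: the paper removes $e^{a\epsilon|x-y|}$ by dominated convergence, exploiting that the $A=0$ kernel is $\epsilon$-independent, while you expand the weight quantitatively; the only flaw is a small bookkeeping slip there, since keeping the explicit factor $\epsilon$ in $a\epsilon|x-y|e^{a\epsilon|x-y|}$ and summing $|z|e^{-c|z|(1\wedge t^{-1/2})}$ costs a factor $(1\vee t)$, so the correction is $C\epsilon(1\wedge t^{-1})$ rather than $C\epsilon(1\wedge t^{-3/2})$, whose time integral over $[0,\epsilon^{-2}T]$ is $O(\epsilon\log(1/\epsilon))$ (or $O(\epsilon^{1/2})$ if you trade half the power of $\epsilon$ for extra decay) — in any case it vanishes as $\epsilon\to0$, so your conclusion and choice of $\epsilon_0$, $c_*$ are unaffected.
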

	
	\begin{proof} For the moment being, let us suppose that we have already proved the claim when $A=0$: there exists some $c_*<1$ such that for small enough $\epsilon$ we have $$\sum_{y \geq 1}\int_0^{\epsilon^{-2}T} |K_t(x,y;0)| e^{a \epsilon|x-y|} dt  \leq c_*.$$
		
		By Proposition A.8, we also have that $$\sum_{y \geq 1}\int_0^{\epsilon^{-2}T} |K_t(x,y;A)-K_t(x,y;0)| e^{a \epsilon|x-y|} dt \leq C(a,A,T)\epsilon^{1/2}.$$
		
		Putting together both these bounds and using the triangle inequality, we find that 
		$$\sum_{y \geq 1}\int_0^{\epsilon^{-2}T} |K_t(x,y;A)| \; e^{a \epsilon|x-y|} dt \leq c_*+C(a,A,T) \epsilon^{1/2}.$$
		Now consider $\epsilon$ small enough so that $C(a,A,T) \epsilon^{1/2} \leq (1-c_*)/2$, and we see that the RHS is smaller than $(1+c_*)/2 =: c_*'<1$ which proves the claim for arbitrary $A$.
		\\
		\\
		It remains to prove the claim when $A=0$. From now on, we will implicitly assume that $A=0$ and we will just write $K_t(x,y)$ and $\mathbf p_t^R(x,y)$ with the understanding that $A=0$. Let us first consider the case when $a=0$. For this, we imitate the proof of [CS16, Proposition 5.4]. Using Cauchy-Schwarz, it is true that $$\sum_{y \geq 1} \int_0^{\epsilon^{-2}T} |K_t(x,y)|dt < \bigg(\sum_{y \geq 1} \int_0^{\infty} (\nabla^+\mathbf p_t^R(x,y))^2 dt \bigg)^{1/2}\bigg(\sum_{y \geq 1} \int_0^{\infty} (\nabla^-\mathbf p_t^R(x,y))^2 dt \bigg)^{1/2}.$$ Using Lemma \ref{108}, it is easy to see that the RHS of this expression is equal to $1$. Moreover, the inequality is strict since $\nabla^+ \mathbf p_t^R \neq \nabla^- \mathbf p_t^R$. This proves that, for each fixed $x \in \Bbb Z_{\geq 0}$ and $\epsilon>0$, the LHS is strictly less than $1$. However, this strict inequality may no longer be true after taking the supremum over all $x$ and $\epsilon$. Thus, a stronger argument is needed. Recall the Lagrange identity, which says $$\bigg(\sum_i a_i^2 \bigg) \bigg( \sum_i b_i^2 \bigg) - \bigg( \sum_i a_ib_i \bigg)^2= \sum_{i<j} (a_ib_j-a_jb_i)^2.$$
		This means that $$\bigg( \sum_{y \geq 1} (\nabla^+ \mathbf p_t^R(x,y))^2 \bigg) \bigg( \sum_{y \geq 1} (\nabla^- \mathbf p_t^R(x,y) )^2 \bigg) - \bigg( \sum_{y \geq 1} |K_t(x,y)|\bigg)^2$$ \begin{equation}\label{111}= \sum_{1 \leq \bar y < y} \bigg( |\nabla^+ \mathbf p_t^R(x,y) \nabla^- \mathbf p_t^R(x, \bar y) | - |\nabla^- \mathbf p_t^R(x,y) \nabla^+ \mathbf p_t^R(x, \bar y) | \bigg)^2.\end{equation} Now, just as in [CS16, Corollary 5.4], we claim that there exists some $t_0>0$ such that for every $\epsilon>0$, $x \in \Bbb Z_{\geq 0}$, and $t \leq t_0$ we have that \begin{equation}\label{110}\nabla^{\pm} \mathbf p_t^R(x,x) \leq -\frac{4}{5}, \;\;\;\; \text{and }\;\;\; |\nabla^- \mathbf p_t^R(x,x+1)| \leq \frac{1}{5}.\end{equation} Indeed, the corresponding bound may be seen to be true for the standard heat kernel on the whole line $\Bbb Z$: for $t \leq t_0$ we have \begin{equation}\label{109}\nabla^{\pm} p_t(0) \leq -\frac{9}{10} , \;\;\;\;\text{and} \;\;\;\; |\nabla^- p_t(-1) | \leq \frac{1}{10}.\end{equation} In fact when $t=0$ the left quantity is simply $-1$ and the right quantity is $0$. Moreover, there is no dependence on $\epsilon$ and both quantities are continuous in $t$, which shows that (\ref{109}) is indeed true. Now we use the simple relation (see Equation (\ref{1})) $\mathbf p_t^R(x,y) = p_t(x-y)+p_t(x+y+1)$ in order to deduce Equation (\ref{110}) from (\ref{109}).
		\\
		\\
		Now, given that $(\ref{110})$ is true, this implies that for $t \leq t_0$, $x \in \Bbb Z_{\geq 0}$, and $\epsilon>0$: $$\bigg( |\nabla^+ \mathbf p_t^R(x,x+1) \nabla^- \mathbf p_t^R(x,x)| - |\nabla^- \mathbf p_t^R(x,x+1) \nabla^+ \mathbf p_t^R(x,x)| \bigg)^2 \geq \bigg(\frac{4}{5}\cdot \frac{4}{5} - \frac{1}{5} \cdot 1 \bigg)^2 > \frac{1}{6}.$$
		This in turn implies that the expression in (\ref{111}) is bounded below by $1/6$, uniformly over $t \in [0,t_0], x \in \Bbb Z_{\geq 0}$, and $\epsilon>0$, i.e., $$\bigg(\sum_{y \geq 1} (\nabla^+ \mathbf p_t^R(x,y))^2 \bigg) \bigg( \sum_{y \geq 1} (\nabla^-\mathbf p_t^R(x,y))^2 \bigg)-\bigg(\sum_{y \geq 1} \mathbf |K_t(x,y)| \bigg)^2 >1/6  $$ Now this expression is of the form $f^2g^2-h^2 >1/6$, where $f,g,h$ are functions of $(x, \epsilon)$, defined by the previous expression. We can rewrite this as $(fg-h)(fg+h)>1/6$. Now, Cauchy-Schwarz implies that $h \leq fg$, so that $fg+h \leq 2fg$. Moreover, the heat kernel estimates (Propositions \ref{2}, \ref{3}, and Corollary \ref{4}) imply that for $t \leq \epsilon^{-2}T$, we have that $f, g \leq C$ for some absolute constant $C$. Hence $(fg-h) \cdot 2C^2 \geq (fg-h)(fg+h) >1/6$, so that $fg-h>1/(12C^2) =:c>0$. 
		Summarizing, there exists $c>0$ such that for all $t \leq t_0$, $x \in \Bbb Z_{\geq 0}$, and $\epsilon>0$, we have that $$\bigg(\sum_{y \geq 1} (\nabla^+ \mathbf p_t^R(x,y))^2 \bigg)^{1/2} \bigg( \sum_{y \geq 1} (\nabla^-\mathbf p_t^R(x,y))^2 \bigg)^{1/2}-\sum_{y \geq 1} \mathbf |K_t(x,y)|\;\; >\;\;c . $$ Using the Cauchy-Schwarz inequality and Lemma \ref{108}, the time-integral of the first term on the LHS (from $t=0$ to $\infty$) is bounded above by $1$, therefore we integrate both sides of the above expression and we get $$1- \int_0^{\infty} \sum_{y \geq 1} |K_t(x,y)| dt > ct_0$$ which completes the proof when $a=0$, with $c_* = 1-ct_0$.
		\\
		\\
		To prove the general case with $a>0$, Proposition \ref{3} gives $|K_t(x,y)| \leq C(1 \wedge t^{-2}) e^{-b|x-y|(1 \wedge t^{-1/2})}$, so that $$\sup_{x \geq 0} \sum_{y \geq 1} |K_t(x,y)| (e^{a\epsilon|x-y|}-1) \leq C(1\wedge t^{-2}) \sum_{z \in \Bbb Z} e^{-b(1 \wedge t^{-1/2})|z|}(e^{a\epsilon |z|}-1).$$ By the dominated convergence theorem, this in turn implies that: $$\lim_{\epsilon \to 0} \int_0^{\infty} \sup_{x \geq 0} \sum_{y \geq 1} |K_t(x,y)|(e^{a \epsilon |x-y|}-1) dt = 0.$$
		Note that by Equation (\ref{1}), $\mathbf p_t^R$ does not actually depend on $\epsilon$ when $A=0$, therefore the preceding expression implies that $$\lim_{\epsilon \to 0}\; \sup_{x \geq 0} \int_0^{\infty} \sum_{y \geq 1} |K_t(x,y)| e^{a \epsilon |x-y|} dt = \sup_{x \geq 0}\int_0^{\infty} \sum_{y \geq 1} |K_t(x,y)| dt \leq c_*<1.$$ Hence for small enough $\epsilon>0$, the LHS is smaller than $(1+c_*)/2 =: c_*'$, completing the proof.
	\end{proof}
	
	\begin{cor}\label{11}
		In the same setting as Proposition \ref{10}, for any $S\in[0,T]$ there is a $C=C(A,T,S,a)$ such that for $s=\epsilon^{-2}S$ we have $$\sum_{y\geq 1} \int_0^s |\nabla^+ \textbf{p}^R_t(x,y) \nabla^- \textbf{p}^R_t(x,y)| e^{a \epsilon|x-y|}  (s-t)^{-1/2} dt \leq C\epsilon.$$
	\end{cor}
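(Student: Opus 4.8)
The plan is to derive the estimate directly from the pointwise kernel bounds of Section~3.1; the hypotheses of Proposition~\ref{10} are in force but its conclusion (that $c_*<1$) plays no role here. The one genuine point is the singularity of $(s-t)^{-1/2}$ at $t=s$, which turns out to be harmless because $s=\epsilon^{-2}S$ is large while $\sum_{y}|K_t(x,y;A)|$ decays polynomially in $t$.

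First I would bound the $y$-sum. By Proposition~\ref{3} with $v=1$ and $b=0$ one has $|\nabla^{\pm}\mathbf p_t^R(x,y;A)|\le C(A,T)(1\wedge t^{-1})$ uniformly in $y$, and by the last statement of Corollary~\ref{4} (taking $a_1=0$) one has $\sum_{y\ge0}|\nabla^{\pm}\mathbf p_t^R(x,y;A)|\,e^{a\epsilon|x-y|}\le C(a,A,T)(1\wedge t^{-1/2})$ for $t\in[0,\epsilon^{-2}T]$. Bounding one of the two gradients in $K_t(x,y;A)=\nabla^+\mathbf p_t^R(x,y;A)\,\nabla^-\mathbf p_t^R(x,y;A)$ by its supremum over $y$ and keeping the other under the sum gives
$$\sum_{y\ge1}|K_t(x,y;A)|\,e^{a\epsilon|x-y|}\;\le\;C(a,A,T)\,(1\wedge t^{-3/2}),\qquad t\in[0,\epsilon^{-2}T].$$

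It then remains to integrate this against $(s-t)^{-1/2}$ over $[0,s]$ with $s=\epsilon^{-2}S$. The case $S=0$ is trivial, so I assume $\epsilon$ small enough that $s/2\ge1$ and split $\int_0^s=\int_0^{s/2}+\int_{s/2}^s$. On $[0,s/2]$ I use $(s-t)^{-1/2}\le(s/2)^{-1/2}=\sqrt{2/S}\,\epsilon$ together with $\int_0^{s/2}(1\wedge t^{-3/2})\,dt\le\int_0^\infty(1\wedge t^{-3/2})\,dt=3$, contributing at most $C(a,A,T,S)\,\epsilon$. On $[s/2,s]$ one has $t\ge s/2\ge1$, hence $1\wedge t^{-3/2}=t^{-3/2}\le(s/2)^{-3/2}=(2/S)^{3/2}\epsilon^{3}$, while $\int_{s/2}^s(s-t)^{-1/2}\,dt=2(s/2)^{1/2}=\sqrt{2S}\,\epsilon^{-1}$, so this piece contributes at most $C(a,A,T,S)\,\epsilon^{2}\le C(a,A,T,S)\,\epsilon$. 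Summing the two contributions yields the claim. The only place that needs attention is the choice of splitting point: the singular factor must be matched to the region $t\gtrsim\epsilon^{-2}$, where $t^{-3/2}\lesssim\epsilon^{3}$, while on the complementary region one exploits that $(s-t)^{-1/2}\le(s/2)^{-1/2}$ is already of size $\epsilon$; everything else is a routine integration.
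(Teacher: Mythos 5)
Your proof is correct, and it takes a (mildly but genuinely) different route from the paper's on the first half of the integral. On $[s/2,s]$ you argue exactly as the paper does: the bound $\sum_{y}|K_t(x,y;A)|e^{a\epsilon|x-y|}\le C(a,A,T)(1\wedge t^{-3/2})$ obtained from Proposition \ref{3} (one gradient, $v=1$, $b=0$) and Corollary \ref{4} (the other gradient, summed with the weight), integrated against $(s-t)^{-1/2}$. On $[0,s/2]$, however, the paper bounds $(s-t)^{-1/2}\le \sqrt{2}\,S^{-1/2}\epsilon$ and then invokes Proposition \ref{10} to control $\sum_y\int_0^{s/2}|K_t|e^{a\epsilon|x-y|}\,dt$ by $c_*<1$, whereas you reuse the same integrable pointwise bound $C(1\wedge t^{-3/2})$, whose time integral over $[0,\infty)$ is an explicit finite constant. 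Since that piece already carries the factor $\epsilon$, any finite constant suffices, so your remark that the conclusion $c_*<1$ of Proposition \ref{10} plays no role here is accurate: your argument makes the corollary rest only on the elementary spatial kernel estimates, while the paper's version is shorter on the page (Proposition \ref{10} being already proved) but logically heavier, since the strict inequality $c_*<1$ is only genuinely needed elsewhere, e.g.\ to sum the iterated series in the proof of Theorem \ref{126}. Two minor points: the restriction $s/2\ge 1$ in your second piece is unnecessary, as $1\wedge t^{-3/2}\le t^{-3/2}$ for all $t>0$; and the blow-up of your constant as $S\to 0$ matches the paper's and is permitted by the statement, which fixes $S$.
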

	
	\begin{proof} We mimic the proof given in [CS16, Proposition 5.4]. We split the integral into two pieces, one from $0$ to $s/2$ and he other from $s/2$ to $s$. For the first integral, we note that $(s-t)^{-1/2} \leq \sqrt{2} s^{-1/2} =\sqrt{2}S^{-1/2} \epsilon$ when $t<s/2$, and therefore 
	\begin{align*}&\;\;\;\;\sum_{y\geq 1} \int_0^{s/2} |\nabla^+ \textbf{p}^R_t(x,y) \nabla^- \textbf{p}^R_t(x,y)| e^{a \epsilon|x-y|}  (s-t)^{-1/2} dt \\ &\leq C(S) \; \epsilon  \sum_{y\geq 1} \int_0^{s/2} |\nabla^+ \textbf{p}^R_t(x,y) \nabla^- \textbf{p}^R_t(x,y)| e^{a \epsilon|x-y|} dt \\ &< C(S) \;\epsilon
	\end{align*}
	where we used Proposition \ref{10} in the final line. For the second part of the integral, we note from Proposition \ref{3} that $|\nabla^+ \textbf{p}^R_t(x,y) \nabla^- \textbf{p}^R_t(x,y)| \leq Ct^{-1}|\nabla^- \mathbf p_t^R(x,y)|$ and thus Corollary \ref{4} shows
	$$\sum_{y \geq 1} |\nabla^+ \textbf{p}^R_t(x,y) \nabla^- \textbf{p}^R_t(x,y)|e^{a\epsilon|x-y|} \leq C(a,A,T) (1 \wedge t^{-3/2}).$$ Integrating both sides from $s/2$ to $s$ we see that 
	\begin{align*}&\;\;\;\;\sum_{y \geq 1} \int_{s/2}^s |\nabla^+ \textbf{p}^R_t(x,y) \nabla^- \textbf{p}^R_t(x,y)|e^{a\epsilon|x-y|} (s-t)^{-1/2}dt \\ &\leq C(a,A,T)  \int_{s/2}^s t^{-3/2} (s-t)^{-1/2}dt \\ &=C(a,A,T)  s^{-1} \int_{1/2}^1 u^{-3/2} (1-u)^{-1/2} du \\&\leq C(a,A,S,T) \epsilon^2.
	\end{align*}
	We made a substitution $t=su$ in the third line, and we used $s^{-1} = \epsilon^2 S^{-1}$ in the final line. This proves the claim. \end{proof}

	Finally we conclude this section with an estimate which falls into none of the above categories:
	
	\begin{prop}\label{33} For every $t \geq s \geq 0$ and $x,y \geq 0$ we have that $$\mathbf p_s^R(x,y) \leq e^{t-s} \mathbf p_t^R(x,y).$$
	\end{prop}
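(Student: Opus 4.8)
The plan is to reduce the inequality to a statement about a single scalar ODE. Fix $x,y\geq 0$ and set $g(t):=\mathbf p_t^R(x,y)$. From Lemma~\ref{74}'s underlying heat equation (or rather the defining equation $\partial_t\mathbf p_t^R(x,y)=\tfrac12\Delta\mathbf p_t^R(x,y)$, with $\Delta$ acting on the second coordinate, together with the Robin condition $\mathbf p_t^R(x,-1)=\mu_A\mathbf p_t^R(x,0)$), the function $g$ alone does not satisfy a closed ODE, so a direct Gr\"onwall argument on $g$ will not work. Instead I would work with the whole family at once. The key observation is that $\mathbf p_t^R(x,y)\geq 0$ for all $t,x,y$ — this follows from the probabilistic interpretation (the kernel is the transition sub/super-probability of a continuous-time random walk with branching/killing at the boundary), or can be read off Equation~\eqref{1} by noting the standard whole-line heat kernel is nonnegative and the alternating series still sums to something nonnegative (when $A>0$, $\mu_A<1$, all terms in \eqref{1} with $\mu_A$-weights combine with the right signs; when $A\le 0$ one uses that $\mathbf p_t^R$ is a genuine sub-Markov-plus-branching kernel). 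Given nonnegativity, the heat equation gives $\partial_t\mathbf p_t^R(x,y)=\tfrac12\big(\mathbf p_t^R(x,y+1)+\mathbf p_t^R(x,y-1)-2\mathbf p_t^R(x,y)\big)\geq -\mathbf p_t^R(x,y)$, since the two ``incoming'' terms are nonnegative. This is precisely the differential inequality $\partial_t g(t)\geq -g(t)$.

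From $\partial_t g\geq -g$ we immediately get that $t\mapsto e^{t}g(t)$ is nondecreasing: $\frac{d}{dt}\big(e^tg(t)\big)=e^t\big(g(t)+\partial_tg(t)\big)\geq 0$. Hence for $s\le t$ we have $e^sg(s)\leq e^tg(t)$, i.e. $\mathbf p_s^R(x,y)\leq e^{t-s}\mathbf p_t^R(x,y)$, which is exactly the claim. One small care point: near $y=0$ the equation reads $\partial_t\mathbf p_t^R(x,0)=\tfrac12\big(\mathbf p_t^R(x,1)+\mathbf p_t^R(x,-1)-2\mathbf p_t^R(x,0)\big)=\tfrac12\big(\mathbf p_t^R(x,1)+\mu_A\mathbf p_t^R(x,0)-2\mathbf p_t^R(x,0)\big)$, so when $A>0$ (so $\mu_A<1$) one gets $\partial_t\mathbf p_t^R(x,0)\geq -\tfrac{2-\mu_A}{2}\mathbf p_t^R(x,0)\geq -\mathbf p_t^R(x,0)$ still; when $A\le 0$, $\mu_A\geq 1$ and $\tfrac{\mu_A}{2}-1$ could be positive, which only helps. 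Either way $\partial_t\mathbf p_t^R(x,0)\geq -\mathbf p_t^R(x,0)$ holds (for $\epsilon$ small so that $\mu_A\in(0,2)$, which is assumed throughout), so the boundary site requires no separate treatment.

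The main obstacle is establishing nonnegativity of $\mathbf p_t^R$ cleanly. If one does not want to invoke the probabilistic picture, the honest route is: $\mathbf p_t^R(x,\cdot)$ solves a linear ODE system on $\ell^1(\Bbb Z_{\geq 0})$ whose generator $\tfrac12\Delta$ (with the Robin condition folded in) has off-diagonal entries that are all nonnegative — it is a Metzler matrix — so the semigroup it generates is positivity-preserving, and since $\mathbf p_0^R(x,\cdot)=\delta_x\geq 0$ we get $\mathbf p_t^R(x,\cdot)\geq 0$ for all $t$. This is standard but worth stating. Once nonnegativity is in hand, the rest is the one-line Gr\"onwall/integrating-factor computation above, so I expect the whole proof to be quite short.
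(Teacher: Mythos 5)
Your argument is correct, but it is not the route the paper takes. The paper's proof is a one-line uniformization: it writes $\mathbf p_s^R(x,y)=\sum_{n\geq 0}e^{-s}\frac{s^n}{n!}\mathbf p^n(x,y)$, where $\mathbf p^n$ is the discrete-time Robin kernel satisfying $\mathbf p^{n+1}-\mathbf p^n=\frac12\Delta\mathbf p^n$ with $\mathbf p^n(-1,y)=\mu_A\mathbf p^n(0,y)$, and then compares the two series termwise using $e^{-s}\frac{s^n}{n!}\leq e^{t-s}\,e^{-t}\frac{t^n}{n!}$ for $s\leq t$ (this uses, implicitly, that $\mathbf p^n\geq 0$, which is immediate by induction since $\mathbf p^{n+1}(x,y)=\frac12(\mathbf p^n(x,y+1)+\mathbf p^n(x,y-1))$ with the $\mu_A$-reflection at $y=0$ and $\mu_A>0$ for small $\epsilon$). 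You instead integrate the differential inequality $\partial_t\mathbf p_t^R(x,y)\geq-\mathbf p_t^R(x,y)$, which requires nonnegativity of the continuous-time kernel; your Metzler-generator argument for that is fine, though what you flag as the "main obstacle" is really the same positivity fact the paper gets for free from the series representation (the generator plus the identity is exactly the nonnegative one-step kernel $\frac12\Delta+I$), so the two proofs rest on the same structural input. The paper's version is marginally more self-contained since positivity of $\mathbf p^n$ is an elementary induction, whereas yours packages the statement as a monotonicity of $t\mapsto e^t\mathbf p_t^R(x,y)$, which is perhaps more transparent and adapts verbatim to the bounded-interval kernel of Proposition \ref{26}; your separate check at the boundary site $y=0$ is correct and needed in either formulation only through $\mu_A>0$.
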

	
	\begin{proof} One may directly verify that $$\mathbf p^R_s(x,y) = \sum_{n=0}^{\infty} e^{-s}\frac{s^n}{n!} \mathbf p^n(x,y)$$ where $\mathbf p^n(x,y)$ is the fundamental solution to the discrete-time, discrete-space equation $\mathbf p^{n+1}(x,y)-\mathbf p^n(x,y) = \frac{1}{2}\Delta \mathbf p^n(x,y)$, with Robin boundary conditions $\mathbf p^n(-1,y)=\mu_A \mathbf p^n(0,y)$. Therefore $$\mathbf p_s^R(x,y) = \sum_{n=0}^{\infty} e^{-s}\frac{s^n}{n!}  \mathbf p^n(x,y)\leq e^{t-s}\sum_{n=0}^{\infty} e^{-t}\frac{t^n}{n!} \mathbf p^n(x,y) = e^{t-s} \mathbf p_t^R(x,y)$$ which proves the claim. \end{proof}

	\subsection{Bounded Interval Estimates}
	
	In this section, we prove all of the estimates of Section 3.1 for the Robin heat kernel on the bounded interval $\{0,...,N\}$. We will fix $A,B \in \Bbb R$, and we set $\mu_A:=1-A\epsilon$ and $\mu_B=1-B\epsilon$ for $\epsilon:=\frac{1}{N}$. Then we let $\textbf{p}_t^R(x,y)$ be the Robin heat kernel satisfying $\textbf{p}_t^R(-1,y)=\mu_A\textbf{p}_t^R(0,y)$ and $\textbf{p}_t^R(N+1,y)=\mu_B \textbf{p}_t^R(N,y)$. We will need a formula similar to \eqref{1}, though unfortunately we have to rely on a more complicated inductive formula. 
	\\
	\\
	In order to derive such a formula, [CS16] used the following procedure: Start with an arbitrary ``test" function $\varphi: \{0,...,N\} \to \Bbb R$. Then extend $\varphi$ to a function $\tilde \varphi$ on all of $\Bbb Z$ such that $\tilde \varphi(z-1)-\mu_A \tilde \varphi(z)$ is an odd function, and such that $\tilde \varphi(N+1+z) - \mu_B \tilde \varphi(N+z)$ is also an odd function. This may be done inductively, first defining $\tilde \varphi$ on $\{-N,...,-1\}$ and on $\{N+1,...,2N\}$, then on $\{-2N,...,-N-1\}$ and $\{2N+1,...,3N\}$, and so forth. Then it would necessarily hold true that for any $x\in \{0,...,N\}$: $$\sum_{y=0}^N \mathbf p_t^R(x,y) \varphi(y) = \sum_{y \in \Bbb Z} p_t(x-y) \tilde \varphi(y)$$ where the $p_t$ on the RHS is the standard (whole-line) heat kernel. Indeed, both the LHS and RHS solve the heat equation with Robin boundary conditions and initial data $\varphi$. Using this fact and rearranging terms, [CS16, Lemmas 4.6 and 4.7] proved the following semi-explicit formula 
	\begin{equation}\label{12}
	\textbf{p}_t^R(x,y) = \sum_{\substack{k\in \Bbb Z}} I_kp_t(x-i(y,k)) + \epsilon \sum_{\substack{k\in \Bbb Z \\ k \neq 0}} \sum_{z=k(N+1)}^{k(N+1)+N} p_t(x-z) E_k(z,y)
	\end{equation}
	where $$i(y,k) = \begin{cases} 
	(k+1)(N+1)-y-1, & k \equiv 1 \pmod 2 \\
	y+k(N+1), & k \equiv 0 \pmod 2
	\end{cases}$$
	and the factors $I_k$ and $E_k(z,y)$ (which depend on $\mu_A$ and $\mu_B$) satisfy the following inductive relations for $m \geq 0$:
	\\
	\\
	$I_0=1$ and $E_0(x,y)=0$ for $x,y \in \{0,...,N\}$. Then
	\begin{align*} I_{-(m+1)} &= \mu_AI_m, \\
	I_{m+1} &= \mu_BI_{-m}. \end{align*}
	For $x \in \{(-m-1)(N+1),..., (-m-1)(N+1)+N\}$, and $y \in \{0,...,N\}$
	\begin{align*}E_{-(m+1)}(x,y) &:= \mu_AE_m(-x-1,y) + \epsilon^{-1} (\mu_A^2-1) \sum_{k=0}^m \mu_A^{-x-2-i(y,k)} I_{k} 1_{\{i(y,k) \leq -x-2 \}} \\ &\;\;+(\mu_A^2-1) \sum_{k=0}^m \sum_{z=k(N+1)}^{k(N+1)+N} \mu_A^{-x-2-z} 1_{\{z\leq -x-2\}} E_k(z,y).
	\end{align*}
	For $x \in \{(m+1)(N+1),..., (m+1)(N+1)+N\}$, and $y \in \{0,...,N\}$
	\begin{align*}E_{m+1}(x,y) &:= \mu_BE_{-m}(2N+1-x,y) + \epsilon^{-1} (\mu_B^2-1) \sum_{k=-m}^0 \mu_B^{x-2(N+1)+i(y,k)} I_{k} 1_{\{i(y,k) \geq 2(N+1) -x\}} \\ &\;\;+(\mu_B^2-1) \sum_{k=-m}^0 \sum_{z=k(N+1)}^{k(N+1)+N} \mu_B^{x-2(N+1)+z} 1_{\{z\geq 2(N+1)-x\}} E_{k}(z,y).
	\end{align*}
	We will repeatedly use the fact that $\epsilon=N^{-1}$ throughout this section, so we strongly emphasize this relation.
	\begin{lem}\label{13} There exists a constant $C_0:=C_0(A,B)$ such that for all $k\in \Bbb Z$, 
		$$\sup_{\substack{z\in \{k(N+1),...,k(N+1)+N\} \\ y \in \{0,...,N\}}} |E_k(z,y)| \leq C_0^{|k|}.$$
	\end{lem}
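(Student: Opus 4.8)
The plan is to prove the bound $|E_k(z,y)| \leq C_0^{|k|}$ by induction on $|k|$, using the inductive relations defining $E_k$. I will establish the bound simultaneously with a companion bound on the $I_k$, which is immediate: from $I_{-(m+1)} = \mu_A I_m$ and $I_{m+1} = \mu_B I_{-m}$ together with $I_0 = 1$, one sees $|I_k| = |\mu_A|^{\lceil |k|/2 \rceil} |\mu_B|^{\lfloor |k|/2 \rfloor}$ (up to swapping the roles of $A$ and $B$ depending on the sign of $k$), hence $|I_k| \leq (1 + (|A|+|B|)\epsilon)^{|k|}$. Since we only ever care about $k$ with $|k| \lesssim \epsilon^{-2}T$ in later applications but the statement here is for all $k \in \Bbb Z$, I should be a little careful: the cleanest move is to note $|\mu_A|, |\mu_B| \leq 1 + C(A,B)\epsilon \leq e^{C(A,B)\epsilon} = e^{C(A,B)/N}$, and then absorb factors of the form $e^{C|k|/N}$ into $C_0^{|k|}$ by choosing $C_0$ large enough (this works uniformly in $N \geq 1$ since $e^{C/N} \leq e^C$). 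So really the bound $|I_k| \leq C_1^{|k|}$ holds for a constant $C_1 = C_1(A,B)$.

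Now the induction on $E_k$. The base case $E_0 \equiv 0$ is trivial. For the inductive step, consider $E_{-(m+1)}$ (the case $E_{m+1}$ is symmetric, with $\mu_B$, $N$ in place of $\mu_A$, $0$). I would assume $|E_k(z,y)| \leq C_0^{|k|}$ for all $|k| \leq m$ and bound each of the three terms in the defining formula for $E_{-(m+1)}(x,y)$:
\begin{itemize}
\item The first term $\mu_A E_m(-x-1,y)$ is bounded by $|\mu_A| \cdot C_0^m \leq e^{C(A,B)/N} C_0^m$.
\item The second term is $\epsilon^{-1}(\mu_A^2-1) \sum_{k=0}^m \mu_A^{-x-2-i(y,k)} I_k 1_{\{i(y,k) \leq -x-2\}}$. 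On the indicator set the exponent $-x-2-i(y,k) \geq 0$; and since $x \in \{(-m-1)(N+1), \dots, (-m-1)(N+1)+N\}$ while $i(y,k) \geq k(N+1) \geq 0$ for the relevant range, the exponent $-x-2-i(y,k)$ is at most $O((m+1)N)$, so $|\mu_A|^{-x-2-i(y,k)} \leq e^{C(A,B)(m+1)}$ after using $|\log \mu_A| \leq C(A,B)/N$. Combined with $|\epsilon^{-1}(\mu_A^2-1)| = |N(\mu_A^2-1)| \leq C(A,B)$ and $|I_k| \leq C_1^k$, and summing over the $m+1$ values of $k$, this term is at most $(m+1) C(A,B) C_1^m e^{C(A,B)(m+1)}$.
\item The third term is handled the same way, using the inductive hypothesis $|E_k(z,y)| \leq C_0^{|k|}$ in place of $|I_k|$, and noting the inner sum over $z$ has $N+1$ terms. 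This gives a bound of the shape $(m+1)(N+1) C(A,B) C_0^m e^{C(A,B)(m+1)}$ — but wait, the extra factor $N+1$ is dangerous.
\end{itemize}

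The main obstacle is precisely that factor of $N+1$ appearing in the third term (and the $m+1$ factors appearing throughout): naively these would force $C_0$ to depend on $N$ or produce a bound like $(CN)^{|k|}$ rather than $C_0^{|k|}$. The resolution I expect to use is the restriction imposed by the indicator $1_{\{z \leq -x-2\}}$ together with the geometric decay coming from $\mu_A^{-x-2-z}$: on the support of the indicator the exponent is nonnegative, but more importantly, when $A > 0$ (so $\mu_A < 1$) the factor $\mu_A^{-x-2-z}$ is exponentially \emph{small} in the gap, which sums the inner $z$-sum and the outer $k$-sum to $O(\epsilon^{-1}) = O(N)$, and then the prefactor $\epsilon^{-1}(\mu_A^2 - 1) = O(\epsilon) = O(1/N)$ cancels it; when $A \leq 0$ one instead uses that $\mu_A + \mu_A^{-1} - 2 = O(\epsilon^2)$ so the exponential growth $\mu_A^{-x-2-z}$ over a range of size $O((m+1)N)$ is only $e^{O((m+1)\epsilon^2 N^2 / N)} = e^{O((m+1))}$, a constant to the power $m+1$. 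Either way, the key point is to group the geometric sums so that the $\epsilon^{-1}$ prefactors are absorbed and each recursion level contributes only a bounded multiplicative constant. Once that bookkeeping is done, the recursion reads $\sup |E_{m+1}| \leq C_2 \sup_{|k|\leq m}|E_k| + C_2 \cdot C_1^{m}$ for a constant $C_2 = C_2(A,B)$, from which $\sup |E_k| \leq C_0^{|k|}$ follows by choosing $C_0 := 2(C_1 + C_2)$ (say) and a routine induction. I would keep the exposition honest by carrying the two cases $A > 0$ and $A \leq 0$ (and likewise $B$) either in parallel or by the uniform estimate $|\mu_A|^{\pm j} \leq e^{C(A,B) j/N}$ valid for all $A$, which sidesteps the sign dichotomy at the cost of slightly weaker but still sufficient constants.
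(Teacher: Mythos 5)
Your proposal is correct and takes essentially the same route as the paper's proof: induction on $|k|$ with $E_0=0$, the uniform bound $\mu_A^{\pm j}\leq e^{C(A,B)j/N}$ so that exponents of size $O((m+1)N)$ cost only $e^{O(m+1)}$, cancellation of the $N+1$ terms of the inner $z$-sum against the $O(\epsilon)$ prefactor $(\mu_A^2-1)$ (your ``$\epsilon^{-1}(\mu_A^2-1)=O(\epsilon)$'' is a slip of notation for exactly this), and absorbing the geometric sum $\sum_{k\leq m}e^{C(m-k)}C_0^k\leq C_0^{m+1}/(C_0-e^{C})$ by taking $C_0$ large. The paper performs the same bookkeeping, closing the induction by choosing $C_0\geq e^{2|A|}$ with $2K(A)/(C_0-e^{|A|})\leq 1$, which is the precise form of your ``choose $C_0$ large enough'' step.
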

	
	\begin{proof} Assuming $C_0$ has already been fixed, we we prove the claim using induction on $k$ (and we will find an explicit lower bound for $C_0$ later). It is clearly true for $k=0$, since $E_0=0$. So suppose the result is true for all $k$ such that $|k| \leq m$. Using the recursive relation for $E_{-(m+1)}$ as well as the fact that $|\mu_A^2-1| \leq 3A\epsilon$ for small enough $\epsilon$, we find that $$|E_{-m-1}(x,y)| \leq \mu_A C_0^m + 3A(m+1) \mu_A^{(m+1)N} + 3A\epsilon \bigg(\sum_{k=0}^m N \mu_A^{(m+1-k)N} C_0^k \bigg).$$ 
	Now notice that $\mu_A^{(m+1)N} = (1-A/N)^{N(m+1)} \leq e^{|A|(m+1)}$. Similarly, and using $\epsilon=N^{-1}$ we get
	$$3A\epsilon \bigg(\sum_{k=0}^m N \mu_A^{(m+1-k)N} C_0^k\bigg) \leq 3Ae^{|A|} \sum_{k=0}^{m} e^{|A|(m-k)} C_0^k = 3Ae^{|A|} \frac{C_0^{m+1} - e^{|A|(m+1)}}{C_0-e^{|A|}}.$$
	Therefore if $\epsilon$ is small enough so that $\mu_A<2$ then
	\begin{align*}|E_{-(m+1)}(x,y)| &\leq 2 C_0^m+3A(m+1)e^{|A|(m+1)} +3Ae^{|A|} \frac{C_0^{m+1} - e^{|A|(m+1)}}{C_0-e^{|A|}} \\ &\leq K(A) \;\bigg(e^{2|A|m} + \frac{C_0^{m+1}}{C_0-e^{|A|}}\bigg)
	\end{align*}
	where $K(A)$ is a constant depending only on $A$ and not $m$. We claim that for large enough choices of $C_0$ the last expression can be made smaller than $C_0^{m+1}$. Indeed, take $C_0 \geq e^{2|A|}$ such that $2K(A)/(C_0-e^{|A|}) \leq 1$. Then 
	\begin{align*}
	C_0^{-(m+1)}|E_{-(m+1)}(x,y)| &\leq K(A) \bigg[ \frac{1}{C_0} \bigg( \frac{e^{2|A|}}{C_0} \bigg)^m + \frac{1}{C_0-e^{|A|}} \bigg] \\ & \leq K(A) \bigg[ \;\;\frac{1}{C_0} \;\;\;+\;\;\; \frac{1}{C_0-e^{|A|}} \bigg] \\ &\leq 1.
	\end{align*}
	Thus the inductive hypothesis is satisfied for $k=-(m+1)$. The case when $k=m+1$ is similar, but one needs to replace $A$ with $B$ throughout the proof. \end{proof}
	
	As a consequence of these explicit formulas, we have the same spatial bounds as those of the half-line.
	
	\begin{prop}\label{14}
		Fix $A,B \in \Bbb R$ and $T>0$. For $b \geq 0$, there is a constant $C(A,B,b,T)$ (not depending on $\epsilon=N^{-1}$) such that for all $t \in [0,\epsilon^{-2}T]$, and all $x,y \in \mathbb \{0,...,N\}$ we have that $$\mathbf{p}_t^{R}(x,y) \leq C(A,B,b,T) (1 \wedge t^{-1/2}) e^{-b|x-y| (1 \wedge t^{-1/2})}.$$
	\end{prop}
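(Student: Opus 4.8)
The plan is to mimic the proof of Proposition \ref{2}, but using the semi-explicit formula \eqref{12} in place of \eqref{1}. First I would split the right-hand side of \eqref{12} into the ``image'' sum $\sum_{k\in\Bbb Z} I_k p_t(x-i(y,k))$ and the ``correction'' sum $\epsilon\sum_{k\neq 0}\sum_{z} p_t(x-z)E_k(z,y)$, and bound each separately. For both pieces the key input is the standard whole-line heat-kernel bound [DT16, (A.12)], $p_t(w)\le C(b')(1\wedge t^{-1/2})e^{-b'|w|(1\wedge t^{-1/2})}$ for any $b'\ge 0$, together with two facts: (i) the geometric growth $|I_k|\le C_0^{|k|}$ (immediate from the recursions $I_{-(m+1)}=\mu_A I_m$, $I_{m+1}=\mu_B I_{-m}$ and $|\mu_A|,|\mu_B|\le 2$ for small $\epsilon$, in fact with $C_0$ as in Lemma \ref{13}), and (ii) the bound $|E_k(z,y)|\le C_0^{|k|}$ from Lemma \ref{13}. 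Since $t\le\epsilon^{-2}T$ gives $1\le C(T)\epsilon^{-1}(1\wedge t^{-1/2})$, hence $\log C_0\le C(A,B,T)\epsilon\cdot\epsilon^{-1}$... more usefully $C_0^{|k|}=e^{|k|\log C_0}$ and we will absorb this into the exponential decay coming from the displacement $|x-i(y,k)|$ (resp. $|x-z|$), which grows linearly in $|k|$.

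For the image sum: for $x,y\in\{0,\dots,N\}$ and $|k|\ge 1$ one checks from the definition of $i(y,k)$ that $|x-i(y,k)|\ge (|k|-1)(N+1)\ge \tfrac12|k|(N+1)\ge \tfrac12|k|\epsilon^{-1}$ (for $|k|\ge 2$; the $|k|=1$ term is handled as in Proposition \ref{2} since $|x-i(y,1)|=|x+y-2N-... |\ge$ something nonnegative and we just use $p_t\le C(b)(1\wedge t^{-1/2})e^{-b|x-i(y,1)|(1\wedge t^{-1/2})}$ directly, noting $|x-i(y,1)|\ge |x-y|-C N$ is the wrong direction, so instead I would keep $k=0$ and $k=\pm1$ as ``main terms'' exactly parallel to the first two terms in \eqref{1} and bound them by the whole-line estimate, getting the desired form with $|x-y|$ since $i(y,0)=y$ and $i(y,1),i(y,-1)$ differ from reflections of $y$ across the boundaries). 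For $|k|\ge 2$, using $(1\wedge t^{-1/2})\le \epsilon^{-1}\cdot\epsilon$ and $t\le\epsilon^{-2}T$ so $(1\wedge t^{-1/2})\ge \epsilon T^{-1/2}$ wait — more carefully: I want $C_0^{|k|}e^{-b|x-i(y,k)|(1\wedge t^{-1/2})}$ to be summable in $k$. We have $|x-i(y,k)|(1\wedge t^{-1/2})\ge \tfrac12|k|(N+1)(1\wedge t^{-1/2})$. This is where the restriction $t\le\epsilon^{-2}T$, i.e. $(1\wedge t^{-1/2})\ge \epsilon T^{-1/2}\cdot\mathbf 1_{t\ge1}$... hmm, if $t$ is large this factor is small. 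But $(N+1)(1\wedge t^{-1/2})\ge (N+1)\cdot (\epsilon^{-1}T^{-1/2})^{-1}$ — no. Let me reconsider: if $t\le\epsilon^{-2}T=N^2T$ then $t^{-1/2}\ge N^{-1}T^{-1/2}=\epsilon T^{-1/2}$, so $(N+1)(1\wedge t^{-1/2})\ge (N+1)\epsilon T^{-1/2}\ge T^{-1/2}$ whenever $t\ge 1$; and for $t\le 1$, $(1\wedge t^{-1/2})=1$ so $(N+1)(1\wedge t^{-1/2})\ge N+1$. Either way $(N+1)(1\wedge t^{-1/2})\ge \min(1,T^{-1/2})=:c_T>0$. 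Hence the displacement factor is $\ge \tfrac12 c_T|k|$, so choosing $b$ large enough that $e^{-bc_T/2}C_0<1$ makes $\sum_{|k|\ge 2}C_0^{|k|}e^{-b|x-i(y,k)|(1\wedge t^{-1/2})}\le C(A,B,b,T)e^{-b\cdot\text{(residual)}}$; and since the residual displacement still dominates $|x-y|(1\wedge t^{-1/2})$ up to a constant (because $|x-y|\le N+1\le c_T^{-1}(1\wedge t^{-1/2})^{-1}$, so $|x-y|(1\wedge t^{-1/2})\le c_T^{-1}$ is bounded!), the whole $|k|\ge2$ contribution is simply $\le C(A,B,b,T)(1\wedge t^{-1/2})$, which trivially satisfies the claimed bound since $e^{-b|x-y|(1\wedge t^{-1/2})}\ge e^{-bc_T^{-1}}$ is bounded below. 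The correction sum is handled identically: it is $\epsilon\sum_{k\neq0}\sum_{z=k(N+1)}^{k(N+1)+N}p_t(x-z)E_k(z,y)$, and for $z$ in that range and $|k|\ge1$ we have $|x-z|\ge(|k|-1)(N+1)$, so bounding $|E_k(z,y)|\le C_0^{|k|}$, $p_t(x-z)\le C(b)(1\wedge t^{-1/2})e^{-b|x-z|(1\wedge t^{-1/2})}$, and summing the $N+1\le 2\epsilon^{-1}$ values of $z$ against $\epsilon$ (canceling the $\epsilon$), we again get geometric-in-$|k|$ decay for $b$ large and conclude a bound $\le C(1\wedge t^{-1/2})$, hence the claim.

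The main obstacle is bookkeeping rather than any deep estimate: one must verify that the displacement $|x-i(y,k)|$ (and $|x-z|$ in the correction term) genuinely grows at rate $\gtrsim|k|(N+1)$ uniformly over $x,y\in\{0,\dots,N\}$ — this requires a short case analysis on the parity of $k$ using the explicit form of $i(y,k)$ — and that, on the bounded interval, the target exponential factor $e^{-b|x-y|(1\wedge t^{-1/2})}$ is essentially harmless because $|x-y|(1\wedge t^{-1/2})\le (N+1)(1\wedge t^{-1/2})$ need not be small, but the competing decay from large $|k|$ is what really controls the sum; the $k=0,\pm1$ terms reproduce exactly the half-line argument of Proposition \ref{2} and give the genuine $e^{-b|x-y|(1\wedge t^{-1/2})}$ decay, while all $|k|\ge 2$ terms are absorbed into a uniformly bounded remainder. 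A secondary technical point is ensuring $\epsilon$ is small enough (depending on $A,B$) that $|\mu_A|,|\mu_B|<2$ and $|\mu_A^2-1|,|\mu_B^2-1|\le C(A,B)\epsilon$, exactly as in Lemma \ref{13}, so that $C_0=C_0(A,B)$ is a legitimate $\epsilon$-independent constant.
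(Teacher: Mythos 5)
Your decomposition and ingredients are the same as the paper's (formula \eqref{12}, the bound $|E_k|\le C_0^{|k|}$ from Lemma \ref{13}, geometric control of $I_k$, the whole-line estimate [DT16, (A.12)], the displacement bound $|x-i(y,k)|\ge(|k|-1)(N+1)$, and the observation $(N+1)(1\wedge t^{-1/2})\ge\min(1,T^{-1/2})=:c_T$), and your treatment of the $k=0,\pm1$ main terms is fine. But the endgame contains a genuine error: from $(N+1)(1\wedge t^{-1/2})\ge c_T$ you conclude $N+1\le c_T^{-1}(1\wedge t^{-1/2})^{-1}$, i.e. you invert the inequality in the wrong direction. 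The quantity $|x-y|(1\wedge t^{-1/2})$ is \emph{not} uniformly bounded: take $t\le 1$, $x=0$, $y=N$, and it equals $N\to\infty$. Hence $e^{-b|x-y|(1\wedge t^{-1/2})}$ is not bounded below, and your remainder estimate ``$|k|\ge 2$ contribution $\le C(1\wedge t^{-1/2})$'' does not imply the claimed bound $C(1\wedge t^{-1/2})e^{-b|x-y|(1\wedge t^{-1/2})}$. The same defect affects your correction sum, where in addition the bound $|x-z|\ge(|k|-1)(N+1)$ is vacuous for $|k|=1$, so that term is not covered at all by your argument as written.

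The repair is exactly what the paper does: do not discard the exponential decay of the $|k|\ge2$ terms, but split it. Since $|x-i(y,k)|\ge(|k|-1)(N+1)\ge N+1\ge|x-y|$ for $|k|\ge2$ (and likewise $|x-z|\ge(|k|-1)(N+1)\ge|x-y|$ for $|k|\ge2$ in the correction sum), run the whole-line estimate with parameter $2b$ and write $e^{-2b(|k|-1)(N+1)(1\wedge t^{-1/2})}\le e^{-b|x-y|(1\wedge t^{-1/2})}\cdot e^{-b(|k|-1)c_T}$: the first factor is the target exponential, and the second, together with $|I_k|,|E_k|\le C_0^{|k|}$ and $b$ taken large (which is harmless, as the statement for large $b$ implies it for smaller $b$), gives geometric summability in $k$; the surviving prefactor $N^{-1}=\epsilon\le C(T)(1\wedge t^{-1/2})$ (or the $(1\wedge t^{-1/2})$ already present) produces the $(1\wedge t^{-1/2})$. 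In the paper this appears as retaining the factor $e^{-bN(1\wedge t^{-1/2})}$ and then using $|x-y|\le N$; with that adjustment your proof matches the paper's.
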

	
	\begin{proof} We will prove the desired bound for both terms in the RHS of equation (2). For the first term, we only need to consider terms in the sum with $|k|>2$ since the other terms easily satisfy the desired bound. Note by induction that $I_k \leq (\mu_A \vee \mu_B)^{|k|}$ for all $k \in \Bbb Z$. Using the same argument as in Proposition \ref{2}, we have $$\mu_A \vee \mu_B \leq e^{C(A,B) \epsilon} \leq e^{C(A,B,T) (1 \wedge t^{-1/2})}$$ for all $t \in[0,\epsilon^{-2}T]$. By using the standard estimates [DT16] as well as the fact that $|x-i(y;k)| \geq (|k|-1)N$ for all $x,y \in \{0,...,N\}$ we find that if $bN>C(A,B,T)$ then
	\begin{align*}\sum_{|k|>2} I_k p_t(x-i(y;k)) &\leq \sum_{|k|>2} e^{C(A,B,T) (1 \wedge t^{-1/2})|k|} \cdot C(b) (1 \wedge t^{-1/2}) e^{-b (|k|-1) N (1 \wedge t^{-1/2})} \\ & = C(b) (1 \wedge t^{-1/2}) \frac{e^{2(C(A,B,T)-bN)(1 \wedge t^{-1/2})}}{1-e^{(C(A,B,T)-bN)(1 \wedge t^{-1/2})}} \\ & \leq C(b)\frac{e^{(C(A,B,T)-bN)(1\wedge t^{-1/2})} }{bN-C(A,B,T)} 
	\end{align*}
	where we used $e^{-2q}/(1-e^{-q}) \leq e^{-q}/q$ in the last line. Now, we may as well assume that $b$ (or $N$) is large enough so that $bN-C(A,B,T)>bN/2$. Using this together with the fact that $N\geq |x-y|$ we get that the last expression is bounded above by $$C(b)N^{-1}e^{-\frac{1}{2}b|x-y|(1 \wedge t^{-1/2})},$$ then using the fact that $N^{-1} = \epsilon \leq C(T) (1\wedge t^{-1/2})$ gives the desired bound on the second term in equation (2), after replacing $\frac{1}{2}b$ with $b$.
	
	\textbf{}\\
	Now for the second term in (\ref{12}). As before we only consider terms in the sum with $|k|>2$. Note by Lemma \ref{13} that
	\begin{align*}
	\epsilon \sum_{|k| >2} \sum_{z=k(N+1)}^{k(N+1)+N} p_t(x-z) E_k(z,y) &\leq C(b) (1 \wedge t^{-1/2}) \sum_{|k|>2} e^{-b(|k|-1)N (1 \wedge t^{-1/2})} C_0^{|k|} \\ & = C(b) (1 \wedge t^{-1/2}) e^{-bN(1\wedge t^{-1/2})}\sum_{k \in \Bbb Z} e^{\big(\log C_0 - bN(1\wedge t^{-1/2})\big)|k|}
	\end{align*}
	
	where we used $\epsilon=N^{-1}$ and $|x-z| \geq (|k|-1)N$ in the first inequality. Noting that $N(1 \wedge t^{-1/2}) > T^{-1/2}$ we may assume that $b>2T^{1/2}\log C_0$ so that $\log C_0 - bN(1\wedge t^{-1/2}) < -\log C_0$ and thus the last expression is bounded by $$C(b)(1 \wedge t^{-1/2}) e^{-bN(1\wedge t^{-1/2})} \frac{1}{1-C_0^{-1}} \leq C(A,B,b,T) (1 \wedge t^{-1/2}) e^{-b|x-y|(1\wedge t^{-1/2})} $$
	where we used $|x-y| \leq N$ and assumed wlog that $C_0>1$. This proves the claim. \end{proof}
	\textbf{}\\
	\begin{prop}\label{15}
		Fix $A,B \in \Bbb R$ and $T>0$. For $b \geq 0$, there is a constant $C(A,B,b,T)$ (not depending on $\epsilon=N^{-1}$) such that for all $t \in [0,\epsilon^{-2}T]$, all $v\in[0,1]$, all $|n| \leq \lceil t^{1/2} \rceil$ and all $x,y \in \mathbb \{0,...,N\}$ we have that $$|\mathbf{p}_t^{R}(x+n,y)-\mathbf{p}_t^{R}(x,y)| \leq C(A,B,b,T) (1 \wedge t^{-1/2-v}) |n|^v e^{-b|x-y| (1 \wedge t^{-1/2})}.$$ When $b=0$ this bound holds for all $n \in \Bbb Z$, not just for $n\leq t^{1/2}$.
	\end{prop}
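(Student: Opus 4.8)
The plan is to transcribe the proof of Proposition \ref{14} (the bounded-interval analogue of Proposition \ref{2}) to the setting of gradients, in exactly the way that Proposition \ref{3} is obtained from Proposition \ref{2} on the half-line. Since replacing $x$ by $x+n$ affects only the first argument of every occurrence of $p_t$ in the inductive formula \eqref{12}, one has
\begin{align*}
\mathbf p_t^R(x+n,y) - \mathbf p_t^R(x,y) &= \sum_{k \in \Bbb Z} I_k\, \nabla_n p_t\big(x - i(y,k)\big) \\
&\quad + \epsilon \sum_{\substack{k \in \Bbb Z\\ k \neq 0}} \sum_{z = k(N+1)}^{k(N+1)+N} \nabla_n p_t(x - z)\, E_k(z,y),
\end{align*}
where $\nabla_n p_t(w) := p_t(w+n) - p_t(w)$. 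The only new external input is the whole-line gradient estimate [DT16, (A.13)], which for $|n| \leq \lceil t^{1/2}\rceil$ and $v \in [0,1]$ gives $|\nabla_n p_t(w)| \leq C(1\wedge t^{-(1+v)/2})|n|^v e^{-b|w|(1\wedge t^{-1/2})}$ for every $w \in \Bbb Z$, the bound holding for all $n$ when $b = 0$; this has the same form as the conclusion of Proposition \ref{3}.

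I would then split each $k$-sum into the finitely many terms with $|k| \leq 2$ and the tail $|k| > 2$, exactly as in Proposition \ref{14}. For $|k| \leq 2$ one uses $I_k \leq (\mu_A\vee\mu_B)^{|k|} \leq C(A,B)$ and $|E_k(z,y)| \leq C_0^{|k|}$ from Lemma \ref{13}; the point (as in Proposition \ref{14}) is that $|x-i(y,k)| \geq |x-y|$ for $k\neq 0$, and on the support of $E_k$ the relevant mirror index $z$ likewise satisfies $|x-z| \geq |x-y|$, so after inserting [DT16, (A.13)] and summing the geometric series in $z$ one is left with an $\epsilon$ prefactor, which is converted into the missing power of $(1\wedge t^{-1/2})$ via $\epsilon \leq C(T)(1\wedge t^{-1/2})$ (valid since $t \leq \epsilon^{-2}T$). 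For $|k| > 2$ one uses $|x-i(y,k)|,\,|x-z| \geq (|k|-1)N$ together with $I_k \leq e^{C(A,B,T)(1\wedge t^{-1/2})|k|}$ and $|E_k(z,y)| \leq C_0^{|k|}$; choosing $b$ (hence $bN$) large enough to dominate $C(A,B,T)$ and $\log C_0$, the geometric series in $|k|$ sums to a multiple of $\frac{1}{bN}(1\wedge t^{-(1+v)/2})|n|^v e^{-\frac{b}{2}|x-y|(1\wedge t^{-1/2})}$ after using $|x-y| \leq N$, and $N^{-1} = \epsilon \leq C(T)(1\wedge t^{-1/2})$ absorbs the $1/(bN)$. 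Finally, for $b=0$ the restriction $|n| \leq \lceil t^{1/2}\rceil$ is removed exactly as in Proposition \ref{3}: the triangle inequality telescopes the $n=1$, $v=1$ estimate to arbitrary $n$, and interpolation with the $v=0$ bound of Proposition \ref{14} recovers all $v \in [0,1]$.

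I expect the main obstacle to be the same one already confronted in Proposition \ref{14}: the coefficients $E_k(z,y)$ are only known to satisfy $|E_k| \leq C_0^{|k|}$ with $C_0 > 1$, so their geometric growth in $|k|$ must be beaten by the Gaussian decay $\nabla_n p_t(x - i(y,k)) = O\big((1\wedge t^{-(1+v)/2})|n|^v e^{-b(|k|-1)N(1\wedge t^{-1/2})}\big)$. This forces $b$ to be chosen depending on $T$, $A$, $B$, and $C_0$, and it relies crucially on $t \leq \epsilon^{-2}T = N^2T$ to guarantee the uniform lower bound $N(1\wedge t^{-1/2}) \geq T^{-1/2} > 0$ on the geometric rate; the remainder is bookkeeping to check that all constants are independent of $\epsilon = N^{-1}$. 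Beyond this the argument is a routine transcription of the half-line gradient estimate of Proposition \ref{3} into the bounded-interval framework.
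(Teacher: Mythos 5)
Your proposal is correct and follows essentially the same route as the paper's proof, which likewise applies Equation \eqref{12} with $p_t$ replaced by $\nabla_n p_t$, invokes the whole-line gradient bound [DT16, (A.13)], proceeds exactly as in Proposition \ref{14}, and removes the restriction on $n$ when $b=0$ via the triangle inequality applied to the $n=1$ case. The extra detail you supply for the $0<|k|\leq 2$ terms (the support constraint built into $E_{\pm 1}$ giving $|x-z|\geq|x-y|$, and $|x-z|\geq N\geq |x-y|$ for $|k|=2$) is a correct filling-in of what the paper leaves implicit.
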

	
	\begin{proof} Again we use equation (2), replacing $p_t$ with $\nabla_np_t$ and noting that the corresponding bound already holds for the whole-line kernel [DT16, (A.13)]. Then we proceed exactly as in Proposition \ref{14}. The fact that the bound holds for all $n$ when $b=0$ is a consequence of the triangle inequality applied to the case when $n=1$. \end{proof}
	
	\begin{cor}\label{16}
		Fix $A,B \in \Bbb R$. For any $T \geq 0$ and $a \geq 0$, there exists some constant $C=C(a,A,B,T)$ such that for all $x \in \{0,...,N\}$ and $t \leq \epsilon^{-2}T$ we have \begin{align*}\sum_{y \geq 0} \mathbf p_t^R(x,y) e^{a|x-y|(1 \wedge t^{-1/2})} &\leq C , \\ \sum_{y \geq 0} |\nabla^+ \mathbf p_t^R(x,y)| e^{a|x-y|(1 \wedge t^{-1/2})} &\leq C(1 \wedge t^{-1/2}) .\end{align*}
	\end{cor}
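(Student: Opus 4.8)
The plan is to follow the proof of Corollary \ref{4} essentially verbatim, substituting the bounded-interval spatial estimates of Propositions \ref{14} and \ref{15} for the half-line estimates of Propositions \ref{2} and \ref{3}. Since there is no weight $e^{a_2\epsilon y}$ to carry around in the present statement, the computation is in fact slightly shorter than in Corollary \ref{4}: the only role of the parameter $a$ is to be absorbed into the exponential decay by choosing the free parameter $b$ large.

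For the first inequality, fix $b:=1+a$ and apply Proposition \ref{14} with this $b$, which gives, for all $t\in[0,\epsilon^{-2}T]$ and $x,y\in\{0,\dots,N\}$,
$$\mathbf p_t^R(x,y)\,e^{a|x-y|(1\wedge t^{-1/2})}\;\le\;C(A,B,b,T)\,(1\wedge t^{-1/2})\,e^{-|x-y|(1\wedge t^{-1/2})}.$$
Summing over $y\in\{0,\dots,N\}$ and enlarging the range to all of $\Bbb Z$ (legitimate since every term is nonnegative), the resulting geometric series is evaluated exactly as in Corollary \ref{4}:
$$\sum_{y\in\Bbb Z}(1\wedge t^{-1/2})\,e^{-|x-y|(1\wedge t^{-1/2})}=(1\wedge t^{-1/2})\,\frac{1+e^{-(1\wedge t^{-1/2})}}{1-e^{-(1\wedge t^{-1/2})}}\le C,$$
using the elementary bound $(1+e^{-q})/(1-e^{-q})\le 1+2/q$ with $q=1\wedge t^{-1/2}$ (and separating the cases $t\le 1$, $t>1$). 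This establishes the first bound.

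For the gradient bound, write $\nabla^+\mathbf p_t^R(x,y)=\mathbf p_t^R(x+1,y)-\mathbf p_t^R(x,y)$ and apply Proposition \ref{15} with $n=1$ and $v=1$ (note $|n|=1\le\lceil t^{1/2}\rceil$), which yields a pointwise estimate of the form
$$|\nabla^+\mathbf p_t^R(x,y)|\;\le\;C(A,B,b,T)\,(1\wedge t^{-1})\,e^{-b|x-y|(1\wedge t^{-1/2})}.$$
Taking $b=1+a$ again, multiplying by $e^{a|x-y|(1\wedge t^{-1/2})}$ and summing over $y\in\Bbb Z$ gives
$$\sum_{y\in\Bbb Z}(1\wedge t^{-1})\,e^{-|x-y|(1\wedge t^{-1/2})}\;\le\;(1\wedge t^{-1})\Bigl(1+\frac{2}{1\wedge t^{-1/2}}\Bigr)\;\le\;C\,(1\wedge t^{-1/2}),$$
where the last step once more separates $t\le 1$ from $t>1$. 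This is the second bound.

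The argument presents no real obstacle, as all of the substantive work is already contained in Propositions \ref{14} and \ref{15} (which in turn rest on Lemma \ref{13} and the semi-explicit formula \eqref{12}). The only two points demanding attention are: (i) choosing $b$ large enough relative to $a$ so that the exponential weight $e^{a|x-y|(1\wedge t^{-1/2})}$ is absorbed into the decaying exponential; and (ii) keeping track of the fact that summation over $y$ costs a factor $(1\wedge t^{-1/2})^{-1}$, so that the extra power $t^{-1}$ appearing in the pointwise gradient estimate is precisely what makes the right-hand side of the second inequality come out as $C\,(1\wedge t^{-1/2})$ rather than a constant.
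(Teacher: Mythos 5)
Your proof is correct and is essentially identical to the paper's, which simply says to mimic Corollary \ref{4} with $a_2=0$, replacing Propositions \ref{2} and \ref{3} by Propositions \ref{14} and \ref{15}. Your bookkeeping (choosing $b=1+a$ to absorb the exponential weight and trading the sum over $y$ for a factor $(1\wedge t^{-1/2})^{-1}$) matches the intended argument exactly.
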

	
	\begin{proof} One may mimic the proof of Corollary \ref{4} with $a_2=0$, but use Propositions \ref{14} and \ref{15} instead of \ref{2} and \ref{3}. \end{proof}

	We will now prove the temporal estimate (analog of Proposition \ref{6} on bounded intervals). For this, the above spatial methods do not work well, so we need information on the spectrum of the Laplacian with Robin boundary conditions.
	\textbf{}\\
	\begin{lem}\label{17}
		Consider the operator $\frac{1}{2}\Delta$ on $\Bbb R^{\{0,...,N\}}$ with Robin boundary conditions $f(-1):=\mu_Af(0)$ and $f(N+1):=\mu_Bf(N)$. This is a symmetric operator (and hence orthonormally diagonalizable with real eigenvalues). Consider $N$ large. For $1 \leq k \leq N-1$, there is an eigenvalue of the form $\lambda_k = 1 - \cos \omega_k$ with $\omega_k \in [k\pi/(N+1), (k+1)\pi/(N+1)]$. In particular there are at most two positive eigenvalues $\lambda_N, \lambda_{N+1}$, and moreover these satisfy the bound $\lambda_{N+1} \vee \lambda_N \leq C(A,B) \epsilon^2$.
	\end{lem}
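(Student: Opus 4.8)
\emph{Overall plan.} The statement bundles four essentially independent assertions, and I would prove each on its own: (i) symmetry; (ii) the location of $N-1$ eigenvalues; (iii) at most two positive eigenvalues; (iv) the $O(\epsilon^2)$ bound on those. Symmetry is a one-line discrete summation by parts: for admissible $f,g$ the quantity $\sum_{x=0}^N\big(g(x)\Delta f(x)-f(x)\Delta g(x)\big)$ telescopes to the discrete Wronskian at the endpoints, $\big(g(N)f(N+1)-f(N)g(N+1)\big)-\big(g(-1)f(0)-f(-1)g(0)\big)$, and both brackets vanish once one inserts $f(N+1)=\mu_B f(N)$, $g(N+1)=\mu_B g(N)$ and $f(-1)=\mu_A f(0)$, $g(-1)=\mu_A g(0)$. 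Hence $\tfrac12\Delta$ is orthonormally diagonalizable with real spectrum and (being an $(N+1)\times(N+1)$ operator) has exactly $N+1$ eigenvalues with multiplicity. A second summation by parts produces the Rayleigh identity
$$\langle \tfrac12\Delta f,f\rangle=-\tfrac12\sum_{x=0}^{N-1}\big(\nabla^+f(x)\big)^2-\tfrac{A\epsilon}{2}f(0)^2-\tfrac{B\epsilon}{2}f(N)^2,$$
which I would use for the next two steps.

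\emph{At most two positive eigenvalues, and their size.} Write the right side of the Rayleigh identity as $Q_0(f)+Q_1(f)$ with $Q_0(f)=-\tfrac12\sum_{x=0}^{N-1}(\nabla^+f(x))^2\le 0$ and $Q_1(f)=-\tfrac{A\epsilon}{2}f(0)^2-\tfrac{B\epsilon}{2}f(N)^2$, the latter depending only on $(f(0),f(N))$, hence a symmetric form of rank at most $2$. If $\tfrac12\Delta$ had three positive eigenvalues there would be a $3$-dimensional subspace on which $\langle\tfrac12\Delta f,f\rangle>0$; intersecting it with the codimension-$2$ subspace $\{f:f(0)=f(N)=0\}$ (on which $Q_1\equiv 0$) yields a nonzero $f$ with $0<Q_0(f)\le 0$, a contradiction. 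Denote the two eigenvalues not found in the next step by $\lambda_N,\lambda_{N+1}$. For the bound $\lambda_N\vee\lambda_{N+1}\le C(A,B)\epsilon^2$ the cleanest route is the trace: since $\mathbf p_t^R$ is the symmetric heat semigroup of $\tfrac12\Delta$, one has $\sum_{x=0}^N\mathbf p_t^R(x,x)=\sum_k e^{t\lambda_k}\ge e^{t\lambda_{\max}}$; applying Proposition \ref{14} with $T=1$ (discard the exponential factor) gives $\mathbf p_{\epsilon^{-2}}^R(x,x)\le C(A,B)\epsilon$, so $e^{\epsilon^{-2}\lambda_{\max}}\le \sum_x\mathbf p_{\epsilon^{-2}}^R(x,x)\le(N+1)C(A,B)\epsilon\le 2C(A,B)$, whence $\lambda_{\max}\le\epsilon^2\log\big(2C(A,B)\big)$. (Alternatively one can check directly that a positive eigenvalue forces the eigenfunction to be a slowly-varying hyperbolic profile with decay rate $O(\epsilon)$, hence $\lambda=O(\epsilon^2)$; but that is fussier. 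Note the naive Rayleigh bound only gives $O(\epsilon)$, so this global input is genuinely needed.)

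\emph{The $N-1$ oscillatory eigenvalues.} For $\lambda=\cos\omega-1$ with $\omega\in(0,\pi)$ the eigen-recurrence is the three-term relation $f(x+1)+f(x-1)=2\cos\omega\,f(x)$, with solutions spanning $\{\cos\omega x,\sin\omega x\}$. Let $L_\omega$ be the solution normalized by $L_\omega(-1)=\mu_A$, $L_\omega(0)=1$, so that the left Robin condition holds automatically; then $\lambda$ is an eigenvalue iff $D(\omega):=L_\omega(N+1)-\mu_B L_\omega(N)=0$. In the Neumann case $\mu_A=\mu_B=1$ one computes $L_\omega(x)=\cos(\omega(x+\tfrac12))/\cos(\tfrac\omega2)$ and $D(\omega)=-2\sin\big(\omega(N+1)\big)\tan(\tfrac\omega2)$, whose zeros on $(0,\pi)$ are exactly $\omega=k\pi/(N+1)$, $k=1,\dots,N$, with a sign change at each. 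Since $\mu_A,\mu_B=1-O(\epsilon)$, on any fixed compact subinterval of $(0,\pi)$ the Robin $L_\omega$, and hence $D$, is an $O(\epsilon)$ perturbation of its Neumann counterpart uniformly in $x\in\{0,\dots,N\}$ (for real $\omega$ the solutions do not grow in $x$); evaluating $D$ at the Neumann-interval midpoints $(k+\tfrac12)\pi/(N+1)$, where the Neumann value has order $1$ and alternates in sign with $k$, one concludes that for small $\epsilon$ the Robin $D$ still changes sign on each, hence vanishes at some $\omega_k\in[k\pi/(N+1),(k+1)\pi/(N+1)]$ for every $1\le k\le N-1$, giving eigenvalues $\cos\omega_k-1$ (equivalently $1-\cos\omega_k$ up to the sign convention for the Laplacian, as in the statement). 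Together with $\lambda_N,\lambda_{N+1}$ this exhausts the $N+1$ eigenvalues.

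\emph{Main obstacle.} The delicate point is uniformity of the last argument at the spectral edges $\omega\to 0$ and $\omega\to\pi$: there $\cos\omega-\mu_A$ (resp.\ $\cos\omega-\mu_B$) can vanish, the perturbation of $D$ is no longer lower order than the local spacing of Neumann eigenfrequencies, and one must track the branch of the phase carefully to confirm that the $k=1$ and $k=N-1$ roots really fall in the asserted intervals — the two eigenvalues that "peel off" the oscillatory family near $\omega=0$ being precisely $\lambda_N,\lambda_{N+1}$. Everything else is routine: summation by parts, a rank-two linear-algebra fact, and the already-established heat-kernel bounds.
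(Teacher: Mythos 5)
Your symmetry argument, your rank-two Rayleigh argument for ``at most two positive eigenvalues,'' and your trace bound for the top eigenvalue are all correct and genuinely different from the paper. The paper gets symmetry by writing out the tridiagonal matrix; it gets the ``at most two'' claim only as a by-product of first locating $N-1$ oscillatory eigenvalues; and it bounds the positive eigenvalues by writing their eigenfunctions explicitly as $\mu^{-x}+c\mu^{x-N}$ and running a case analysis on the boundary relations in $(c,\mu)$, concluding $\mu\le 1+C(A,B)\epsilon$ and hence $\lambda=\mu+\mu^{-1}-2\le C\epsilon^2$. Your replacement of that case analysis by $\sum_x\mathbf p^R_{\epsilon^{-2}}(x,x)=\sum_k e^{\epsilon^{-2}\lambda_k}\ge e^{\epsilon^{-2}\lambda_{\max}}$ together with Proposition \ref{14} is legitimate and non-circular (Proposition \ref{14} is proved from the image formula \eqref{12} and Lemma \ref{13}, not from Lemma \ref{17}), and your observation that the naive Rayleigh bound only yields $O(\epsilon)$ is accurate. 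Your parenthetical reading of the sign convention is also the right one: the oscillatory eigenvalues of $\tfrac12\Delta$ are $\cos\omega_k-1\le 0$; the ``$1-\cos\omega_k$'' in the statement is a slip in the paper.

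The genuine gap is exactly where you flag it, and it is not just a presentational issue: your sign-change argument at the midpoints $(k+\tfrac12)\pi/(N+1)$ fails for the lowest modes. Clearing denominators, $\sin\omega\,D(\omega)=\sin(\omega(N+2))-(\mu_A+\mu_B)\sin(\omega(N+1))+\mu_A\mu_B\sin(\omega N)$, so $D-D^{\mathrm{Neu}}$ contains the term $AB\epsilon^2\sin(\omega N)/\sin\omega$, which at the $k$-th midpoint has size $\asymp |AB|/(kN)$, while the Neumann value there is only $2\tan(\omega/2)\asymp k/N$; when $AB>0$ the correction has the opposing sign, so for $k\lesssim\sqrt{|AB|}$ the evaluation is inconclusive and the root in $[k\pi/(N+1),(k+1)\pi/(N+1)]$ is not established (the $\omega\to\pi$ edge, by contrast, is harmless, since there the correction stays a factor $O(\epsilon)$ below the Neumann value). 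Moreover, as literally written (``fixed compact subinterval of $(0,\pi)$''), your uniformity claim only covers $k\gtrsim\delta N$. The fix is the paper's actual argument, and it is simpler than any perturbation estimate: evaluate the secular function at the interval \emph{endpoints} $\omega=k\pi/(N+1)$, where one gets the exact value $(-1)^k(1-\mu_A\mu_B)\sin(k\pi/(N+1))$ (equivalently $D(k\pi/(N+1))=(-1)^k(1-\mu_A\mu_B)$ in your normalization). Since $1-\mu_A\mu_B=(A+B)\epsilon-AB\epsilon^2$ is nonzero for all large $N$ unless $A=B=0$ (that case being your explicit Neumann computation), the sign alternates for all $1\le k\le N-1$ simultaneously, and the intermediate value theorem delivers a root in every interval with no edge issue at all.
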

	
	\begin{proof} The fact that $\frac{1}{2}\Delta$ is symmetric follows from the matrix representation using the standard basis $\{e_x\}_{0 \leq x \leq N}$ where $e_x(y) = 1_{[x=y]}$:
	
	$$\Delta = \begin{bmatrix}
	\mu_A-2 & 1 & 0 & \dots & 0 & 0 \\
	1 & -2 & 1 & \dots  & 0 & 0\\
	0 & 1 & -2 & \dots & 0 & 0 \\
	\vdots & \vdots & \vdots & \ddots & \vdots & \vdots\\
	0 & 0 & 0 & \dots & -2 & 1 \\
	0 & 0 & 0 & \dots  & 1 & \mu_B-2
	\end{bmatrix}.$$
	
	If $A=B=0$ (which means that $(\mu_A,\mu_B) = (1,1)$) then one may check directly that $N+1$ independent eigenvectors are given by $\cos(\omega_k x)$ where $\omega_k = k\pi/(N+1)$ for $k \in \{0,...,N\}$. In this case the eigenvalues are precisely $\lambda_k=1-\cos \omega_k$.
	
	\textbf{}\\
	If $(\mu_A,\mu_B) \neq (1,1)$ then by [CS16, (4.27)], all negative eigenvalues of this operator are of the form $\lambda = 1-\cos \omega$, where $\omega \in (0,\pi)$ solves the following equation $$f(\omega):=\sin(\omega(N+2)) - (\mu_A+\mu_B)\sin(\omega(N+1)) +\mu_A \mu_B \sin(\omega N) = 0.$$
	
	Letting $\omega=k\pi/(N+1)$, we see that \begin{align*}f \bigg( \frac{k\pi}{N+1}\bigg) & = \sin\bigg( k\pi + \frac{k\pi}{N+1}\bigg) +\mu_A\mu_B \sin \bigg( k\pi - \frac{k\pi}{N+1}\bigg) \\ &= (1-\mu_A\mu_B) \sin\bigg( k\pi + \frac{k\pi}{N+1}\bigg) \\ &= (-1)^k (1-\mu_A\mu_B) \sin \bigg(\frac{k\pi}{N+1}\bigg) \\ &= (-1)^k \bigg( \frac{A+B}{N} - \frac{AB}{N^2}\bigg) \sin \bigg(\frac{k\pi}{N+1}\bigg).
	\end{align*}
	Now notice that unless $A=B=0$, the middle term in this last expression cannot vanish for arbitrarily large $N$. We already ruled out this case, thus when $N$ is large enough this last expression will always alternate sign as a function of $k \in \{1,...,N-1\}$. By the intermediate value theorem applied to $f$, we find $N-1$ solutions to the eigenvalue equation above, one in each of the intervals $\big(k\pi/(N+1), (k+1)\pi/(N+1)\big)$ for $k\in\{1,...,N-1\}$ (this does not work for $k\in \{0,N\}$ since $\omega=0,\pi$ are solutions which do not correspond to nontrivial eigenvectors). Thus the first claim is proved.
	\\
	\\
	Now let us consider the (at most two) positive eigenvalues. By the basic methods of recursive sequences, there must be an associated eigenfunction of the form $$\psi(x) = \mu^{-x}+c\mu^{x-N}$$ for some $\mu>0$ and $c \in \Bbb R$ which satisfy the relations $$\mu_A = \frac{\mu+c\mu^{-(N+1)}}{1+c\mu^{-N}}=:h_1(c,\mu) \;\;\;\;\;,\;\;\;\;\; \mu_B = \frac{\mu^{-(N+1)} +c\mu}{\mu^{-N}+c} =: h_2(c,\mu).$$
	Note that the functions $h_1,h_2$ just defined satisfy the relation $h_2(c,\mu) = h_1(c,\mu^{-1}) = h_1(c^{-1},\mu)$ which means that we may assume $\mu>1$ and $|c|\leq 1$, after possibly interchanging the roles of $\mu_A$ and $\mu_B$ a couple of times (this does not change the eigenvalues or eigenfunctions).
	\\
	\\
	If the above relations are satisfied with $\mu>1$ and $c\in [-1,0]$ then $\mu<h_1(c,\mu)$ and thus $1<\mu<\mu_A$ so that the eigenvalue associated with $\mu$ satisfies $$\lambda = \mu+\mu^{-1}-2 \leq \mu_A+\mu_A^{-1}-2 \leq C(A) \epsilon^2.$$ In the last inequality we used the fact that $\mu_A=1-A\epsilon$ so that $\mu_A^{-1} = 1+A\epsilon+A^2\epsilon^2+O(\epsilon^3)$.
	\\
	\\
	On the other hand, if the above relations are satisfied with $\mu>1$ and $c \in (0,1]$, then $\mu_A\mu_B>1$ (because $h_1(c,\mu)h_2(c,\mu)>1$ for $c>0$ by direct computation) and also $\mu_A<\mu$ (since $h_1(c,\mu)<\mu$ for $c>0$). The fact that $\mu_A\mu_B>1$ implies that $A<0$ or $B<0$, let's say $A<0$. Note that
	$$|\mu_A-1| = \frac{|\mu-1||1-c\mu^{-(N+1)}|}{1+c\mu^{-N}}.$$ Since $\mu^{-1}<\mu_A^{-1}$ and $c \in [0,1]$, it follows that $1-c\mu^{-(N+1)} \geq 1-\mu_A^{-(N+1)}$. But $\mu_A^{-(N+1)} = (1-A/N)^{-(N+1)} \leq e^{-|A|}$ for large enough $N$. Using these bounds together with the last expression shows $$|\mu-1| = \frac{1+c\mu^{-N}}{1-c\mu^{-(N+1)}}|\mu-1| \leq \frac{2}{1-e^{-|A|}} |\mu_A-1|=C(A)|\mu_A-1|$$ and thus $\mu < 1+ C(A)|\mu_A-1| = 1+C(A)\epsilon$, so that the associated eigenvalue satisfies $$\lambda = \mu+\mu^{-1}-2 \leq C(A)\epsilon^2. $$
	This completes the proof. \end{proof} 
	\begin{rk}\label{18}
		It is worth mentioning that positive eigenvalues will exist if and only if $A+B+AB<0$, though we do not need this stronger claim. Similarly, zero will be an eigenvalue iff $A+B+AB=0$. The key idea in proving these statements is to note that whenever $0$ is an eigenvalue, the corresponding eigenfunction must be of the form $c+dx$ for some $c,d \in \Bbb R$, and then checking the boundary conditions necessarily forces $A+B+AB=0$.
	\end{rk}
	\begin{lem}\label{19}
		In the same setting as Lemma B.4, let $\psi_k$ denote the $L^2$-normalized eigenfunction associated to the eigenvalue $\lambda_k$. There exists some $C=C(A,B)$ such that for large enough $N$ we have that $|\psi_k(x)| \leq CN^{-1/2}$ for all $0 \leq x,k \leq N$.
	\end{lem}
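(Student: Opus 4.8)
The plan is to read the closed form of each eigenfunction off Lemma~\ref{17} and its proof, and then, for each one, to bound $\|\varphi_k\|_\infty^2/\|\varphi_k\|_2^2$ for an arbitrary nonzero multiple $\varphi_k$ of $\psi_k$; since $\psi_k$ is $L^2$-normalized, $\|\psi_k\|_\infty=\|\varphi_k\|_\infty/\|\varphi_k\|_2$, so it suffices to prove $\|\varphi_k\|_2^2\ge c(A,B)\,N\,\|\varphi_k\|_\infty^2$ for every eigenfunction. Any eigenfunction solves the three-term recursion induced by $\tfrac12\Delta$ in the bulk together with the two Robin conditions, so its shape is dictated by the sign of $\lambda_k$: when $\lambda_k<0$ one has $\varphi_k(x)=\cos(\omega_k x-\phi_k)$ for some $\omega_k\in(0,\pi)$; when $\lambda_k>0$, by the analysis in Lemma~\ref{17}, $\varphi_k(x)=\mu_k^{-x}+c_k\mu_k^{x-N}$ with $1<\mu_k\le1+C(A,B)/N$ and, after the reflection $x\mapsto N-x$ (which preserves both norms), $|c_k|\le1$; and when $\lambda_k=0$, $\varphi_k(x)=1+Ax/N$.

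\textbf{The oscillatory case.} Take $\varphi_k(x)=\cos(\omega_k x-\phi_k)$, so $\|\varphi_k\|_\infty\le1$. If $\omega_k\in[\pi/(N+1),\,\pi-\pi/(N+1)]$ then, by concavity of $\sin$, $\sin\omega_k\ge\sin\tfrac{\pi}{N+1}\ge\tfrac2{N+1}$ for $N\ge1$, and since
\[\|\varphi_k\|_2^2=\sum_{x=0}^N\cos^2(\omega_kx-\phi_k)=\frac{N+1}{2}+\tfrac12\,\mathrm{Re}\Big(e^{-2i\phi_k}\sum_{x=0}^Ne^{2i\omega_kx}\Big),\qquad\Big|\sum_{x=0}^Ne^{2i\omega_kx}\Big|\le\frac1{\sin\omega_k},\]
we get $\|\varphi_k\|_2^2\ge\tfrac{N+1}{2}-\tfrac{N+1}{4}=\tfrac{N+1}{4}$. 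Otherwise $\omega_k\in(0,\pi/(N+1))$ or $\omega_k\in(\pi-\pi/(N+1),\pi)$; putting $\theta_k:=\min(\omega_k,\pi-\omega_k)<\pi/(N+1)$ and using $\cos((\pi-\theta_k)x-\phi_k)=(-1)^x\cos(\theta_kx+\phi_k)$ (the $(-1)^x$ affects neither norm), $\varphi_k$ reduces to $\cos(\theta_kx-\phi_k')$ with $\alpha_k:=\theta_kN<\pi$, and one argues exactly as in the exponential case below, using $\int_0^1\cos^2(\alpha_kv-\phi)\,dv=\tfrac12+\tfrac{\sin\alpha_k}{2\alpha_k}\cos(\alpha_k-2\phi)$ together with $1-\tfrac{\sin\alpha_k}{\alpha_k}\ge\tfrac{\alpha_k^2}{12}$ (valid because $s\mapsto1-\tfrac{\sin s}s$ is increasing and $\pi^2<10$): both $\|\varphi_k\|_\infty^2$ and $N^{-1}\|\varphi_k\|_2^2$ are then controlled, up to constants depending on $A,B$, by the single quantity measuring how far the argument window $[-\phi_k',\alpha_k-\phi_k']$ sits from a half-integer multiple of $\pi$.

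\textbf{The non-oscillatory cases.} For $\varphi_k(x)=1+Ax/N$ one computes directly: $\|\varphi_k\|_\infty\le1+|A|$ and $\sum_{x=0}^N(1+Ax/N)^2=(N+1)\big(1-|A|+\tfrac{A^2}{3}+\tfrac{A^2}{6N}\big)\ge\tfrac{N+1}{4}$, using $1-|A|+\tfrac{A^2}{3}=\tfrac13\big((|A|-\tfrac32)^2+\tfrac34\big)\ge\tfrac14$. For $\varphi_k(x)=\mu_k^{-x}+c_k\mu_k^{x-N}$: if $c_k\ge0$ then $\varphi_k(x)\ge\mu_k^{-x}\ge e^{-C(A,B)}$ while $\|\varphi_k\|_\infty\le2$, so $\|\varphi_k\|_2^2\ge(N+1)e^{-2C(A,B)}$; if $c_k=-t$ with $t\in(0,1]$ then $\varphi_k$ is strictly decreasing, so $\|\varphi_k\|_\infty=\max\!\big(1-t\mu_k^{-N},\,|\mu_k^{-N}-t|\big)\le(1-t)+(1-\mu_k^{-N})\le(1-t)+\alpha_k$ with $\alpha_k:=N\log\mu_k\le C(A,B)$, while keeping the geometric sums exact and writing $\sum_{x=0}^N\mu_k^{-2x}=(N+1)\mu_k^{-N}\kappa_k$ with $\kappa_k=\tfrac1{N+1}\sum_{x=0}^N\cosh\!\big((N-2x)\log\mu_k\big)$ gives
\[\|\varphi_k\|_2^2=(1+t^2)(N+1)\mu_k^{-N}\kappa_k-2t(N+1)\mu_k^{-N}=(N+1)\mu_k^{-N}\big[(1+t^2)(\kappa_k-1)+(1-t)^2\big].\]
Since $\cosh y\ge1+\tfrac12y^2$ and $\sum_{x=0}^N(N-2x)^2=\tfrac13N(N+1)(N+2)$, one obtains $\kappa_k-1\ge\tfrac{(\log\mu_k)^2}{2(N+1)}\cdot\tfrac13N(N+1)(N+2)\ge\tfrac16\alpha_k^2$, whence $\|\varphi_k\|_2^2\ge\tfrac16(N+1)e^{-C(A,B)}\big(\alpha_k^2+(1-t)^2\big)\ge\tfrac1{12}(N+1)e^{-C(A,B)}\,\|\varphi_k\|_\infty^2$.

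\textbf{Where the difficulty lies.} The oscillatory case with $\sin\omega_k$ bounded below, and the $c_k\ge0$ and linear cases, are routine. The delicate point is the degenerate limit of the two ``edge'' eigenfunctions --- $\mu_k\to1$ with $c_k\to-1$ on the exponential side, or $\theta_k\to0$ with the phase tuned so that $\cos(\theta_kx-\phi_k')$ approaches a $\pm\sin$-profile on the oscillatory side --- in which $\varphi_k$ flattens toward a constant or affine function and \emph{both} $\|\varphi_k\|_\infty$ and $N^{-1}\|\varphi_k\|_2^2$ vanish at the same rate. There the crude ``$\tfrac{N+1}{2}$ minus a smaller term'' estimate breaks down, so the exact identity above for $\|\varphi_k\|_2^2$ is essential: the elementary inequalities $\cosh y\ge1+\tfrac12y^2$ and $\sum_x(N-2x)^2=\tfrac13N(N+1)(N+2)$ (and their $\cos$-analogues) are precisely what shows that the lone small parameter $\alpha_k^2+(1-t)^2$ (resp.\ $\theta_k^2$) governing $\|\varphi_k\|_\infty^2$ also bounds $\|\varphi_k\|_2^2/N$ from below. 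Assembling the three cases yields $\|\psi_k\|_\infty\le C(A,B)\,N^{-1/2}$, with $C(A,B)$ depending only on $|A|,|B|$ and the constant from Lemma~\ref{17}.
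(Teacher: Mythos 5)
Your proposal is correct in substance but takes a genuinely different route from the paper. The paper splits on the sign of $\lambda_k$ and outsources the entire oscillatory (negative-eigenvalue) family to [CS16, Lemma 4.10], treats the zero eigenvalue by the same explicit computation you give, and disposes of the positive eigenvalues by computing $\sum_x(\mu^{-x}+c\mu^{x-N})^2$ and bounding it \emph{above} by $C(A,B)N$ before ``renormalizing''; note that what the normalization step really needs is a \emph{lower} bound of order $N\|\varphi\|_\infty^2$ on that sum, which is immediate for $c\ge0$ but degenerates as $c\to-1$, $\mu\to1$. Your reduction --- prove $\|\varphi_k\|_2^2\ge c(A,B)\,N\,\|\varphi_k\|_\infty^2$ for every un-normalized eigenfunction --- isolates exactly the right quantity, and your treatment of the $c=-t<0$ branch (the exact identity $\|\varphi_k\|_2^2=(N+1)\mu_k^{-N}\big[(1+t^2)(\kappa_k-1)+(1-t)^2\big]$, the bounds $\kappa_k-1\ge\alpha_k^2/6$ and $\|\varphi_k\|_\infty\le(1-t)+\alpha_k$, which I checked) handles precisely the degeneration that the paper's quick computation glosses over. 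Your geometric-sum estimate also makes the oscillatory case self-contained instead of cited: by Lemma \ref{17} every negative eigenvalue localized there has $\omega_k\in[\pi/(N+1),\pi-\pi/(N+1)]$, hence $\sin\omega_k\ge 2/(N+1)$, so your ``easy'' oscillatory bound already covers all of them, and only the at most two edge eigenvalues not pinned down by Lemma \ref{17} can land in your degenerate window. So what your approach buys is a uniform, self-contained argument that is actually sharper where the paper is thinnest; what the paper's route buys is brevity, by leaning on [CS16].

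The one soft spot is the degenerate oscillatory sub-case, which you only sketch, and two points need attention when you flesh it out. First, the controlling quantity is not merely ``how far the window $[-\phi_k',\alpha_k-\phi_k']$ sits from a half-integer multiple of $\pi$'': if the window straddles such a point that distance vanishes while both $\|\varphi_k\|_\infty$ and $N^{-1/2}\|\varphi_k\|_2$ are of order $\alpha_k$; as in your exponential case the correct quantity is of the form $(\text{offset})^2+(\text{width})^2$, i.e.\ the analogue of $\alpha_k^2+(1-t)^2$. Second, replace the continuum identity $\int_0^1\cos^2(\alpha_kv-\phi)\,dv$ by its exact discrete counterpart $\sum_{x=0}^N\cos^2(\theta_kx-\phi)=\tfrac{N+1}{2}+\tfrac{\sin((N+1)\theta_k)}{2\sin\theta_k}\cos(N\theta_k-2\phi)$, since when $\theta_kN\ll1$ a Riemann-sum comparison carries an error of order $\alpha_k/N$ that can swamp the main term of order $\alpha_k^2$; using $1-\tfrac{\sin((N+1)\theta_k)}{(N+1)\sin\theta_k}\ge c\,((N+1)\theta_k)^2$ in place of $1-\tfrac{\sin\alpha_k}{\alpha_k}\ge\alpha_k^2/12$, the argument then closes exactly as in your exponential computation.
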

	
	\begin{proof} If $\lambda_k<0$ then this is proved in [CS16, Lemma 4.10]. 
	
	\textbf{}\\
	If $\lambda_k>0$, then we can write the non-normalized eigenfunction as $$\psi_k(x)=\mu^{-x}+c\mu^{x-N}$$ where $\mu>1$ and $|c|\leq 1$ as in the proof of Lemma \ref{17}. Then \begin{align*}
	\sum_{k=0}^N \psi_k(x)^2 &= (1+c^2) \frac{\mu^{2(N+1)}-1}{\mu-1} + 2c(N+1) \\ &\leq 4 (N+1)\mu^{2(N+1)} +2(N+1)
	\end{align*}
	where we used the fact that $(x^k-1)/(x-1) \leq kx^k$ for $x>1$. From the proof of Lemma \ref{17}, we know that $\mu \leq 1+C(A,B)\epsilon$, hence we see that $\mu^{2(N+1)} \leq e^{2C(A,B)} = C'(A,B)$. Hence the expression above is bounded by $(4C(A,B)+2)(N+1) \leq C'(A,B)N$. So by renormalizing, we find that $\psi_k(x) \leq C(A,B)N^{-1/2}$.
	
	\textbf{}\\
	If $\lambda_k=0$ then the associated eigenfunction is $$\psi_k(x) = c+dx.$$
	This can happen for arbitrarily large $N$ iff $A+B+AB=0$ and $d/c = A/N$ (as one may check by writing out the boundary conditions, for instance $1-d/c = \psi(-1)/\psi(0) = \mu_A = 1-A/N$, etc). The condition for the eigenfunction to be normalized simplifies to $$c^2(N+1)+cdN(N+1)+d^2N(N+1)(2N+1)/6 = 1.$$ Then by writing $d=Ac/N$ or $c=dN/A$ we can check that $c^2 \leq C_1(A) N^{-1}$ and $d^2 \leq C_2(A) N^{-3}$. Consequently $\psi_k(x) =c+dx \leq C(A) ( N^{-1/2}+(N^{-3/2})N ) =  C(A) N^{-1/2}$ for $x\in \{0,...,N\}$. This proves the claim. \end{proof}
	\begin{prop}\label{20}
		Fix $A,B \in \Bbb R$ and $T>0$. There is a constant $C(A,B,T)$ (not depending on $\epsilon=N^{-1}$) such that for all $s< t \in [0,\epsilon^{-2}T]$ and all $v\in [0,1]$ we have $$|\mathbf{p}_t^R(x,y)-\mathbf{p}_s^R(x,y)| \leq C(A,B,T) (1 \wedge s^{-1/2-v}) (t-s)^v.$$
	\end{prop}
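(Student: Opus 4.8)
The plan is to set aside the image--formula method of Proposition~\ref{6}: on the bounded interval the explicit eigenfunction $F$ has no analog, and the semi--explicit substitute \eqref{12} has infinitely many, geometrically growing coefficients $I_k,E_k$ (cf.\ Lemma~\ref{13}), so a term--by--term temporal estimate will not close. Instead I would exploit the spectral information of Lemmas~\ref{17} and~\ref{19}. Since $\frac12\Delta$ with the Robin boundary conditions is symmetric on $\Bbb R^{\{0,\dots,N\}}$, I write
$$\mathbf{p}_r^R(x,y)\;=\;\sum_k e^{-\lambda_k r}\,\psi_k(x)\psi_k(y),$$
where $\{\psi_k\}$ is an $L^2$--orthonormal eigenbasis and the $\lambda_k$ are the eigenvalues of $-\frac12\Delta$. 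By Lemma~\ref{17}, all but at most two of the $\lambda_k$ are $\geq 0$, while the (at most two) exceptional ones satisfy $|\lambda_k|\leq C(A,B)\epsilon^2$; by Lemma~\ref{19}, $|\psi_k(x)|\leq C(A,B)N^{-1/2}=C(A,B)\epsilon^{1/2}$, uniformly in $k$ and $x$. I will also use repeatedly that $\mathbf{p}_r^R(x,x)=\sum_k e^{-\lambda_k r}\psi_k(x)^2\leq C(1\wedge r^{-1/2})$ by Proposition~\ref{14}.

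By interpolation in $v$ it suffices to prove the estimate for $v=0$ and $v=1$: raising the two resulting bounds to the powers $1-v$ and $v$, and using $(1\wedge s^{-1/2})^{1-v}(1\wedge s^{-3/2})^{v}=1\wedge s^{-1/2-v}$, recovers the general case. The case $v=0$ is immediate from Proposition~\ref{14}, since for $t>s$ both $\mathbf{p}_t^R(x,y)$ and $\mathbf{p}_s^R(x,y)$ are $\leq C(1\wedge s^{-1/2})$. For $v=1$ I would write $\mathbf{p}_t^R(x,y)-\mathbf{p}_s^R(x,y)=\int_s^t\partial_r\mathbf{p}_r^R(x,y)\,dr$ and bound the integrand in two complementary ways. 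First, $\partial_r\mathbf{p}_r^R(x,y)=\frac12\Delta_y\mathbf{p}_r^R(x,y)$, so Proposition~\ref{14} gives $|\partial_r\mathbf{p}_r^R(x,y)|\leq C(1\wedge r^{-1/2})$, which controls small $r$. Second, differentiating the spectral sum, the good eigenvalues contribute
$$\Bigl|\sum_{\lambda_k\geq 0}\lambda_k e^{-\lambda_k r}\psi_k(x)\psi_k(y)\Bigr|\;\leq\;\frac{C}{r}\sum_{\lambda_k\geq 0}e^{-\lambda_k r/2}|\psi_k(x)||\psi_k(y)|\;\leq\;\frac{C}{r}\,\mathbf{p}_{r/2}^R(x,x)^{1/2}\,\mathbf{p}_{r/2}^R(y,y)^{1/2}\;\leq\;\frac{C}{r}\bigl(1\wedge r^{-1/2}\bigr),$$
using $\lambda e^{-\lambda r/2}\leq 2/(er)$, Cauchy--Schwarz, and Proposition~\ref{14}; and the (at most two) exceptional eigenvalues contribute at most $C(A,B,T)\epsilon^3$, using $|\lambda_k|\leq C\epsilon^2$, the bound $e^{-\lambda_k r}\leq e^{C\epsilon^2 r}\leq e^{C(A,B)T}$ for $r\leq\epsilon^{-2}T$, and $|\psi_k(x)\psi_k(y)|\leq C\epsilon$ from Lemma~\ref{19}. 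Since $\min\bigl(1\wedge r^{-1/2},\,r^{-1}(1\wedge r^{-1/2})\bigr)=1\wedge r^{-3/2}$, combining the two estimates yields $|\partial_r\mathbf{p}_r^R(x,y)|\leq C(A,B,T)\bigl((1\wedge r^{-3/2})+\epsilon^3\bigr)$. Integrating from $s$ to $t$, using that $r\mapsto 1\wedge r^{-3/2}$ is nonincreasing (so $\int_s^t(1\wedge r^{-3/2})\,dr\leq(1\wedge s^{-3/2})(t-s)$) and that $\epsilon^3\leq C(T)(1\wedge s^{-3/2})$ since $s\leq\epsilon^{-2}T$, gives the $v=1$ bound and finishes the proof.

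The hard part will be the handling of the (at most two) exceptional eigenvalues. Unlike on the half line these do not help the kernel decay --- for branching boundary parameters the corresponding eigenvalue of $\frac12\Delta$ is positive, so $e^{-\lambda_k r}$ actually grows with $r$ --- and a naive bound would destroy the estimate over the long time window $r\leq\epsilon^{-2}T$. The saving grace is Lemma~\ref{19}: it forces $|\psi_k(x)\psi_k(y)|\leq C\epsilon$ on these modes, so that together with $|\lambda_k|\leq C\epsilon^2$ their total contribution to $\partial_r\mathbf{p}_r^R$ is only $O(\epsilon^3)$ --- a high enough power of $\epsilon$ that $\epsilon^3(t-s)$ is absorbed into $(1\wedge s^{-3/2})(t-s)$ at the cost of a $T$--dependent constant. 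A secondary point to watch is that neither of the two bounds on $\partial_r\mathbf{p}_r^R$ is adequate alone --- the Proposition~\ref{14} bound is sharp only for $r\lesssim 1$ and the spectral one only for $r\gtrsim 1$ --- so one genuinely needs both and takes their pointwise minimum.
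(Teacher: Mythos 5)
Your proof is correct and takes essentially the same route as the paper: a spectral decomposition of $\frac12\Delta$ with Robin boundary conditions, interpolation between the trivial $v=0$ case and $v=1$, the split into the bulk spectrum and the at most two exceptional eigenvalues from Lemma \ref{17}, and the absorption of their $O(\epsilon^3)$ contribution via $\epsilon^3\leq C(T)s^{-3/2}$ using Lemma \ref{19} — exactly as in the paper's argument. The only (minor) variation is in the bulk: the paper bounds the difference directly through the eigenvalue asymptotics $|\lambda_k|\asymp k^2/(N+1)^2$ and a Riemann-sum comparison with $\int_0^\infty x^2e^{-c_1sx^2}\,dx\leq Cs^{-3/2}$, whereas you differentiate in time and use $\lambda e^{-\lambda r/2}\leq C/r$ with Cauchy--Schwarz back to the on-diagonal kernel $\mathbf p^R_{r/2}(x,x)\leq C(1\wedge r^{-1/2})$, which sidesteps the Riemann sum but is otherwise equivalent.
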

	
	\begin{proof} Just like Proposition \ref{6}, we only need to prove this when $v=0$ and $v=1$. The $v=0$ case follows from Proposition \ref{14}. So let us prove the $v=1$ case.
	
	\textbf{}\\
	Let $S(t):=e^{\frac{1}{2}t \Delta}$ and let $\{e_x\}$ denote the standard basis functions on $\Bbb R^{\{0,...,N\}}$. Let $\psi_k$ $(1 \leq k \leq N+1)$ denote the orthonormal eigenunctions of $\frac{1}{2}\Delta$. Then $e_x = \sum_k \langle e_x, \psi_k \rangle \psi_k = \sum_k \psi_k(x) \psi_k$. So letting $\langle \cdot, \cdot \rangle$ denote the inner product on $L^2(\{0,...,N\})$ we find $$\mathbf{p}_t^R(x,y) = \langle S(t) e_x, e_y \rangle = \sum_{k, \ell} \psi_k(x)\psi_{\ell}(y) \langle S(t)\psi_k, \psi_{\ell} \rangle= \sum_k \psi_k(x)\psi_k(y) e^{\lambda_kt} $$ which implies that $$ \mathbf{p}_t^R(x,y)-\mathbf{p}_s^R(x,y) =\sum_k \psi_k(x)\psi_k(y) e^{\lambda_ks}(e^{\lambda_k(t-s)}-1). $$ We will split this last sum into two pieces based on the sign of the eigenvalues. Let us first consider negative eigenvalues. Using Lemmas \ref{17} and \ref{19}, together with the following $$|e^{-q}-1| \leq q, \;\;\;q\geq 0$$ $$c_1u^2 \leq |1-\cos u| \leq c_2u^2,\;\;\; u\in [0,\pi]$$ we obtain the following bound \begin{align*}\sum_{k:\lambda_k<0} \psi_k(x)\psi_k(y) e^{\lambda_ks}(e^{\lambda_k(t-s)}-1) &\leq C(A,B) \sum_{k:\lambda_k<0} N^{-1} e^{\lambda_ks} |\lambda_k| (t-s)\\ &\leq \frac{C(A,B)}{N}(t-s) \sum_{k: \lambda_k<0} e^{\lambda_ks}\bigg| 1-\cos \bigg( \frac{k\pi}{N+1} \bigg) \bigg| \\ &\leq \frac{C(A,B)}{N}(t-s) \sum_{k=0}^N e^{-c_1sk^2/(N+1)^2} \frac{c_2 k^2}{(N+1)^2}.
	\end{align*}
	This last expression can be interpreted as a Riemann sum for the integral \begin{align}\label{93}C(A,B) (t-s) \int_0^1 x^2 e^{-c_1sx^2}dx &\leq C(A,B) (t-s) \int_0^{\infty} x^2 e^{-c_1sx^2}dx \nonumber \\ &= C(A,B) (t-s) s^{-3/2} \int_0^{\infty} u^2e^{-c_1u^2}du \nonumber\\ &=C(A,B)s^{-3/2}(t-s)\end{align} where we made the substitution $x\sqrt{s}=u$ in the second line. So when $N$ is large, we are close enough to this integral that the same bound holds.
	
	\textbf{}\\
	Next we consider the terms with positive eigenvalues. By Lemmas \ref{17} and \ref{19}, and the fact that $e^q-1 \leq qe^q$ for $q \geq 0$ we see that 
	\begin{align}\label{94}
	\sum_{k:\lambda_k>0} \psi_k(x)\psi_k(y) e^{\lambda_ks}(e^{\lambda_k(t-s)}-1) & \leq 2 \cdot \frac{C}{N}\cdot e^{C(A,B)\epsilon^2s} \cdot \epsilon^2(t-s) e^{C(A,B) \epsilon^2(t-s)}  \nonumber \\ &=2C \epsilon^3 (t-s) e^{C(A,B)\epsilon^2t} \\ &\leq C(A,B,T) s^{-3/2} (t-s) \nonumber
	\end{align}
	where in the last inequality we used the fact that $s<t<\epsilon^{-2}T$ so that $\epsilon^2t \leq T$ and $\epsilon^3 \leq C(T)s^{-3/2}$. This proves the claim. \end{proof}
	
	\begin{prop}[Long-time Estimate]\label{lt2}
		There exist constants $C=C(A,B)$ and $K=K(A,B)$ such that for every $t\geq 0$ and $x,y \geq 0$ we have that $$\mathbf p_t^R(x,y)\leq C(t^{-1/2}+\epsilon)e^{K\epsilon^2t} .$$
	\end{prop}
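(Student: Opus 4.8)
The plan is to mirror Proposition \ref{lt1}, but because the convenient image series \eqref{1} is replaced by the cumbersome formula \eqref{12} on the bounded interval, I would instead work directly from the spectral decomposition already assembled in the proof of Proposition \ref{20}. Recall from there that
$$\mathbf p_t^R(x,y) \;=\; \sum_{k} \psi_k(x)\,\psi_k(y)\, e^{\lambda_k t},$$
where $\{\psi_k\}$ is the $L^2(\{0,\dots,N\})$-orthonormal system of eigenfunctions of $\tfrac12\Delta$ with the Robin boundary conditions and $\{\lambda_k\}$ the associated eigenvalues. I would split the sum according to whether $\lambda_k$ is negative, zero, or positive, and estimate each piece using the spectral information of Lemmas \ref{17} and \ref{19}, which carry no dependence on a terminal time.

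For the negative eigenvalues, Lemma \ref{19} gives $|\psi_k(x)\psi_k(y)| \le C(A,B) N^{-1} = C(A,B)\epsilon$, while Lemma \ref{17} locates them among $\{\lambda_k\}_{1\le k\le N-1}$ with $\lambda_k = -(1-\cos\omega_k)$, $\omega_k \in [k\pi/(N+1),(k+1)\pi/(N+1)]\subseteq[0,\pi]$, so that $-\lambda_k \ge c_1\,(k\pi/(N+1))^2$; any of $\lambda_N,\lambda_{N+1}$ that happen to be $\le 0$ contribute only $O(\epsilon)$ since $e^{\lambda_k t}\le 1$. Hence, using monotonicity of the summand in $k$ and the Gaussian integral exactly as in the Riemann-sum step of Proposition \ref{20} (but now without invoking any upper bound on $t$),
$$C(A,B)\,\epsilon \sum_{k=1}^{N-1} e^{-c_1 t (k\pi/(N+1))^2} \;\le\; C(A,B)\,\epsilon\,(N+1)\!\int_0^{\infty} e^{-c_1 t \pi^2 u^2}\,du \;\le\; C(A,B)\, t^{-1/2}.$$
A zero eigenvalue occurs only when $A+B+AB=0$ (cf.\ Remark \ref{18}), and Lemma \ref{19} still applies to its eigenfunction, so that term contributes at most $C(A,B)\epsilon$; thus the non-positive part is bounded by $C(A,B)(t^{-1/2}+\epsilon)$.

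Finally, by Lemma \ref{17} there are at most two positive eigenvalues, each at most $C(A,B)\epsilon^2$, and again $|\psi_k(x)\psi_k(y)|\le C(A,B)\epsilon$, so the positive part is at most $C(A,B)\,\epsilon\, e^{C(A,B)\epsilon^2 t}$. Summing the three contributions gives $\mathbf p_t^R(x,y) \le C(A,B)(t^{-1/2}+\epsilon) + C(A,B)\,\epsilon\, e^{K\epsilon^2 t} \le C(t^{-1/2}+\epsilon)e^{K\epsilon^2 t}$ for suitable $C=C(A,B)$ and $K=K(A,B)$. The only delicate point is that this bound must hold uniformly over all $t>0$ with no dependence on a terminal time $\epsilon^{-2}T$; this is precisely why I avoid formula \eqref{12} (whose term-by-term bounds degrade for $t \gg N^2$) in favor of the spectral representation, where the required uniformity is automatic because $e^{\lambda_k t}\le 1$ for $\lambda_k\le 0$ and the positive eigenvalues are $O(\epsilon^2)$. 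Beyond packaging these already-established facts I expect no essential obstacle.
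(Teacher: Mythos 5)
Your argument is correct, but it is packaged differently from the paper's. The paper does not re-estimate the spectral sum from scratch: it observes that the increment bounds (\ref{93}) and (\ref{94}) obtained inside the proof of Proposition \ref{20} have constants independent of the terminal time, divides by $t-s$ and lets $s\to t$ to get $|\partial_t \mathbf p_t^R(x,y)| \leq C\epsilon^3 e^{K\epsilon^2 t}+Ct^{-3/2}$, and then integrates this from $\epsilon^{-2}$ to $t$, using Proposition \ref{14} both for the base value $\mathbf p_{\epsilon^{-2}}^R(x,y)\leq C\epsilon$ and to cover the regime $t\leq \epsilon^{-2}$. You instead bound $\mathbf p_t^R(x,y)=\sum_k \psi_k(x)\psi_k(y)e^{\lambda_k t}$ directly, splitting by the sign of $\lambda_k$ and using Lemmas \ref{17} and \ref{19}: the monotone comparison of $\sum_{k=1}^{N-1}e^{-c_1 t(k\pi/(N+1))^2}$ with a Gaussian integral gives the $t^{-1/2}$ term (after $\epsilon(N+1)\leq 2$), the at most two stray eigenvalues of either sign plus a possible zero eigenvalue give $O(\epsilon)$ or $O(\epsilon e^{K\epsilon^2 t})$, and your sign convention $\lambda_k=-(1-\cos\omega_k)$ is the one actually used in Proposition \ref{20}, so no issue there. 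Both proofs rest on the same spectral inputs and both deliver the crucial uniformity in $t$ (since $e^{\lambda_k t}\leq 1$ for $\lambda_k\leq 0$ and the positive eigenvalues are $O(\epsilon^2)$); your route is more self-contained and avoids the differentiate-then-integrate step and the appeal to Proposition \ref{14}, while the paper's route recycles the already-proved increment estimates and so requires essentially no new computation. The only point worth making explicit in a write-up is the one you already note implicitly: all $N+1$ eigenfunctions, including those attached to the (at most two) eigenvalues outside the cosine family, obey the bound $|\psi_k(x)|\leq CN^{-1/2}$ of Lemma \ref{19}, which is what lets you dispose of them with an $O(\epsilon)$ or $O(\epsilon e^{K\epsilon^2 t})$ contribution.
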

	
	Just like Proposition \ref{lt1} this is a ``long-time" estimate because it is true uniformly over all $t>0$, i.e., constants don't depend on any terminal time $\epsilon^{-2}T$.
	
	\begin{proof} In equations (\ref{93}) and (\ref{94}) above, note that the constants do not depend on the terminal time $T$. This means that there are $C$ and $K$ such that for any $s<t$ we have that $$|\mathbf p_t^R(x,y) - \mathbf p_s^R(x,y)| \leq C \epsilon^3 (t-s)e^{K\epsilon^2t} + Cs^{-3/2} (t-s). $$ Dividing by $t-s$ and letting $s \to t$, we find $$|\partial_t \mathbf p^R_t(x,y)| \leq C\epsilon^{3}e^{K\epsilon^2t}+Ct^{-3/2}$$By Proposition \ref{14}, the desired bound already holds when $t\leq \epsilon^{-2}$, thus we only consider the case when $t>\epsilon^{-2}$. Using the above expression and Proposition \ref{14}, \begin{align*}\mathbf p_t^R(x,y) &\leq \mathbf p_{\epsilon^{-2}}^R(x,y) + \int_{\epsilon^{-2}}^t |\partial_s \mathbf p_s^R(x,y)|ds \\ &\leq C(\epsilon^{-2})^{-1/2} + C\int_{\epsilon^{-2}}^t(\epsilon^3e^{K\epsilon^2 s} +Cs^{-3/2})ds \\ &= C\epsilon + \frac{C}{K} \epsilon (e^{K\epsilon^2 t}-e^K) + 2C(\epsilon-t^{-1/2}) \\ &\leq  C'\epsilon e^{K\epsilon^2 t}\end{align*} which proves the claim.
	\end{proof}
	
	\begin{prop}\label{21}
		For all $A,B \in \Bbb R$ and $T>0$, there exists $C=C(A,B,T)$ such that for $t\in[0,\epsilon^{-2}T]$ and $v\in [0,1]$ we have $$\sup_{ 0 \leq x \leq N}\bigg| \sum_{y = 0}^N \mathbf{p}^R_t(x,y) \;- \;1 \bigg| \leq C \epsilon^{v}   t^{v/2}.$$
	\end{prop}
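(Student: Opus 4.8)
The plan is to imitate the proof of Proposition \ref{7}, the only genuinely new feature being that there are now two boundary terms (one at $0$ and one at $N$) rather than one. The first step is to record that the bounded-interval Robin kernel is symmetric, $\mathbf{p}_t^R(x,y) = \mathbf{p}_t^R(y,x)$: this follows either from the symmetry of $\tfrac{1}{2}\Delta$ with Robin boundary conditions established in Lemma \ref{17}, or directly from the spectral expansion $\mathbf{p}_t^R(x,y) = \sum_k \psi_k(x)\psi_k(y)e^{\lambda_k t}$ derived in the proof of Proposition \ref{20}.

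Next, I would set $f(t,x) := \sum_{y=0}^N \mathbf{p}_t^R(x,y) = \sum_{y=0}^N \mathbf{p}_t^R(y,x)$. Differentiating the finite sum in $t$ and using $\partial_t \mathbf{p}_t^R(y,x) = \tfrac{1}{2}\Delta \mathbf{p}_t^R(y,x)$ (discrete Laplacian in the first coordinate), all interior terms telescope and we are left with $$\partial_t f(t,x) = \tfrac{1}{2}\big(\mathbf{p}_t^R(N+1,x) - \mathbf{p}_t^R(N,x)\big) + \tfrac{1}{2}\big(\mathbf{p}_t^R(-1,x) - \mathbf{p}_t^R(0,x)\big).$$ Invoking the Robin boundary conditions $\mathbf{p}_t^R(-1,x) = \mu_A \mathbf{p}_t^R(0,x)$ and $\mathbf{p}_t^R(N+1,x) = \mu_B \mathbf{p}_t^R(N,x)$, the right-hand side becomes $\tfrac{1}{2}(\mu_B-1)\mathbf{p}_t^R(N,x) + \tfrac{1}{2}(\mu_A-1)\mathbf{p}_t^R(0,x)$. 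Since $|\mu_A-1| = |A|\epsilon$ and $|\mu_B-1| = |B|\epsilon$, and since Proposition \ref{14} (with $b=0$) gives $\mathbf{p}_t^R(0,x), \mathbf{p}_t^R(N,x) \leq C(A,B,T)(1\wedge t^{-1/2})$, we obtain $|\partial_t f(t,x)| \leq C(A,B,T)\,\epsilon\,(1\wedge t^{-1/2})$ uniformly in $x \in \{0,\dots,N\}$.

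Finally, since $f(0,x) = 1$ I would integrate in time: using $1 \wedge s^{-1/2} \leq s^{-1/2}$, $$|f(t,x)-1| \leq \int_0^t |\partial_s f(s,x)|\,ds \leq C(A,B,T)\,\epsilon \int_0^t s^{-1/2}\,ds = 2C(A,B,T)\,\epsilon\, t^{1/2}.$$ Because $t \leq \epsilon^{-2}T$ we have $\epsilon t^{1/2} \leq T^{1/2}$, so for $v \in [0,1]$ one writes $\epsilon t^{1/2} = (\epsilon t^{1/2})^{1-v}\,(\epsilon^v t^{v/2}) \leq T^{(1-v)/2}\,\epsilon^v t^{v/2}$, which gives the claimed bound after absorbing the $T$-dependent factor into the constant. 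I do not anticipate a serious obstacle: this is a direct two-sided analog of Proposition \ref{7}, and the only points needing a little care are the justification of the kernel's symmetry and carrying the telescoping through correctly with both boundary conditions; Proposition \ref{14} supplies everything else.
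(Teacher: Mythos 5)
Your proposal is correct and follows essentially the same route as the paper: symmetrize the kernel, differentiate $f(t,x)=\sum_y \mathbf p_t^R(x,y)$ so the interior terms telescope, use the two Robin boundary conditions to get $\partial_t f = \tfrac12(\mu_A-1)\mathbf p_t^R(0,x)+\tfrac12(\mu_B-1)\mathbf p_t^R(N,x)$, bound this by $C\epsilon(1\wedge t^{-1/2})$ via Proposition \ref{14}, then integrate and interpolate in $v$ exactly as in Proposition \ref{7}. No gaps.
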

	
	\begin{proof} The proof is basically the same as Proposition \ref{7}. We define $$f(t,x):= \sum_{y = 0}^N \textbf{p}^R_t(x,y).$$ The fact that $\mathbf{p}_t^R$ is symmetric in $x$ and $y$ follows easily by symmetry of the operator $\frac{1}{2}\Delta$ with Robin boundary conditions. The same arguments used in Lemma \ref{7} then show that 
	\begin{align*}\partial_t f(t,x) &= \frac{1}{2} (\mu_A-1)\textbf{p}^R_t(0,x)+ \frac{1}{2}(\mu_B-1)\textbf{p}^R_t(N,x) \end{align*}
	so that $|\partial_tf(t,x)| \leq C(A,B,T) \;\epsilon \; t^{-1/2}$ by Proposition \ref{14}. Now we proceed exactly as in Lemma \ref{7}. \end{proof}
	
	\textbf{}\\
	We now turn to proving the cancellation estimates (the analogs of Propositions \ref{10} and \ref{11} for bounded-interval Robin heat kernels). For this we would like a result like Lemma \ref{8}, but we are not able to prove the corresponding result on bounded intervals, so we opt for a weaker result which will suffice for us (Lemma \ref{23}).
	\textbf{}\\
	\begin{lem}\label{22}
		Let $M$ and $N$ be symmetric $n \times n$ matrices (or more generally, self-adjoint operators on some Hilbert space) whose set of eigenvalues (or spectral values) is bounded above by $\alpha \in \Bbb R$. Then $$\|e^{M}-e^{N}\| \leq e^{\alpha} \|M-N\|$$ where $e^{M}$ denotes the matrix exponential, and $\| \cdot \|$ is the operator norm with respect to the underlying Hilbert space norm.
	\end{lem}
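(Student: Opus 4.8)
The plan is to interpolate linearly between $N$ and $M$ and to bound the derivative of the matrix exponential along this path using the spectral hypothesis, via Duhamel's formula. Everything is purely finite-dimensional in the intended application (the relevant operator is the $(N+1)\times(N+1)$ matrix $\tfrac12\Delta$ with Robin boundary conditions), so no functional-analytic subtleties arise; the same argument also goes through verbatim for bounded self-adjoint operators using norm-convergent power series.

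First I would record the elementary spectral fact: if $A$ is self-adjoint with $\sup\operatorname{spec}(A)\le\beta$, then $\|e^{A}\|\le e^{\beta}$, which is immediate from the spectral theorem since $\|e^{A}\|=\sup\{e^{\lambda}:\lambda\in\operatorname{spec}(A)\}$. Next, for $s\in[0,1]$ set $A_s:=sM+(1-s)N$. Each $A_s$ is self-adjoint, and for every unit vector $v$ one has $\langle A_sv,v\rangle=s\langle Mv,v\rangle+(1-s)\langle Nv,v\rangle\le s\alpha+(1-s)\alpha=\alpha$, so $\sup\operatorname{spec}(A_s)\le\alpha$. Consequently, for $u\in[0,1]$ the operator $uA_s$ has spectrum contained in $(-\infty,u\alpha]$, hence $\|e^{uA_s}\|\le e^{u\alpha}$ by the first fact (this holds for $\alpha$ of either sign).

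Now I would apply Duhamel's formula: since $\frac{d}{ds}A_s=M-N$,
\[
\frac{d}{ds}\,e^{A_s}=\int_0^1 e^{uA_s}\,(M-N)\,e^{(1-u)A_s}\,du .
\]
Taking operator norms under the integral and using $\|e^{uA_s}\|\le e^{u\alpha}$ together with $\|e^{(1-u)A_s}\|\le e^{(1-u)\alpha}$ gives $\bigl\|\frac{d}{ds}e^{A_s}\bigr\|\le\int_0^1 e^{u\alpha}e^{(1-u)\alpha}\,\|M-N\|\,du=e^{\alpha}\|M-N\|$. Finally, integrating the identity $e^{M}-e^{N}=\int_0^1\frac{d}{ds}e^{A_s}\,ds$ and applying the triangle inequality for the (operator-valued) integral yields $\|e^{M}-e^{N}\|\le e^{\alpha}\|M-N\|$, which is the claim.

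I expect the only point needing care to be the justification of Duhamel's formula and of differentiation and integration under the operator-valued integral sign; in finite dimensions this is a routine consequence of the power-series expansion of $s\mapsto e^{A_s}$, so there is no genuine obstacle. (Alternatively one can avoid operator calculus altogether by telescoping $e^{M}-e^{N}=\lim_{n\to\infty}\sum_{k=0}^{n-1}e^{\frac{n-1-k}{n}M}\bigl(e^{M/n}-e^{N/n}\bigr)e^{\frac{k}{n}N}$, bounding each exponential factor by the first spectral fact and using $\|e^{M/n}-e^{N/n}\|=\tfrac1n\|M-N\|+O(n^{-2})$; the sum then converges to $e^{\alpha}\|M-N\|$. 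Note that the crude bound via $e^{M}-e^{N}=\sum_{k\ge1}\frac1{k!}\sum_{j=0}^{k-1}M^{j}(M-N)N^{k-1-j}$ does not suffice, since it produces constants of size $e^{\|M\|\vee\|N\|}$ rather than $e^{\alpha}$ — exploiting that only the \emph{top} of the spectrum enters is precisely the role of the interpolation argument.)
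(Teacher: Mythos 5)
Your proposal is correct and follows essentially the same route as the paper: the paper also interpolates along $A_s=(1-s)N+sM$, uses the Duhamel/Fr\'echet-derivative formula $Df(X)H=\int_0^1 e^{uX}He^{(1-u)X}\,du$, and bounds $\|e^{uX}\|\le e^{u\alpha}$ via the spectral bound on convex combinations before integrating. The observations about the crude $e^{\|M\|\vee\|N\|}$ bound being insufficient and about the sign of $\alpha$ are likewise made in the paper's proof.
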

	
	\begin{proof} When $M$ and $N$ commute the proof is easy by simultaneous diagonalization. For the general case, it is tempting to use $\|e^{M}-e^{N}\| \leq \|M-N\|e^{\max\{\|M\|,\|N\|\}}$, however this crude bound fails since it only takes into account the magnitude of the eigenvalues and not their sign (note that $\alpha$ might be negative). Instead we use Frechet calculus. If $\cal B$ is a Banach space, $f:\cal B \to B$ is Frechet differentiable, and $\gamma:[0,1] \to \cal B$ is a smooth curve, $$f(\gamma(1))-f(\gamma(0)) = \int_0^1 Df(\gamma(t)) \gamma'(t)dt$$ where $Df(x) \in L(\cal B,B)$ is the Frechet derivative. As an immediate corollary, we have for all $a,b \in \cal B$ $$\|f(a)-f(b)\| \leq \bigg( \sup_{x \in [a,b]} \|Df(x)\| \bigg) \|a-b\|$$ where $[a,b]:= \{(1-t)a+tb:t\in[0,1]\}$. We now specialize this bound to the case when $f(X)=e^X$ with $\mathcal B = L(\cal H,H)$ for a Hilbert space $\cal H$. In this case, there is a well-known formula for the Frechet derivative $$Df(X)H = \int_0^1 e^{sX}He^{(1-s)X}ds$$ which immediately implies that the operator norm of $Df(X)$ satisfies $$\|Df(X)\| \leq \int_0^1 \|e^{sX}\| \cdot \|e^{(1-s)X}\|ds.$$ 
	Now suppose that $M,N$ are self-adjoint operators on $\cal H$ which satisfy $\langle Mu,u \rangle \leq \alpha\|u\|^2$ and $\langle Nu,u \rangle \leq \alpha\|u\|^2$ (i.e., the largest eigenvalue of $M$ and of $N$ is bounded above by $\alpha$, which can be negative). In this case we easily have that $\langle Xu,u \rangle \leq \alpha\|u\|^2$ for all $X$ in the interval $[M,N]$ (as defined above). Consequently $\|e^{sX}\| \leq e^{\alpha s}$ for all $s$. Thus if $X\in [M,N]$ $$\|Df(X)\| \leq \int_0^1 e^{s\alpha}e^{(1-s)\alpha}ds = e^{\alpha}$$ so that $\|f(M)-f(N)\| \leq e^{\alpha} \|M-N\|$ as desired. \end{proof}
	
	\begin{lem}\label{23}
		For the next few estimates we will distinguish between different values of $(A,B)$ by writing $\mathbf{p}_t^R(x,y;A,B)$ for the ($\epsilon$-dependent) Robin heat kernel of parameters $A,B$. For all $A,B \in \Bbb R$, all $T>0$, and all $b \geq 0$, there exists $C=C(A,B,b,T)$ such that for all $x,y \in \Bbb Z_{\geq 0}$, all $t \in [0,\epsilon^{-2}T]$, and all $v\in [0,1]$ we have
		\begin{align*}
		|\mathbf{p}^R_t(x,y;A,B)-\mathbf{p}^R_t(x,y;0,0)| &\leq C\; \epsilon^v \;(t^v \wedge t^{(3v-1)/2} )\; e^{-(1-v)b|x-y|(1 \wedge t^{-1/2})}, \\
		|\nabla^{\pm}\mathbf{p}^R_t(x,y;A,B)-\nabla ^{\pm} \mathbf{p}^R_t(x,y;0,0)| &\leq C \; \epsilon^v \; (t^v \wedge t^{2v-1}) e^{-(1-v)b|x-y|(1 \wedge t^{-1/2})},	\end{align*}
		where $\nabla^{\pm}$ denotes the discrete gradient in the first spatial coordinate.
	\end{lem}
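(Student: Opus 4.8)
The plan is to reduce to the endpoint cases $v=0$ and $v=1$ and then interpolate. Set $D_t(x,y):=\mathbf p_t^R(x,y;A,B)-\mathbf p_t^R(x,y;0,0)$. The case $v=0$ is immediate from the triangle inequality and Propositions \ref{14}, \ref{15}: they give $|D_t(x,y)|\le C(1\wedge t^{-1/2})e^{-b|x-y|(1\wedge t^{-1/2})}$ and $|\nabla^{\pm}D_t(x,y)|\le C(1\wedge t^{-1})e^{-b|x-y|(1\wedge t^{-1/2})}$ (discarding a bit in the gradient bound if Proposition \ref{15} gives a stronger power of $t$). Thus the whole content is the $v=1$ endpoint: there is $C=C(A,B,T)$ so that $|D_t(x,y)|\le C\epsilon t$ and $|\nabla^{\pm}D_t(x,y)|\le C\epsilon t$ for all $t\in[0,\epsilon^{-2}T]$ and $x,y\in\{0,\dots,N\}$.

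For the $v=1$ bound I would set up a variation-of-parameters (Duhamel) identity with the Neumann kernel $q_t:=\mathbf p_t^R(\cdot,\cdot;0,0)$ as reference. Write $p_s:=\mathbf p_s^R(\cdot,\cdot;A,B)$, fix $x,y,t$, and differentiate $w(s):=\sum_{z=0}^N q_{t-s}(x,z)p_s(z,y)$ in $s$. Since the Robin kernels are symmetric in their two spatial arguments (cf.\ the proof of Proposition \ref{21}), $z\mapsto p_s(z,y)$ solves the discrete heat equation with boundary data $\mu_A,\mu_B$ and $z\mapsto q_{t-s}(x,z)$ solves it in reverse time with Neumann data, so the discrete Green identity $\sum_{z=0}^N(f\Delta g-g\Delta f)=\big[f(N)g(N+1)-g(N)f(N+1)\big]-\big[f(-1)g(0)-g(-1)f(0)\big]$ applies; inserting the respective boundary conditions collapses the bracketed terms to $(\mu_B-1)f(N)g(N)$ and $(1-\mu_A)f(0)g(0)$, and integrating in $s$ gives
\[
D_t(x,y)=\frac12\int_0^t\Big[(\mu_B-1)\,q_{t-s}(x,N)\,p_s(N,y)+(\mu_A-1)\,q_{t-s}(x,0)\,p_s(0,y)\Big]\,ds .
\]
Now $|\mu_A-1|,|\mu_B-1|\le C(A,B)\epsilon$, while Propositions \ref{14}, \ref{15} (with $b=0$) bound $q_{t-s}(x,\cdot)$, $|\nabla^{\pm}q_{t-s}(x,\cdot)|$ and $p_s(\cdot,y)$ by constants $C(A,B,T)$ uniformly over $t-s,s\in[0,\epsilon^{-2}T]$ --- here it matters that those bounds are genuinely bounded (equal to $1\wedge(\cdot)^{-1/2}$, etc., rather than just $(\cdot)^{-1/2}$) for small times, since otherwise the time integral would only give a constant in place of the factor $t$. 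Hence the integrand is at most $C(A,B,T)\epsilon$ pointwise in $s$, and integrating over $[0,t]$ yields $|D_t(x,y)|\le C(A,B,T)\epsilon t$; applying $\nabla^{\pm}$ in $x$ to the formula (it only hits $q_{t-s}(x,\cdot)$) and repeating gives $|\nabla^{\pm}D_t(x,y)|\le C(A,B,T)\epsilon t$.

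With the two endpoints in hand, the general $v\in(0,1)$ follows by writing $|D_t(x,y)|=|D_t(x,y)|^{1-v}|D_t(x,y)|^{v}$ and multiplying the bounds:
\[
|D_t(x,y)|\le C\,\epsilon^{v}\,t^{v}(1\wedge t^{-1/2})^{1-v}\,e^{-(1-v)b|x-y|(1\wedge t^{-1/2})}.
\]
Since $t^{v}(1\wedge t^{-1/2})^{1-v}$ equals $t^{v}$ for $t\le1$ and $t^{(3v-1)/2}$ for $t\ge1$ --- i.e.\ equals $t^{v}\wedge t^{(3v-1)/2}$ --- this is exactly the asserted function bound; the identical computation with $t^{v}(1\wedge t^{-1})^{1-v}=t^{v}\wedge t^{2v-1}$ gives the gradient bound.

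I expect the $v=1$ step to be the only genuinely delicate point: one must justify the symmetry of the Robin kernels and the interchange of the $s$-derivative (resp.\ the finite difference) with the sum (resp.\ the integral), verify that the Green-identity boundary terms produce precisely the factors $\mu_A-1$ and $\mu_B-1$ and nothing else, and use that the pointwise heat-kernel bounds stay bounded down to $t=0$ so that the time integral contributes a true factor $t$ for short times. One could alternatively attempt a term-by-term comparison in the semi-explicit formula \eqref{12} via Lemma \ref{13} together with the elementary fact $p_t(w)\le 1-p_t(0)\le 1\wedge t$ for $|w|\ge1$ (note that every shifted argument appearing in the comparison is nonzero), but the resulting sums over $k$ appear to cost a spurious logarithm of $t$, so the Duhamel route seems preferable.
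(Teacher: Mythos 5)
Your argument is correct, but your treatment of the crucial $v=1$ endpoint is genuinely different from the paper's. The paper proves $|\mathbf p^R_t(x,y;A,B)-\mathbf p^R_t(x,y;0,0)|\le C\epsilon t$ abstractly: it establishes the operator inequality $\|e^{M}-e^{N}\|\le e^{\alpha}\|M-N\|$ for self-adjoint matrices with spectrum bounded by $\alpha$ (Lemma \ref{22}, via Fr\'echet calculus), combines it with the spectral bound $\lambda\le C(A,B)\epsilon^2$ of Lemma \ref{17} to get $\|e^{\frac{t}{2}\Delta_{A,B}}-e^{\frac{t}{2}\Delta_{0,0}}\|\le C(A,B,T)\,\epsilon\, t$ for $t\le\epsilon^{-2}T$, and then reads off matrix entries; your route instead derives the exact Duhamel/Green identity
\[
\mathbf p^R_t(x,y;A,B)-\mathbf p^R_t(x,y;0,0)=\tfrac12\int_0^t\Big[(\mu_A-1)\,q_{t-s}(x,0)\,p_s(0,y)+(\mu_B-1)\,q_{t-s}(x,N)\,p_s(N,y)\Big]ds,
\]
which is the kernel-level form of $e^{tM}-e^{tN}=\int_0^t e^{(t-s)N}(M-N)e^{sM}ds$ with $M-N$ the rank-two diagonal perturbation, and then uses only the pointwise bounds of Propositions \ref{14}--\ref{15} (symmetry of the kernel, as in the proof of Proposition \ref{21}, and the consistency of the boundary-extension convention are the points you rightly flag, and they check out; your Green-identity boundary algebra and the interpolation identity $t^{v}(1\wedge t^{-1/2})^{1-v}=t^{v}\wedge t^{(3v-1)/2}$ are both correct, and the interpolation step itself coincides with the paper's). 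What each buys: the paper's argument is shorter given Lemmas \ref{17} and \ref{22}, but needs that machinery solely for this estimate; your argument bypasses both the Fr\'echet-calculus lemma and the spectral input, and moreover the exact formula is sharper than what you extract from it --- since $q_{t-s}(x,\cdot)\le C(1\wedge(t-s)^{-1/2})e^{-b|x-\cdot|(1\wedge(t-s)^{-1/2})}$ and $p_s(\cdot,y)\le C(1\wedge s^{-1/2})e^{-b|\cdot-y|(1\wedge s^{-1/2})}$, the time integral is bounded uniformly in $t\le\epsilon^{-2}T$, so your identity actually yields the bounded-interval analogue of Lemma \ref{8} (an $O(\epsilon)$ bound with the exponential factor, uniform in $t$), which the paper explicitly says it could not obtain and which is why it settles for the $\epsilon^{1/8}$ in Lemma \ref{24}. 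The weaker $C\epsilon t$ endpoint you state is of course sufficient for the lemma as written.
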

	
	\begin{proof} It suffices to prove the claim $v=0$ and $v=1$. The middle cases then follow from interpolation. The $v=0$ case follows easily from Propositions \ref{14} and \ref{15}, thus we only consider the $v=1$ case. So we will show that 
	\begin{equation*}
	|\textbf{p}^R_t(x,y;A,B)-\textbf{p}^R_t(x,y;0,0)| \leq C\; \epsilon \;t,
	\end{equation*}
	\begin{equation*}
	|\nabla^{\pm}\textbf{p}^R_t(x,y;A,B)-\nabla ^{\pm} \textbf{p}^R_t(x,y;0,0)| \leq C \; \epsilon \; t,
	\end{equation*}
	for some constant $C=C(A,B,T)$ (we postulate that these bounds are not optimal, but they suffice to prove Proposition \ref{24} below). To prove these bounds, we will use Lemma \ref{22} with the two matrices $\frac{t}{2}\Delta_{A,B}$ and $\frac{t}{2}\Delta_{0,0}$. Here $\Delta_{A,B}$ denotes the Laplacian on $\{0,...,N\}$ with Robin boundary parameters $\mu_A, \mu_B$. Then Lemma \ref{22} and Lemma \ref{17} show that $$\| e^{\frac{1}{2}\Delta_{A,B}t} - e^{\frac{1}{2} \Delta_{0,0}t}  \| \leq e^{C(A,B)\epsilon^{2}t}\cdot  \frac{t}{2}\|\Delta_{A,B} - \Delta_{0,0}\|.$$
	But $t \leq \epsilon^{-2}T$ so that $\epsilon^2t \leq T$ and thus $e^{C(A,B)\epsilon^{2}t} \leq C(A,B,T)$. Now notice that $$\Delta_{A,B} - \Delta_{0,0} = \begin{bmatrix}
	\mu_A-1 & 0 & \dots & 0 & 0 \\
	0 & 0  & \dots  & 0 & 0\\
	\vdots & \vdots & \ddots & \vdots & \vdots\\
	0 & 0 & \dots & 0 & 0\\
	0 & 0 & \dots  & 0 & \mu_B-1
	\end{bmatrix}.$$
	This is a diagonal matrix with eigenvalues $\{-A\epsilon, 0,...,0,-B\epsilon\}$ and thus we see that $\|\Delta_{A,B} - \Delta_{0,0}\| \leq C(A,B) \epsilon$. Summarizing, we have shown that $$\| e^{\frac{1}{2}\Delta_{A,B}t} - e^{\frac{1}{2} \Delta_{0,0}t}  \| \leq C(A,B,T) \; \epsilon \; t.$$ Therefore
	\begin{align*}
	|\textbf{p}^R_t(x,y;A,B)-\textbf{p}^R_t(x,y;0,0)| &= \bigg|\big\langle  (e^{\frac{1}{2}\Delta_{A,B}t} - e^{\frac{1}{2} \Delta_{0,0}t}) \mathbf 1_x, \mathbf 1_y \big\rangle \bigg| \\ & \leq \|e^{\frac{1}{2}\Delta_{A,B}t} - e^{\frac{1}{2} \Delta_{0,0}t} \| \\ &\leq C(A,B,T) \epsilon t.
	\end{align*}
	Similarly,
	\begin{align*}
	|\nabla^{\pm}\textbf{p}^R_t(x,y;A,B)-\nabla^{\pm}\textbf{p}^R_t(x,y;0,0)| &= \bigg|\big\langle  (e^{\frac{1}{2}\Delta_{A,B}t} - e^{\frac{1}{2} \Delta_{0,0}t}) \mathbf (\mathbf 1_x-\mathbf 1_{x \pm 1}), \mathbf 1_y \big\rangle \bigg| \\ & \leq 2\|e^{\frac{1}{2}\Delta_{A,B}t} - e^{\frac{1}{2} \Delta_{0,0}t} \| \\ &\leq C(A,B,T) \epsilon t
	\end{align*}
	which proves the claim. \end{proof}
	\begin{lem}\label{24} For $A,B\in \Bbb R$, $t \geq 0$, and $x,y \in \Bbb Z_{\geq 0}$ we define $$K_t(x,y;A,B):=\nabla^+ \mathbf{p}^R_t(x,y;A,B) \nabla^- \mathbf{p}^R_t(x,y;A,B).$$ For $T \geq 0$ there exists a constant $C=C(A,B,T)$ such that $$\sum_{y =1}^{N-1}\int_0^{\epsilon^{-2}T} |K_t(x,y;A,B)-K_t(x,y;0,0)| dt  \leq C\epsilon^{1/8}.$$
		
	\end{lem}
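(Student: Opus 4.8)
The plan is to mirror the proof of Lemma~\ref{9}, carried out on the bounded interval, substituting the bounded-interval difference estimate (Lemma~\ref{23}) for Lemma~\ref{8} and the bounded-interval summation estimate (Corollary~\ref{16}) for Corollary~\ref{4}. First I would apply the elementary identity $ab-a'b'=(a-a')b+a'(b-b')$ with $a=\nabla^+\mathbf p_t^R(x,y;A,B)$, $b=\nabla^-\mathbf p_t^R(x,y;A,B)$ and $a',b'$ the corresponding quantities with parameters $(0,0)$, obtaining
\begin{align*}
|K_t(x,y;A,B)-K_t(x,y;0,0)|
&\leq |\nabla^+\mathbf p_t^R(x,y;A,B)-\nabla^+\mathbf p_t^R(x,y;0,0)|\cdot|\nabla^-\mathbf p_t^R(x,y;A,B)| \\
&\quad+|\nabla^+\mathbf p_t^R(x,y;0,0)|\cdot|\nabla^-\mathbf p_t^R(x,y;A,B)-\nabla^-\mathbf p_t^R(x,y;0,0)| \\
&=:J_1(t,x,y)+J_2(t,x,y).
\end{align*}

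Second, I would fix the interpolation parameter $v=\tfrac18$ in Lemma~\ref{23}: this bounds the two gradient-difference factors uniformly in $y$ by $C\epsilon^{1/8}(t^{1/8}\wedge t^{-3/4})$ (the exponential on the right of Lemma~\ref{23} is $\le 1$ and may be discarded; note also that the present lemma carries no weight $e^{a\epsilon|x-y|}$, and in any case such a weight is harmless on $\{0,\dots,N\}$ since $\epsilon|x-y|\le\epsilon N=1$). For the remaining gradient factor I would sum over $1\le y\le N-1$ using Corollary~\ref{16} with $a=0$, giving $\sum_{y}|\nabla^\pm\mathbf p_t^R(x,y;A,B)|\le C(1\wedge t^{-1/2})$ (for $\nabla^-$ one shifts the base point by one site; the bound is uniform in that point). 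Multiplying these two estimates and extending the range of summation yields
$$\sum_{y=1}^{N-1}|K_t(x,y;A,B)-K_t(x,y;0,0)|\leq C\,\epsilon^{1/8}\,(t^{1/8}\wedge t^{-3/4})(1\wedge t^{-1/2}).$$

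Third, I would integrate in $t$ over $[0,\epsilon^{-2}T]$, splitting at $t=1$. On $[0,1]$ the integrand is at most $C\epsilon^{1/8}t^{1/8}$, which is integrable; on $[1,\epsilon^{-2}T]$ it is at most $C\epsilon^{1/8}t^{-3/4-1/2}=C\epsilon^{1/8}t^{-5/4}$, and since $-5/4<-1$ the integral $\int_1^{\epsilon^{-2}T}t^{-5/4}\,dt$ is bounded by $\int_1^{\infty}t^{-5/4}\,dt=4$, uniformly in $\epsilon$ and in $T$. Adding the two contributions gives $\sum_{y=1}^{N-1}\int_0^{\epsilon^{-2}T}|K_t(x,y;A,B)-K_t(x,y;0,0)|\,dt\le C(A,B,T)\,\epsilon^{1/8}$, which is the claim.

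The point requiring the most care is the choice of $v$. Unlike the half-line difference estimate of Lemma~\ref{8}, which is uniformly bounded in $t$, the bounded-interval estimate of Lemma~\ref{23} degrades like $\epsilon t$ as $t\to\infty$, so one must keep $v<\tfrac14$ in order that the tail integral $\int_1^{\epsilon^{-2}T}t^{2v-3/2}\,dt$ remain bounded as $\epsilon\to0$; this is precisely why the exponent obtained here ($\tfrac18$) is weaker than the $\tfrac12$ of Lemma~\ref{9}. Any $v\in(0,\tfrac14)$ would work, and $v=\tfrac14$ itself would still succeed at the cost of a logarithmic factor; $v=\tfrac18$ is taken merely for cleanliness. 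Everything else is routine bookkeeping with estimates already established in this section.
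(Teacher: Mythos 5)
Your proposal is correct and follows exactly the route the paper takes: the paper's proof of this lemma simply repeats the argument of Lemma \ref{9} with $a=0$, replacing Lemma \ref{8} by the second bound of Lemma \ref{23} with $v=1/8$ (and Corollary \ref{4} by Corollary \ref{16}), which is precisely what you do. Your additional remarks — discarding the decaying exponential, handling $\nabla^-$ via the same shifted-argument bound, and the observation that any $v<1/4$ works (explaining the weaker exponent $1/8$ versus $1/2$ on the half-line) — are accurate and just spell out the bookkeeping the paper leaves implicit.
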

	
	\begin{proof} The proof is very similar to that of Lemma \ref{9} with $a=0$. The only difference is that instead of using Lemma \ref{8}, we now use the second bound in Lemma \ref{23} with $v=1/8$, and consequently we get a factor of $\epsilon^{1/8}$ instead of $\epsilon^{1/2}$.
	\end{proof}
	
	\begin{lem}\label{107}
		Let us write $\mathbf p_t^R(x,y;A,B)$ as in the preceding lemma. For $x, \bar x \in \{0,...,N\}$, we have $$\sum_{y \geq 0}\int_0^{\infty} \nabla^+ \mathbf p_t^R(x,y;0,0) \nabla^+ \mathbf p_t^R (\bar x, y ;0,0) dt = -\frac{1}{N+1} 1_{\{x \neq \bar x\}} + \frac{N}{N+1} 1_{\{x=\bar x\}}.$$ 
	\end{lem}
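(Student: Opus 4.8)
The plan is to run the argument of Lemma~\ref{108} essentially verbatim; the only genuinely new feature is that on the bounded interval the Neumann ($A=B=0$) heat kernel does not decay to zero as $t\to\infty$ but instead relaxes to the constant equilibrium mode, and it is precisely this surviving mode that produces the $\tfrac{1}{N+1}$ on the right-hand side.

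First I would apply the discrete Green's identity on $\{0,\dots,N\}$: for $u,v\colon\{-1,\dots,N+1\}\to\Bbb R$,
\[
\sum_{y=0}^{N} u(y)\,\Delta v(y) \;=\; u(0)\,\nabla^- v(0)\;+\;u(N)\,\nabla^+ v(N)\;-\;\sum_{y=0}^{N-1}\nabla^+ u(y)\,\nabla^+ v(y).
\]
Taking $u=\mathbf p_t^R(x,\cdot\,;0,0)$ and $v=\mathbf p_t^R(\bar x,\cdot\,;0,0)$, the choice $A=B=0$ gives $\mu_A=\mu_B=1$, i.e.\ the discrete Neumann conditions $v(-1)=v(0)$, $v(N+1)=v(N)$ (and the same for $u$), so $\nabla^- v(0)=\nabla^+ v(N)=0$ and both boundary terms vanish; moreover $\nabla^+ u(-1)=\nabla^+ u(N)=0$, so the remaining sum equals $\sum_{y=0}^{N}\nabla^+ u(y)\nabla^+ v(y)$. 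Symmetrizing in $u$ and $v$ (the operator $\tfrac12\Delta$ with these boundary conditions is self-adjoint, cf.\ Lemma~\ref{17}) and using $\Delta\mathbf p_t^R=2\partial_t\mathbf p_t^R$ yields
\[
-\sum_{y=0}^{N}\nabla^+\mathbf p_t^R(x,y;0,0)\,\nabla^+\mathbf p_t^R(\bar x,y;0,0) \;=\; \partial_t\!\sum_{y=0}^{N}\mathbf p_t^R(x,y;0,0)\,\mathbf p_t^R(\bar x,y;0,0) \;=\; \partial_t\,\mathbf p_{2t}^R(x,\bar x;0,0),
\]
where the last equality uses symmetry of $\mathbf p_t^R$ together with the Chapman--Kolmogorov identity $\sum_y\mathbf p_t^R(x,y)\mathbf p_t^R(y,\bar x)=\mathbf p_{2t}^R(x,\bar x)$.

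Next I would integrate this identity over $t\in[0,\infty)$. The interchange of the finite sum over $y$ with the integral is immediate once each time-integral converges, and this follows from exponential decay in $t$ for fixed $N$: expanding in the orthonormal eigenbasis $\{\psi_k\}$ of $\tfrac12\Delta$ as in the proof of Proposition~\ref{20}, $\mathbf p_t^R(x,y;0,0)=\sum_k\psi_k(x)\psi_k(y)e^{\lambda_k t}$, where (by Remark~\ref{18}) when $A=B=0$ there are no positive eigenvalues and the zero-eigenspace, consisting of affine functions obeying the Neumann conditions, reduces to the constants, so $\lambda_0=0$ is simple with $\psi_0\equiv(N+1)^{-1/2}$ and $\lambda_k<0$ for $k\ge 1$. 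Since $\psi_0$ is constant it is annihilated by $\nabla^+$, hence $\nabla^+\mathbf p_t^R(x,y;0,0)=\sum_{k\ge1}\psi_k(x)\bigl(\psi_k(y+1)-\psi_k(y)\bigr)e^{\lambda_k t}$ decays exponentially. The fundamental theorem of calculus then gives
\[
-\sum_{y=0}^{N}\int_0^\infty \nabla^+\mathbf p_t^R(x,y;0,0)\,\nabla^+\mathbf p_t^R(\bar x,y;0,0)\,dt \;=\; \lim_{t\to\infty}\mathbf p_{2t}^R(x,\bar x;0,0)\;-\;\mathbf p_0^R(x,\bar x;0,0).
\]
Here $\mathbf p_0^R(x,\bar x;0,0)=\mathbf 1_{\{x=\bar x\}}$, while the spectral expansion with $\lambda_0=0$, $\lambda_k<0$ $(k\ge1)$ gives $\lim_{t\to\infty}\mathbf p_{2t}^R(x,\bar x;0,0)=\psi_0(x)\psi_0(\bar x)=\tfrac1{N+1}$. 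Rearranging, $\sum_{y}\int_0^\infty\nabla^+\mathbf p_t^R(x,y;0,0)\,\nabla^+\mathbf p_t^R(\bar x,y;0,0)\,dt=\mathbf 1_{\{x=\bar x\}}-\tfrac1{N+1}$, which is exactly $\tfrac{N}{N+1}\mathbf 1_{\{x=\bar x\}}-\tfrac1{N+1}\mathbf 1_{\{x\neq\bar x\}}$, as claimed.

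The summation by parts and the bookkeeping are routine; the only step that genuinely differs from Lemma~\ref{108} is the identification of the limit as $t\to\infty$, where one must retain the surviving zero eigenmode contributing $\psi_0(x)\psi_0(\bar x)=\tfrac1{N+1}$ rather than vanishing. I also note that the spectral gap invoked to justify integrability is needed only for fixed $N$ (it holds since the second-largest eigenvalue of $\tfrac12\Delta$ is strictly negative), which is all the statement requires, as no $\epsilon=N^{-1}$-uniformity is claimed here.
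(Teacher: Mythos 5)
Your proposal is correct and follows essentially the same route as the paper: summation by parts with the Neumann ($\mu_A=\mu_B=1$) boundary terms vanishing, the identity $\Delta\mathbf p_t^R=2\partial_t\mathbf p_t^R$ plus symmetry and the semigroup property to reduce the integrand to $\partial_t\,\mathbf p_{2t}^R(x,\bar x;0,0)$, and then evaluation at $t=0$ and $t=\infty$, with the surviving $\tfrac{1}{N+1}$ coming from the long-time limit. The only (harmless) difference is that you identify that limit and justify integrability via the spectral decomposition and the zero eigenmode $\psi_0\equiv(N+1)^{-1/2}$, whereas the paper simply invokes convergence of the finite-state Markov chain to the uniform measure; your spectral-gap justification of the time integral's convergence is a welcome extra detail the paper leaves implicit.
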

	
	\begin{proof} Let us recall the summation-by-parts identity: if $u,v: \{-1,...,N+1\} \to \Bbb R$, then $$\sum_{y=0}^{N} u(y) \Delta v(y) = u(N+1)\nabla^+ v(N)+u(-1)\nabla^- v(0) - \sum_{y=-1}^{N} \nabla^+u(y) \nabla^+ v(y).$$ Letting $u = \mathbf p_t^R(x, \cdot;0,0)$ and $v= \mathbf p_t^R(\bar x, \cdot;0,0)$, the boundary terms will all vanish and we get \begin{align*}-\sum_{y=0}^{N} \nabla^+ \mathbf p_t^R(x,y) \nabla^+ \mathbf p_t^R (\bar x, y) &= \sum_{y=0}^{N}\mathbf p_t^R(x,y) \Delta \mathbf p_t^R (\bar x, y )=\sum_{y=0}^{N}\Delta\mathbf p_t^R(x,y) \mathbf p_t^R (\bar x, y).
		\end{align*}
		Since $\Delta \mathbf p_t^R = 2\partial_t \mathbf p_t^R$, this implies that $$-\sum_{y=0}^{N} \nabla^+ \mathbf p_t^R(x,y) \nabla^+ \mathbf p_t^R (\bar x, y) = \sum_{y=0}^{N}\mathbf p_t^R(x,y) \partial_t \mathbf p_t^R (\bar x, y)+\partial_t\mathbf p_t^R(x,y) \mathbf p_t^R (\bar x, y).$$ Integrating both sides from $t=0$ to $\infty$ and using the semigroup property, we find that \begin{align*}
		-\sum_{y=0}^{N}\int_0^{\infty} \nabla^+ \mathbf p_t^R(x,y) \nabla^+ \mathbf p_t^R (\bar x, y) dt&= \sum_{y=0}^{N} \mathbf p_t^R(x,y)\mathbf p_t^R(\bar x,y) \bigg|_{t=0}^{\infty} \\ &= \mathbf p_{2t}^R(x, \bar x)\bigg|_{t=0}^{\infty} \\ &= \frac{1}{N+1}-1_{\{x=\bar x\}}
		\end{align*}which proves the claim. In the final line, we used the fact that $\lim_{t \to \infty} \mathbf p_t^R(x,\cdot; 0,0)$ is the uniform measure on $\{0,...,N\}$, which follows from the basic theory of finite-state Markov Chains.
	\end{proof}
	
	\begin{prop}\label{25}
		Let $A,B\in \Bbb R$, and $T>0$. Let $K_t$ be as in Lemma \ref{24}. There exists some $\epsilon_0=\epsilon_0(A,B,T)$ and some $c_*=c_*(A,B,T)<1$ such that for $\epsilon<\epsilon_0$ and $x \in \{1,...,N-1\}$ $$\sum_{y = 1}^{N-1}\int_0^{\epsilon^{-2}T} |K_t(x,y;A,B)| dt  \leq c_*.$$ Moreover, for any $S\in[0,T]$ there is a $C=C(A,B,T,S)$ such that for $s=\epsilon^{-2}S$ we have $$\sum_{y= 1}^{N-1} \int_0^s |K_t(x,y;A,B)|  (s-t)^{-1/2} dt \leq C\epsilon.$$
	\end{prop}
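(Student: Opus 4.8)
The plan is to run, essentially verbatim, the proofs of Proposition~\ref{10} and Corollary~\ref{11}, substituting the bounded-interval ingredients---Lemmas~\ref{23}, \ref{24} and~\ref{107}, Propositions~\ref{14} and~\ref{15}, and Corollary~\ref{16}---for their half-line analogues (Lemmas~\ref{8}, \ref{9}, \ref{108}, Propositions~\ref{2}, \ref{3}, and Corollary~\ref{4}). Two small differences work in our favour: there is no exponential weight $e^{a\epsilon|x-y|}$ in the present statement (so the $a>0$ part of Proposition~\ref{10} is not needed), and $\int_0^\infty\sum_{y}(\nabla^+\mathbf p_t^R(x,y;0,0))^2\,dt$ equals $\tfrac{N}{N+1}<1$ rather than $1$. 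First I would reduce to $A=B=0$: Lemma~\ref{24} gives $\sum_{y=1}^{N-1}\int_0^{\epsilon^{-2}T}|K_t(x,y;A,B)-K_t(x,y;0,0)|\,dt\le C(A,B,T)\epsilon^{1/8}$, so it suffices to find $c_*^{(0)}=c_*^{(0)}(T)<1$ with $\sum_{y=1}^{N-1}\int_0^{\epsilon^{-2}T}|K_t(x,y;0,0)|\,dt\le c_*^{(0)}$ uniformly in $x\in\{1,\dots,N-1\}$ and in large $N$; then taking $\epsilon$ small enough that $C\epsilon^{1/8}\le\tfrac12(1-c_*^{(0)})$ gives the first claim with $c_*=\tfrac12(1+c_*^{(0)})$.

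For the case $A=B=0$ I would first record from~\eqref{12} (with $I_k\equiv1$, $E_k\equiv0$) the pure image representation $\mathbf p_t^R(x,y;0,0)=\sum_{k\in\Bbb Z}p_t(x-i(y,k))$, which is $\epsilon$-independent, is a genuine (reflecting) Markov transition kernel (so $\mathbf p_t^R(x,y;0,0)\in[0,1]$ and $\sum_y\mathbf p_t^R(x,y;0,0)=1$), and is invariant under the reflection $x\mapsto N-x$, $y\mapsto N-y$. Using this last symmetry and the reflection-invariance of the sum over $y\in\{1,\dots,N-1\}$, I may assume $x\le N/2$, so that $\{x,x+1\}\subseteq\{1,\dots,N-1\}$. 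Cauchy--Schwarz in $y$ and then in $t$, together with Lemma~\ref{107} evaluated at $\bar x=x$ (and at $\bar x=x-1$, via $\nabla^-\mathbf p_t^R(x,\cdot)=-\nabla^+\mathbf p_t^R(x-1,\cdot)$), already gives the non-uniform bound $\sum_{y=1}^{N-1}\int_0^{\epsilon^{-2}T}|K_t(x,y;0,0)|\,dt\le\tfrac{N}{N+1}$, so, exactly as in Proposition~\ref{10}, I would turn to the Lagrange identity for the vectors $\big(\nabla^+\mathbf p_t^R(x,y)\big)_{y=1}^{N-1}$ and $\big(\nabla^-\mathbf p_t^R(x,y)\big)_{y=1}^{N-1}$ and retain only the single cross-term indexed by $\{x,x+1\}$.

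The crux, and the main obstacle, is then to produce a fixed $t_0>0$ (independent of $N$, $\epsilon$, $x$) such that for $t\le t_0$ this cross-term exceeds a fixed positive constant $>\tfrac16$. For $x\le N/2$ the image representation shows that, at each point entering the cross-term, every reflected summand $p_t(x-i(y,k))$ with $k\ne0$ has argument of modulus at least $2$ (the $k=-1$ reflection) or at least $N-1$ (all others), hence is $\le p_{t_0}(2)$; thus for $t_0$ small the $k=0$ whole-line terms dominate and the whole-line bounds~\eqref{109} transport to $\nabla^{\pm}\mathbf p_t^R(x,x)\le-\tfrac45$, $|\nabla^-\mathbf p_t^R(x,x+1)|\le\tfrac15$, and $|\nabla^+\mathbf p_t^R(x,x+1)|=|\nabla^-\mathbf p_t^R(x+1,x+1)|\ge\tfrac45$, forcing the cross-term to be at least $\big(\tfrac45\cdot\tfrac45-1\cdot\tfrac15\big)^2>\tfrac16$. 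From here everything is verbatim Proposition~\ref{10}: writing the Lagrange expression as $f^2g^2-h^2=(fg-h)(fg+h)$ with $h\le fg$ (Cauchy--Schwarz) and $f,g\le\sqrt2$ for all $t$ (since $\sum_y|\nabla^{\pm}\mathbf p_t^R(x,y;0,0)|\le2$ and each summand is $\le1$ for the probability kernel $\mathbf p_t^R(\cdot,\cdot;0,0)$), one gets $fg-h\ge c>0$ on $[0,t_0]$; integrating in $t$ and bounding $\int_0^\infty fg\,dt\le\tfrac{N}{N+1}\le1$ via Lemma~\ref{107} yields $\sum_{y=1}^{N-1}\int_0^\infty|K_t(x,y;0,0)|\,dt\le1-ct_0=:c_*^{(0)}<1$, as required. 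The only delicate point is that the ``crucial cancellation'' must survive near the boundary, and the reflection distances being at least $2$ makes this routine once one commits to $x\le N/2$.

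Finally, for the weighted estimate I would follow Corollary~\ref{11}. With $s=\epsilon^{-2}S$ I split $\int_0^s=\int_0^{s/2}+\int_{s/2}^s$. On $[0,s/2]$ we have $(s-t)^{-1/2}\le\sqrt2\,s^{-1/2}=\sqrt2\,S^{-1/2}\epsilon$, so this piece is at most $\sqrt2\,S^{-1/2}\epsilon\sum_{y=1}^{N-1}\int_0^{\epsilon^{-2}T}|K_t(x,y;A,B)|\,dt\le C(A,B,T,S)\,\epsilon$ by the first part of the proposition (using $s/2\le\epsilon^{-2}T$, since $S\le T$). On $[s/2,s]$ we have $t\ge\tfrac12\epsilon^{-2}S\ge1$ for $\epsilon$ small, so Proposition~\ref{15} gives $|K_t(x,y;A,B)|\le Ct^{-1}|\nabla^-\mathbf p_t^R(x,y;A,B)|$ and then Corollary~\ref{16} gives $\sum_y|K_t(x,y;A,B)|\le Ct^{-3/2}$; hence this piece is at most $C\int_{s/2}^s t^{-3/2}(s-t)^{-1/2}\,dt=Cs^{-1}\int_{1/2}^1 u^{-3/2}(1-u)^{-1/2}\,du=C(A,B,T,S)\,\epsilon^2$ after the substitution $t=su$. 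Adding the two pieces gives the bound $C(A,B,T,S)\,\epsilon$.
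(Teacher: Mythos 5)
Your proposal is correct and follows essentially the same route as the paper, which proves the first bound by repeating the argument of Proposition \ref{10} with $a=0$ (reduction to $A=B=0$ via Lemma \ref{24}, Lagrange identity plus the crucial short-time cross-term bound, and Lemma \ref{107} in place of Lemma \ref{108}), and the second bound verbatim from Corollary \ref{11}. Your added details -- the reflection $x\mapsto N-x$ to keep the retained pair $\{x,x+1\}$ inside $\{1,\dots,N-1\}$ when $x$ is near the right endpoint, the verification via the image formula that the reflected summands stay uniformly small for $t\le t_0$, and the elementary bound $f,g\le\sqrt{2}$ from the reflecting probability kernel -- are all sound and in fact patch points the paper's terse proof leaves implicit.
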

	
	\begin{proof} The proof of the first bound is the same as that of Proposition \ref{10} with $a=0$, the only difference is the factor of $\epsilon^{1/8}$ rather than $\epsilon^{1/2}$. Obviously one should use Lemmas \ref{24} and \ref{107} instead of the analogous half-line estimates \ref{9} and \ref{108}.
	\\
	\\
	The proof of the second bound can be copied verbatim from the proof of Proposition \ref{11}, with $a=0$. \end{proof}
	
	Finally we conclude this section with the analogue of Proposition \ref{33}:
	
	\begin{prop}\label{26} For every $t \geq s \geq 0$ and $x,y \geq 0$ we have that $$\mathbf p_s^R(x,y) \leq e^{t-s} \mathbf p_t^R(x,y).$$
	\end{prop}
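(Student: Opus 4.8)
The plan is to copy the proof of Proposition \ref{33} almost verbatim, the only new ingredient being that the discrete-time kernel must now respect two Robin conditions rather than one. First I would observe that
$$\mathbf p_s^R(x,y) = \sum_{n=0}^{\infty} e^{-s} \frac{s^n}{n!}\, \mathbf p^n(x,y),$$
where $\mathbf p^n(\cdot,y)$ is defined inductively by $\mathbf p^0(\cdot,y) = \mathbf 1_{\{\cdot = y\}}$ and $\mathbf p^{n+1}(x,y) - \mathbf p^n(x,y) = \tfrac{1}{2}\Delta \mathbf p^n(x,y)$ for $x \in \{0,\dots,N\}$, subject to the boundary conditions $\mathbf p^n(-1,y) = \mu_A \mathbf p^n(0,y)$ and $\mathbf p^n(N+1,y) = \mu_B \mathbf p^n(N,y)$. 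This identity is nothing but $e^{\frac{s}{2}\Delta} = e^{-s} e^{s(I + \frac{1}{2}\Delta)} = \sum_{n\geq 0} e^{-s}\frac{s^n}{n!}(I + \frac{1}{2}\Delta)^n$ applied to $\mathbf 1_y$, with $\tfrac{1}{2}\Delta$ the Robin Laplacian of Lemma \ref{17}.

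The key step is the nonnegativity $\mathbf p^n(x,y) \geq 0$ for every $n$. Writing out the recursion, for interior $x$ one has $\mathbf p^{n+1}(x,y) = \tfrac{1}{2}\mathbf p^n(x+1,y) + \tfrac{1}{2}\mathbf p^n(x-1,y)$, while at the two boundary points the Robin conditions give $\mathbf p^{n+1}(0,y) = \tfrac{1}{2}\mathbf p^n(1,y) + \tfrac{\mu_A}{2}\mathbf p^n(0,y)$ and $\mathbf p^{n+1}(N,y) = \tfrac{1}{2}\mathbf p^n(N-1,y) + \tfrac{\mu_B}{2}\mathbf p^n(N,y)$. Since we work with $\epsilon = N^{-1}$ small enough that $\mu_A = 1 - A\epsilon \geq 0$ and $\mu_B = 1 - B\epsilon \geq 0$, all of these coefficients are nonnegative, so nonnegativity propagates from $\mathbf p^0 \geq 0$ by induction on $n$.

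Finally, since $0 \leq s \leq t$ we have $s^n \leq t^n$, hence $e^{-s}\frac{s^n}{n!} = e^{t-s} e^{-t}\frac{s^n}{n!} \leq e^{t-s} e^{-t}\frac{t^n}{n!}$ for every $n$. Multiplying by $\mathbf p^n(x,y) \geq 0$ and summing over $n$ yields $\mathbf p_s^R(x,y) \leq e^{t-s}\mathbf p_t^R(x,y)$, which is the claim. I do not anticipate any genuine obstacle here: the only point requiring a word of care is the sign condition $\mu_A,\mu_B \geq 0$, i.e. the absence of negative weights in the discrete-time scheme, and this is automatic in the weakly asymmetric regime with $\epsilon$ small.
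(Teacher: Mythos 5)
Your argument is correct and is essentially the paper's own proof: the paper handles Proposition \ref{26} by declaring it ``similar to Proposition \ref{33},'' whose proof is exactly the Poissonization identity $\mathbf p_s^R(x,y)=\sum_{n\ge 0}e^{-s}\frac{s^n}{n!}\mathbf p^n(x,y)$ followed by the term-by-term comparison $e^{-s}\frac{s^n}{n!}\le e^{t-s}e^{-t}\frac{t^n}{n!}$. Your explicit induction showing $\mathbf p^n(x,y)\ge 0$ at both boundary points (using $\mu_A,\mu_B\ge 0$ for $\epsilon$ small) merely spells out a detail the paper leaves implicit, so nothing further is needed.
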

	
	\begin{proof} Similar to the proof of Proposition \ref{33}. \end{proof}
	
	\subsection{Continuum Estimates}
	
	Throughout this section, $I$ will denote either $[0, \infty)$ or $[0,1]$. Similarly, $\Lambda$ will denote either $\Bbb Z_{\geq 0}$ or $\{0,...,N\}$. We will fix Robin boundary parameters $A$ and $B$. As before, $\mathbf p_t^R$ will denote the (discrete-space, continuous-time, $\epsilon$-dependent) Robin heat kernel on $\Lambda$, with boundary parameters $\mu_A, \mu_B$. By an abuse of notation, we will write $\mathbf p_t(x,y)$ even when $x,y \in \Bbb R$, and this quantity is meant to be understood as a linear interpolation of the values of $\mathbf p_t^R$ from nearby integer-coordinate points.
	
	\begin{thm}[Existence of the Continuum Robin Heat Kernel]\label{controb} For $T \geq 0$ and $X,Y \in I$ let $$P^{\epsilon}_T(X,Y):= \epsilon^{-1} \mathbf p_{\epsilon^{-2}T}^R (\epsilon^{-1}X, \epsilon^{-1}Y).$$ Then $P^{\epsilon}_T(X,Y)$ converges to a limit $P_T(X,Y)$ as $\epsilon \to 0$. For any $0<\delta<\tau$, the convergence is uniform over $(T,X,Y) \in [\delta,\tau] \times I \times I$. Moreover, the limit $ P_T(X,Y)$ satisfies $$\partial_T P_T(X,Y) = \frac{1}{2} \partial_X^2 P_T(X,Y),\;\;\;\;\;\;\;\;\;\;\;\;\;\;\;\;\;\;\;\;\;$$ $$\partial_XP_T(0,Y) = AP_T(0,Y),\;\;\;\;\;\;\;\;\;\;\;\;\;\;\;\;\;\;\;\;\;\;\;\;\;$$ $$\partial_XP_T(1,Y) = BP_T(1,Y), \;\;\;\;\; \text{if }\; I=[0,1].$$ Furthermore, for every $X \in I$, $P_T(X, \cdot)$ converges weakly to $\delta_X$ as $T \to 0$.
		
	\end{thm}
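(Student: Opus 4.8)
The plan is to handle the half-line and bounded-interval cases in parallel, starting from the explicit image-sum formula \eqref{1} (resp.\ the inductive formula \eqref{12}) and the known parabolic scaling limit of the whole-line kernel $p_t$. First I would establish \emph{pointwise} convergence. By the local limit theorem (or the estimates of [DT16, Appendix A]), $\epsilon^{-1}p_{\epsilon^{-2}T}(\epsilon^{-1}X)\to\rho_T(X):=(2\pi T)^{-1/2}e^{-X^2/2T}$ uniformly for $T$ in compact subsets of $(0,\infty)$ and $X\in\Bbb R$. Substituting into \eqref{1}, the first two terms of $P^{\epsilon}_T(X,Y)$ converge to $\rho_T(X-Y)+\rho_T(X+Y)$, while for the third term one writes $\mu_A=1-A\epsilon$, so $1-\mu_A^{-2}=-2A\epsilon+O(\epsilon^2)$ and $\mu_A^{z}=e^{-A\epsilon z+O(\epsilon^2 z)}$, and treats $\epsilon\sum_{z\geq 2}$ as a Riemann sum in $W=\epsilon z$, using the uniform exponential decay of Proposition \ref{2} as a dominating bound. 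This gives
$$P_T(X,Y)=\rho_T(X-Y)+\rho_T(X+Y)-2A\int_0^\infty\rho_T(X+Y+W)e^{-AW}\,dW$$
on $[0,\infty)$, with an analogous (but more elaborate) multiple-reflection series on $[0,1]$ extracted from \eqref{12}, where convergence of the $k$-sum is secured by Lemma \ref{13} against the spatial decay $|x-i(y,k)|\geq(|k|-1)N$.

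Next I would upgrade pointwise to uniform convergence on $[\delta,\tau]\times I\times I$. The rescaled estimates of Propositions \ref{2}, \ref{3}, \ref{6} (resp.\ \ref{14}, \ref{15}, \ref{20}) show that $\{P^{\epsilon}_T(X,Y)\}_{\epsilon}$ is uniformly bounded and equi-Hölder in $(T,X,Y)$ on $[\delta,\tau]\times I\times I$; moreover the spatial bound rescales to $P^{\epsilon}_T(X,Y)\leq CT^{-1/2}e^{-b|X-Y|/\sqrt T}$, which is integrable uniformly in $\epsilon$ and controls the noncompact direction when $I=[0,\infty)$. By Arzel\`a--Ascoli every subsequence has a further subsequence converging uniformly on compacts; since the full sequence converges pointwise to $P_T$, the full sequence converges uniformly on compacts, and the exponential tail then promotes this to uniformity over all of $I\times I$.

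To identify the limiting equation I would pass to the limit in the weak form of the discrete heat equation: for $\phi\in C_c^\infty$ one writes $\iint\phi\,\partial_t\mathbf p^R_t=\tfrac12\iint\phi\,\Delta\mathbf p^R_t$, rescales, and applies discrete summation by parts together with the discrete Robin relation $\mathbf p^R_t(-1,y)=\mu_A\mathbf p^R_t(0,y)$ (and $\mathbf p^R_t(N+1,y)=\mu_B\mathbf p^R_t(N,y)$). The only surviving boundary term is proportional to $(1-\mu_A)\mathbf p^R_t(0,y)=A\epsilon\,\mathbf p^R_t(0,y)$, which in rescaled variables converges to the boundary integral $A\,P_T(0,Y)\phi(0,Y)$; hence $P_T$ is a weak solution of $\partial_TP=\tfrac12\partial_X^2P$ with $\partial_XP_T(0,Y)=AP_T(0,Y)$ (and $\partial_XP_T(1,Y)=BP_T(1,Y)$ when $I=[0,1]$), and parabolic regularity up to the boundary makes it classical — alternatively, on $[0,\infty)$ one just differentiates the explicit formula above. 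For the initial condition, the rescaled Gaussian upper bound $P_T(X,Y)\leq CT^{-1/2}e^{-b|X-Y|/\sqrt T}$ forces mass outside any $\delta$-neighborhood of $X$ to vanish as $T\to0$, while the total-mass estimate of Proposition \ref{7} (resp.\ \ref{21}) passes to the limit to give $\big|\int_I P_T(X,Y)\,dY-1\big|\leq CT^{v/2}$, pinning the limiting mass to $1$; together these yield $P_T(X,\cdot)\Rightarrow\delta_X$ against bounded continuous test functions.

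The main obstacle I anticipate is the bounded-interval case. The formula \eqref{12} is only semi-explicit and inductive, so extracting the $\epsilon\to0$ limit of the $E_k$-series, and verifying that the resulting multiple-reflection series is genuinely the Robin heat kernel on $[0,1]$, requires tracking the recursions for $I_k$ and $E_k(z,y)$ uniformly in $\epsilon$ with Lemma \ref{13} as the crucial input. A cleaner route there would be the spectral representation $\mathbf p^R_t(x,y)=\sum_k\psi_k(x)\psi_k(y)e^{\lambda_kt}$ from Lemmas \ref{17}--\ref{19}: under $t=\epsilon^{-2}T$ one has $\lambda_k\approx-\tfrac12(k\pi\epsilon)^2$ and $\epsilon^{-1/2}\psi_k(\epsilon^{-1}X)$ should converge to the $L^2([0,1])$-normalized Robin eigenfunction, so the series converges termwise with Gaussian-in-$k$ domination to the eigenfunction expansion of the continuum Robin kernel, which manifestly solves the stated PDE and boundary conditions; establishing the mode-by-mode convergence of the $\psi_k$ and $\lambda_k$ is then the technical crux in that case.
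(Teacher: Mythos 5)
Your second, third, and fourth paragraphs are essentially the paper's own proof: the paper establishes uniform boundedness and Lipschitz equicontinuity of $P^{\epsilon}$ on $[\delta,\tau]\times I\times I$ from Propositions \ref{2}, \ref{3}, \ref{6} (resp.\ \ref{14}, \ref{15}, \ref{20}) with $b=0$, $v=1$, applies Arzel\`a--Ascoli, identifies any limit point as a weak (hence classical) solution of the Robin heat equation by passing to the limit in the discrete equation with an $O(\epsilon)$ boundary correction coming from $\mathbf p^R_t(-1,y)=\mu_A\mathbf p^R_t(0,y)$, and proves $P_T(X,\cdot)\Rightarrow\delta_X$ from the Gaussian bound and the mass estimate (Propositions \ref{27} and \ref{29}, i.e.\ the limiting forms of \ref{7}/\ref{21}). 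The genuine difference is your first paragraph: the paper never computes an explicit pointwise limit, neither from \eqref{1} nor from \eqref{12}. Your half-line computation is correct (the image formula you write does satisfy the Robin condition at $0$), but this is precisely where your plan has a gap on $[0,1]$: as you organize the argument, convergence of the \emph{full} sequence is deduced from pointwise convergence plus equicontinuity, and on the bounded interval you do not prove pointwise convergence --- neither the limit of the $E_k$-recursion of \eqref{12} (Lemma \ref{13} only gives the bound $|E_k|\leq C_0^{|k|}$, not its $\epsilon\to0$ behavior) nor the mode-by-mode convergence of the $\psi_k,\lambda_k$ of Lemmas \ref{17}--\ref{19} (including the up-to-two positive modes) is carried out; you explicitly defer both as ``the technical crux.''

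This gap is organizational rather than fatal, because ingredients you already have close it without any explicit formula: every subsequential limit is a continuous weak solution of the Robin heat equation on $(0,\infty)\times I$, and your initial-data argument (Gaussian bound plus total mass tending to $1$) applies verbatim to \emph{every} limit point, so all limit points solve the same well-posed problem (Robin heat equation with initial data $\delta_X$) and hence coincide; precompactness then upgrades subsequential to full convergence, uniformly on $[\delta,\tau]\times I\times I$. This is in effect how the paper argues, so you should either drop the explicit identification on $[0,1]$ or state clearly that uniqueness of the limit point comes from uniqueness for the limiting PDE rather than from the unproven pointwise limit. A minor further caution: you test against arbitrary $\phi\in C_c^{\infty}$ and keep a boundary integral, whereas the paper restricts to test functions with $\partial_X\phi(T,0)=A\phi(T,0)$ (and the analogous condition at $X=1$) so that the boundary contribution is $O(\epsilon)$; your formulation is equally legitimate, but after the two summations by parts you must then track both the $\phi$-gradient and the $\mathbf p^R$-gradient boundary terms, the latter being where the discrete Robin relation enters.
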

	
	\begin{proof} Let $\tau> \delta >0$. Using Propositions \ref{2}, \ref{3}, and \ref{6} (or \ref{14}, \ref{15}, and \ref{20}) with $b=0$ and $v=1$, we easily verify that for every $S<T \in [\delta, \tau]$ and $X,Y,Z \in I$ we have (say, for $\epsilon \leq 1$) $$\;\;\;\;\;\;\;\;\;\;P_T^{\epsilon}(X,Y) \leq CT^{-1/2}, $$ $$|P_T^{\epsilon}(X,Z)-P_T^{\epsilon}(Y,Z)| \leq CT^{-1} |X-Y|, $$ $$|P_T^{\epsilon}(X,Y) - P_S^{\epsilon}(X,Y)| \leq CS^{-3/2} |T-S|,$$ where $C$ is a constant depending only on $A,B$, and the terminal time $\tau$. Since $S,T>\delta$, we find that $T^{-1/2} \leq \delta^{-1/2}$, also $T^{-1} \leq \delta^{-1}$, and similarly $S^{-3/2} \leq \delta^{-3/2}$. Therefore, we have proved that the collection $\{P_{\cdot}^{\epsilon}(\cdot,\cdot)\}_{\epsilon \in (0,1]}$ is uniformly bounded and uniformly Lipchitz (in both the time variable and in both spatial variables by symmetry) on $[\delta , \tau] \times I \times I$. By the Arzela-Ascoli theorem, we conclude that the family $\{P_{\cdot}^{\epsilon}(\cdot,\cdot)\}_{\epsilon\in (0,1]}$ is precompact in $C([\delta , \tau] \times I \times I)$, so there is at least one limit point as $\epsilon \to 0$.
		\\
		\\
		We will now show that any limit point of the $\{P_{\cdot}^{\epsilon}(\cdot,\cdot)\}$ coincides in a \textit{weak} sense with the fundamental solution of the Robin-boundary heat equation on $I$ (this weak formulation is good enough, because any continuous weak solution is automatically a strong solution by the standard methods of PDE). In other words, if $P_T(X,Y)$ is a limit point, we will show that for any $\phi \in C_c^{\infty}((0,\infty)\times \Bbb R)$ satisfying $\partial_X\phi(T,0)=A\phi(T,0)$ (and $\partial_X\phi(T,1)=B\phi(T,1)$ if $I=[0,1]$), \begin{equation}\label{w}
		-\int_I \int_0^{\infty} P_T(X,Y)\partial_T\phi(T,X) dTdX = \int_I P_T(X,Y) \partial_X^2\phi(T,X)dT dX. \end{equation} To prove this, first note that for any $\epsilon>0$ and any $X,Y \in \epsilon \Bbb Z$, we have (by definition) that $\partial_TP_T^{\epsilon}(X,Y) = \epsilon^{-2}( P_T^{\epsilon}(X+\epsilon,Y)+P_T^{\epsilon}(X-\epsilon,Y)-2P_T^{\epsilon}(X,Y))$. By the linear interpolation, this is still true for $X,Y \in \Bbb R$. Therefore we find that \begin{align}\label{g} -\int_I \int_0^{\infty} P_T^{\epsilon}(X,Y)\partial_T\phi(T,X) dTdX = \int_I \int_0^{\infty} \partial_TP_T^{\epsilon}(X,Y)\phi(T,X) dTdX
		\end{align}
		\begin{align}\label{int}
		&= \int_I \int_0^{\infty} \epsilon^{-2}\big[ P_T^{\epsilon}(X+\epsilon,Y)+P_T^{\epsilon}(X-\epsilon,Y)-2P_T^{\epsilon}(X,Y)\big]\phi(T,X) dTdX.
		\end{align}
		We can separate the expression (\ref{int}) as the sum of three separate integrals based on the three terms appearing in the square parentheses, then we make the substitution $X \mapsto X-\epsilon$ in the first integral, we make the substitution $X \mapsto X+\epsilon$ in the second integral, and we leave the third integral as is. After those calculations we obtain the following expression: 
		\begin{equation}\label{f}
		\int_I \int_0^{\infty} P_T^{\epsilon}(X,Y)\cdot \epsilon^{-2} \big[ \phi(T,X+\epsilon)+\phi(T,X-\epsilon)-2\phi(T,X) \big] dTdX\;+\;O(\epsilon)
		\end{equation}
		where the error term is due to the boundary correction near the endpoints of $I$. The fact that this boundary correction is $O(\epsilon)$ is a consequence of the boundary conditions: since $\mathbf p_t(-1,y) = \mu_A \mathbf p_t(0,y)$, it follows that $\partial_X^{\pm} P_T^{\epsilon}(0,Y) = AP_T(0,Y)+O(\epsilon)$, where $\partial_X^{\pm}$ denotes left/right derivatives. Similarly if $I=[0,1]$, then we also have $\partial_X^{\pm}P_T^{\epsilon}(1,Y) = B P_T^{\epsilon}(1,Y)+O(\epsilon)$. Also recall that $\partial_X \phi(T,0) = A \phi(T,0)$ (and $\partial_X \phi(T,1) = B \phi(T,1)$ if $I=[0,1]$). Using these facts and performing a first-order Taylor expansion of the integrand of the boundary correction gives the $O(\epsilon)$ error.
		\\
		\\
		Since $\partial_X^2\phi$ is continuous and compactly supported, it follows that $$\lim_{\epsilon \to 0}\; \sup_{\substack{T>0 \\ X \in \Bbb R}} \;\bigg| \partial_X^2\phi(T,X) - \epsilon^{-2} \big[ \phi(T,X+\epsilon)+\phi(T,X-\epsilon)-2\phi(T,X) \big] \bigg| = 0.$$
		Indeed, this can be seen by using the fundamental theorem of calculus to rewrite the parenthetical term: $\phi(T,X+\epsilon)+\phi(T,X-\epsilon)-2\phi(T,X) = \int_X^{X+\epsilon} \int_{Z-\epsilon}^{Z}\partial_X^2 \phi(T,W)dWdZ  $, and then using uniform continuity of $\partial_X^2\phi$ on $(0,\infty) \times \Bbb R$.
		\\
		\\
		So if $P_T(X,Y)$ is a limit point of $\{P_{\cdot}(\cdot, \cdot) \}_{\epsilon \in (0,1]}$, then there is a subsequence of $\epsilon \to 0$ such that $P_T^{\epsilon}(X,Y) \to P_T(X,Y)$ along that subsequence. Letting $\epsilon \to 0$ along the same subsequence in the LHS of (\ref{g}) and in the equivalent expression (\ref{f}) proves the desired identity (\ref{w}), thus completing the proof that $P^{\epsilon}_T(X,Y)$ converges to a function $P_T(X,Y)$ which satisfies the heat equation with Robin boundary.
		\\
		\\
		All that is left to show is that $P_T(X, \cdot)$ converges weakly to $\delta_X$ as $T \to 0$. In other words, if $\varphi \in C_c^{\infty}(\Bbb R)$, we wish to show that $\int_I P_T^R(X,Y)\varphi(Y)dY \stackrel{T\to 0}{\longrightarrow} \varphi(X)$. This will require the usage of Propositions \ref{27} and \ref{29} proved below, hence the reader may wish to take a look at those estimates and return to this proof a bit later (note that the proofs of those estimates do not use this property of $P_T$, hence there is no circular logic here). 
		First note by Proposition \ref{29} that there exists some $C>0$ such that for $T \leq 1$ we have $$\bigg| \int_IP_T(X,Y) dY -1 \bigg| \leq CT^{1/2}.$$ Therefore by the triangle inequality \begin{align*} \bigg|\int_I P_T^R(X,Y)\varphi(Y)dY - \varphi(X)\bigg| &\leq \int_I P_T(X,Y) \big| \varphi(Y)-\varphi(X) \big| dY+ \bigg| \int_IP_T(X,Y) dY -1 \bigg|\varphi(X) \\ &\leq \int_I P_T(X,Y) \big| \varphi(Y)-\varphi(X) \big|dY+CT^{1/2} \varphi(X). \end{align*}
		Hence it suffices to show that $$\int_I P_T(X,Y) \big| \varphi(Y)-\varphi(X) \big| dY\; \stackrel{T \to 0}{\longrightarrow}\; 0 .$$ Since $\varphi \in C_c^{\infty}(\Bbb R)$ it follows that $\varphi$ is Lipchitz so that $|\varphi(Y)-\varphi(X)| \leq C|Y-X|$. Moreover by applying the first estimate in Proposition \ref{27} with $b=1$ we find that there exists $C$ such that for $T \leq 1$, we have $P_T(X,Y) \leq C T^{-1/2} e^{-|X-Y|/\sqrt{T}}$. Thus for $T \leq 1$, \begin{align*} 
		\int_I P_T(X,Y) |\varphi(Y) - \varphi(X)| dY & \leq C T^{-1/2} \int_I e^{-|X-Y| /\sqrt{T} } |Y-X| dY \\ &\leq CT^{-1/2}  \int_{\Bbb R} e^{-|Z|/\sqrt{T}} |Z| dZ \\ & \leq CT^{-1/2} \int_{\Bbb R} e^{-|W|} |T^{1/2}W| (T^{1/2}dW) \\ &= CT^{1/2}
		\end{align*}
		where we made the substitution $Z=Y-X$ in the second line, and another substitution $W=Z/\sqrt{T}$ in the third line. Now we let $T \to 0$ which proves the claim.
	\end{proof}
	
	From now onward, $P_T(X,Y)$ will denote the continuum Robin heat kernel which has been constructed in Theorem \ref{controb}.
	
	\begin{prop}\label{27} Fix a terminal time $\tau>0$. For any $b \geq 0$, there exists some constant $C=C(A,B,b,\tau)$ such that for all $S<T \leq \tau$ and $X,Y,Z \in I$ we have that \begin{equation}\label{31}
		P_T(X,Y) \leq CT^{-1/2} e^{-b|X-Y|/\sqrt{T}},
		\end{equation}
		\begin{equation}\label{30}
		|P_T(X,Z)-P_T(Y,Z)| \leq CT^{-1} |Y-Z|,
		\end{equation}
		\begin{equation}
		|P_T(X,Y)-P_S(X,Y)| \leq CS^{-3/2}|T-S|.
		\end{equation}
		
	\end{prop}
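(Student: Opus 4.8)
The plan is to obtain all three bounds as limits of their already-established discrete analogues. Recall from Theorem \ref{controb} that $P_T^{\epsilon}(X,Y) = \epsilon^{-1}\mathbf p_{\epsilon^{-2}T}^R(\epsilon^{-1}X,\epsilon^{-1}Y)$ converges to $P_T(X,Y)$ uniformly on $[\delta,\tau]\times I\times I$ for every $0<\delta<\tau$; in particular, for each fixed $T>0$ and $X,Y\in I$ one has the pointwise convergence $P_T^{\epsilon}(X,Y)\to P_T(X,Y)$ as $\epsilon\to 0$. So it suffices to prove that the three displayed estimates hold for $P_T^{\epsilon}$ with a constant $C=C(A,B,b,\tau)$ independent of (small) $\epsilon$, and then let $\epsilon\to 0$ with $S<T\le\tau$ and $X,Y,Z\in I$ held fixed.

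For \eqref{31}, fix $T\le\tau$ and take $\epsilon$ small enough that $\epsilon^{-2}T\ge 1$, so that $1\wedge(\epsilon^{-2}T)^{-1/2}=\epsilon T^{-1/2}$. Applying Proposition \ref{2} with terminal time $\tau$ and parameter $b$ (or Proposition \ref{14} if $I=[0,1]$) at $t=\epsilon^{-2}T$, $x=\epsilon^{-1}X$, $y=\epsilon^{-1}Y$ gives $\mathbf p_{\epsilon^{-2}T}^R(\epsilon^{-1}X,\epsilon^{-1}Y)\le C\,\epsilon T^{-1/2}e^{-b|X-Y|/\sqrt T}$, and multiplying by $\epsilon^{-1}$ yields the claim for $P_T^{\epsilon}$. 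For \eqref{30}, observe that $P_T^{\epsilon}(\cdot,Z)$ is piecewise linear with nodes on $\epsilon\Bbb Z$, so its Lipschitz constant is the supremum over adjacent nodes of $\epsilon^{-1}|P_T^{\epsilon}(X'+\epsilon,Z)-P_T^{\epsilon}(X',Z)|=\epsilon^{-2}|\mathbf p_{\epsilon^{-2}T}^R(\epsilon^{-1}X'+1,\epsilon^{-1}Z)-\mathbf p_{\epsilon^{-2}T}^R(\epsilon^{-1}X',\epsilon^{-1}Z)|$; Proposition \ref{3} (resp. \ref{15}) with $n=1$, $v=1$, $b=0$ bounds this by $\epsilon^{-2}\cdot C(1\wedge(\epsilon^{-2}T)^{-1})=CT^{-1}$, whence $|P_T^{\epsilon}(X,Z)-P_T^{\epsilon}(Y,Z)|\le CT^{-1}|X-Y|$. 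For the temporal bound, apply Proposition \ref{6} (resp. \ref{20}) with $v=1$ at $s=\epsilon^{-2}S$, $t=\epsilon^{-2}T$: for $\epsilon$ small, $1\wedge(\epsilon^{-2}S)^{-3/2}=\epsilon^3S^{-3/2}$, so $|\mathbf p_t^R-\mathbf p_s^R|\le C\epsilon^3S^{-3/2}\cdot\epsilon^{-2}(T-S)=C\epsilon S^{-3/2}(T-S)$, and multiplying by $\epsilon^{-1}$ gives $|P_T^{\epsilon}(X,Y)-P_S^{\epsilon}(X,Y)|\le CS^{-3/2}|T-S|$.

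The one thing to keep track of is the interpolation convention: the computations above are literally valid for $X,Y,Z\in\epsilon\Bbb Z$, while $\mathbf p_t^R(x,y)$, hence $P_T^{\epsilon}$, is defined by linear interpolation at non-lattice arguments. The Lipschitz bound \eqref{30} already encodes this (it is a statement about the interpolant). For \eqref{31} one notes that the interpolant is a convex combination of lattice values each of the form $Ct^{-1/2}e^{-b|\,\cdot\,-y|(1\wedge t^{-1/2})}$, and passing from a lattice point to a neighbour changes $|\,\cdot\,-y|$ by at most $1$, so the same bound holds after enlarging $C$ by the harmless factor $e^{b}$ (using $\epsilon\le\sqrt T$); for the temporal estimate the interpolation weights depend only on $(X,Y)$ and not on time, so the bound passes through verbatim. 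I do not expect any genuine obstacle here — the content is entirely in the $\epsilon$-uniform discrete estimates of Section 3.1–3.2, and this proof is bookkeeping, the only mild subtlety being the bounded-interval versus half-line dichotomy, handled identically once Propositions \ref{14}, \ref{15}, \ref{20} replace Propositions \ref{2}, \ref{3}, \ref{6}.
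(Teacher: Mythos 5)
Your proposal is correct and is essentially the paper's own proof: the paper likewise observes that the three estimates hold for $P^{\epsilon}_T(X,Y)=\epsilon^{-1}\mathbf p^R_{\epsilon^{-2}T}(\epsilon^{-1}X,\epsilon^{-1}Y)$ by Propositions \ref{2}, \ref{3}, \ref{6} (or \ref{14}, \ref{15}, \ref{20}) with constants independent of $\epsilon$, and then lets $\epsilon\to 0$ using Theorem \ref{controb}. You simply spell out the scaling conversions and the linear-interpolation bookkeeping that the paper leaves implicit (and your version of \eqref{30} with $|X-Y|$ on the right is the intended statement; the $|Y-Z|$ in the proposition is a typo).
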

	
	\begin{proof} For $\epsilon>0$, let $P_T^{\epsilon}(X,Y)$ be as in Theorem \ref{controb}. Note that all of these estimates already hold for $P_T^{\epsilon}(X,Y)$, by Propositions \ref{2}, \ref{3}, and \ref{6} (or by \ref{14}, \ref{15}, and \ref{20}). Moreover the constant $C$ does not depend on $\epsilon$. Letting $\epsilon \to 0$, it follows that these estimates still hold in the limit. \end{proof}
	
	\begin{prop}\label{28} Fix a terminal time $\tau >0$. For any $a\geq 0$, there exists a constant $C=C(a,\tau, A,B)$ such that for $T \leq \tau$ and $X \in I$, $$\int_I P_T(X,Y) e^{aY} dY \leq Ce^{aX}.$$ 
	\end{prop}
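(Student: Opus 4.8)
The plan is to deduce this directly from the Gaussian upper bound \eqref{31} in Proposition \ref{27}, choosing the decay parameter $b$ large enough to absorb the exponential weight $e^{aY}$. First I would fix $a \geq 0$ and $\tau > 0$, and set $b := 1 + a\sqrt{\tau}$. Applying Proposition \ref{27} with this specific value of $b$ yields a constant $C = C(A,B,b,\tau) = C(a,A,B,\tau)$ such that $P_T(X,Y) \leq CT^{-1/2} e^{-b|X-Y|/\sqrt{T}}$ for all $S < T \leq \tau$ and all $X,Y \in I$; it is important to fix $b$ \emph{before} invoking Proposition \ref{27}, so that the resulting constant depends only on $a,A,B,\tau$ as claimed.

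Next I would use the elementary inequality $e^{aY} \leq e^{aX} e^{a|X-Y|}$, valid since $a \geq 0$. Combining this with the heat kernel bound and enlarging the domain of integration from $I$ to $\mathbb{R}$ (permissible because the integrand is nonnegative) gives
\[
\int_I P_T(X,Y) e^{aY}\, dY \;\leq\; C e^{aX} \int_{\mathbb{R}} T^{-1/2} e^{-b|X-Y|/\sqrt{T}}\, e^{a|X-Y|}\, dY.
\]
Then I would perform the substitution $W = (Y-X)/\sqrt{T}$, so that $dY = \sqrt{T}\, dW$ and $|X-Y| = \sqrt{T}\,|W|$, turning the right-hand integral into
\[
\int_{\mathbb{R}} e^{-b|W| + a\sqrt{T}|W|}\, dW \;=\; \int_{\mathbb{R}} e^{-(b - a\sqrt{T})|W|}\, dW.
\]
Since $T \leq \tau$ we have $a\sqrt{T} \leq a\sqrt{\tau}$, hence $b - a\sqrt{T} \geq b - a\sqrt{\tau} = 1$, and therefore this integral is at most $\int_{\mathbb{R}} e^{-|W|}\, dW = 2$, uniformly in $T \leq \tau$ and $X \in I$. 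Absorbing the factor $2$ into $C$ yields $\int_I P_T(X,Y) e^{aY}\, dY \leq C e^{aX}$, which is the desired bound.

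There is no genuine obstacle here: the argument is a routine change-of-variables computation once one has the sharp Gaussian bound \eqref{31}. The only point requiring a small amount of care is the order of quantifiers — one must choose $b$ depending on $a$ and $\tau$ first, and only then apply Proposition \ref{27} — and the observation that nonnegativity of the integrand lets one treat the half-line and bounded-interval cases uniformly by integrating over all of $\mathbb{R}$.
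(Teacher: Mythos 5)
Your proposal is correct and follows essentially the same route as the paper: the same choice $b=1+a\sqrt{\tau}$ in the bound \eqref{31}, the same inequality $e^{aY}\leq e^{aX}e^{a|X-Y|}$, extension of the integral to $\mathbb{R}$, and the substitution $W=(Y-X)/\sqrt{T}$ yielding a uniformly bounded integral since $b-a\sqrt{T}\geq 1$. Nothing to add.
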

	
	\begin{proof} Using (\ref{31}) with $b=1+a\tau^{1/2}$, and the fact that $e^{aY} \leq e^{aX} e^{a|X-Y|}$ for $X \in I$, one sees that there exists $C=C(a,A,B,\tau)$ such that for all $T \leq \tau$ and $X \in I$
		\begin{align*} 
		\int_IP_T(X,Y)e^{aY}dY &\leq C e^{aX}T^{-1/2} \int_I e^{-b|X-Y|/\sqrt{T}} e^{a|X-Y|}dY \\ &\leq Ce^{aX} T^{-1/2} \int_{\Bbb R} e^{-b|Z|/\sqrt{T}} e^{a|Z|}dZ \\ & = Ce^{aX} \int_{\Bbb R} e^{-b|W|} e^{aT^{1/2}|W|}dW \\ &\leq Ce^{aX} \int_{\Bbb R} e^{-|W|} dW
		\end{align*}
		where we made a substitution $Z=Y-X$ in the second line, and another substitution $W=T^{-1/2}Z$ in the next line. In the final line we used the fact that $aT^{1/2}-b \leq a\tau^{1/2}-b=-1$.
	\end{proof}
	
	\begin{prop}\label{29} Fix a terminal time $\tau \geq 0$. Then there is a constant $C=C(A,B,\tau)$ such that for $T \leq \tau$ and $X \in I$ we have $$ \bigg|\int_I P_T(X,Y)dY-1 \bigg| \leq CT^{1/2} .$$
		
	\end{prop}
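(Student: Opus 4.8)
The plan is to transfer the corresponding discrete estimates (Proposition \ref{7} for the half-line, Proposition \ref{21} for the bounded interval) to the continuum limit, in exactly the same spirit as the deduction of Proposition \ref{27} from Propositions \ref{2}, \ref{3}, \ref{6}. Fix $\tau>0$, $T\in(0,\tau]$, $X\in I$, and recall that $P_T^{\epsilon}(X,Y)=\epsilon^{-1}\mathbf p_{\epsilon^{-2}T}^R(\epsilon^{-1}X,\epsilon^{-1}Y)$. The first step is to rewrite $\int_I P_T^{\epsilon}(X,Y)\,dY$ as a lattice sum. Since $P_T^{\epsilon}(X,\cdot)$ is by definition the linear interpolant through the grid points $\epsilon\Bbb Z_{\geq 0}$, the trapezoidal rule is exact on each cell, which yields
$$\int_I P_T^{\epsilon}(X,Y)\,dY \;=\; \sum_{y\in\Lambda}\mathbf p_{\epsilon^{-2}T}^R(\epsilon^{-1}X,y)\;-\;\tfrac12\mathbf p_{\epsilon^{-2}T}^R(\epsilon^{-1}X,0)$$
when $I=[0,\infty)$, with an additional correction $-\tfrac12\mathbf p_{\epsilon^{-2}T}^R(\epsilon^{-1}X,N)$ when $I=[0,1]$; if $\epsilon^{-1}X$ is not an integer one reads $\mathbf p^R$ as its interpolant in the first argument, which changes none of the estimates below.

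The second step is to bound the two pieces on the right. Applying Proposition \ref{7} (resp. Proposition \ref{21}) with $v=1$ and terminal time $\tau$ at $t=\epsilon^{-2}T\in[0,\epsilon^{-2}\tau]$, the lattice sum differs from $1$ by at most $C(A,B,\tau)\,\epsilon\,(\epsilon^{-2}T)^{1/2}=C(A,B,\tau)\,T^{1/2}$. Applying Proposition \ref{2} (resp. Proposition \ref{14}) with $b=0$, the correction terms are bounded by $C(A,B,\tau)\,(1\wedge(\epsilon^{-2}T)^{-1/2})\leq C(A,B,\tau)\,\epsilon\,T^{-1/2}$ as soon as $\epsilon^2\leq T$. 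Combining, we obtain
$$\bigl|\,\textstyle\int_I P_T^{\epsilon}(X,Y)\,dY-1\,\bigr|\;\leq\;C(A,B,\tau)\bigl(T^{1/2}+\epsilon\,T^{-1/2}\bigr),$$
uniformly over $X\in I$ and all sufficiently small $\epsilon$.

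The third and final step is to pass to the limit $\epsilon\to0$ with $T$ fixed. Theorem \ref{controb} gives $P_T^{\epsilon}(X,Y)\to P_T(X,Y)$ pointwise, while Proposition \ref{2} (resp. Proposition \ref{14}) with $b=1$ supplies a single $\epsilon$-independent bound $P_T^{\epsilon}(X,Y)\leq C(A,B,\tau)\,T^{-1/2}e^{-|X-Y|/\sqrt T}$; the right-hand side is integrable in $Y\in I$, so the dominated convergence theorem yields $\int_I P_T^{\epsilon}(X,Y)\,dY\to\int_I P_T(X,Y)\,dY$. Letting $\epsilon\to0$ in the displayed inequality kills the term $\epsilon\,T^{-1/2}$ and gives $|\int_I P_T(X,Y)\,dY-1|\leq C(A,B,\tau)\,T^{1/2}$, which is the claim; the case $T=0$ is immediate (or follows by continuity). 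I expect the only delicate points to be the exact trapezoidal identity in the first step — which is precisely what produces the harmless boundary correction terms and must be handled, rather than ignored — and checking that the exponential domination in the third step is genuinely uniform in $\epsilon$; both, however, are already furnished by the heat-kernel estimates established earlier in this section.
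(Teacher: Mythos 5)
Your proof is correct and follows essentially the same route as the paper: the discrete mass estimate (Proposition \ref{7} or \ref{21} with $v=1$) gives the $CT^{1/2}$ bound for the lattice sum, and the passage to the limit uses the pointwise convergence from Theorem \ref{controb} together with the $\epsilon$-uniform Gaussian-type domination from Proposition \ref{2} (or \ref{14}) with $b=1$ and dominated convergence. The only (harmless) difference is bookkeeping: you convert the integral of the interpolant to the lattice sum via the exact trapezoidal identity, absorbing the $O(\epsilon T^{-1/2})$ boundary correction, whereas the paper writes $\epsilon\sum_{Y\in\epsilon\Lambda}P^{\epsilon}_T(X,Y)=\int_I P^{\epsilon}_T(X,\epsilon\lfloor\epsilon^{-1}Y\rfloor)\,dY$ and applies dominated convergence to that step-function approximation.
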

	
	\begin{proof} Let $P_T^{\epsilon}(X,Y)$ be as in Theorem \ref{controb}. Using Proposition \ref{7} or \ref{21} with $v=1$, there exists $C>0$ such that for all $T \leq \tau$, $X \in I$, and (small enough) $\epsilon>0$, \begin{equation}\label{32}\bigg| \epsilon \sum_{Y \in \epsilon \Lambda} P_T^{\epsilon}(X,Y) -1 \bigg| \leq CT^{1/2}.\end{equation} Notice that $$\epsilon\sum_{Y\in \epsilon \Lambda} P_T^{\epsilon}(X,Y) = \int_I P_T^{\epsilon}(X,\epsilon\lfloor \epsilon^{-1}Y \rfloor)dY.$$ Moreover, $P_T^{\epsilon}(X,\epsilon\lfloor \epsilon^{-1}Y \rfloor) \stackrel{\epsilon \to 0}{\longrightarrow} P_T(X,Y)$, by uniform convergence of $P_T^{\epsilon}(X, \cdot)$ to $P_T(X, \cdot)$.
		\\
		\\
		Using Proposition \ref{2} or \ref{14} with $b=1$, we have the following bound, uniformly over all small enough $\epsilon>0$: $$|P_T^{\epsilon}(X,Y)| \leq CT^{-1/2} e^{-|Y-X|/\sqrt{T}}.$$ For each fixed $X$ and $T$, the RHS is an integrable function of $Y$, so it follows from the dominated convergence theorem (together with the preceding observations) that $$\lim_{\epsilon \to 0} \epsilon \sum_{Y \in \epsilon \Lambda} P_T^{\epsilon}(X,Y) = \lim_{\epsilon \to 0} \int_I P_T^{\epsilon}(X,\epsilon\lfloor \epsilon^{-1}Y \rfloor)dY= \int_I P_T(X,Y)dY.$$ Letting $\epsilon \to 0$ in (\ref{32}) gives the result.
	\end{proof}
	
	\section{The SHE with Robin Boundary Conditions}
	
	Next we want to describe the continuum version of the height functions in our particle system, which we expect will (in a sense) solve the KPZ Equation on the spatial domain $I$. Recall that this equation is formally given by $$\partial_TH = \frac{1}{2} \partial_X^2 H + \frac{1}{2}(\partial_XH)^2 + \xi $$
	where $\xi$ is a Gaussian space-time white noise, meaning informally that $\Bbb E [ \xi(S,X)\xi(T,Y)] = \delta(S-T)\delta(X-Y)$. 
	\\
	\\
	In order to solve this equation, let us first make precise how to rigorously define the noise term. One may construct $\xi$ as the distributional time-derivative $\xi = \partial_TW$ of a cylindrical Wiener process $W=(W_T)_{T \geq 0}$ over $L^2(I)$, in which case each individual $W_T$ may be viewed as a random element of the Sobolev space $H^s_{loc}(I)$ for $s<-1/2$. This viewpoint will be very useful to us because it allows one to define stochastic integrals against $\xi$, which in turn allows us to construct strong solutions to parabolic PDEs which are driven by $\xi$. See for instance [Hai09] or [DPZ92] or [Wal86] for the general theory of space-time stochastic integrals.
	\\
	\\
	Given the regularity (or lack thereof) of the noise $\xi$, one may then heuristically compute (using Schauder estimates or the  Kolmogorov continuity theorem) that the solution to the KPZ equation should be locally Hölder $1/2-$ in space and Hölder $1/4-$ in time, but no better. In particular, we are faced with two serious problems:
	
	\begin{enumerate}
		\item The nonlinear term $(\partial_XH)^2$ is undefined, since it is not possible to square the derivative of a function which is Hölder $1/2-$. So the PDE is ill-posed. 
		
		\item To make matters worse, the boundary parameters for ASEP should somehow correspond to Neumann boundary conditions for the PDE so that $\partial_XH(T,0) = A$ in the half-line case (and also $\partial_XH(T,1) = -B$ for the bounded interval case). But once again these quantities are ill-posed.
	\end{enumerate}
	The first of the two problems described may be fixed by the well-known Cole-Hopf transform, in which we define $\mathcal Z:=\exp H$ and then formally $\cal Z$ solves the multiplicative Stochastic Heat Equation (SHE): $$\partial_T\mathcal Z = \frac{1}{2}\Delta \mathcal Z+ \mathcal Z \xi$$
	\\
	As it turns out, this transformation now makes the equation well-posed, and by considering the noise as a cylindrical Wiener process, one can hope to make sense of solutions in a Duhamel form, see Theorem \ref{70} below. Once we construct this solution and prove it is positive, we can \textit{formally} just define $H:=\log \cal Z$. One may wish to look at [GJ14] and [GPS17] for methods which avoid the Cole-Hopf transformation.
	\\
	\\
	So now we only need to make sense of the Neumann Boundary conditions described above for the KPZ equation. Under the Cole-Hopf transform just discussed, the Neumann boundary conditions for the KPZ equation formally become $Robin$ boundary conditions for the SHE: $$\partial_X\mathcal Z(T,X)|_{X=0}=A\mathcal Z(T,0)$$ (and also $\partial_X \mathcal Z(T,X)|_{X=1}=-B\mathcal Z(T,1)$ for the bounded-interval case). If we were to forget about the noise term in the SHE for the moment being (thus we consider the equation $\partial_T\mathcal Z = \frac{1}{2} \Delta \cal Z$), then by theorem \ref{controb} the solution is given by a semigroup kernel: $$\mathcal Z(T,X) = P_T * \mathcal Z_0 (X) = \int_I P_T(X,Y)\mathcal Z(0,Y)dY.$$ This fact motivates the following definition of solutions in mild (Duhamel) form:
	
	\begin{defn}[Cole-Hopf Solution with Neumann Boundary Conditions]\label{69} Let $P_T$ denote the continuum Robin heat kernel constructed in Theorem \ref{controb}, and let $\xi$ denote a space-time white noise on some probability space $(\Omega, \mathcal F, \Bbb P)$. Let $\mathcal Z_0$ denote some (random) Borel measure which is $\Bbb P$-almost surely supported on $I$. We say that a space-time process $\mathcal Z = (\mathcal Z(T,X))_{T > 0, X \in I}$ is a mild solution to the SHE satisfying Robin boundary conditions if $\Bbb P$-almost surely, for every $T>0$ and $X \in I$ we have $$\mathcal Z(T,X) = \int_I P_T (X,Y)\mathcal Z_0(dY)+\int_0^T\int_I P_{T-S}(X,Y) \mathcal Z(S,Y) \xi(S,Y) dSdY$$ where the integral against the white noise is meant to be interpreted in the Itô sense (see [Wal86], [Hai09], or [DPZ92]). If $\xi = \partial_T W$ for a cylindrical Wiener Process $W$, then we may abbreviate the above expression as $$\mathcal Z_T = P_T * \mathcal Z_0 + \int_0^T P_{T-S} * (\mathcal Z_S \cdot dW_S)$$ where the $*$ always denotes a spatial convolution.
		
	\end{defn}
	
	The following proposition gives us conditions for the existence, uniqueness, and positivity of solutions, starting from initial data which is defined pointwise at each $X \in I$, and bounded in $L^2(\Bbb P)$ under an exponential weight function.
	
	\begin{prop}[Existence/Uniqueness/Positivity of Mild Solutions with $L^2$-bounded initial data]\label{70} Let $\xi$ be a space-time white noise. Suppose that we have some (random, function-valued) initial data $\mathcal Z_0$ which satisfies the following condition for some $a>0$: $$\sup_{ \substack{X\in I}} e^{-aX} \Bbb E[\mathcal Z_0(X)^2]<\infty.$$ If $I=[0,1]$ we can just assume $a=0$. Then there exists a mild solution to the SHE which is adapted to the natural filtration generated by the noise, $\mathcal F_T:=\sigma( \{\xi(S,\cdot) \}_{S\leq T})$. This mild solution is unique in the class of adapted processes $\mathcal Z$ satisfying $$\sup_{ \substack{X\in I \\ S \in [0,T]}} e^{-aX} \Bbb E[\mathcal Z_S(X)^2]<\infty.$$ Furthermore, if $\mathcal Z_0$ is a.s. positive, then $\mathcal Z(S, \cdot)>0,\; \forall S$ a.s. 
		
	\end{prop}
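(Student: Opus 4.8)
The plan is to follow the now-standard Picard iteration / fixed-point scheme for the mild SHE, adapted to our Robin heat kernel $P_T$, using the heat-kernel bounds from Section 3.3 (Propositions \ref{27}, \ref{28}, \ref{29}) in place of the usual Gaussian estimates. First I would set up the iteration: define $\mathcal Z^{(0)}(T,X) := \int_I P_T(X,Y)\mathcal Z_0(dY)$, and inductively $\mathcal Z^{(n+1)}_T := P_T*\mathcal Z_0 + \int_0^T P_{T-S}*(\mathcal Z^{(n)}_S \cdot dW_S)$. Introduce the weighted norm $\vertiii{\mathcal Z}_{T,a} := \sup_{S\le T,\,X\in I} e^{-aX}\,\Bbb E[\mathcal Z_S(X)^2]^{1/2}$ (with $a=0$ when $I=[0,1]$). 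Using the Itô isometry, the difference $\mathcal Z^{(n+1)}_S(X)-\mathcal Z^{(n)}_S(X)$ has second moment equal to $\int_0^S\int_I P_{S-R}(X,Y)^2\,\Bbb E[(\mathcal Z^{(n)}_R(Y)-\mathcal Z^{(n-1)}_R(Y))^2]\,dY\,dR$. The key computation is to bound $\int_I P_{S-R}(X,Y)^2 e^{aY}\,dY$: combining the pointwise bound $P_T(X,Y)\le C T^{-1/2}e^{-b|X-Y|/\sqrt T}$ of \eqref{31} (used once, with $b$ large) with the exponential-moment bound of Proposition \ref{28}, this is at most $C(S-R)^{-1/2}e^{aX}$. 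Feeding this in gives $\vertiii{\mathcal Z^{(n+1)}-\mathcal Z^{(n)}}_{T,a}^2 \le C\int_0^T (T-R)^{-1/2}\vertiii{\mathcal Z^{(n)}-\mathcal Z^{(n-1)}}_{R,a}^2\,dR$, and the singular-kernel Gronwall lemma (the half-integer power $t^{-1/2}$ being integrable) yields a contraction on a small time interval, and then by iteration on all of $[0,T]$; the limit $\mathcal Z$ is the desired mild solution, and is adapted since each iterate is. For uniqueness, the same Gronwall argument applied to the difference of two adapted solutions in the stated class forces them to agree; one also needs that $\mathcal Z^{(0)}$ itself lies in the class, which follows from Proposition \ref{28} applied to $\Bbb E[\mathcal Z_0(Y)^2]$ under its exponential weight.

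For positivity, I would invoke the standard Mueller-type comparison / positivity argument for the SHE. The cleanest route is to approximate: mollify the initial data and the noise so that the solution of the approximate equation is a genuine (smooth in the noise, continuous) process for which positivity can be read off from a Feynman–Kac or ordinary-SDE representation, then pass to the limit. Alternatively, one can cite the general positivity result for the SHE (Mueller's theorem, or the version in [CS16, Proposition 2.4]/the appendix there): the argument is local and insensitive to the boundary condition, since near the boundary $P_T$ still defines a positive semigroup (this positivity of $P_T$ is itself inherited from positivity of $\mathbf p_t^R$ in the regime $A,B\le 0$, and for $A>0$ follows from the image-method formula together with the maximum principle, or more simply from the $\epsilon\to 0$ limit of positive discrete kernels). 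I would phrase it as: $P_T(X,Y)\ge 0$ for all $X,Y\in I$, hence the SHE is a positivity-preserving equation in the sense of Mueller, so $\mathcal Z_0\ge 0$ a.s. implies $\mathcal Z(S,\cdot)\ge 0$ for all $S$ a.s.; strict positivity then follows from the strong maximum principle (Mueller's strict positivity theorem), using that $\mathcal Z_0$ is a nonzero measure.

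The main obstacle I expect is \emph{not} the contraction estimate (that is routine once the heat-kernel bounds are in hand) but rather handling the boundary in the positivity step with full rigor: the classical references for SHE positivity (Mueller, Shiga) are stated on $\Bbb R$ or the torus, so some care is needed to check that the comparison principle survives the Robin boundary — in particular that the Robin heat semigroup is positivity-preserving, which when $A<0$ or $B<0$ corresponds to a \emph{branching} mechanism at the boundary (so $P_T$ is a sub-probability kernel scaled up, but still nonnegative). I would address this by using the representation $\mathbf p_t^R = \sum_n e^{-t} \frac{t^n}{n!}\mathbf p^n$ from the proof of Proposition \ref{33} (and its bounded-interval analogue), where each $\mathbf p^n$ is manifestly a nonnegative kernel (a convex combination of lazy-random-walk steps with a nonnegative boundary reflection coefficient $\mu_A$ — note $\mu_A=1-A\epsilon>0$ for small $\epsilon$), hence $\mathbf p_t^R\ge 0$, and then pass to the $\epsilon\to0$ limit using Theorem \ref{controb} to conclude $P_T\ge 0$. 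With nonnegativity of $P_T$ secured, the rest of the positivity proof is a black-box application of the SHE comparison principle to the mollified equation followed by a limiting argument, exactly as in [CS16].
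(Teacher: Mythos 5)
Your proposal is correct and takes essentially the same approach as the paper: a Picard/Walsh iteration in the weighted norm $\sup_{X,S}e^{-aX}\Bbb E[\mathcal Z_S(X)^2]$, with the key kernel estimate $\int_I P_T(X,Y)^2e^{aY}dY\leq CT^{-1/2}e^{aX}$ (obtained from Propositions \ref{27} and \ref{28}) feeding a singular Gronwall/factorial iteration, the same Gronwall argument for uniqueness, and positivity delegated to Mueller's comparison theorem exactly as the paper does via [Mue91]. Your extra verification that $P_T\geq 0$ through the discrete representation of $\mathbf p_t^R$ and Theorem \ref{controb} is a sensible supplement rather than a different method.
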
 
	
	\begin{proof} The argument here is adapted from [Wal86]. The informal argument is as follows: We fix a terminal time $\tau>0$, and we define the following sequence of iterates for $T\leq \tau$ and $X \in I$: \begin{align*}u_0(T,X)&:= \int_I P_T(X,Y) \mathcal Z_0(Y)dY, \\ u_{n+1}(T,X)&:= \int_0^T \int_I P_{T-S}(X,Y) u_n(S,Y) \xi(S,Y)dYdS.\end{align*}
		The fact that the stochastic integral defining $u_{n+1}$ actually exists will follow from Equations (\ref{67}) and (\ref{68}) below, but we will leave the formalism for later. If we let $z_N:=\sum_0^Nu_n$ then we have the relation that $$z_{N+1}(T,X) = \int_I P_T(X,Y) \mathcal Z_0(Y)dY+\int_0^T\int_I P_{T-S}(X,Y)z_N(S,Y) \xi(S,Y)dYdS.$$ By letting $N \to \infty$, it follows that our mild solution should heuristically be given by $\mathcal Z=\lim_{N\to \infty} z_N= \sum_{0}^{\infty} u_n$. Therefore, the aim is now to prove that the series $\sum_n u_n$ converges absolutely in the appropriate Banach Space.
		\\
		\\
		Let us now formalize the argument. Consider the Banach space $\cal B$ consisting of $C(I)$-valued, adapted processes $u=(u(T,\cdot))_{T \in [0,\tau]}$ which satisfy the condition $$\|u\|_{\mathcal B}^2 := \sup_{ \substack{X\in I \\ T \in [0,\tau]}} e^{-aX} \Bbb E[ u(T,X)^2]<\infty .$$ 
		Let $u_n$ be given as above. Define a sequence of functions $(f_n)_{n\geq 0}$ from $[0,\tau] \to \Bbb R_+$ by $$f_n(T):= \sup_{\substack{X\in I \\ S \in [0,T]}} e^{-aX} \Bbb E [ u_n(S,X)^2]$$
		where the RHS is defined to be $+\infty$ if the stochastic integral defining $u_{n}$ fails to exist (which will happen iff $\int_{[0,T] \times I} P_{T-S}(X,Y)^2 \Bbb E[u_n(S,Y)^2] dYdS = +\infty$).
		\\
		\\
		By Itô's isometry and the definition of $f_n$, we see that
		\begin{align}\label{67}
		\Bbb E[u_{n+1}(T,X)^2] &= \int_0^T  \int_I P_{T-S}(X,Y)^2 \Bbb E[u_n(S,Y)^2] dYdS \nonumber \\ & \leq \int_0^T \bigg( \int_I P_{T-S}(X,Y)^2 \cdot e^{aY} dY \bigg) f_n(S) dS.
		\end{align} By Propositions \ref{27} and \ref{28} above, we easily obtain the bound: \begin{equation}\label{68}\int_I P_T(X,Y)^2e^{aY} dY \leq C T^{-1/2} e^{aX},\;\;\;\;\;\;\;\; \forall X\in I, \;T \leq \tau. \end{equation} Here $C$ is a constant depending only on $A,B$ and the time horizon $\tau$. Note that $f_n$ is an increasing function, which implies that $T \mapsto \int_0^T (T-S)^{-1/2} f_n(S)dS$ is also increasing in $T$ (which can be proved by substituting $S=TU)$. Together with equations (\ref{67}) and (\ref{68}), this implies that $$f_{n+1}(T) \leq C \int_0^T (T-S)^{-1/2} f_n(S)dS$$ which we can iterate twice to obtain that $$f_{n+2}(T) \leq C' \int_0^T f_n(S)dS$$ 
		By assumption, we have that $\sup_{X \in I} e^{-aX} \Bbb E[\mathcal Z_0(X)^2] <\infty$ which implies (for instance by (\ref{68})) that $f_0(T) < \infty$ for all $T \in [0,\tau]$. So by iterating this recursion, one obtains the result that $f_n(T) \leq CT^{n/2}/(n/2)!$, which implies that the stochastic integral defining $u_n$ always exists, and moreover that $\sum_n \|u_n\|_{\mathcal B}<\infty$, as desired.
		\\
		\\
		Uniqueness is proved in a similar way: If $\mathcal Z, \mathcal Z'$ are two different solutions then $$\mathcal Z(T,X) - \mathcal Z'(T,X) = \int_0^T \int_I P_{T-S}(X,Y) \big[ \mathcal Z(S,Y) - \mathcal Z'(S,Y) \big] \xi(S,Y) dY dS$$ Squaring both sides and taking expectations, $$\Bbb E\big[\big(\mathcal Z(T,X) - \mathcal Z'(T,X)\big)^2\big] = \int_0^T \int_I P_{T-S}(X,Y)^2 \Bbb E \big[ \big(\mathcal Z(S,Y) - \mathcal Z'(S,Y)\big)^2 \big] dYdS$$ so now Gronwall's lemma (or direct iteration) shows that both sides must be zero.
		\\
		\\
		The positivity result can be adapted from [Mue91].
	\end{proof}
	
	Although the above proposition gives us existence and uniqueness results for a wide class of initial data, there are still some natural choices of initial data which are not covered. In section 6, we will especially need the case of $\delta_0$ initial data, which is clearly not covered by the above proposition (since it is not function-valued).
	
	\begin{prop}[Existence/Uniqueness/Positivity of Mild Solutions with $\delta_0$-initial data]\label{71} Fix a space-time white noise $\xi$. There exists a $C(I)$-valued process $(\mathcal Z_T)_{T \geq 0}$ which is adapted to $\mathcal F_T:=\sigma( \{\xi(S,\cdot) \}_{S\leq T})$, and satisfies the conditions of Definition \ref{69} with $\mathcal Z_0=\delta_0$: $$\mathcal Z_T(X) = P_T(X,0) + \int_0^T \int_I P_{T-S}(X,Y) \mathcal Z_S(Y) \xi(S,Y) dYdS.$$ This mild solution is unique in the class of adapted processes $\mathcal Z$ satisfying $$\sup_{ \substack{X\in I \\ S \in [0,T]}} S\cdot  \Bbb E[\mathcal Z_S(X)^2]<\infty.$$ Furthermore, $\mathcal Z_S>0,\; \forall S$ a.s. 
		
	\end{prop}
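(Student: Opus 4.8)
The plan is to run the Picard iteration / Wiener-chaos construction already used for Proposition \ref{70}, adapted to the fact that the ``seed'' is now the singular function $u_0(T,X):=P_T(X,0)$, which by the estimate \eqref{31} of Proposition \ref{27} (taken with $b=0$) is comparable to $T^{-1/2}$ as $T\to 0$. Accordingly, the uniform-$L^2$ class of Proposition \ref{70} must be replaced by the weighted class $\sup_{X\in I,\,S\le T}S\,\mathbb E[\mathcal Z_S(X)^2]<\infty$ appearing in the statement, and every estimate has to be made compatible with an $S^{-1}$-type blow-up of the second moment at $S=0$.

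\textbf{Existence.} I would set $u_0(T,X)=P_T(X,0)$, $u_{n+1}(T,X)=\int_0^T\!\!\int_I P_{T-S}(X,Y)\,u_n(S,Y)\,\xi(dS,dY)$, and define $\mathcal Z_T:=\sum_{n\ge 0}u_n(T,\cdot)$ once convergence is shown. The crucial device is to propagate through the iteration not merely a power of $T$ but a Gaussian-type spatial profile $\bar p_T(X):=T^{-1/2}e^{-2b'|X|/\sqrt T}$, with a fixed $b'>0$ coming from \eqref{31}: one shows inductively that $\mathbb E[u_n(T,X)^2]\le g_n(T)\,\bar p_T(X)$, where $g_0(T)=cT^{-1/2}$ and $g_{n+1}(T)=C\int_0^T (T-S)^{-1/2}g_n(S)\,dS$. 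The inductive step reduces to the Chapman--Kolmogorov-type bound
\[
\int_I P_{T-S}(X,Y)^2\,\bar p_S(Y)\,dY\;\le\;C(T-S)^{-1/2}\,\bar p_T(X),
\]
which follows from \eqref{31} by an elementary ``overlap'' estimate on the product of the two exponentials (the mechanism behind Propositions \ref{28} and \ref{29}); this is exactly where the spatial profile earns its keep, since the cruder bound $\int_I P_{T-S}(X,Y)^2\,dY\le C(T-S)^{-1/2}$ combined with $\mathbb E[u_n(S,Y)^2]\le (\text{const})\,S^{-1}$ would leave a non-integrable factor $(T-S)^{-1/2}S^{-1}$. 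Solving the recursion by Beta integrals gives $g_n(T)\le (C')^{n+1}\,T^{(n-1)/2}/\Gamma((n+1)/2)$, whence $\sum_n\mathbb E[u_n(T,X)^2]^{1/2}<\infty$; this simultaneously shows that each stochastic integral is well defined, that the series converges in $L^2(\Omega)$, and (since the $n=0$ term dominates) that $\mathbb E[\mathcal Z_T(X)^2]\le CT^{-1}$ for $T\le\tau$, so that $\mathcal Z$ belongs to the asserted class. Local Hölder continuity of $(T,X)\mapsto\mathcal Z_T(X)$ on $(0,\infty)\times I$, hence a $C(I)$-valued version, then follows from Kolmogorov's criterion together with analogous moment bounds on space-time increments built from the increment estimates in Proposition \ref{27}, exactly as in [CS16] or [ACQ11].

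\textbf{Uniqueness.} If $\mathcal Z,\mathcal Z'$ are two solutions in the class, then $D_S:=\mathcal Z_S-\mathcal Z'_S$ solves the \emph{homogeneous} equation $D_T(X)=\int_0^T\!\!\int_I P_{T-S}(X,Y)\,D_S(Y)\,\xi(dS,dY)$ (the singular seed $P_T(X,0)$ cancels), so $v(T,X):=\mathbb E[D_T(X)^2]$ satisfies the closed identity $v(T,X)=\int_0^T\!\!\int_I P_{T-S}(X,Y)^2\,v(S,Y)\,dY\,dS$ together with the a priori bound $v(S,Y)\le MS^{-1}$. The one-step Gronwall argument of Proposition \ref{70} fails here because $\int_0^T (T-S)^{-1/2}S^{-1}\,dS=\infty$; instead I would iterate the identity $k$ times, bound each squared kernel by \eqref{31}, integrate out the intermediate space variables with the overlap estimate above, and bound the innermost factor by $MS_1^{-1}$. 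The remaining time integral is a Dirichlet integral over the $k$-simplex, producing a prefactor of order $(C'')^k/\Gamma(k/2)$, which tends to $0$ as $k\to\infty$; hence $v\equiv 0$ and $\mathcal Z=\mathcal Z'$.

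\textbf{Positivity.} This is handled as in [Mue91], just as for Proposition \ref{70}: approximate $\delta_0$ by nonnegative smooth bumps $\rho_\eta\to\delta_0$; the corresponding solutions $\mathcal Z^{\rho_\eta}$ are covered by Proposition \ref{70}, hence strictly positive, and they converge to $\mathcal Z$ because the solution map is continuous in the initial datum (a consequence of the same $L^2$ estimates, using $\int_I P_T(X,Y)\rho_\eta(Y)\,dY\to P_T(X,0)$ locally uniformly for $T>0$), so $\mathcal Z\ge 0$; strict positivity for every $S>0$ then follows from Mueller's strong positivity, or more cheaply by observing that for $S\ge\delta$ the process $\mathcal Z_S$ is the mild solution started at time $\delta$ from the a.s.\ nonnegative, not-identically-zero datum $\mathcal Z_\delta\in C(I)$ and invoking the positivity assertion of Proposition \ref{70}. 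I expect the main obstacle throughout to be precisely the $T^{-1/2}$ blow-up of the seed: it forces the spatial-profile bookkeeping in the existence proof and the $k$-fold (rather than single-step) iteration in the uniqueness proof.
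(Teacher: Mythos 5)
Your existence argument is essentially the paper's: the same Picard iteration seeded by $u_0(T,X)=P_T(X,0)$, with second moments tracked through a weight that blows up like $T^{-1/2}$ times a spatial profile, and the recursion closed by a Beta-function gain. The only difference is bookkeeping: the paper weights by the exact kernel, setting $f_n(T):=\sup_{X,\,S\le T}S^{1/2}P_S(X,0)^{-1}\,\mathbb{E}[u_n(S,X)^2]$, bounds $P_{T-S}(X,Y)^2\le C(T-S)^{-1/2}P_{T-S}(X,Y)$ and then uses the exact semigroup identity $\int_I P_{T-S}(X,Y)P_S(Y,0)\,dY=P_T(X,0)$, which delivers your ``overlap'' inequality for free; your surrogate profile $T^{-1/2}e^{-2b'|X|/\sqrt{T}}$ also works but obliges you to prove the overlap bound from \eqref{31} (true, by splitting the exponents according to whether $T-S\le S$ or $S\le T-S$). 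Your treatment of positivity is at the same level of detail as the paper, which simply adapts [Mue91].

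The uniqueness step, however, has a genuine gap. After bounding each squared kernel by $C(\cdot)^{-1/2}$ times the kernel and integrating out the intermediate space variables (whether crudely or with the overlap estimate --- note the uniqueness class gives \emph{no} spatial decay on $v(S_1,Y_1):=\mathbb{E}[D_{S_1}(Y_1)^2]$, only $v\le M S_1^{-1}$ uniformly in $Y_1$, so no profile can be fed in at the innermost slot), the remaining time integral in your $k$-fold iteration is $\int_{0<S_1<\cdots<S_k<T}S_1^{-1}(S_2-S_1)^{-1/2}\cdots(T-S_k)^{-1/2}\,dS_1\cdots dS_k$, and this is $+\infty$ for every $k$: already the innermost integral $\int_0^{S_2}S_1^{-1}(S_2-S_1)^{-1/2}\,dS_1$ diverges. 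The Dirichlet/Beta evaluation you invoke requires every exponent to exceed $-1$; the exponent $-1$ sits at the innermost variable and is not improved by iterating, so the advertised $(C'')^k/\Gamma(k/2)$ prefactor never materializes --- this is the same obstruction that kills the one-step Gronwall, merely pushed one level in. Some genuinely different input is needed to conclude in the class $\sup_{X,\,S\le T}S\,\mathbb{E}[\mathcal Z_S(X)^2]<\infty$: for instance, iterate the homogeneous equation to write $D_T(X)$, for every $k$, as a $(k+1)$-fold iterated It\^{o} integral of an adapted $L^2$ process, which is orthogonal to every Wiener chaos of order $\le k$; letting $k\to\infty$ gives $D_T(X)\perp L^2(\sigma(W))$ and hence $D_T(X)=0$. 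Alternatively one could first bootstrap a sharper a priori bound near $S=0$ (with a spatial profile) for any solution in the class and only then run your weighted iteration, but that bootstrap itself requires an argument. To be fair, the paper is terse here --- it says uniqueness is ``proved in the same manner as in Proposition \ref{70}'' --- and the naive transcription fails for exactly the reason you yourself identified; but your proposal does not supply the missing ingredient either.
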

	
	\begin{proof} Similar to before, let us fix a terminal time $\tau$ and then define a Banach space $\mathcal B$ which consists of $C(I)$-valued, adapted processes $u=(u(T,\cdot))_{T \in [0,\tau]}$ satisfying $$\|u\|_{\mathcal B}^2 := \sup_{ \substack{X\in I \\ T \in [0,\tau]}} T\cdot \Bbb E[u(T,X)^2]<\infty.$$
	As before, define a sequence of iterates $T\leq \tau$ and $X \in I$: \begin{align*}u_0(T,X)&:= P_T(X,0), \\ u_{n+1}(T,X)&:= \int_0^T \int_I P_{T-S}(X,Y) u_n(S,Y) \xi(S,Y)dYdS.\end{align*}
	As before, we just need to show that $\sum_n \|u_n\|_{\mathcal B}<\infty$.
		\\
		\\
		Similar to the proof of Proposition \ref{70}, we define $$f_n(T):= \sup_{\substack{X\in I \\ S \in [0,T]}} S^{1/2} P_S(X,0)^{-1} \Bbb E [ u_n(S,X)^2]$$
		where the RHS is defined as $+\infty$ if the stochastic integral defining $u_n$ fails to exist.
		\\
		\\
		Using the Itô isometry, the definition of $f_n$, and the fact that (by Proposition \ref{27}) $P_{T-S} \lesssim (T-S)^{-1/2}$, we compute \begin{align*}\Bbb E [u_{n+1}(T,X)^2] &= \int_0^T \int_I P_{T-S}(X,Y)^2 \Bbb E[u_n(S,Y)^2] dYdS \\ &\leq \int_0^T \int_I P_{T-S}(X,Y)^2 \cdot S^{-1/2} P_S(Y,0) f_n(S) dY dS \\ &\leq C \int_0^T (T-S)^{-1/2}S^{-1/2} \bigg[\int_I P_{T-S}(X,Y) P_S(Y,0) dY \bigg]f_n(S) dS \\ & = C P_T(X,0) \int_0^T (T-S)^{-1/2}S^{-1/2}  f_n(S)dS \end{align*}
		where we used the semigroup property in the final line. Multiplying both sides by $T^{1/2}P_T(X,0)^{-1}$, we find that \begin{align*}T^{1/2}P_T(X,0)^{-1} \Bbb E[u_{n+1}(T,X)] &\leq CT^{1/2} \int_0^T (T-S)^{-1/2}S^{-1/2}f_n(S)dS .
		\end{align*}
		Just like before, $f_n$ is an increasing function, therefore (by making a substitution $S=TU)$ one may see that the RHS of the last expression is an increasing function of $T$. It follows that \begin{align*}f_{n+1}(T) &\leq CT^{1/2} \int_0^T (T-S)^{-1/2}S^{-1/2} f_n(S)dS \end{align*} which we can iterate twice to obtain $$f_{n+2}(T) \leq C'T^{1/2} \int_0^T S^{-1/2}f_n(S)dS.$$ Using the fact that (by Prop. \ref{27}) $\sup_{T\in [0,\tau]}f_0(T)\leq C$, we can iterate this recursion to obtain $$f_n(T) \lesssim T^{n/2}/(n/2)! $$ We have just proved that $$\Bbb E[u_n(T,X)^2] \leq CP_T(X,0) T^{(n-1)/2}/(n/2)!$$ for a constant $C$ not depending on $T \in [0,\tau]$, $X \in I$, or $n \in \Bbb N$. By Proposition \ref{27} we know that $P_{T}(X,0) \leq CT^{-1/2}$, and therefore it follows that $\sum_n \|u_n\|_{\mathcal B}<\infty$, which proves existence of the mild solution. 
		\\
		\\
		Uniqueness and positivity are proved in the same manner as in Proposition \ref{70}.
	\end{proof}
	
	Although the mild solution is one notion of what it means to solve the SHE, it is not the only natural definition of a solution. In particular, the notion of a weak solution (which uses the idea of pairing against test functions) is also very useful, and is actually equivalent to the notion of a mild solution:
	
	\begin{prop}[Equivalence of Weak Solutions and Mild Solutions]\label{103}Let $W$ be a cylindrical Wiener process defined on some $(\Omega, \mathcal F, \Bbb P$). Denote by $\mathscr T$ the collection of all $\varphi \in \mathcal S (\Bbb R)$ such that $\varphi'(0) = A \varphi(0)$ and also $\varphi'(1) = B \varphi(1)$ if $I=[0,1]$. Then a $C(I)$-valued process $(\mathcal Z_T)_{T \geq 0}$ is a mild solution to the SHE if and only if for every $\varphi \in \mathscr T$ we have that $$(\mathcal Z_T, \varphi) = (\mathcal Z_0, \varphi) +\frac{1}{2} \int_0^T (\mathcal Z_S, \varphi'') dS + \int_0^T (\mathcal Z_S \varphi , dW_S)$$ where $(\varphi, \psi):=\int_I \varphi \psi$ denotes the $L^2(I)$-pairing. If the latter relation holds, we call $(\mathcal Z_T)$ a \textit{weak} solution of the SHE.
		
	\end{prop}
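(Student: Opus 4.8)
The plan is to prove the two implications separately, the key analytic ingredient being the semigroup identity
\[
P_t * \varphi \;=\; \varphi + \tfrac12 \int_0^t P_s * \varphi''\, ds \qquad (\varphi \in \mathscr T,\ t\ge 0),
\]
which I would establish first and refer to as $(\star)$. It follows because $u(s,X):=(P_s*\varphi)(X)$ solves the heat equation $\partial_s u = \tfrac12\partial_X^2 u$ (Theorem \ref{controb}) together with the observation that $\partial_X^2(P_s*\varphi) = P_s*\varphi''$ for $\varphi \in \mathscr T$: by symmetry of the kernel and the fact that $P_s(X,Y)$ solves the heat equation in each variable, $\partial_X^2 P_s(X,Y) = \partial_Y^2 P_s(X,Y)$, so one integrates by parts twice in $Y$, and the boundary terms cancel precisely because $\varphi$ and $P_s(X,\cdot)$ satisfy the \emph{same} Robin conditions ($\partial_Y P_s(X,0) = AP_s(X,0)$ and $\varphi'(0)=A\varphi(0)$, likewise at $X=1$ when $I=[0,1]$, with everything decaying at $+\infty$ by Proposition \ref{27}); note that only $\varphi$ and $\varphi'$ enter at the boundary, so no condition on $\varphi''$ is needed. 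Then $(\star)$ is the fundamental theorem of calculus in $s$ applied to $s \mapsto P_s*\varphi$.

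For mild $\Rightarrow$ weak: pair the mild formula with $\varphi \in \mathscr T$. Using symmetry (self-adjointness) of $P_t$ write $(P_T*\mathcal Z_0,\varphi) = (\mathcal Z_0, P_T*\varphi)$, and via a stochastic Fubini (legitimate thanks to the $L^2(\Bbb P)$ bounds of Proposition \ref{70} or \ref{71} and the kernel estimates of Proposition \ref{27}) rewrite the stochastic convolution term as $\int_0^T (\mathcal Z_S(P_{T-S}*\varphi),\, dW_S)$. Now apply $(\star)$ to $P_T*\varphi$ and to $P_{T-S}*\varphi$; the latter produces a term $\tfrac12\int_0^T\big(\mathcal Z_S \int_S^T P_{R-S}*\varphi''\,dR,\, dW_S\big)$, which after another stochastic Fubini becomes $\tfrac12\int_0^T\big(\int_0^R (\mathcal Z_S(P_{R-S}*\varphi''),dW_S)\big)dR$, and the inner stochastic integral equals $(\mathcal Z_R - P_R*\mathcal Z_0,\varphi'')$ by the mild formula (and the same Fubini). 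Collecting terms, every contribution involving $P_\bullet * \mathcal Z_0$ cancels, leaving exactly $(\mathcal Z_T,\varphi) = (\mathcal Z_0,\varphi) + \tfrac12\int_0^T(\mathcal Z_S,\varphi'')\,dS + \int_0^T(\mathcal Z_S\varphi, dW_S)$.

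For weak $\Rightarrow$ mild: I would first upgrade the fixed-test-function relation to a time-dependent one — for a $C^1$ family $(\varphi_S)_{S\le T}$ of boundary-compatible test functions,
\[
(\mathcal Z_T,\varphi_T) = (\mathcal Z_0,\varphi_0) + \int_0^T \big(\mathcal Z_S,\ \partial_S\varphi_S + \tfrac12\varphi_S''\big)\,dS + \int_0^T (\mathcal Z_S\varphi_S,\, dW_S),
\]
obtained by approximating $(\varphi_S)$ by piecewise-constant-in-time families, applying the fixed-$\varphi$ relation on each subinterval, summing with an Itô product expansion, and passing to the limit. Then fix $T>0$, $X \in I$, let $\eta>0$, and take $\varphi_S(Y):= P_{T-S+\eta}(X,Y)$; since $\partial_S \varphi_S = -\partial_T P_{T-S+\eta}(X,\cdot) = -\tfrac12 \varphi_S''$ (Theorem \ref{controb}), the Lebesgue integral drops out and one obtains $(\mathcal Z_T, P_\eta(X,\cdot)) = (\mathcal Z_0, P_{T+\eta}(X,\cdot)) + \int_0^T (\mathcal Z_S\,P_{T-S+\eta}(X,\cdot),\, dW_S)$. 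Letting $\eta \to 0$, the left side converges to $\mathcal Z_T(X)$ by continuity of $\mathcal Z$ and the fact that $P_\eta(X,\cdot)$ is an approximate identity (Propositions \ref{27} and \ref{29}), while the right side converges to $\int_I P_T(X,Y)\mathcal Z_0(dY) + \int_0^T\int_I P_{T-S}(X,Y)\mathcal Z_S(Y)\xi(S,Y)\,dY\,dS$ by dominated convergence for the deterministic term and Itô's isometry plus Proposition \ref{27} for the stochastic one — yielding the mild formula.

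The step I expect to be the main obstacle is the converse direction: making the time-dependent weak formulation rigorous and, above all, controlling the $\eta \to 0$ limit, since $\varphi_S = P_{T-S+\eta}(X,\cdot)$ develops a $\delta_X$-type singularity as $S \uparrow T$; this is exactly where the sharp short-time kernel bounds of Proposition \ref{27} and the moment estimates of Propositions \ref{70}--\ref{71} (including, for $\delta_0$ data, the integrability built in near $S=0$ and $S=T$) are used. A minor technical point to dispatch along the way is that $P_{T-S}(X,\cdot)$ is not literally an element of $\mathcal S(\Bbb R)$ but only smooth with Gaussian decay on $I$; this is handled by first extending the weak relation (by density of $\mathscr T$ and the $L^2$ bounds) to all bounded $C^2(I)$ functions satisfying the Robin conditions, or by a routine cutoff approximation.
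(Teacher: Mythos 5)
Your proposal is correct, and the forward direction (mild $\Rightarrow$ weak) is essentially the paper's argument in a slightly reshuffled form: the paper differentiates $(P_T*\mathcal Z_0,\varphi)$ in $T$, substitutes the mild formula for $P_S*\mathcal Z_0$, and uses stochastic Fubini plus the fundamental theorem of calculus to produce the cancelling term $P_{T-U}*\varphi-\varphi$; your identity $(\star)$ applied to $P_T*\varphi$ and $P_{T-S}*\varphi$ accomplishes exactly the same cancellation, with the same ingredients (self-adjointness of the Robin semigroup, stochastic Fubini). The converse is where you genuinely diverge. The paper applies the time-dependent (Itô) form of the weak relation to $\varphi_S:=P_{T-S}*\varphi$ for a \emph{fixed} $\varphi\in\mathscr T$; since $P_{T-S}*\varphi\to\varphi$ smoothly as $S\uparrow T$, there is no singularity to control, and one lands directly on the mild formula paired against $\varphi$, with only a soft density remark needed to upgrade to the pointwise identity. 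You instead take $\varphi_S:=P_{T-S+\eta}(X,\cdot)$, i.e.\ the kernel itself with an $\eta$-regularization, and send $\eta\to 0$. This buys the pointwise mild identity at $(T,X)$ without a final density step, but at the cost of two extra technical layers that the paper's choice avoids entirely: the approximate-identity limit $(\mathcal Z_T,P_\eta(X,\cdot))\to\mathcal Z_T(X)$ together with the $L^2$ convergence of the stochastic term via the Itô isometry and the kernel bounds of Proposition \ref{27}, and the fact that $P_{T-S+\eta}(X,\cdot)$ is not in $\mathcal S(\Bbb R)$, which forces the cutoff/density extension of the class $\mathscr T$ that you flag. Both routes are sound; the paper's is shorter precisely because the convolved test function never becomes singular, whereas yours trades that convenience for a direct pointwise conclusion.
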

	
	\begin{proof}
		If $\mathcal Z_0$ is a mild solution as defined in \ref{69}, then $$\mathcal Z_T = P_T*\mathcal Z_0 + \int_0^T P_{T-S} *(\mathcal Z_S dW_S).$$ Pairing both sides against a $\varphi$ and using self-adjointness of $\frac{1}{2}\Delta$ with Robin boundary conditions, \begin{equation}\label{104}(\mathcal Z_T, \varphi) = ( P_T *\mathcal Z_0, \varphi) + \int_0^T \big(\mathcal Z_S \cdot (P_{T-S} * \varphi), dW_S\big).\end{equation} Next, noting that $\partial_T ( P_T *\mathcal Z_0, \varphi) = \frac{1}{2}(P_T * \mathcal Z_0 , \varphi '')$ we have that \begin{align}\label{105}(P_T *\mathcal Z_0, \varphi) &= (\mathcal Z_0, \varphi) + \frac{1}{2}\int_0^T (P_S *\mathcal Z_0, \varphi'')dS \nonumber\\ &= (\mathcal Z_0, \varphi) + \frac{1}{2} \int_0^T \bigg(  \mathcal Z_S - \int_0^S P_{S-U} *(\mathcal Z_U dW_U)\;\;,\;\; \varphi'' \bigg) dS \nonumber\\ &= (\mathcal Z_0, \varphi) + \frac{1}{2} \int_0^T (\mathcal Z_S, \varphi'')dS - \frac{1}{2} \int_0^T \bigg( \mathcal Z_U \cdot \bigg[\int_U^T P_{S-U} * \varphi'' dS\bigg] \;\;, dW_U \bigg) \nonumber\\ &= (\mathcal Z_0, \varphi) + \frac{1}{2} \int_0^T (\mathcal Z_S, \varphi'')dS - \int_0^T \bigg( \mathcal Z_U \cdot \bigg[P_{T-U}* \varphi - \varphi\bigg] \;\;, dW_U \bigg). \end{align} In the third line we distributed terms and switched the order of integration (cf. stochastic Fubini's theorem). In the last line we merely applied the fundamental theorem of calculus to the term within the square bracket. Now Equations (\ref{104}) and (\ref{105}) together imply that $(\mathcal Z_T)$ is a weak solution. Thus mild solutions are weak.
		\\
		\\
		To prove the converse, note that if $$(\mathcal Z_T, \varphi) = (\mathcal Z_0, \varphi) +\frac{1}{2} \int_0^T (\mathcal Z_S, \varphi'') dS + \int_0^T (\mathcal Z_S \varphi , dW_S).$$ Using Itô's formula for Hilbert-space valued processes, we see that if $\varphi \in C^{\infty} ([0,\tau] \times \Bbb R)$ such that $\varphi_T:=\varphi(T,\cdot) \in \mathscr T$ for all $T$, then $$d(\mathcal Z_T, \varphi_T) \;\;=\;\; (d\mathcal Z_T, \varphi_T) + (\mathcal Z_T, d\varphi_T) \;\;=\;\; \big(\mathcal Z_T, \frac{1}{2}\varphi_T'' + \partial_T \varphi_T \big)dT + (\mathcal Z_T \varphi_T, dW_T).$$ Then fixing a time $T>0$, and setting $\varphi_S:= P_{T-S}*\varphi$, we can integrate from $0$ to $T$ to obtain: \begin{align*}
		(\mathcal Z_T, \varphi) - (P_T *\mathcal Z_0, \varphi) &= (\mathcal Z_T, \varphi_T) - (\mathcal Z_0, \varphi_0) \\ &= \int_0^T \big( \mathcal Z_S, \frac{1}{2} \varphi_S'' +\partial_S \varphi_S \big) dS + \int_0^T (\mathcal Z_S \varphi_S, dW_S ) \\ &=0+ \int_0^T \big( \mathcal Z_S \cdot (P_{T-S} * \varphi), dW_S\big)
		\end{align*}
		which (after rearranging terms) is equivalent to $$(\mathcal Z_T, \varphi) = \bigg( P_T*\mathcal Z_0 + \int_0^T P_{T-S} *(\mathcal Z_S dW_S)\;\;,\; \varphi \bigg) $$ so that $(\mathcal Z_T)$ is a mild solution.
	\end{proof}
	
	\section{Proof of Tightness and Identification of the Limit }
	
	Let us first establish a few topological conventions. Throughout this section, $I$ will denote either the interval $[0,\infty)$ or $\Bbb [0,1]$. We will endow $C(I)$ with the topology of uniform convergence on compact sets if $I=[0,\infty)$, and the topology of uniform convergence when $I=[0,1]$. The space $D([0,\tau],C(I))$ will denote the space of all right-continuous functions from $[0,\tau] \to C(I)$ which have left limits. We will endow $D([0,\tau],C(I))$ with the Skorokhod topology, which may be metrized by $$\sigma(\Phi, \Psi) = \inf_{\lambda \in \Lambda} \max \big\{ \|\lambda-id\|_{L^{\infty}[0,\tau]}\;,\; \sup_{T \in [0,\tau]} \|\Phi(T)-\Psi(\lambda(T)) \|_{C(I)}\big\} $$ where $\Lambda$ is the space of increasing homeomorphisms from $[0,\tau]$ to itself, and $\|\cdot\|_{C(I)}$ is a norm inducing the topology on $C(I)$.
	
	\begin{defn}[Parabolic Scaling]\label{40} Let $\Lambda = \Bbb Z_{\geq 0}$ for ASEP-H and $\Lambda = \{0,...,N\}$ for ASEP-B. Let $Z_t(x)$ denote the Gartner-transformed height functions from Section 2. For $\epsilon>0$, $X \in \epsilon^{-1}\Lambda$, and $T \geq 0$, we define the parabolically scaled process $$\mathcal Z^{\epsilon}(T,X) = Z_{\epsilon^{-2}T}(\epsilon^{-1}X)$$ where $Z_t(x)$ is the Gärtner-transformed height function from Definition 2.7. We extend $\mathcal Z^{\epsilon}(T,\cdot)$ from $\epsilon\Lambda$ to the whole interval $I$ by linear interpolation. Throughout this section, we will fix a terminal time $\tau$, and consider (the law of) $\mathcal Z^{\epsilon}$ as a measure on the Skorokhod space $D([0,\tau], C(I))$.
	\end{defn}
	
	We remark that $\mathcal Z^{\epsilon}$ depends on $\epsilon$ in two completely different ways, first due to the space-time scaling in Definition \ref{40}, but secondly also because the non-scaled discrete-space process $Z_t$ depends on $\epsilon$ via the $\epsilon$-scaled parameters $p,q, \alpha, \beta, \gamma, \delta$. This is what we meant by a weak scaling: we scale the model parameters along with the height functions.
	
	\begin{ass}[Near-Equilibrium Initial Conditions]\label{51} For ASEP-H, we will always assume that the initial data $\mathcal Z^{\epsilon}$ satisfies the following bounds: There exists $a \geq 0$ such that for each terminal time $\tau>0$, each $p \geq 1$, and each $\alpha \in [0,\frac{1}{2})$ there exists some constant $C=C(p,\tau, \alpha)$ such that for all $T \in [0,\tau]$, $X,Y \in I$, and $\epsilon>0$ small enough: \begin{align*}\|\mathcal Z^{\epsilon}_0(X)\|_p &\leq Ce^{aX}, \\ \|\mathcal Z^{\epsilon}_0(X)-\mathcal Z^{\epsilon}_0(Y) \|_p &\leq C |X-Y|^{\alpha} e^{a(X+Y)}.\end{align*} Here $\|Z\|_p:= \Bbb E[|Z|^p]^{1/p}$ denotes the $L^p$-norm with respect to the probability measure. For ASEP-B, we will assume the same bounds but with $a=0$.
	\end{ass}
	
	As a consequence of Kolmogorov's continuity criterion, Assumption \ref{51} ensures that with large probability, the random functions $\{\mathcal Z_{0}^{\epsilon}\}_{\epsilon>0}$ are pointwise bounded and equi-Hölder $1/2-$ (locally). In turn, Arzela-Ascoli's theorem and Prohorov's theorem ensure that as $\epsilon \to 0$ the $\mathcal Z_0^{\epsilon}$ converge weakly (along a subsequence) to some random function which has the same regularity as Brownian motion. Hence, Assumption \ref{51} is merely a technical restriction which ensures that the initial data have limit points which are not too wildly behaved on small scales, and whose moments grow exponentially in $X$ (at worst). For instance a product of Bernoulli's will satisfy the assumptions in 3.2, as the associated height function converges to a Brownian motion. On the other hand, narrow-wedge (zero-particle) initial data will not satisfy Assumption \ref{51}. We will find a way to deal with such data in Section 6 below.
	\\
	\\
	We are almost ready to move onto the main results, however we will need a technical result which will be a very useful black box for obtaining $L^p$ estimates. This result was originally obtained in [DT16, Lemma 3.1] and was further elaborated in [CS16, Lemma 4.18], so we will not give the proof.
	
	\begin{lem}\label{73} Let $\|X\|_p = \Bbb E[|X|^p]^{1/p}$ denote the $L^p$ norm with respect to the probability measure, and let $M$ denote the martingale appearing in Proposition \ref{74}. Let $F: [0,\infty) \times \Bbb Z_{\geq 0} \to \Bbb R$ be any bounded function. There exists a constant $C$ (not depending on the function $F$) such that that for any $t>1$ we have that $$\bigg\| \int_0^t \sum_{y \in \Lambda} F(s,y) dM_s(y) \bigg\|_p^2 \leq C \epsilon \int_0^t \sum_y \overline{F} (s,y)^2 \|Z_s(y)\|_p^2 ds$$ where the bar denotes a local supremum: $$\overline{F}(s,y) := \sup_{|u-s|\leq 1} F(u,y).$$
		
	\end{lem}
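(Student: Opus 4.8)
The plan is to run the Burkholder--Davis--Gundy (BDG) machinery for the jump martingale $N_t:=\int_0^t\sum_{y\in\Lambda} F(s,y)\,dM_s(y)$, using Lemma \ref{74} to control its predictable bracket; the window supremum $\overline F$ will enter only in passing from the $L^2$ estimate to general $p$. We take $p\ge 2$, which is the regime used in the sequel.

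First I would dispose of the case $p=2$, since it already reveals the shape of the bound. Because $Z_t(x\pm1)=Z_t(x)e^{\mp\lambda\eta_t(x\pm1)}$ with $\lambda=-\sqrt\epsilon$ and $\eta_t\in\{-1,1\}$, one has $|\nabla^{\pm}Z_t(x)|\le C\sqrt\epsilon\,Z_t(x)$, so the cross term $\nabla^+Z_t(x)\,\nabla^-Z_t(x)$ in \eqref{75} is itself $O(\epsilon Z_t(x)^2)$, whence $\frac{d}{dt}\langle M(y)\rangle_t\le C\epsilon\,Z_t(y)^2$ for every $y$. Since the off-diagonal brackets vanish, $\langle N\rangle_t=\int_0^t\sum_y F(s,y)^2\,d\langle M(y)\rangle_s$, so by Itô's isometry and Minkowski's integral inequality $\|N_t\|_2^2=\Bbb E[\langle N\rangle_t]\le C\epsilon\int_0^t\sum_y F(s,y)^2\,\|Z_s(y)\|_2^2\,ds$, which is in fact stronger than claimed (no $\overline F$).

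For $p\ge 2$, BDG gives $\|N_t\|_p^2\le C_p\,\|[N]_t\|_{p/2}$, so it suffices to estimate the quadratic variation. Each elementary transition of the ASEP changes the height function at exactly one site (this is the purpose of the bookkeeping in the definition of $h_t(0)$), hence at each instant at most one $M(y)$ jumps; thus $[M(x),M(y)]\equiv 0$ for $x\ne y$ and $[N]_t=\sum_y\int_0^t F(s,y)^2\,d[M(y)]_s$. Moreover each jump of $M(y)$ equals $Z_{s^-}(y)(e^{\pm2\lambda}-1)$, of size at most $C\sqrt\epsilon\,Z_{s^-}(y)$, and occurs at an $O(1)$ rate, so $d[M(y)]_s\le C\epsilon\,Z_{s^-}(y)^2\,dN^{(y)}_s$ where $N^{(y)}$ is a counting process of bounded intensity. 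Writing $[M(y)]=\langle M(y)\rangle+R(y)$ with $R(y)$ a martingale (the compensation of $N^{(y)}$), the $\langle M(y)\rangle$-piece contributes at most $C\epsilon\int_0^t\sum_y F(s,y)^2Z_s(y)^2\,ds$ pointwise, whose $L^{p/2}$-norm is bounded by $C\epsilon\int_0^t\sum_y F(s,y)^2\|Z_s(y)\|_p^2\,ds$ by Minkowski --- already the right-hand side (without $\overline F$). What remains is the martingale remainder $\widetilde R_t:=\int_0^t\sum_y F(s,y)^2\,dR_s(y)$.

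The main obstacle is controlling $\widetilde R_t$. Here I would use $t>1$ to partition $[0,t]$ into the unit blocks $[k,k+1]$, apply the discrete-martingale (Burkholder) form of BDG across the blocks so that $\|N_t\|_p^2\lesssim\sum_k\|N_{(k+1)\wedge t}-N_{k\wedge t}\|_p^2$, and on each block iterate BDG on the stochastic integral restricted to $[k,k+1]$. The point is that a fixed site undergoes only $O(1)$ transitions in a unit time window, so when the random jump times on $[k,k+1]$ are integrated one may first replace the time-dependent coefficient $F(s,y)$ by $\sup_{s\in[k,k+1]}F(s,y)$, and $\sup_{s\in[k,k+1]}F(s,y)\le\overline F(s',y)$ for every $s'\in[k,k+1]$; re-summing the blocks then reproduces exactly the factor $\overline F$ on the right. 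A path-regularity estimate --- of the kind in [DT16] and [CS16], obtained from the martingale structure together with the heat-kernel bounds of Section 3 --- is used to return the block supremum of $Z_s(y)$ to the pointwise norms $\|Z_s(y)\|_p$, and one checks that every step costs only a harmless constant and a power of $\epsilon$ no worse than $\epsilon$. This bookkeeping is precisely the content of [DT16, Lemma 3.1] and [CS16, Lemma 4.18], which is why we omit it.
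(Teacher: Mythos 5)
The first thing to note is that the paper does not prove Lemma \ref{73} at all: it states that the result ``was originally obtained in [DT16, Lemma 3.1] and was further elaborated in [CS16, Lemma 4.18], so we will not give the proof.'' So the only comparison available is between your sketch and those references, and your preliminary reductions are indeed the standard opening moves there and are correct: the $p=2$ case by the It\^o isometry together with the brute-force bound $|\nabla^{\pm}Z_t(x)|\leq C\sqrt{\epsilon}\,Z_t(x)$; the observation that each elementary transition changes the height at exactly one site, so the covariations $[M(x),M(y)]$ vanish for $x\neq y$; the jump bound $|\Delta M_s(y)|\leq C\sqrt{\epsilon}\,Z_{s^-}(y)$ with jumps occurring at bounded rate; BDG for $p\geq 2$; and the splitting $[M(y)]=\langle M(y)\rangle+R(y)$, with the compensated part handled by Minkowski so that $\|Z_s(y)^2\|_{p/2}=\|Z_s(y)\|_p^2$ produces the right-hand side without $\overline F$.

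As a proof, however, your proposal is circular precisely where the lemma stops being routine. Everything that makes the statement nontrivial --- why the radius-$1$ window supremum $\overline F$ and the hypothesis $t>1$ enter, and why the right-hand side can be written with $\|Z_s(y)\|_p^2$ at the \emph{same} exponent $p$ --- is concentrated in your last paragraph, where you (i) assert that inside the integral against the random jump measure one may replace $F(s,y)$ by its block supremum and resum, (ii) invoke an unspecified ``path-regularity estimate'' to convert $\sup_{s\in[k,k+1]}Z_s(y)$ back into the pointwise norm $\|Z_s(y)\|_p$, and (iii) close by saying this bookkeeping ``is precisely the content of [DT16, Lemma 3.1] and [CS16, Lemma 4.18], which is why we omit it.'' Those references \emph{are} the lemma; citing them at this point proves nothing beyond the one-line citation the paper already makes. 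Moreover, the omitted step is genuinely delicate: the block jump count $\Delta J_k(y)$ is not independent of $Z(y)$ on that block, so a naive H\"older separation of $\Delta J_k(y)e^{c\sqrt{\epsilon}\Delta J_k(y)}$ from $\sup_{[k,k+1]}Z(y)^2$ raises the integrability index on $Z$ above $p$ and does not give the stated bound (which is used later with the same $p$ in the Gronwall iterations of Propositions \ref{85} and \ref{36}); one needs the conditional-intensity/compensator argument of [DT16], e.g.\ bounding $\sup_{[k,k+1]}Z(y)^2\leq Z_u(y)^2e^{C\sqrt{\epsilon}\Delta J_k(y)}$ and exploiting the uniformly bounded conditional jump rates, with care taken because this conditioning must be performed inside an $L^{p/2}$ norm of a sum over blocks and sites. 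If you intend a self-contained proof, that argument must be supplied; if citation is acceptable, then the honest version of your write-up is the paper's own single sentence referring to [DT16, Lemma 3.1] and [CS16, Lemma 4.18]. (The restriction to $p\geq 2$ is harmless for the paper's purposes, and the small index slip $Z_t(x-1)=Z_t(x)e^{\lambda\eta_t(x)}$, not $e^{\lambda\eta_t(x-1)}$, does not affect the gradient bound.)
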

	
	In practice, we will almost always use the above lemma when $F(s,y) = \mathbf p_s^R(x,y)$ for some $x \in \Lambda$. In this case, by proposition \ref{26} or \ref{33}, it always holds true that $$\sup_{|u-s| \leq 1} \mathbf p_u^R(x,y) \leq e^{1}\mathbf p^R_{s+1}(x,y)$$ and therefore by Proposition \ref{2} or \ref{14} (with $b=0$) it then follows that \begin{equation}\label{76}\overline{\mathbf p}_s^R(x,y)^2 \leq e^{2}\mathbf p^R_{s+1}(x,y)^2 \leq Cs^{-1/2}\mathbf p_{s+1}^R(x,y).
	\end{equation}
	This fact will be used repeatedly during the technical estimates below.
	\\
	\\
	We now move onto the main results.
	
	\begin{prop}[Tightness]\label{85} Fix a terminal time $\tau\geq 0$, and assume that the sequence $\{\mathcal Z_0^{\epsilon} \}_{\epsilon>0}$ of initial data satisfies Assumption \ref{51}. For all $p \geq 1$ and $\alpha \in [0,1/2)$, there exists a constant $C=C(\alpha, p, \tau)$ such that
		
		\begin{equation}\label{77}\|\mathcal Z^{\epsilon}(T,X)\|_p \leq Ce^{aX}\;\;\;\;\;\;\;\;\;\; \end{equation}
		\begin{equation}\label{78}\| \mathcal Z^{\epsilon}(T,Y)-\mathcal Z^{\epsilon}(T,X)\|_p \leq C|X-Y|^{\alpha}e^{a(X+Y)}\;\;\;\;\;\;\;\end{equation}
		\begin{equation}\label{79}\| \mathcal Z^{\epsilon}(T,X) - \mathcal Z^{\epsilon}(S,X)\|_p \leq Ce^{2aX} (|T-S|^{\alpha/2}\vee \epsilon^{\alpha}) \end{equation} uniformly over all $X,Y \in[0,\infty)$ and $S ,T \in [0,\tau]$. For ASEP-B, the same estimates are satisfied for $X,Y \in [0,1]$ and $a=0$. Consequently, the laws of the $\mathcal Z^{\epsilon}$ are tight in the Skorokhod space $D([0,\tau],C(I))$, and moreover the limit point lies in $C([0,\tau],C(I))$.
	\end{prop}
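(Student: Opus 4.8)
I plan to establish the three moment estimates \eqref{77}--\eqref{79} and then feed them into a Kolmogorov--Chentsov-type tightness criterion for $D([0,\tau],C(I))$. The starting point is the Duhamel (mild) representation of the Gärtner-transformed height function: since by Lemma \ref{74} the process $Z_t$ solves the discrete SHE $dZ_t(x)=\tfrac12\Delta Z_t(x)\,dt+dM_t(x)$ with the discrete Robin boundary condition, and since $\mathbf p_t^R$ is the fundamental solution of exactly that equation, one has
$$Z_t(x)=\sum_{y\in\Lambda}\mathbf p_t^R(x,y)Z_0(y)+\int_0^t\sum_{y\in\Lambda}\mathbf p_{t-s}^R(x,y)\,dM_s(y),$$
so that after the parabolic rescaling of Definition \ref{40} the quantity $\mathcal Z^{\epsilon}(T,X)$ is a ``heat flow of the initial data'' term plus a stochastic convolution. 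Each of the three estimates is obtained by bounding these two pieces separately: the first piece via the heat-kernel estimates of Section 3 and the summation bounds of Corollary \ref{4} (resp. \ref{16}) together with Assumption \ref{51}, and the second piece via Lemma \ref{73} combined with the sup-bound \eqref{76}, which comes from Proposition \ref{33} (resp. \ref{26}).

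For the one-point bound \eqref{77}: the initial-data term is handled by Minkowski's inequality, Assumption \ref{51}, and Corollary \ref{4}, which absorbs the weight $e^{a\epsilon y}$ and returns $Ce^{aX}$. For the stochastic-convolution term, Lemma \ref{73} with $F(s,y)=\mathbf p_{t-s}^R(x,y)$, the bound \eqref{76}, and Corollary \ref{4} give a bound of the form $C\epsilon\int_0^{\epsilon^{-2}T}(\epsilon^{-2}T-s)^{-1/2}\,e^{2a\epsilon x}\sup_{y}\|Z_s(y)\|_p^2\,ds$; rescaling time via $s=\epsilon^{-2}S$ turns this into a fractional Volterra inequality $g(T)\le C+C\int_0^T(T-S)^{-1/2}g(S)\,dS$ for $g(T):=\sup_{x}e^{-2a\epsilon x}\|Z_{\epsilon^{-2}T}(x)\|_p^2$, which iterates to a bound uniform over $T\in[0,\tau]$. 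Estimate \eqref{78} is proved in the same way, now using Proposition \ref{3} (resp. \ref{15}) with Hölder exponent $v=\alpha<1/2$ in place of the pointwise kernel bound; the factor $|n|^v=(\epsilon^{-1}|X-Y|)^{\alpha}$ combines with the extra powers of $(1\wedge t^{-1/2})$ and $\epsilon$ so that, after summing over $y$ and re-running the Gronwall argument, one gets $C|X-Y|^{\alpha}e^{a(X+Y)}$ for small $|X-Y|$ (for large $|X-Y|$ one just uses \eqref{77} and the triangle inequality). Estimate \eqref{79} is similar but requires splitting the time integral at $s=\epsilon^{-2}S$: on $[\epsilon^{-2}S,\epsilon^{-2}T]$ one applies Lemma \ref{73} directly, and on $[0,\epsilon^{-2}S]$ one uses the temporal kernel bound of Proposition \ref{6} (resp. \ref{20}) with exponent $v=\alpha/2$; the lattice floor $\epsilon^{\alpha}$ arises precisely from the near-diagonal short-time contribution, which cannot be made smaller than order $\epsilon^{\alpha}$.

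Finally, tightness: estimates \eqref{77} and \eqref{78} with Kolmogorov's continuity criterion (taking $p$ large and $\alpha$ close to $1/2$, so $p\alpha>1$) give, uniformly in $\epsilon$, tightness of the one-time marginals $\mathcal Z^{\epsilon}(T,\cdot)$ in $C(I)$, while \eqref{79} controls the temporal modulus of continuity and forces it to vanish with the increment once $\epsilon^{\alpha}\to 0$; a standard criterion for tightness in $D([0,\tau],C(I))$ (of Aldous/Billingsley type) then applies, and since the surviving temporal oscillations are genuinely Hölder-$\alpha/2$, every subsequential limit is supported on $C([0,\tau],C(I))$. I expect the main obstacle to be the subtlety the paper flags at the start of this proof: the one-point bound is self-referential, so before the fractional Gronwall iteration can legitimately be run one must first know $g(T)<\infty$, and this a priori finiteness (the gap in the related arguments of [CS16], [CST16]) has to be secured separately, e.g.\ by a stopping-time or truncation argument on the martingale before passing to the limit. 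The careful bookkeeping of $\epsilon$-powers in the temporal estimate \eqref{79}, and the precise passage from the pointwise moment bounds (with the $\epsilon^{\alpha}$ floor) to Skorokhod tightness with continuous limit points, are the other places demanding care.
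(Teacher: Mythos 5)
Your overall architecture — the Duhamel representation from Lemma \ref{74}, Lemma \ref{73} together with \eqref{76} and Corollary \ref{4} (resp. \ref{16}) for the stochastic convolution, a Volterra/Gronwall iteration for \eqref{77}, and a Kolmogorov--Arzel\`a--Ascoli--Prohorov argument for tightness in $D([0,\tau],C(I))$ — is the same as the paper's. But there is a genuine gap in your treatment of the initial-data (heat-flow) term in the spatial estimate \eqref{78}. You propose to bound $\sum_z\bigl(\mathbf p_t^R(x,z)-\mathbf p_t^R(y,z)\bigr)Z_0(z)$ using the kernel-gradient bound of Proposition \ref{3} (resp. \ref{15}) with $v=\alpha$. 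Summing that bound over $z$ against $e^{a\epsilon z}$ gives at best $C\,t^{-\alpha/2}|x-y|^{\alpha}e^{a\epsilon x}$, i.e.\ $C\,T^{-\alpha/2}|X-Y|^{\alpha}$ in macroscopic variables, which blows up as $T\to 0$ (at microscopic times $t\sim 1$ it is of order $\epsilon^{-\alpha}|X-Y|^{\alpha}$, which is useless). Since \eqref{78} is asserted uniformly over $T\in[0,\tau]$, and at $T=0$ it must reduce to Assumption \ref{51}, the kernel-gradient route cannot work: the spatial H\"older regularity has to be transferred from the initial data through the semigroup, not extracted from the kernel. Because $\mathbf p_t^R$ is not translation invariant, the paper does this by extending $Z_0$ to $\tilde Z_0$ on all of $\Bbb Z$ via the Robin image method (so that $\mathbf p_t^R* Z_0 = p_t*\tilde Z_0$ with the whole-line kernel $p_t$), checking that $\tilde Z_0$ inherits the H\"older bound of Assumption \ref{51}, and then writing the increment as $\sum_z p_t(x-z)\bigl(\tilde Z_0(z)-\tilde Z_0(z+y-x)\bigr)$; this gives $C(\epsilon|x-y|)^{\alpha}e^{a\epsilon(x+y)}$ uniformly in $t$. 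Some device of this kind (or an equivalent way of showing the Robin semigroup preserves the H\"older norm of the data) is missing from your plan, and simple patches such as adding and subtracting $Z_0(x)$, or splitting according to $t\lessgtr|x-y|^2$, do not close the intermediate regime $|x-y|^2\ll t\ll\epsilon^{-2}$. (A related but fixable point: your route for \eqref{79} through Proposition \ref{6}/\ref{20} uses a temporal kernel bound with no spatial decay, so for ASEP-H with $a>0$ you must interpolate with Proposition \ref{2} before you can absorb the weight $e^{a\epsilon y}$; the paper instead restarts the Duhamel formula at time $s$ and uses the already-proved \eqref{78} together with Proposition \ref{7}.)

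Also, your diagnosis of the flagged mistake in [CS16]/[CST16] is not quite right. The issue is not a priori finiteness of $g(T)$ requiring a stopping-time or truncation argument (finiteness at fixed $\epsilon$ follows from elementary Poisson domination of the jump counts); it is that the unit-window supremum in Lemma \ref{73} forces the kernel $\mathbf p_{t-s}^R$ to be replaced by $\mathbf p_{t-s+1}^R$, which spoils the naive iteration. The paper repairs this with the bound \eqref{76} (via Proposition \ref{33}/\ref{26}), and, since Lemma \ref{73} only applies for $t\geq 1$, it establishes the Volterra inequality on $[0,1]$ separately by the Poisson-domination bound $\|Z_t(x)\|_p\leq Ce^{a\epsilon x}$ for $t\leq 1$, after which the two-fold iteration and Gronwall go through. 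You should incorporate both of these corrections — the extension trick for \eqref{78} and the $t\leq 1$ treatment plus time-shifted kernels for \eqref{77} — for the proof to be complete.
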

	
	\begin{proof} This was originally proved as [CS16, Proposition 4.15]. However, after a thorough discussion with the authors, we actually found a small mistake in [CS16, (4.57)] where $\mathbf p_{t-s}^R(x,y)$ should be replaced by $\mathbf p_{t-s+1}^R(x,y)$, and this messes up the iteration. This is not so obviously fixed; therefore we will give a new variant of the proof here, which is loosely based on the argument in [DT16, Proposition 3.2].
		\\
		\\
		For the first estimate, note by Proposition $\ref{74}$ that $Z_t$ will satisfy the Duhamel-form equation $$Z_t(x) = \mathbf p_t^R * Z_0(x) + \int_0^t \sum_{y \in \Lambda} \mathbf p_{t-s}^R (x,y) dM_s(y).$$
		Using this identity and the fact that $(x+y)^2 \leq 2x^2+2y^2$, we find that \begin{equation}\label{80}\|Z_t(x)\|_p^2 \leq 2 \big\|\mathbf p_t^R * Z_0(x)\big\|_p^2 + 2\bigg\|\int_0^t \sum_{y \in \Lambda} \mathbf p_{t-s}^R (x,y) dM_s(y) \bigg\|_p^2.\end{equation}
		The first term on the RHS of (\ref{80}) can be bounded as follows: \begin{equation}\label{81}\bigg\| \sum_{y \in \Lambda} \mathbf p_t^R(x,y) Z_0(y) \bigg\|_p \leq \sum_y \mathbf p_t^R(x,y) \|Z_0(y)\|_p \leq \sum_y \mathbf p_t^R(x,y)  e^{a\epsilon y} \leq Ce^{a\epsilon x}\end{equation}
		where we applied Assumption \ref{51} in the second inequality and Corollary \ref{4} in the next one. For $t \geq 1$, the second term on the RHS of (\ref{80}) can be bounded using Lemma \ref{73} with (\ref{76}): \begin{equation}\label{82}\bigg\|\int_0^t \sum_{y \in \Lambda} \mathbf p_{t-s}^R (x,y) dM_s(y) \bigg\|_p^2 \leq C \epsilon \int_0^t (t-s)^{-1/2} \sum_y \mathbf p_{t-s+1}^R(x,y) \|Z_s(y)\|_p^2 ds.\end{equation}
		Let us define the following quantity: $$[Z_t]_p := \sup_{x \in \Lambda} e^{-a \epsilon x} \|Z_t(x)\|_p. $$Multiplying both sides of (\ref{80}) by $e^{-2a\epsilon x}$ and then using (\ref{81}) and (\ref{82}), we see that \begin{align}e^{-2a\epsilon x}\|Z_t(x)\|_p^2 &\leq C + C \epsilon e^{-2a\epsilon x} \int_0^t (t-s)^{-1/2} \bigg( \sum_y \mathbf p_{t-s+1}^R(x,y) e^{2a \epsilon y}\bigg) [Z_s]_p^2 \;ds \nonumber \end{align} Now, by Corollary \ref{4} we know that $$\sup_{x \geq 0}\; e^{-2a\epsilon x} \sum_y \mathbf p_{t-s+1}^R(x,y)e^{2a\epsilon y}<\infty$$ so by taking the sup over $x$ on both sides of the previous expression, we find that \begin{equation}\label{83}[Z_t]_p^2 \leq C+C\epsilon \int_0^t (t-s)^{-1/2} [Z_s]_p^2ds\end{equation}Now we would like to iterate the inequality in (\ref{83}), however the problem is that we have only proved this bound for $t\geq 1$ (see the statement of Lemma \ref{73}), but we also need to prove it for $t \in[0,1]$ in order to apply the iteration. For $t \leq 1$, we have (by the definition of $Z_t)$ that $Z_t(x) \leq e^{2\sqrt{\epsilon} N(t)} Z_0(x)$, where $N(t)$ denotes the net number of jumps which occur at site $x$ up to time $t$. Since $t \leq 1$, each $N(t)$ is stochastically dominated by a Poisson random variable of rate $1$. Then by Assumption \ref{51}, $\|Z_t(x)\|_p \leq \Bbb E[e^{pN(t)}]^{1/p} \cdot \|Z_0\|_p \leq Ce^{a\epsilon x}$, and thus $\sup_{t \leq 1} [Z_t]_p<\infty$, which proves that (\ref{83}) holds for $t \leq 1$ as well.
		\\
		\\
		Iterating (\ref{83}) twice, we find that \begin{equation}\label{84}[Z_t]_p^2 \leq C+ C^2\epsilon t^{1/2} + \alpha C^2\epsilon^2 \int_0^t [Z_s]_p^2ds \end{equation} where $\alpha = \int_u^t (s-u)^{-1/2}(t-s)^{-1/2}ds$, which is a constant not depending on $u$ or $t$, as can be verified by making the substitution $v=(t-s)/(s-u)$. Since $\epsilon t^{1/2} \leq T^{1/2}$, the second term on the RHS of (\ref{84}) may be absorbed into the first one, only changing the constant $C$. Then an easy application of Gronwall's lemma shows that $$[Z_t]_p \leq [Z_0]_p e^{\alpha C^2 \epsilon^2t} \leq C$$ where we used the fact that $\epsilon^2t \leq T$ which is a constant not depending on $\epsilon$ or $t$ or $x$. We have shown that $$\|Z_t(x)\|_p \leq Ce^{a\epsilon x}$$ which (after changing to macroscopic variables $X=\epsilon x$ and $T=\epsilon^2 t$) proves the first bound (\ref{77}).
		\\
		\\
		Let us now move onto the second bound (\ref{78}). Note that $$Z_t(x)-Z_t(y) = \sum_z \big( \mathbf p_t^R(x,z)-\mathbf p_t^R(y,z) \big) Z_0(z) + \int_0^t \sum_z \big(\mathbf p_t^R(x,z)-\mathbf p_t^R(y,z) \big) dM_s(z).$$ Let us name the terms on the right side as $I_1,I_2$ respectively. 
		\\
		\\
		In order to bound $I_1$, let us start by extending $Z_0$ to a function $\tilde Z_0$, defined on all $\Bbb Z$, such that $\tilde Z_0(z-1)-\mu_A \tilde Z_0(z)$ is an odd function. In the bounded interval case, we also require that $\tilde Z_0(N+1+z)-\mu_B\tilde Z_0(N+z)$ is an odd function. This extended $\tilde Z_0$ can be constructed inductively: first let $\tilde Z_0(-1) =\mu_AZ(0)$, then define $\tilde Z_0(-2) = \mu_A \tilde Z_0(-1) - \big( Z_0(0)-\mu_A Z_0(1) \big)$, and so on. In the bounded interval case, one would first construct $\tilde Z_0$ on $\{-N,...,-1\}$ and $\{N+1,...,2N\}$, then on $\{-2N,...,-N-1\}$ and $\{2N+1,...,3N\}$, etc.
		\\
		\\
		By Assumption $\ref{51}$, we know that $\|Z_0(z)-Z_0(w)\|_p \leq Ce^{a\epsilon(x+y)} (\epsilon|z-w|)^{\alpha}$ for $\alpha < 1/2$. In the half-line case, one may check (using the defining property of $\tilde Z_0$) that $\tilde Z_0$ will satisfy the same property with the same constant $a$. In the bounded interval case, $\tilde Z_0$ will satisfy this property on $\Bbb Z$, for \textit{some} large enough $a>0$ which does not depend on $\epsilon$ (see Lemma \ref{13}). By construction, it is true that $$\sum_{z \in \Lambda} \mathbf p_t^R(x,z) Z_0(z) = \sum_{z \in \Bbb Z} p_t(x-z)\tilde Z_0(z)$$ where the $p_t$ on the RHS is the standard (whole-line) heat kernel on $\Bbb Z$. Consequently, \begin{align*}\|I_1\|_p &\leq \sum_{z \in \Bbb Z} p_t(x-z) \big\|\tilde Z_0(z) -\tilde Z_0(z+y-x)\big\|_p \\ &\leq C\sum_{z \in \Bbb Z} p_t(x-z) (\epsilon |x-y|)^{\alpha} e^{a\epsilon(2z+y-x)} \\ &\leq C (\epsilon |x-y|)^{\alpha} e^{a \epsilon (x+y)} .
		\end{align*}
		In the final line, we used the fact that $\sum_z p_t(x-z)e^{2a\epsilon z} = e^{2a\epsilon x} e^{(\cosh (2a\epsilon ) - 1)t} \sim e^{2a\epsilon x} e^{2a^2\epsilon^2 t} \leq Ce^{2a\epsilon x}$, which is true because $e^{2a\epsilon z}$ is an eigenfunction of $\frac{1}{2}\Delta$ on $\Bbb Z$ with eigenvalue $\cosh (2a\epsilon)-1$. This proves that $I_1$ satisfies the desired bound.
		\\
		\\
		Next we need to bound $I_2$, so we are going to apply Lemma \ref{73} with $F(s,z) = \mathbf p_{t-s}^R(x,z)-\mathbf p_{t-s}^R(y,z)$. Note that if $|t-s-u| \leq 1$, then by the triangle inequality, Proposition \ref{3} (or \ref{14}), and Proposition \ref{33} (or \ref{26}), we have \begin{align*}(\mathbf p_u^R(x,z)-\mathbf p_u^R(y,z))^2 &\leq |\mathbf p_u^R(x,z)-\mathbf p_u^R(y,z)| (\mathbf p_u^R(x,z) + \mathbf p_u^R(y,z)) \\ & \leq C (1\wedge u^{-(1+\alpha)/2}) |x-y|^{\alpha} \cdot e^{1} (\mathbf p_{t-s+1}^R(x,z) + \mathbf p_{t-s+1}^R(y,z)) \\ & \leq C(t-s)^{-(1+\alpha)/2} |x-y|^{\alpha} \cdot (\mathbf p_{t-s+1}^R(x,z)+\mathbf p_{t-s+1}^R(y,z)). \end{align*}
		Hence $\sup_{|s-u| \leq 1} F_u(s,z)^2$ is bounded by the last expression. Thus by Lemma \ref{73} and Equation (\ref{77}) above, we find that for $t \geq 1$ and $\alpha \in [0,1)$ we have:
		\begin{align*}
		\|I_2\|_p^2 & \leq C \epsilon |x-y|^{\alpha} \int_0^t (t-s)^{-(1+\alpha)/2} \sum_z \big(\mathbf p_{t-s+1}^R(x,z)+\mathbf p_{t-s+1}^R(y,z)\big) \|Z_s(z)\|_p^2 ds \\ & \leq C\epsilon |x-y|^{\alpha} \int_0^t (t-s)^{-(1+\alpha)/2} \sum_z \big(\mathbf p_{t-s+1}^R(x,z)+\mathbf p_{t-s+1}^R(y,z)\big) e^{2a\epsilon z} ds \\ &= C\epsilon |x-y|^{\alpha} t^{(1-\alpha)/2} \big(e^{2a \epsilon x} + e^{2a\epsilon y}\big) \\ & \leq C (\epsilon |x-y|)^{\alpha} e^{2a\epsilon (x+y)}.
		\end{align*}
		Let us justify each of the inequalities above. In the second line we used (\ref{77}) to bound $\|Z_s(z)\|_p^2$ by $Ce^{2a\epsilon z}$. In the third line, we applied Corollaries \ref{4} and \ref{16} in order to bound the sum over $z$, and then we used the fact that $\int_0^t(t-s)^{-(1+\alpha)/2} ds = Ct^{(1-\alpha)/2}$. In the final line, we used the fact that $t^{(1-\alpha)/2} \leq C\epsilon^{\alpha-1}$ since $t \leq \epsilon^{-2}T$, and we also noted that $e^{2a\epsilon x} + e^{2a\epsilon y} \leq 2e^{2a\epsilon(x+y)}$. Taking square roots in the above expression, we find that for $\alpha<1$, $$\|I_2\|_p \leq C (\epsilon |x-y|)^{\alpha/2} e^{a\epsilon (x+y)}$$ which (after changing to macroscopic variables) completes the proof of the second bound (\ref{47}).
		\\
		\\
		Let us move onto the third estimate. Using Lemma \ref{74} and the semigroup property of the heat kernel, we can write 
		$$Z_t(x) = \sum_{y \geq 0} \mathbf p_{t-s}^R(x,y)Z_s(y) + \int_s^{t} \sum_{y \geq 0} \mathbf{p}_{t-u}^R(x,y)dM_u(y) $$ where $\mathbf{p}_t^R$ is the Robin heat kernel on $\Bbb Z_{\geq 0}$ with Robin boundary conditions. Therefore \begin{align*}Z_t(x)-Z_s(x) &= \sum_{y\geq 0} \mathbf{p}_{t-s}^R(x,y)\big( Z_s(y)-Z_s(x) \big) \\ &\;\;\;+ \bigg[ \sum_{y \geq 0} \textbf p_{t-s}^R(x,y)\;-\;1 \bigg]Z_s(x)+ \int_s^t \sum_{y \geq 0} \mathbf{p}_{t-u}^R(x,y) dM_u(y).\end{align*}
		Naming the terms on the RHS $J_1,J_2,J_3$ (in that order), we will prove the desired bound for each of the terms $J_i$.
		\\
		\\
		For $J_1$ we use the spatial $L^p$ bound (\ref{78}) at time $S=\epsilon s$, and the fact that $r^{\alpha} \leq e^r$ (since $\alpha< 1$) to obtain
		\begin{align}\label{44}
		\|J_1\|_p &\leq \sum_{y \geq 0} \mathbf p_{t-s}^R(x,y) \|Z_s(y)-Z_s(x)\|_p \nonumber
		\\ &\leq C \sum_{y \geq 0} \mathbf p_{t-s}^R(x,y) \cdot |\epsilon x-\epsilon y|^{\alpha}e^{a(\epsilon x+ \epsilon y)} \nonumber \\& \leq C \sum_{y \geq 0} \mathbf p_{t-s}^R(x,y) \cdot \epsilon^{\alpha} [1 \vee (t-s)^{\alpha/2}] e^{ |x-y| [1 \wedge (t-s)^{-1/2}]} e^{a(\epsilon x+\epsilon y)} \nonumber  \\& \leq C\epsilon^{\alpha} [1 \vee (t-s)^{\alpha/2}]e^{2a\epsilon x}.
		\end{align}
		where we used Corollary \ref{4} in the last inequality. Now bounding $J_2$, we can simply use Proposition \ref{7} together with (\ref{77}) to note that 
		\begin{align*}\|J_2\|_p &\leq \bigg| \sum_{y \geq 0} \mathbf p_{t-s}^R(x,y) \;-\;1 \bigg| \cdot \|Z_s(x)\|_p \\ &\leq C \epsilon^{\alpha}(t-s)^{\alpha/2} \cdot e^{a \epsilon x}
		\end{align*}
		where $C$ does not depend on $x$. As for $J_3$, we use Lemma \ref{73} and (\ref{76}) to obtain for $t-s \geq 1$, $$\bigg\| \int_s^t \sum_{y \geq 0} \mathbf p_{t-u}^R (x,y) dM_u(y) \bigg\|_p^2 \leq C\epsilon \int_s^t (t-u)^{-1/2}\sum_{y \geq 0}  \mathbf p_{t-u+1}^R (x,y) \|Z_u(y)\|_p^2 du. $$ Using (\ref{77}) and then Corollary \ref{4} we find that $$\sum_{y \geq 0} \mathbf p_{t-u+1}^R (x,y) \|Z_u(y)\|_p^2 \leq C \sum_{y \geq 0} \mathbf p_{t-u+1}^R(x,y) \cdot e^{2 a \epsilon y} \leq Ce^{2 a \epsilon x}.$$ Consequently \begin{align*}C\epsilon \int_s^t (t-u)^{-1/2}\sum_{y \geq 0}  \mathbf p_{t-u+1}^R (x,y) \|Z_u(y)\|_p^2 du &\leq C \epsilon e^{2a \epsilon x} \int_s^t (t-u)^{-1/2} du \\ &= C\epsilon (t-s)^{1/2} e^{2a \epsilon x}\\ &\leq C \epsilon^{2\alpha} (t-s)^{\alpha} e^{2a\epsilon x}.
		\end{align*}
		In the last line we used the fact that $2\alpha<1$ and that $t-s \leq \epsilon^{-2}\tau$, so that $\epsilon = \epsilon^{2\alpha}\epsilon^{1-2\alpha} \leq C \epsilon^{2\alpha} (t-s)^{\alpha-1/2}$. 
		\\
		\\
		To show tightness on $D([0,\tau], C(I))$ for $0<\delta \leq \tau$, these three estimates imply (respectively) that the $\{\mathcal Z^{\epsilon}\}_{\epsilon\in (0,1]}$ are uniformly bounded, uniformly spatially Hölder, and uniformly temporally Hölder (\textbf{except} for jumps of order $\epsilon^{\alpha}$) with large probability. Now we apply the version of Arzela-Ascoli for Skorokhod spaces (together with Prohorov's theorem) to obtain tightness, see [Bil97, Chapter 3] for the precise formulation of compactness in $D$.
		\\
		\\
		The fact that any limit point lies in $C([\delta,\tau],C(I))$ is a straightforward consequence of Kolmogorov's continuity criterion.
	\end{proof}
	
	\begin{defn}[Martingale Problem for the SHE]\label{57} Fix a terminal time $\tau \geq 0$. Let $\Bbb P$ be a probability measure on $\Omega:=C([0,\tau],C(I))$, and for $T \in [0,\tau]$, denote by $\mathcal L_T: \Omega \to C(I)$ the evaluation map at time $T$. Define $\mathscr T$ to be the set of all test functions $\varphi \in C_c^{\infty}(\Bbb R)$ such that $\varphi'(0)=A\varphi(0)$, and also $\varphi'(1)=B\varphi(1)$ if $I=[0,1]$. We say that $\Bbb P$ solves the martingale problem for the SHE with Robin boundary parameters $A$ and $B$ on $I$ if for every $\varphi \in \mathscr T$, the processes \begin{align*}Y_T(\varphi) &:= (\mathcal L_T, \varphi) - (\mathcal L_0, \varphi)- \int_0^T (\mathcal L_S, \varphi'') dS, \\ Q_T(\varphi) &:= Y_T(\varphi)^2 - \int_0^T \|\mathcal L_S\varphi\|_{L^2(I)}^2 dS\end{align*}
	are $\Bbb P$-local martingales (with respect to the filtration $\mathcal F_T:=\sigma( \{\mathcal L_S:S\leq T\})$). Here $(\psi,\varphi):=\int_I \psi \varphi $ denotes the $L^2(I)$ pairing.
	\end{defn}
	
	Here is the motivation behind the preceding definition. If $\mathcal L$ is the solution of the SHE, then formally, we have that $$\mathcal L_T - \mathcal L_0 -\frac{1}{2} \int_0^T \Delta \mathcal L_S dS = \int_0^T \mathcal L_SdW_S $$ for some cylindrical Wiener Process $(W_S)_{S \geq 0}$ over $L^2(I)$. Consequently, if we integrate both sides in the above expression against any test function $\varphi \in \mathscr T$, and use self-adjointness of the operator $\Delta$ with Robin boundary conditions on $I$, we should have that \begin{equation}\label{49}(\mathcal L_T, \varphi) - (\mathcal L_0, \varphi)- \int_0^T (\mathcal L_S, \varphi'') dS = \int_0^T (\mathcal L_S \varphi, dW_S )\end{equation} One may verify directly from the definition of mild solutions that this is indeed true. The RHS in this expression is clearly a martingale for any $\varphi$ (being an Itô integral against a cylindrical Wiener process). Furthermore by the Itô isometry, the quadratic variation of the martingale appearing on the RHS of (\ref{49}) is given by $$\int_0^T \|\mathcal L_S \varphi \|_{L^2(I)}^2dS$$ which shows that the expressions $Y_T(\varphi)$ and $Q_T(\varphi)$ appearing in Definition \ref{57} are indeed martingales when $\mathcal Z$ is a solution to the SHE.
	\\
	\\
	Conversely, if $\mathcal L$ is any $C(I)$-valued process (not necessarily the solution to the SHE) such that $Y_T(\varphi)$ and $Q_T(\varphi)$ are always local martingales, one might hope that (by some version of Levy's characterization theorem) it should be possible to construct a cylindrical Wiener Process $W$ such that (\ref{49}) holds true for every $\varphi$, and thus $\mathcal L$ solves the SHE in some weak (PDE) sense. This is indeed true, as proved in [CS16, Proposition 5.9]. We will reiterate the proof here because it will be needed later.
	
	\begin{prop}[Uniqueness of Solution to Martingale Problem]\label{58} Fix a terminal time $\tau$, and let $\Omega$ and $\mathcal L_T$ be as in Definition \ref{57}. Suppose that $\Bbb P$ is a solution to the martingale problem for the SHE. Then we may enlarge the probability space $\big(\Omega, \;\mathcal{B} orel, \;\Bbb P)$ so as to admit a space-time white noise $W$ such that $\Bbb P$-almost surely, for any $T\in[0,\tau]$ we have that $$\mathcal L_T(X) =  P_{T}*\mathcal L_0(X) + \int_0^T P_{T-S}*(\mathcal L_S dW_S)$$
		
	\end{prop}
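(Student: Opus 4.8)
The plan is to produce a space-time white noise $W$ out of the martingale data and then invoke the weak-to-mild equivalence already established in Proposition \ref{103}.

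\textbf{Step 1 (identify a martingale measure).} For each $\varphi \in \mathscr T$ set $M_T(\varphi):=Y_T(\varphi)$; by hypothesis this is a continuous $\mathcal F_T$-local martingale, and by hypothesis $Q_T(\varphi)=M_T(\varphi)^2-\int_0^T\|\mathcal L_S\varphi\|_{L^2(I)}^2\,dS$ is also a local martingale, so $\langle M(\varphi)\rangle_T=\int_0^T\int_I \mathcal L_S(X)^2\varphi(X)^2\,dX\,dS$. Applying this to $\varphi+\psi$ and polarizing gives
\[
\langle M(\varphi),M(\psi)\rangle_T=\int_0^T\int_I \mathcal L_S(X)^2\,\varphi(X)\psi(X)\,dX\,dS .
\]
Since $\varphi\mapsto M_T(\varphi)$ is linear and the bracket above is absolutely continuous with respect to the random intensity $\mathcal L_S(X)^2\,dX\,dS$, the family $\{M_T(\varphi)\}$ is an orthogonal martingale measure in the sense of Walsh [Wal86]; its covariation functional extends from $\mathscr T$ to all test functions in $L^2(I)$ using that $\mathcal L$ is $C(I)$-valued (hence locally bounded).

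\textbf{Step 2 (construct $W$).} By Walsh's martingale-measure representation theorem there is a space-time white noise $\dot W$, defined on an enlargement of the probability space, with
\[
M_T(\varphi)=\int_0^T\int_I \varphi(X)\,\mathcal L_S(X)\,W(dS,dX),\qquad \varphi\in\mathscr T .
\]
The enlargement is needed only to adjoin independent randomness on the random set $\{\mathcal L_S(X)=0\}$, where $\mathcal L_S(X)^{-1}$ is unavailable. Writing $W$ for the associated cylindrical Wiener process over $L^2(I)$, this says $Y_T(\varphi)=\int_0^T(\mathcal L_S\varphi,dW_S)$, so unwinding the definition of $Y_T(\varphi)$ in Definition \ref{57} yields, for every $\varphi\in\mathscr T$,
\[
(\mathcal L_T,\varphi)=(\mathcal L_0,\varphi)+\tfrac12\int_0^T(\mathcal L_S,\varphi'')\,dS+\int_0^T(\mathcal L_S\varphi,dW_S).
\]
That is, $\mathcal L$ is a weak solution of the SHE with Robin parameters $A,B$; the passage from the $C_c^\infty$ test functions of Definition \ref{57} to the Schwartz-class space $\mathscr T$ appearing in Proposition \ref{103} is a routine approximation using the heat-kernel decay of Proposition \ref{27} and dominated convergence.

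\textbf{Step 3 (weak implies mild).} Finally, apply Proposition \ref{103}: since $\mathcal L$ is a weak solution, it is a mild solution, so $\Bbb P$-almost surely $\mathcal L_T(X)=P_T*\mathcal L_0(X)+\int_0^T P_{T-S}*(\mathcal L_S\,dW_S)$ for all $T\in[0,\tau]$, which is the claim. The main obstacle is Step 2: one must realize $W$ as a genuine white noise on all of $[0,\tau]\times I$ rather than merely "on the support of $\mathcal L$". Because the martingale measure $M$ is absolutely continuous with respect to $\mathcal L_S(X)^2\,dX\,dS$ and not with respect to Lebesgue measure, one carries no information on the (random) zero set of $\mathcal L$ and must adjoin an independent white noise there, which is precisely why the probability space is enlarged; one then checks that the resulting $W$ is adapted and that the stochastic integral against it reproduces $M$. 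The bracket computation, the test-function approximation, and the appeal to Proposition \ref{103} are all routine.
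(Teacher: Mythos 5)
Your proposal is correct and takes essentially the same route as the paper: polarize the bracket to see that $Y$ is an orthogonal martingale measure in Walsh's sense, obtain the white noise by inverting $\mathcal L_S(X)$ off its zero set and adjoining an independent noise on $\{\mathcal L_S(X)=0\}$ (which is exactly what your appeal to the Walsh representation step amounts to, and is what the paper does explicitly, verifying $\langle W(\varphi),W(\psi)\rangle_T=(\varphi,\psi)T$ and invoking L\'evy's characterization), and then conclude via the weak-to-mild equivalence of Proposition \ref{103}.
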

	
	\begin{proof}
		Let us first illustrate this same principle in a one-dimensional setting. Let $b,\sigma: \Bbb R \to \Bbb R$, and suppose that $L_t$ is a process for which $$Y_t:= L_t - \int_0^t b(L_s)ds,$$ $$Q_t:= Y_t^2 - \int_0^t \sigma(L_s)^2ds$$ are both local martingales. Now suppose that we want to construct a Brownian motion $W_t$ such that $dL_t=b(L_t)dt+\sigma(L_t)dW_t$. The solution is then to define \begin{equation}\label{55}W_t:=\int_0^t \sigma^{-1}(L_s)1_{\{\sigma(L_s) \neq 0\}} dY_s+\int_0^t 1_{\{\sigma(L_s) = 0\}} d\overline W_s\end{equation} for an independent BM $\overline W$. It is then easily checked that $\langle W \rangle_t=t$ so that $W$ is a Brownian motion, and $W$ clearly satisfies the desired relation.
		\\
		\\
		The same general idea will apply in the infinite-dimensional setting, where $b(z) = \frac{1}{2} \Delta z$ and $\sigma(z)=z$. However, the formalisms needed to define integrals against the (now infinite-dimensional) process $Y_s$ become much more subtle, therefore we will use the notion of martingale measures as developed by Walsh, see for instance [Wal86, Chapter 2]. 
		\\
		\\
		Returning to the original statement, we define $Y_T(\varphi)$ and $Q_T(\varphi)$ to be the local martingales appearing in Definition \ref{57}. By localization, we are just going to assume that $Y$ and $Q$ are themselves martingales, with the understanding that we should technically choose a sequence of stopping times $\tau_n \to \infty$ and apply the usual tricks used in localization.
		\\
		\\
		By the assumption that $Q_T(\varphi)$ is a martingale, we have that $\langle Y(\varphi) \rangle_T = Y_T(\varphi)^2-Q_T(\varphi)=\int_0^T \|\mathcal L_S \varphi \|_{L^2(I)}^2 dS$ and thus by polarization we see that $$\langle Y(\varphi), Y(\psi) \rangle_T = \int_0^T ( \mathcal L_S \varphi, \mathcal L_S \psi) dS$$ so if $\varphi$ and $\psi$ have disjoint supports, then $\langle Y(\varphi), Y(\psi) \rangle=0$. Thus $Y$ is actually an \textit{orthogonal} martingale measure [Wal86, page 288], so we can define stochastic integrals of predictable space-time processes against it. In particular, we define $$W_T(\varphi):= \int_0^T \int_I \mathcal L_S(X)^{-1} \varphi(X) 1_{\{L_S(X) \neq 0\}} Y(dX\; dS) + \int_0^T \int_I 1_{\{\mathcal L_S(X)=0\}} \varphi(X) \overline W(dX\;dS)$$ where $\overline W$ is an independent space-time white noise over $[0,\tau) \times I$. Note the analogy between this formula and equation (\ref{55}) in the one-dimensional setting. Then one may check that $$\langle W(\varphi), W(\psi) \rangle_T = (\varphi, \psi) T$$ so by Levy's characterization theorem, it follows that $W$ is a space-time white noise over $[0, \tau] \times I$, and moreover, it is true by construction that for every $\varphi \in \mathscr T$, $$Y_T(\varphi) = \int_0^T (\mathcal L_S \varphi, dW_S)$$ which implies that \begin{align*}(\mathcal L_T,\varphi)&=(\mathcal L_0,\varphi)+\frac{1}{2}\int_0^T (\mathcal L_S,\varphi'')dS+Y_T(\varphi)\\& =(\mathcal L_0,\varphi)+\frac{1}{2}\int_0^T (\mathcal L_S,\varphi'')dS+ \int_0^T (\mathcal L_S \varphi, dW_S).\end{align*}
		Thus $(\mathcal L_T)$ is a weak solution, which (by Proposition \ref{103}) coincides with the mild solution.
	\end{proof}
	
	\begin{thm}[Main Theorem: Identification of Limit]\label{126} Assume that as $\epsilon \to 0$, the sequence of initial data $\mathcal Z_0^{\epsilon}$ converges weakly in $C(I)$ to some initial data $\mathcal Z_0$. Then, as $\epsilon \to 0$, the laws of the $\mathcal Z^{\epsilon}$ converge weakly in $D([0,T],C(I))$ to the law of the mild solution of the SHE with Robin boundary conditions, and initial data $\mathcal Z_0$.
	\end{thm}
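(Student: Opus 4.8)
The plan is to run the standard martingale-problem strategy (Bertini--Giacomin, [CS16], [DT16]). By Proposition \ref{85} the laws of $\{\mathcal Z^\epsilon\}_{\epsilon>0}$ are tight in $D([0,\tau],C(I))$ and every limit point is supported on $C([0,\tau],C(I))$; pass to a subsequence along which $\mathcal Z^\epsilon$ converges weakly to some $\mathcal Z$, and (by Skorokhod's representation theorem, if convenient) realize this convergence almost surely on a common space. It then suffices to show that the law of $\mathcal Z$ solves the martingale problem of Definition \ref{57} with initial data $\mathcal Z_0$: Proposition \ref{58} exhibits $\mathcal Z$, after enlarging the probability space, as a mild solution of the SHE with Robin boundary started from $\mathcal Z_0$, and by Proposition \ref{70} — whose hypothesis $\sup_X e^{-aX}\Bbb E[\mathcal Z_0(X)^2]<\infty$ passes from Proposition \ref{85} to the limit by Fatou — this mild solution is unique in law. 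Since all subsequential limits then coincide, the full family converges, which is the assertion. The convergence $(\mathcal Z^\epsilon_0,\varphi)\to(\mathcal Z_0,\varphi)$ used below is immediate from the assumed weak convergence of the initial data.

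To check the martingale problem, fix $\varphi\in\mathscr T$ and set $\psi^\epsilon(y):=\varphi(\epsilon y)$, so that $(\mathcal Z^\epsilon_S,\varphi)=\epsilon\sum_{y\in\Lambda}Z_{\epsilon^{-2}S}(y)\psi^\epsilon(y)$ up to a Riemann-sum error. By Lemma \ref{74}, $dZ_t(x)=\tfrac12\Delta Z_t(x)\,dt+dM_t(x)$; multiplying by $\epsilon\psi^\epsilon(x)$, summing over $x\in\Lambda$, performing a discrete summation by parts in $x$, and rescaling $t=\epsilon^{-2}S$ gives
$$ Y^\epsilon_T(\varphi):=(\mathcal Z^\epsilon_T,\varphi)-(\mathcal Z^\epsilon_0,\varphi)-\tfrac12\!\int_0^T\!(\mathcal Z^\epsilon_S,\varphi'')\,dS = \mathcal M^\epsilon_T(\varphi) + o_{L^2}(1),\qquad \mathcal M^\epsilon_T(\varphi):=\int_0^{\epsilon^{-2}T}\!\epsilon\sum_{y\in\Lambda}\psi^\epsilon(y)\,dM_s(y), $$
where the $o_{L^2}(1)$ remainder (uniform on $[0,\tau]$) absorbs the discrepancy between $\Delta^\epsilon\varphi(X):=\epsilon^{-2}(\varphi(X+\epsilon)+\varphi(X-\epsilon)-2\varphi(X))$ and $\varphi''$, and the boundary terms of the summation by parts. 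The latter is where the condition defining $\mathscr T$ enters: a first-order Taylor expansion gives $\psi^\epsilon(-1)=\mu_A\psi^\epsilon(0)+O(\epsilon^2)$ and $\psi^\epsilon(N+1)=\mu_B\psi^\epsilon(N)+O(\epsilon^2)$, which matches to $O(\epsilon^2)$ the discrete Robin conditions $Z_t(-1)=\mu_AZ_t(0)$, $Z_t(N+1)=\mu_BZ_t(N)$ satisfied by $Z$; the boundary contribution is therefore $O(\epsilon)$ once the moment bound \eqref{77} is invoked. In particular $Y^\epsilon_T(\varphi)$ is a martingale up to an error tending to $0$ in $L^2$.

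The harder half is that $Q^\epsilon_T(\varphi):=Y^\epsilon_T(\varphi)^2-\int_0^T\|\mathcal Z^\epsilon_S\varphi\|_{L^2(I)}^2\,dS$ is (approximately) a martingale. Since by \eqref{75} the bracket $\langle M(\cdot),M(\cdot)\rangle$ is diagonal,
$$ \langle\mathcal M^\epsilon(\varphi)\rangle_T=\int_0^{\epsilon^{-2}T}\epsilon^2\sum_{y}\psi^\epsilon(y)^2\,\tfrac{d}{ds}\langle M(y)\rangle_s\,ds = \int_0^T\epsilon\sum_{y}\psi^\epsilon(y)^2\,\mathcal Z^\epsilon(S,\epsilon y)^2\,dS \;-\; \mathcal B^\epsilon_T(\varphi) \;+\; o_{L^1}(1), $$
where $\mathcal B^\epsilon_T(\varphi):=\int_0^T\sum_{y\ge1}\psi^\epsilon(y)^2\,\nabla^+Z_{\epsilon^{-2}S}(y)\,\nabla^-Z_{\epsilon^{-2}S}(y)\,dS$ and the $o_{L^1}(1)$ collects the $o(\epsilon)Z^2$ contributions, controlled by \eqref{77}. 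The first term converges to $\int_0^T\|\mathcal Z_S\varphi\|_{L^2(I)}^2\,dS$ by the a.s.\ convergence $\mathcal Z^\epsilon\to\mathcal Z$ and the uniform integrability supplied by \eqref{77}--\eqref{78} (take $p$ large). The crucial cancellation is $\Bbb E[|\mathcal B^\epsilon_T(\varphi)|]\to0$: inserting the Duhamel form $Z_t=\mathbf p_t^R\!\ast Z_0+\int_0^t\sum_z\mathbf p_{t-s}^R(\cdot,z)\,dM_s(z)$ into $\nabla^+Z\,\nabla^-Z$, expanding, taking expectations, and using \eqref{75} a second time produces a self-referential bound in which $\mathcal B^\epsilon$ reappears with kernel weight $\sum_{y}\int_0^{\epsilon^{-2}T}|\nabla^+\mathbf p^R_t(x,y)\nabla^-\mathbf p^R_t(x,y)|\,e^{a\epsilon|x-y|}\,dt\le c_*<1$ from Proposition \ref{10}; this contraction, together with Corollary \ref{11} (for the time-convolution singularity $(s-t)^{-1/2}$), the Hölder bound \eqref{78} (for the $\mathbf p^R\!\ast Z_0$ part and the martingale$\times$drift cross terms), and the moment bound \eqref{77}, forces $\Bbb E[|\mathcal B^\epsilon_T(\varphi)|]=O(\epsilon^\kappa)$ for some $\kappa>0$. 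Hence $Q^\epsilon_T(\varphi)$ is a martingale up to an $L^1$-error tending to $0$.

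Finally one passes to the limit: the uniform $L^p$-moments of Proposition \ref{85} give the uniform integrability needed to test the (approximate) martingale identities for $Y^\epsilon_\cdot(\varphi)$ and $Q^\epsilon_\cdot(\varphi)$ against bounded continuous functionals of $\mathcal Z^\epsilon|_{[0,S]}$ and let $\epsilon\to0$ along the subsequence, showing that $Y_T(\varphi)$ and $Q_T(\varphi)$ of Definition \ref{57} are $\Bbb P$-martingales. By Propositions \ref{58}, \ref{103}, \ref{70} this identifies $\mathcal Z$ with the unique mild solution, completing the proof. The main obstacle is the crucial cancellation $\Bbb E[|\mathcal B^\epsilon_T(\varphi)|]\to0$; everything else is bookkeeping with the heat-kernel estimates of Section 3 and the moment bounds of Proposition \ref{85}. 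For ASEP-B the cancellation runs on the weaker inputs Lemma \ref{24} and Proposition \ref{25} (with $\epsilon^{1/8}$ in place of $\epsilon^{1/2}$), which still suffice.
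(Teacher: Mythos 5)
Your overall route is exactly the paper's: tightness from Proposition \ref{85}, the martingale problem of Definition \ref{57}, summation by parts with the Robin/Taylor matching at the boundary killed by \eqref{77}, the quadratic-variation decomposition isolating the term $\mathcal B^\epsilon_T(\varphi)$ (the paper's $R^\epsilon_T(\varphi)$), and identification via Propositions \ref{58}, \ref{103}, \ref{70}. The linear part and the final limit-passage are fine as you describe them.

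The gap is in the one step you yourself flag as the heart of the matter. You claim $\Bbb E|\mathcal B^\epsilon_T(\varphi)|\to 0$ by ``inserting the Duhamel form, expanding, taking expectations'' and closing a self-referential bound with the contraction constant $c_*<1$ of Proposition \ref{10}. As described, this controls only $\Bbb E[\mathcal B^\epsilon_T(\varphi)]$ (or, pointwise, $|\Bbb E[\nabla^+Z_t(x)\nabla^-Z_t(x)]|$), because the iteration with the kernel $K_{t-u}$ lives inside a plain expectation; but the martingale-problem test pairs the error against random bounded functionals $\Phi(\mathcal Z^\epsilon|_{[0,S]})$ of the past, so you genuinely need $\Bbb E|\mathcal B^\epsilon_T(\varphi)|\to 0$, and the brute-force bound $|\nabla^\pm Z|\le C\sqrt\epsilon\,Z$ only gives $O(1)$, not $o(1)$, for that quantity. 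The paper gets around this by estimating the \emph{second} moment: write $\Bbb E[(R^\epsilon_T)^2]$ as a double time integral over $s'<s$ (equation \eqref{117}), condition the later factor on $\mathcal F_{s'}$, and prove the conditional key estimate \eqref{claim1}, namely $\sup_x e^{-2a\epsilon x}\Bbb E\big|\Bbb E[\nabla^+Z_t(x)\nabla^-Z_t(x)\,|\,\mathcal F_s]\big|\le C\epsilon^{1/8}(t-s)^{-1/2}$ for $\epsilon^{-3/2}\le s\le t\le\epsilon^{-2}T$. Establishing and then using that estimate requires several ingredients absent from your sketch: the decomposition $Z=L+D$ and the treatment of the cross terms; the auxiliary estimate \eqref{claim2}, which is where Lemmas \ref{9}/\ref{24} and \ref{108}/\ref{107} enter; the iteration of \eqref{123} with Propositions \ref{10}, \ref{11} (resp.\ \ref{25}); the splitting of the time domain at $s=\epsilon^{-3/2}$ (handled by the brute-force bound, see \eqref{119}); and the truncation on the event $\{|Z_{s'}(x)|\le K\}$ in \eqref{118}, which is forced because only the \emph{first} conditional moment of $U^\epsilon$ is available, so one cannot Cauchy--Schwarz it directly against $Z_{s'}(x)^2$. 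Without the conditioning-plus-second-moment structure (or some substitute delivering an $L^1$ or $L^2$ bound on $\mathcal B^\epsilon_T$ itself), the contraction argument you invoke does not close; this is a missing idea rather than bookkeeping, and it is precisely the content of the paper's Claims 1 and 2.
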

	
	\begin{proof} The previous proposition shows that the laws of the $\mathcal Z^{\epsilon}$ do indeed converge (subsequentially) to some limiting measure as $\epsilon \to 0$. When $A,B \geq 0$ the identification of the limit is shown in [CS16] via Lemma 5.8, Lemma 5.7, Proposition 5.6, and Proposition 5.9 (in that logical order). When $A<0$ or $B<0$ the proof is very similar, but a small modification is needed in Lemma 5.8 (because it relies on a previous result, Proposition 5.1, which is only true for $A,B \geq 0$). Rather than trying to pinpoint the numerous little modifications which are needed, we will reproduce the whole proof here in generality, for the sake of completeness and clarity.
	\\
	\\
	By Proposition \ref{58}, it suffices to show that if $\Bbb P$ is a probability measure on $C([0,\tau],C(I))$ which is a limit point of the laws of the $\mathcal Z^{\epsilon}$, then the processes $$ Y_T(\varphi):= (\mathcal L_T, \varphi) - \frac{1}{2} \int_0^T (\mathcal L_S, \varphi'')dS$$ $$Q_T(\varphi):= Y_T(\varphi)^2 - \int_0^T (\mathcal L_T^2,\varphi^2)dS$$ are both $\Bbb P$-local martingales, where $\mathcal L_T:C([0,\tau],C(I)) \to C(I)$ denotes the canonical projection (e.g., evaluation at $T$). The main idea is to note that for any (fixed) $\epsilon>0$, if we define $$(\varphi,\psi)_{\epsilon} := \epsilon \sum_{x \in \Lambda} \varphi(\epsilon x) \psi(\epsilon x)$$then by Theorem \ref{74}, the process $$Z_t(x) - \frac{1}{2}\int_0^t\Delta Z_t(x)dt $$ is a martingale, so after summing against a test function and changing to macroscopic variables, the process $$Y_T^{\epsilon}(\varphi) := (\mathcal Z_T^{\epsilon}, \varphi)_{\epsilon} - \frac{1}{2} \int_0^{T} (\Delta_{\epsilon} \mathcal Z_S^{\epsilon}, \varphi)_{\epsilon} dS$$ is also martingale, where $$\Delta_{\epsilon} \varphi(X):= \epsilon^{-2}(\varphi(X+\epsilon)+\varphi(X-\epsilon)-2\varphi(X)).$$ 
	Recall that $\varphi \in \mathscr T$, which means that $\varphi'(0)=A\varphi(0)$. Using summation-by-parts and a Taylor series expansion of $\varphi$ near $X=0$, we have: \begin{align*}
	(\Delta_{\epsilon} &\mathcal Z_S^{\epsilon}, \varphi)_{\epsilon} - (\mathcal Z_S^{\epsilon}, \Delta_{\epsilon} \varphi)_{\epsilon} =  \epsilon^{-2} \cdot \epsilon \bigg[\mathcal Z_S^{\epsilon}(-\epsilon) \big(\varphi(0)-\varphi(-\epsilon)\big) - \varphi(-\epsilon) \big(\mathcal Z_S^{\epsilon}(0) - \mathcal Z_S^{\epsilon}(-\epsilon)\big) \bigg]\\ &= \epsilon^{-1} \bigg[ \mathcal Z_S^{\epsilon}(-\epsilon) \cdot \big( \epsilon A \varphi(0) + O(\epsilon^2) \big)- \varphi(-\epsilon) \cdot \epsilon A \mathcal Z_S^{\epsilon}(0) \bigg] \\ &= \epsilon^{-1} \bigg[   (1- \epsilon A) \mathcal Z_S^{\epsilon}(0) \cdot \big( \epsilon A \varphi(0) + O(\epsilon^2) \big)- \big( \varphi(0) - \epsilon A \varphi(0) +O(\epsilon^2) \big) \cdot \epsilon A \mathcal Z_S^{\epsilon}(0) \bigg] \\ &= \epsilon^{-1} \bigg[ \mathcal Z_S^{\epsilon}(0)\cdot O(\epsilon^2)\bigg]
	\end{align*}where the $O(\epsilon^2)$ term is a non-random quantity depending only on the test function $\varphi$. This computation was specific to the half-line case, but the bounded-interval case is similar (except that the number of boundary terms doubles since there are two boundary points instead of just one) and we get the same bound (except that there will be a $\mathcal Z_S^{\epsilon}(1)$ term appearing next to the $O(\epsilon^2)$ term as well).
	\\
	\\
	Summarizing, we have shown that \begin{equation}
	\label{112}\big| (\Delta_{\epsilon} \mathcal Z_S^{\epsilon}, \varphi)_{\epsilon} - (\mathcal Z_S^{\epsilon}, \Delta_{\epsilon} \varphi)_{\epsilon} \big| \leq C \epsilon \big| \mathcal Z_S^{\epsilon}(0) \big|\end{equation}and by (\ref{77}) the RHS tends to zero in $L^2(\Omega)$ as $\epsilon \to 0$. Now, by the fundamental theorem of calculus, one may easily check that $\Delta_{\epsilon} \varphi(X) = \int_X^{X+\epsilon} \int_{Z-\epsilon}^Z \varphi''(W)dWdZ$, and hence by uniform continuity of $\varphi''$, it follows that \begin{equation}\label{113} \lim_{\epsilon \to 0} \sup_{X \in \Bbb R} \big| \varphi''(X) - \Delta_{\epsilon} \varphi(X) \big| =0.\end{equation} Letting $F(\epsilon):= \sup_{X \in \Bbb R} \big| \varphi''(X) - \Delta_{\epsilon} \varphi(X) \big|$, we find that \begin{align}\label{114}\big\| (\mathcal Z_S^{\epsilon}, \Delta_{\epsilon} \varphi)_{\epsilon}- (\mathcal Z_S^{\epsilon},  \varphi'')_{\epsilon}\big\|_2 &\leq  \epsilon \sum_{X \in \epsilon \Lambda} \|Z_S^{\epsilon}(X)\|_2 \cdot F(\epsilon) \cdot 1_{\{ \varphi(X) >0\}} \nonumber\\ &\leq  \bigg(\epsilon \sum_{X \in \epsilon \Lambda} Ce^{aX} \cdot 1_{\{\varphi(X)>0\}} \bigg) \cdot F(\epsilon) \nonumber\\ &\leq C | supp(\varphi)| e^{a|supp(\varphi)|} F(\epsilon)\nonumber \\ &=C\cdot F(\epsilon)\end{align} where $|supp(\varphi)|$ denotes the supremum of the support of $\varphi$. In the second line we used (\ref{77}) to bound $\|Z_S^{\epsilon}(X)\|_2$ by $Ce^{aX}$, and in the third line we used compact support of $\varphi$.
	\\
	\\
	Using (\ref{112}), (\ref{113}), and (\ref{114}), it follows that $$\big\| (\Delta_{\epsilon} \mathcal Z_S^{\epsilon} , \varphi)_{\epsilon} - (\mathcal Z_S^{\epsilon}, \varphi'')_{\epsilon} \big\|_2\;\; \stackrel{ \epsilon \to 0}{\longrightarrow} \;\;0.$$ Here, as usual, we are denoting $\|X\|_p:= \Bbb E[|X|^p]^{1/p}$. Therefore, we can write for $T \leq \tau$: \begin{equation}\label{115}Y_T^{\epsilon}(\varphi) = (\mathcal Z_T^{\epsilon}, \varphi)_{\epsilon} - \frac{1}{2} \int_0^{T} ( \mathcal Z_S^{\epsilon}, \varphi'')_{\epsilon} dS + R_T^{\epsilon}(\varphi)\end{equation} where $\lim_{\epsilon \to 0} \|R_T^{\epsilon} \|_2 = 0$. Letting $\epsilon \to 0$ in this last expression, it follows that $Y_T(\varphi)$ is a $\Bbb P$-martingale for any limit point $\Bbb P$ of the law of $\mathcal Z^{\epsilon}$. Here we are using the fact that $(\mathcal Z_T^{\epsilon},\varphi)_{\epsilon}$ converges weakly along some subsequence to $(\mathcal L_T, \varphi)$ under $\Bbb P$, which can be seen using a Riemann sum approximation.
	\\
	\\
	Thus we only need to show that $Q_T(\varphi)$ is a $\Bbb P$-martingale (with respect to the canonical filtration on $C([0,\tau],C(I))$) for any limit point $\Bbb P$ of the law of $\mathcal Z^{\epsilon}$. This will be the the more difficult part of the proof (due to the extra term appearing in (\ref{75})), and it is really where the brunt of our efforts will go. The basic idea is as follows: We define $$Q_T^{\epsilon}(\varphi) := Y_T^{\epsilon}(\varphi)^2 - \langle Y^{\epsilon}(\varphi) \rangle_T$$ where $\langle Y^{\epsilon}(\varphi) \rangle$ denotes the predictable bracket of the martingale $Y_T^{\epsilon}(\varphi)$. Our goal will be to write $Q_T^{\epsilon}$ in a form similar to (\ref{115}), i.e., $$Q_T^{\epsilon}(\varphi) = Y_T^{\epsilon}(\varphi)^2 - \int_0^T( (\mathcal Z_S^{\epsilon})^2 , \varphi^2 )_{\epsilon} dS + R^{\epsilon}_T$$ where $R^{\epsilon}_T$ is some error term whose $L^2$ norm vanishes as $\epsilon \to 0$. So the remainder of the proof will be devoted to this goal. Indeed, using Equation (\ref{75}), it holds that $$\langle M \rangle_t = \int_0^t \bigg((\epsilon + o(\epsilon)) Z_s(x)^2 - \nabla^+ Z_s(x) \nabla^-Z_s(x)\bigg) ds.$$
	Consequently, we find that $$\langle Y^{\epsilon}(\varphi) \rangle_T = \epsilon^{-2} \int_0^T \bigg((\epsilon+o(\epsilon) )^2 ((\mathcal Z_S^{\epsilon})^2, \varphi^2)_{\epsilon} + \epsilon \cdot (\nabla^+_{\epsilon} \mathcal Z_S^{\epsilon} \cdot \nabla^-_{\epsilon} \mathcal Z_S^{\epsilon}\;,\; \varphi^2)_{\epsilon} \bigg)dS$$
	where $\nabla^{\pm}_{\epsilon} \varphi(X)=\varphi(X) - \varphi(X \pm \epsilon)$. Since $\epsilon^{-2} (\epsilon +o(\epsilon))^2 = 1+o(1)$, it is clear that $$\epsilon^{-2} \int_0^T (\epsilon+o(\epsilon) )^2 ((\mathcal Z_S^{\epsilon})^2, \varphi^2)_{\epsilon}dS$$ converges weakly to $\int_0^T (\mathcal L_S^2,\varphi^2)dS$ (under $\Bbb P$) as $\epsilon \to 0$ (subsequentially). Thus defining $$R^{\epsilon}_T(\varphi):= \epsilon^{-1} \int_0^T (\nabla^+_{\epsilon} \mathcal Z_S^{\epsilon} \cdot \nabla^-_{\epsilon} \mathcal Z_S^{\epsilon}\;,\; \varphi^2)_{\epsilon} dS$$the proof will be completed if we can show that $\Bbb E[ R_T^{\epsilon}(\varphi)^2] \to 0$ as $\epsilon \to 0$. Changing back to microscopic variables, we note that $$R^{\epsilon}_T(\varphi) = \epsilon^2 \int_0^{\epsilon^{-2}T} \sum_{x \in \Lambda} \nabla^+Z_{s}(x) \nabla^-Z_{s}(x) \varphi(\epsilon x)^2 ds.$$
	Using the identity $\big(\int_0^t f(s)ds\big)^2 =2 \int_0^t \int_0^s f(s)f(s')ds'ds$, we find that $$R^{\epsilon}_T(\varphi)^2 = 2 \epsilon^4 \int_0^{\epsilon^{-2}T} \int_0^s \sum_{x , y \in \Lambda} \varphi(\epsilon x)^2 \varphi(\epsilon y)^2 \nabla^+ Z_{s'}(x) \nabla^- Z_{s'}(x) \nabla^+ Z_{s}(y) \nabla^- Z_{s}(y) \;ds' ds .$$ Recall the filtration $\mathcal F_s = \sigma(\{\eta_s(x) : x\in \Lambda,\;s\leq t\})$, and define $$U^{\epsilon}(y,s',s):= \Bbb E\big[ \nabla^+ Z_s(y) \nabla^- Z_s(y) \big| \mathcal F_{s'} \big].$$Then we find that $$\Bbb E[R^{\epsilon}_T(\varphi)^2] = 2 \epsilon^4 \int_0^{\epsilon^{-2}T} \int_0^s \sum_{x,y \in \Lambda} \varphi(\epsilon x)^2 \varphi(\epsilon y)^2 \Bbb E\big[ \nabla^+ Z_s(x) \nabla^- Z_{s'}(x) U^{\epsilon}(y,s',s) \big] \;ds'ds.$$ Now since $Z_t(x) = e^{2 \sqrt{\epsilon} h_t(x) - \nu t}$, and since $|e^q-e^p| \leq |q-p|e^{q}$ for $p<q$, it follows that we have the ``brute-force" bound: $|\nabla^{\pm} Z_t(x)| \leq C\epsilon^{1/2} Z_t(x) $, hence the preceding expression yields \begin{equation}\label{117}\Bbb E[R^{\epsilon}_T(\varphi)^2] \leq C \epsilon^5 \int_0^{\epsilon^{-2}T} \int_0^s \sum_{x,y \in \Lambda} \varphi(\epsilon x)^2 \varphi(\epsilon y)^2 \Bbb E\big[ Z_{s'}(x)^2 U^{\epsilon}(y,s',s) \big]\;ds'ds.\end{equation}
	Now, we make the following claim, the proof of which we will postpone until a bit later: \begin{equation}\label{claim1} \sup_{x \in \Lambda} e^{-2a \epsilon x} \Bbb E|U^{\epsilon}(x,s,t)| \leq C\epsilon^{1/8} (t-s)^{-1/2},\;\;\;\;\;\; \epsilon^{-3/2} \leq s \leq t \leq \epsilon^{-2}T  .\tag{\textbf{Claim 1}}\end{equation} The $1/8$ appearing in the power is not sharp, it is just a convenient bound which suffices for our purposes. Before proving \eqref{claim1}, let us see why it implies the result. First we are going to split up the integral in \eqref{117} as three terms: \begin{equation}\label{119} \int_0^{\epsilon^{-3/2}} \int_0^s (-) ds'ds\;+\;\int_{\epsilon^{-3/2}}^{\epsilon^{-2}T} \int_0^{\epsilon^{-3/2}} (-)ds'ds \;+\; \int_{\epsilon^{-3/2}}^{\epsilon^{-2}T} \int_{\epsilon^{-3/2}}^s(-)ds'ds.\end{equation} 
	To deal with the first two terms, we use the brute-force bound $|\nabla^+ Z_s(y) \nabla^-Z_s(y)| \leq \epsilon Z_s(y)^2$, which in turn implies that the expectation appearing in \eqref{117} is bounded by $\epsilon \cdot \Bbb E[ Z_{s'}(x)^2 Z_s(y)^2]$. By Cauchy-Schwarz, this is in turn bounded by $\epsilon \cdot \|Z_{s'}(x)\|_4^2 \|Z_s(y)\|_4^2$, which by \eqref{77} is further bounded by $\epsilon \cdot C e^{2a(x+y)}$. Noting (for instance by Riemann sum approximation) that \begin{equation} \label{120}\epsilon^2 \sum_{x,y \in \Lambda} \varphi(\epsilon x)^2 \varphi(\epsilon y)^2 e^{Ca\epsilon(x+y)} \leq C,\end{equation} it follows that the first integral in \eqref{119} will be bounded by $C\epsilon$ and the second one will be bounded by $C \epsilon^{1/2}$, which is sufficient, since these quantities approach zero as $\epsilon \to 0$.
	\\
	\\
	In order to deal with the third integral in \eqref{119}, we define events $A_{K,s',x}:=\{ |Z_{s'}(x)| \leq K \}$, and we are going to split up the expectation in (\ref{117}) according to whether or not $A_{K,x,s'}$ occurs or not: \begin{equation}\label{118}\Bbb E\big[ Z_{s'}(x)^2\cdot 1_{A_{K,x,s'}} U^{\epsilon}(y,s',s) \big]\;\;\;+\;\;\;\Bbb E\big[ Z_{s'}(x)^2 \cdot 1_{A_{K,x,s'}^c} \nabla^+ Z_s(y) \nabla^- Z_s(y) \big]\end{equation} 
	where $A^c$ denotes the complement of $A$. The first term of \eqref{118} can be bounded by $K^2 \Bbb E[U^{\epsilon}(y,s',s)]$, which by \eqref{claim1} is bounded by $CK^2 \epsilon^{1/8} (t-s)^{-1/2}e^{a\epsilon y}$. Now for the second term. Since $1_{A^c_{K,x,s'}} \leq K^{-2} Z_{s'}(x,y)$, and since $\nabla^+Z_s(y) \nabla^-Z_s(y) \leq \epsilon Z_s(y)^2$, the second term of \eqref{118} is bounded by $K^{-2} \epsilon \Bbb E[ Z_{s'}(x)^4 Z_s(y)^2]$, which (by the Cauchy-Schwarz inequality) is in turn bounded by $K^{-2} \epsilon \|Z_{s'}(x)\|_8^4 \| Z_s(y) \|_4^2$. By \eqref{77}, this is further bounded by $K^{-2}\epsilon \cdot C e^{4a \epsilon x +2a\epsilon y}$. Then applying \eqref{120}, it will follow that the third term in \eqref{119} will be bounded by $C(\epsilon^{1/8}K^2 +K^{-2}) $. Letting $\epsilon \to 0$ and then $K \to \infty$, it finally follows that the RHS of \eqref{117} approaches zero as $\epsilon \to 0$.
	\\
	\\
	Thus all that is left to do is to prove \eqref{claim1}. This is done separately below. $\Box$
	\\
	\\
	\textit{Proof of \eqref{claim1}:} To prove the ``key estimate" (as [BG97] calls it) we are going to write: $$Z_t(x) = L_t(x) + D_t^t(x)$$ where $L$ stands for the solution to the \textbf{L}inear equation: $$L_t(x):= \mathbf p_t^R * Z_0(x)$$ and $D$ stands for the \textbf{D}uhamel contribution from the random noise: $$D^t_s(x) := \int_0^s \sum_y \mathbf p_{t-u}^R(x,y) dM_u(y).$$ We note that \begin{align*}\nabla^+ Z_t(x) \nabla^- Z_t(x) = &\nabla^+L_t(x) \nabla^- L_t(x) + \nabla^+D_t^t(x) \nabla^- D_t^t(x) \\&+ \nabla^+D_t^t(x) \nabla^- L_t(x) + \nabla^+ L_t(x) \nabla^- D_t^t(x).
	\end{align*}
	Noting that $D^t_s(x)$ is a martingale in $s$, it follows that $\nabla^{\pm} D_s^t(x)$ is a martingale in $s$, and hence $\nabla^+D_s^t(x) \nabla^- D_s^t(x) - \langle \nabla^+ D^t(x), \nabla^- D^t(x) \rangle_s$ is a martingale. Using the condition that $\langle M(x), M(y) \rangle_t = 0$ if $x \neq y$, it follows that $\langle \nabla^+ D^t(x), \nabla^- D^t(x) \rangle_s = \int_0^s \sum_y K_{t-u}(x,y;A) d\langle M(y) \rangle_u$, where $$K_t(x,y;A) = \nabla^+\mathbf p_t^R(x,y;A) \nabla^- \mathbf p_t^R(x,y;A).$$The reason we specify $A$ here but not elsewhere will be made clear below. Summarizing the last paragraph, we find that if $s \leq r \leq t$ then $$\Bbb E \big[ \nabla^+D_r^t(x) \nabla^- D_r^t(x) \; \big|\; \mathcal F_s \big] = \nabla^+D_s^t(x) \nabla^- D_s^t(x) + \Bbb E \bigg[ \int_s^r K_{t-u}(x,y;A) d\langle M(y) \rangle_u \; \bigg|\; \mathcal F_s \bigg].$$ Hence \begin{align*}U^{\epsilon}(x,s,t) = &\nabla^+L_t(x) \nabla^-L_t(x) + \nabla^+ D_s^t(x) \nabla^-D_s^t(x) + \Bbb E \bigg[ \int_s^t K_{t-u}(x,y;A) d\langle M(y) \rangle_u \; \bigg|\; \mathcal F_s \bigg] \\ &+ \nabla^+ D_s^t(x) \nabla^- L_t(x) + \nabla^+ L_t(x) \nabla^- D^t_s(x) .\end{align*}
	Let us call the terms on the RHS as $I_1,...,I_5$, respectively. In order to bound the expectations of $I_1,I_2,I_4$, and $I_5$, it suffices (by the Cauchy-Schwarz inequality) to bound both $\Bbb E[ (\nabla^{\pm} L_t(x) )^2 ]$ and $\Bbb E[(\nabla^{\pm} D_s^t(x) )^2]$ by $C \epsilon^{1/8}(t-s)^{-1/2} e^{2a\epsilon x}$, where $C$ does not depend on $x,\epsilon,$ or $s,t \in [\epsilon^{-3/2}, \epsilon^{-2}T]$. The bound for $I_3$ is more involved, and will be done later.
	\\
	\\
	Let us start be getting the desired bound on $\Bbb E[(\nabla^{\pm} L_t(x))^2]$. Using Assumption \ref{51} and then Corollary \ref{4} or \ref{16}, we have that \begin{align*}
	\|\nabla^{\pm} L_t(x)\|_2 &\leq \sum_y | \nabla^{\pm} \mathbf p_t^R(x,y)| \cdot \|Z_0(y)\|_2 \\ &\leq C \sum_y| \nabla^{\pm} \mathbf p_t^R(x,y)| e^{a\epsilon y} \\ &\leq Ct^{-1/2} e^{a\epsilon x}.
	\end{align*}
	Squaring both sides, we get $$\Bbb E[(\nabla^{\pm} L_t(x))^2] \leq Ct^{-1} e^{2a\epsilon x}.$$
	Using the assumption that $t \geq \epsilon^{-3/2}$, we have that $t^{-1/2} \leq \epsilon^{3/4}$, so that $t^{-1} \leq \epsilon^{3/4}t^{-1/2} \leq \epsilon^{3/4} (t-s)^{-1/2} $. Noting that $\epsilon^{3/4} \leq \epsilon^{1/8}$ when $\epsilon \leq 1$, the desired bound for $\Bbb E[(\nabla^{\pm} L_t(x))^2]$ follows.
	\\
	\\
	Next, we are going to bound $\Bbb E[(\nabla^{\pm} D^t_s(x))^2 ]$. First note that $\| \frac{d}{du} \langle M(y) \rangle_u \|_p \leq C\epsilon e^{2a\epsilon y}$, where $C$ does not depend on $y$. This follows by \eqref{75}, the ``brute-force" bound $|\nabla^+Z_u(x) \nabla^- Z_u(x)| \leq C\epsilon Z_u(x)^2$, and \eqref{77}. Note that \begin{align*}\Bbb E[(\nabla^{\pm}D_s^t(x))^2] &= \Bbb E[ \langle \nabla^{\pm} D^t(x) \rangle_s ] \\&= \Bbb E \bigg[ \int_0^s \sum_y (\nabla^{\pm} \mathbf p_{t-u}^R(x,y))^2 d\langle M(y) \rangle_u \bigg] \\ &\leq \int_0^s \sum_y (\nabla^{\pm} \mathbf p_{t-u}^R(x,y))^2 \cdot \Bbb E \big| \frac{d}{du} \langle M(y) \rangle _u \big| du\\&\leq C \epsilon \int_0^s (t-u)^{-1} \sum_y \nabla^{\pm} \mathbf p_{t-u}^R(x,y) e^{2a\epsilon y} du \\ &\leq C\epsilon e^{2a\epsilon x} \int_0^s (t-u)^{-3/2}du \\ &\leq C \epsilon e^{2a\epsilon x} (t-s)^{-1/2}.  \end{align*}We used Proposition \ref{3} or \ref{15} in the fourth line, and Corollary \ref{4} or \ref{16} in the fifth line. In the last line we just performed the integral and omitted the subtracted term. The desired claim now follows by noting that $\epsilon \leq \epsilon^{1/8}$ for $\epsilon \leq 1$. This completes the proof of desired bounds for $I_1,I_2,I_4,$ and $I_5$.
	\\
	\\
	Now we just need to bound $I_3$, which was defined as $\Bbb E \big[ \int_s^t \sum_y K_{t-u}(x,y;A) d\langle M(y) \rangle_u \big| \mathcal F_s \big]$. Using Equation \eqref{75}, we can expand $d\langle M(y) \rangle_u$ into $(\epsilon +o(\epsilon)) Z_u(y)^2du + \nabla^+ Z_u(y) \nabla^-Z_u(y)du$. Consequently, we find that \begin{equation}\label{122}|I_3| \leq  C\epsilon \bigg| \int_s^t \sum_y K_{t-u}(x,y;A) \Bbb E[ Z_u(y)^2 |\mathcal F_s] du \bigg|+ \int_s^t \sum_y |K_{t-u}(x,y;A)| |U^{\epsilon} (y,s,u)| du . \end{equation}
	Now we make a claim which we hold off until later: \begin{equation}\label{claim2}
	    \Bbb E \bigg[\epsilon \bigg| \int_s^t \sum_y K_{t-u}(x,y;A) \Bbb E[ Z_u(y)^2 |\mathcal F_s] du \bigg| \bigg] \leq C \epsilon^{1/8} e^{2a\epsilon x} (t-s)^{-1/2}. \tag{\textbf{Claim 2}}
	\end{equation}
	Before proving this, let us first see why it implies \eqref{claim1}. Using the bounds for $I_1,I_2,I_4$, and $I_5$ together with \eqref{122} and \eqref{claim2} shows that \begin{equation}\label{123}\Bbb E|U^{\epsilon}(x,s,t)| \leq C\epsilon^{1/8}e^{2a\epsilon x} (t-s)^{-1/2} +\int_s^t \sum_y |K_{t-u}(x,y;A)| \Bbb E|U^{\epsilon} (y,s,u)| du .\end{equation}
	Now we can iterate this, and we get that $$\Bbb E|U^{\epsilon}(x,s,t)| \leq C\epsilon^{1/8} e^{2a\epsilon x} (t-s)^{-1/2} $$$$+ C\epsilon^{1/8}\sum_{n=1}^{\infty} \int_{\Delta_n(s,t)} (u_n-s)^{-1/2} \sum_{y_1,...,y_n} |K_{t-u_1}(x,y_1;A)| \prod_{i=1}^{n-1} |K_{u_{i}-u_{i+1}}(y_i,y_{i+1};A)| e^{2a\epsilon y_n} du_n \cdots du_1  $$
	where $\Delta_n(s,t) := \{(u_1,...,u_n) : s \leq u_n \leq \cdots \leq u_1 \leq t\}$. Now note by the triangle inequality that $e^{2a\epsilon y_n} \leq e^{2a\epsilon x} e^{2a \epsilon |y_1-x|}\prod_{i=1}^n e^{2a \epsilon |y_{i+1}-y_i|}$. Now using Propositions \ref{10} and \ref{11} (or \ref{25}), one may check that the integral over $\Delta_n(s,t)$ is at most $C \epsilon \cdot c_*^n\cdot e^{2a\epsilon x}$, where $c_*<1$ is a constant independent of $x$ and $\epsilon$. Using the fact that $\epsilon \leq C(t-s)^{-1/2}$, the desired claim now follows easily. $\Box$
	\\
	\\
	\textit{Proof of \eqref{claim2}:} This will be the final part of the proof. We will prove that $$\Bbb E \bigg[ \epsilon \bigg| \sum_{y \in \Lambda} \int_s^t K_{t-u} (x,y;A) \Bbb E[Z_{u}(y)^2 | \mathcal F_s] du \bigg| \bigg] \leq C \epsilon^{1/8} e^{2a \epsilon x} (t-s)^{-1/2} $$ uniformly in $x\in \Bbb Z_{\geq 0}$ and $s,t\in [0,\epsilon^{-2}\tau]$, where $$K_t(x,y;A) = \nabla^+\mathbf p_t^R(x,y;A) \nabla^- \mathbf p_t^R(x,y;A)$$ and $\mathbf p_t^R(\cdot, \cdot;A)$ is the Robin heat kernel with parameter $\mu_A$, and $\mathcal F_t= \sigma(\eta_s(x):s \leq t, x \in \Lambda)$.
	\\
	\\
	In the special case of ASEP-H, the same result holds with $\epsilon^{1/8}$ improved to $\epsilon^{1/2-\delta}$. This can be seen by dissecting the proof below, and it is mainly because of the difference between the methods used to prove Lemmas \ref{9} and \ref{24}.
	\\
	\\
	In order to prove the above inequality, we write 
	\begin{align*}\epsilon \sum_{y \in \Lambda}\int_s^t |K_{t-u} (x,y;A)| \Bbb E[Z_{u}(y)^2 | \mathcal F_s] du &= \epsilon \sum_{y \in \Lambda}\int_s^t |K_{t-u} (x,y;A)| \Bbb E[Z_{u}(y)^2 - Z_t(x)^2| \mathcal F_s] du \\ &\;\;\; + \epsilon \Bbb E[Z_{t}(x)^2 | \mathcal F_s] \sum_{y \in \Lambda}\int_s^t |K_{t-u}(x,y;A) -K_{t-u}(x,y;0)| du \\ &\;\;\;+ \epsilon \Bbb E[Z_{t}(x)^2 | \mathcal F_s] \sum_{y \in \Lambda}\int_s^t |K_{t-u}(x,y;0)| du.
	\end{align*}
	Let us name the three terms on the RHS as $J_1, J_2, J_3$, in that specific order. To bound $J_1$, we write $Z_u(y)^2-Z_t(x)^2=(Z_u(y)-Z_t(x)) (Z_u(y)+Z_t(x))$, so by Cauchy-Schwarz we have that $\|Z_u(y)^2-Z_t(x)^2\|_1 \leq \|Z_u(y)+Z_t(x)\|_2 \|Z_u(y)-Z_t(x)\|_2$. Using \eqref{77}, we have that $\|Z_u(y)+Z_t(x)\|_2 \leq Ce^{a\epsilon x}$. Using \eqref{78} and \eqref{79}, we have that for $\alpha<1/2$ \begin{align*}\|Z_u(y)-Z_t(x)\|_2 &\leq \|Z_u(y)-Z_u(x)\|_2 + \|Z_u(x)-Z_t(x)\|_2 \\ &\leq C(\epsilon |x-y|)^{\alpha}e^{a\epsilon (x+y)} + C\epsilon^{\alpha} (1 \vee |t-u|^{\alpha/2} ) e^{2a\epsilon x} .\end{align*} Let us take $\alpha = 1/8$ for this proof. Note from Proposition \ref{3} (or \ref{15}) that $ |K_t(x,y;A)| \leq (1 \wedge t^{-1}) |\nabla^+ \mathbf p_t^R(x,y)|$. Now we make a few observations: firstly, note that $(1 \wedge t^{-1})(1 \vee t^{\alpha/2}) = (1\wedge t^{-(2-\alpha)/2})$. Secondly, by the identity $r^{\alpha} \leq e^r$, we find that $(\epsilon |x-y|)^{\alpha} \leq \epsilon^{\alpha} (1 \vee t^{\alpha/2}) e^{-(1 \wedge t^{-1/2}) |x-y|}$. Combining all of these observations, one finds that \begin{align*}
	    \Bbb E[|J_1|] &\leq \epsilon \cdot C \epsilon^{\alpha} \int_s^t (1 \wedge (t-u)^{-(2-\alpha)/2}) \sum_{y \in \Lambda} |\nabla^+ \mathbf p_{t-u}^R(x,y)| \big[ e^{a\epsilon (x+y)} e^{-(1 \wedge (t-u)^{-1/2}) |x-y|} +e^{2a\epsilon x} \big] \\ &\leq C \epsilon \cdot \epsilon^{1/8} \int_0^t (1 \wedge (t-u)^{-3/2}) e^{2a\epsilon x} du \\ &\leq C (t-s)^{-1/2} \cdot \epsilon^{1/8} e^{2a\epsilon x} \int_0^{\infty} (1 \wedge u^{-3/2}) du.
	\end{align*}
	In the second line we substituted $\alpha=1/8$, and we used Corollary \ref{4} (or \ref{16}) to bound the sum over $y$. In the next line we noted that $\epsilon \leq C(t-s)^{-1/2}$ since $t -s\leq \epsilon^{-2}\tau$. In the final line we made the substitution $u \to t-u$. This proves that $J_1$ satisfies the desired bound.
	\\
	\\
	To show that $J_2$ satisfies a bound of the desired type, we apply Equation \eqref{77} together with Lemma \ref{9} or \ref{24} to say that
	\begin{align*}
	\Bbb E[|J_2|] &\leq \epsilon \|Z_t(x)\|_2^2 \sum_{y \geq 0} \int_0^{\epsilon^{-2}\tau} |K_t(x,y;A)-K_t(x,y;0)| dt \\ & \leq \epsilon \cdot C e^{2a \epsilon x} \cdot \epsilon^{1/8} \\ & \leq C(t-s)^{-1/2} e^{2a \epsilon x} \epsilon^{1/8}
	\end{align*}
	where we used the fact that $t-s \leq \epsilon^{-2}\tau$.
	\\
	\\
	As for $J_3$, we can split it into two further terms $$J_3 = \epsilon \Bbb E[Z_t(x)^2|\mathcal F_s]\bigg(\sum_{y \geq 0} \int_0^{\infty} K_{u}(x,y;0)du - \sum_{y \geq 0} \int_{t-s}^{\infty} K_{u}(x,y;0)du \bigg).$$ As a consequence of Proposition \ref{108} or \ref{107}, the first term satisfies $$\epsilon \Bbb E[Z_t(x)^2| \mathcal F_s] \sum_{y \geq 0} \int_0^{\infty} K_{\tau}(x,y;0)d\tau =O(\epsilon^2) \Bbb E[Z_t(x)^2| \mathcal F_s].$$ After taking expectations, this will be bounded by $C\epsilon^2 \|Z_t(x)\|_2^2$, which by \eqref{77} (and the fact that $t-s \leq \epsilon^{-2}\tau$) is bounded by $C\epsilon (t-s)^{-1/2} e^{2a\epsilon x}$. To bound the second term on the RHS, note by Proposition \ref{3} (or \ref{15}) and Corollary \ref{4} (or \ref{16}) that $\sum_y |K_u(x,y)| \leq 1 \wedge u^{-3/2}$. After integrating from $t-s$ to $\infty$, this will be bounded by $C(t-s)^{-1/2}$, which completes the proof.
    \end{proof}

	\section{Extension of Results to Narrow-Wedge Initial Data}
	
	In this section we will consider the weakly asymmetric limit of the height functions for Open ASEP started from an initial configuration which has zero particles. There are several reasons why such initial data poses a problem. The first problem is as follows:
	
	\begin{itemize}
		\item The associated sequence of initial data for the rescaled Gartner-transformed height functions is $\mathcal Z_0^{\epsilon}(X) = e^{-\epsilon^{-1/2}X} $. Note that this converges weakly (in the PDE sense) to $0$ as $\epsilon \to 0$, therefore one may expect that $\mathcal Z^{\epsilon}(T,X)$ will just converge almost surely to zero as $\epsilon \to 0$, which is indeed the case. So the limit is trivial.
	\end{itemize}
	
	Hence it is clear that some sort of normalization is necessary in order to obtain a nontrivial limiting object. The solution to this problem is to introduce a logarithmically diverging correction to the height function, as in [ACQ11] or [Cor12]. More specifically, we want our Gartner-transformed initial data $\mathcal Z_0^{\epsilon}(X) = e^{-\epsilon^{-1/2}X} $ to converge to something nontrivial as $\epsilon \to 0$, and the only sensible way to do this is to multiply by a factor of $\epsilon^{-1/2}$ since that will make $\mathcal Z_0^{\epsilon}$ converge weakly (in the PDE sense) to $\delta_0$ as $\epsilon \to 0$. Hence we will \textit{redefine} $Z_t(x)$ and $\mathcal Z^{\epsilon}(T,X)$ as the following quantities: $$Z_t(x) = \frac{\varrho}{\epsilon^{1/2}} e^{\lambda h_t(x) + \nu t}$$ $$\mathcal Z^{\epsilon}(T,X) = Z_{\epsilon^{-2}T}(\epsilon^{-1}X)$$ where $\varrho:= \big(\int_I e^{-Z}dZ\big)^{-1}$. So $\varrho=1$ if $I=[0,\infty)$, and $\varrho = 1/(1-e^{-1})$ if $I=[0,1]$. The choice for this constant will be made clear later (see Lemma \ref{56} below).
	\\
	\\
	We hope that it is clear that this is \textbf{not} the same $Z_t(x)$ and $\mathcal Z^{\epsilon}(T,X)$ appearing in Sections 2 and 4, due to the normalizing factor which is $\epsilon^{-1/2}\varrho$.
	\\
	\\
	Although this normalization scheme should presumably give us a nontrivial object in the limit, the proof involves some subtleties. The main issue is that
	\begin{itemize}
		\item The associated sequence of initial data for the (redefined) Gartner-transformed height functions $\mathcal Z_0^{\epsilon}(T,X)=\frac{\varrho}{\epsilon^{1/2}} e^{-\epsilon^{-1/2}X}$ is no longer ``near-equlilbrium," as defined in Assumption \ref{51}. Therefore, the results of Section 4 no longer apply in our case.
	\end{itemize}
	The solution to this issue involves quite a few subtleties, and therefore we will devote this section to proving convergence of $\mathcal Z^{\epsilon}$ to the SHE in this case.
	\\
	\\
	Recall that $\|X\|_p = \Bbb E[|X|^p]^{1/p}$.
	
	\begin{lem}\label{34}
		For all $T \geq 0$, there exists $C=C(T)$ such that for every $\epsilon>0$ sufficiently small $$\sup_{t\leq T} \Bbb \|Z_t(x)\|_p \leq C Z_0(x).$$
	\end{lem}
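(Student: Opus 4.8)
The approach is to reduce the estimate to an exponential-moment bound for a single Poisson clock, exploiting a \emph{locality} property of the height function. For a fixed site $x$ in the bulk, write $h_t(x) = h_t(0) + \sum_{j=1}^x \eta_t(j)$. A bulk jump across a bond $(j,j+1)$ with $j\le x-1$ changes both $\eta_t(j)$ and $\eta_t(j+1)$, hence leaves $\sum_{j'=1}^x \eta_t(j')$ unchanged; a jump across a bond with left endpoint $\ge x+1$ affects no term of that sum; and a creation/annihilation at a reservoir site changes $h_t(0)$ and the adjacent occupation variable in a compensating way, so it leaves the interior values $h_t(x)$, $x\ge 1$, unchanged --- this is precisely the point of the ``twice net removed'' convention in the definition of $h_t(0)$. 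Consequently only jumps across the single bond $(x,x+1)$ can alter $h_t(x)$, and $h_t(x)-h_0(x) = -2\big(R_t(x)-L_t(x)\big)$, where $R_t(x)$, $L_t(x)$ count the right- and left-jumps across that bond up to time $t$. At the two boundary sites ($x=0$, and $x=N$ for ASEP-B) the same holds with the bond replaced by the reservoir clocks. In every case, letting $M_t(x)$ be the total number of relevant clock rings, we get the pathwise bound $|h_t(x)-h_0(x)|\le 2M_t(x)$, where $M_{\cdot}(x)$ is a counting process whose predictable intensity is bounded, uniformly in $x$ and in small $\epsilon$, by an absolute constant $c_0$ (one may take $c_0=1$, using $p\vee q\le 1$ for bond clocks and $\alpha+\gamma,\ \beta+\delta\le 1$ for reservoir clocks); in particular $M_t(x)$ is stochastically dominated, uniformly in $x$, by a Poisson variable of parameter $c_0 t$.

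I would then insert this into the identity $Z_t(x)=Z_0(x)\exp\big(-\lambda(h_t(x)-h_0(x))+\nu t\big)$. Since $|\lambda|=\sqrt\epsilon$ and $0\le\nu\le\epsilon$ for $\epsilon$ small, and since $t\mapsto M_t(x)$ is nondecreasing, this yields the pathwise bound
\[
\sup_{t\le T} Z_t(x)\;\le\; Z_0(x)\,e^{\epsilon T}\,e^{2\sqrt\epsilon\,M_T(x)} .
\]
Taking $L^p$ norms, using the stochastic domination above and the Poisson Laplace transform $\Bbb E\big[e^{c\,\mathrm{Poisson}(\mu)}\big]=\exp\big(\mu(e^{c}-1)\big)$, one obtains
\[
\Big\|\sup_{t\le T} Z_t(x)\Big\|_p\;\le\; Z_0(x)\,e^{\epsilon T}\,\exp\!\Big(\tfrac{c_0 T}{p}\big(e^{2p\sqrt\epsilon}-1\big)\Big).
\]
For $\epsilon$ sufficiently small (the threshold depending on $p$ and $T$) the last exponent is at most $4c_0 T\sqrt\epsilon\le 4c_0 T$, so the right-hand side is at most $C(T)\,Z_0(x)$ with $C(T)=e^{(1+4c_0)T}$; in fact the constant tends to $1$ as $\epsilon\to 0$. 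This proves the claim, and indeed the stronger statement with the supremum inside the norm.

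The only delicate point is the locality claim of the first paragraph: one must check carefully that reservoir events leave $h_t(x)$ unchanged for interior $x$, and treat the boundary sites $x=0$ and (for ASEP-B) $x=N$ separately, where the governing clock is a reservoir clock rather than a bond clock. Once that is established, the rest --- stochastic domination of a counting process with bounded predictable intensity by a Poisson law, and the explicit exponential moment of a Poisson variable --- is routine.
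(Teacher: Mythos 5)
Your proof is correct, but it takes a genuinely different route from the paper's. The paper exploits the standing assumption of this section --- the initial configuration is empty --- and argues globally: the position of the rightmost occupied site is stochastically dominated by a Poisson variable $N(t)$ of mean $t$, whence $h_t(x)\le h_0(x)+N(t)$, and then the Poisson exponential moment gives $\|Z_t(x)\|_p\le C Z_0(x)$. You instead argue locally at a fixed site: since $h_t(x)=h_t(0)+\sum_{j\le x}\eta_t(j)$, bulk jumps strictly left or strictly right of the bond $(x,x+1)$ leave $h_t(x)$ unchanged, and left-reservoir events are compensated by the convention defining $h_t(0)$, so only the single bond clock (or the relevant reservoir clock at $x=0$, resp.\ $x=N$ for ASEP-B) moves $h_t(x)$, each ring by $\pm 2$; this gives the pathwise bound $|h_t(x)-h_0(x)|\le 2M_t(x)$ with $M_\cdot(x)$ a counting process of intensity at most $p\vee q\le 1$ (resp.\ $\alpha\vee\gamma$, $\beta\vee\delta$), hence Poisson-dominated, and the Poisson Laplace transform finishes the proof exactly as in the paper. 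Your locality claim is sound (including the observation that right-reservoir events in ASEP-B affect only $h_t(N)$, since $h_t(0)$ records only left-boundary flux), and because you only use $|\lambda|=\sqrt\epsilon$ and $|h_t-h_0|\le 2M_t$, the argument is insensitive to the sign conventions in the Gartner transform. What your approach buys: it does not use the empty initial configuration at all, so it proves the lemma for arbitrary initial data, and it gives the stronger bound with $\sup_{t\le T}$ inside the $L^p$ norm; what the paper's approach buys is brevity, since in the narrow-wedge setting the Poisson domination of the rightmost particle is immediate. One cosmetic remark: your constant, like the paper's, implicitly depends on the exponent $p$ as well as $T$ through the threshold on $\epsilon$ and the Poisson moment, which is harmless but worth stating.
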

	
	\begin{proof} Since we are starting from an empty configuration of particles and since the exponential jump rate satisfies $p \sim \frac{1}{2}+O(\sqrt \epsilon) \leq 1$, we know that the position of the largest occupied site at time $t$ is stochastically dominated by a Poisson random variable $N(t)$, with mean $t$. Therefore $h_t(x)$ is stochastically dominated by $N(t)+h_0(x)$. Therefore for all $x \in \Lambda$ and $t \leq T$, we have that $$Z_t(x) = \epsilon^{-1/2}e^{\epsilon^{1/2}h_t(x)+\nu t} \leq \epsilon^{-1/2}e^{\epsilon^{1/2} N(t)}e^{\epsilon^{1/2}h_0(x)+\nu t} = e^{\epsilon^{1/2} N(t)}Z_0(x)$$ and consequently $$\|Z_t(x)\|_p \leq \|e^{\epsilon^{1/2}N(t)}\|_p Z_0(x) = e^{\epsilon^{1/2}(e^{pt}-1)} Z_0(x)$$ from which one may deduce the claim. \end{proof}
	
	\begin{prop}\label{36} Fix a terminal time $\tau \geq 0$. Let $\alpha \in [0,1/2)$. We have the following bounds, uniformly over all (small enough) $\epsilon>0$, $x,y \in \Lambda$, and $s,t \in [1, \epsilon^{-2}\tau]$ with $s<t$:
		\begin{equation}\label{46}\|Z_t(x)\|_p \leq C (\epsilon^2t)^{-1/2},\;\;\;\;\;\;\;\;\;\;\;\;\;\;\;\;\;\;\;\;\end{equation}
		\begin{equation}\label{47}\|Z_t(x)-Z_t(y)\|_p \leq C(\epsilon |x-y|)^{\alpha} (\epsilon^2 t)^{-(1+\alpha)/2},\;\;\;\;\;\;\;\;\;\;\end{equation}
		\begin{equation}\label{48}\|Z_t(x)-Z_s(x)\|_p \leq C \epsilon^{\alpha} (1 \vee |t-s|) ^{\alpha/2}(\epsilon^2s)^{-(1+\alpha)/2}.\end{equation}
		
	\end{prop}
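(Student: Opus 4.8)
The plan is to closely follow the proof of Proposition \ref{85}, with Assumption \ref{51} replaced by two elementary facts about the \emph{deterministic} narrow-wedge data $Z_0(y)=\tfrac{\varrho}{\epsilon^{1/2}}e^{-\sqrt\epsilon y}$: its $\ell^1$-mass is controlled, $\sum_{y\in\Lambda}Z_0(y)\le\tfrac{\varrho}{\epsilon^{1/2}}(1-e^{-\sqrt\epsilon})^{-1}\le C\epsilon^{-1}$, and by Lemma \ref{34} one has $\|Z_t(x)\|_p\le CZ_0(x)\le C\epsilon^{-1/2}$ for $t$ in any fixed bounded interval. Using the Duhamel representation $Z_t(x)=L_t(x)+\mathcal M_t(x)$ of Proposition \ref{74}, with $L_t(x):=\mathbf p_t^R*Z_0(x)$ deterministic and $\mathcal M_t(x):=\int_0^t\sum_y\mathbf p_{t-s}^R(x,y)\,dM_s(y)$, the mass bound combined with the on-diagonal estimate $\mathbf p_t^R(x,y)\le C(1\wedge t^{-1/2})$ of Proposition \ref{2} (resp.\ \ref{14}) gives $L_t(x)\le C(1\wedge t^{-1/2})\sum_yZ_0(y)\le C\epsilon^{-1}(1\wedge t^{-1/2})$, which for $t\ge1$ is exactly $C(\epsilon^2t)^{-1/2}$. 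Replacing the on-diagonal bound here by the gradient bound of Proposition \ref{3} (resp.\ \ref{15}) and the temporal bound of Proposition \ref{6} (resp.\ \ref{20}) shows in the same way that the spatial and temporal increments of $L_t(x)$ already obey the right-hand sides of \eqref{47} and \eqref{48}: the factor $\epsilon^{-1}$ coming from $\|Z_0\|_{\ell^1}$ is precisely what turns $1\wedge t^{-(1+\alpha)/2}$ into $(\epsilon^2t)^{-(1+\alpha)/2}$. What remains is to control the martingale contributions.

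For \eqref{46}, Lemma \ref{73} together with \eqref{76} gives $\|\mathcal M_t(x)\|_p^2\le C\epsilon\int_0^t(t-s)^{-1/2}\sum_y\mathbf p_{t-s+1}^R(x,y)\|Z_s(y)\|_p^2\,ds$ for $t\ge1$. Here the argument departs from Proposition \ref{85}. There the source term $e^{2ax}$ is stable under the heat flow and a two-fold iteration plus Gronwall closes the bound; in our setting the natural source term is $(\epsilon^2t)^{-1}$, which is singular in time, and iterating a time-singular source through the half-order integral $f\mapsto\int_0^t(t-s)^{-1/2}f(s)\,ds$ produces spurious logarithms from $\int ds/s$. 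Instead I would keep the inequality function-valued in $y$: writing $V_t(x):=\|Z_t(x)\|_p^2$ and $\mathcal K[f](t,x):=C\epsilon\int_0^t(t-s)^{-1/2}\sum_y\mathbf p_{t-s+1}^R(x,y)f(s,y)\,ds$, one has $V_t(x)\le2L_t(x)^2+\mathcal K[V](t,x)$, hence $V\le\sum_{n\ge0}\mathcal K^n[2L^2]$. Using the pointwise inequality $L_s(y)^2\le(\sup_zL_s(z))L_s(y)\le C\epsilon^{-1}(1\wedge s^{-1/2})L_s(y)$ and the Robin semigroup identity $\sum_y\mathbf p_a^R(\cdot,y)\mathbf p_b^R(y,\cdot)=\mathbf p_{a+b}^R$ to telescope the heat kernels, each iterate collapses to $\mathcal K^n[2L^2](t,x)\le b_n\,\epsilon^{\,n-1}t^{(n-1)/2}L_{t+n}(x)$, where $b_n$ is a product of Beta integrals $\int_0^1(t-s)^{-1/2}s^{(k-1)/2}\,ds$ — all pure powers of $t$, no logarithms — that telescopes to $\Gamma(1/2)^n/\Gamma(\tfrac{n+1}2)$. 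Since $L_{t+n}(x)\le C\epsilon^{-1}(1\wedge t^{-1/2})$ uniformly in $n$ and $\epsilon\sqrt t\le\sqrt\tau$ on $[1,\epsilon^{-2}\tau]$, the series converges and $V_t(x)\le C(\tau)(\epsilon^2t)^{-1}$, which is \eqref{46}; the short-time range $t\le1$ is covered by Lemma \ref{34}, its contribution being absorbed because $\epsilon^2t\le\tau$.

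Finally, \eqref{47} and \eqref{48} follow by the same decomposition as in Proposition \ref{85}. For the spatial increment one applies Lemma \ref{73} to $F(s,z)=\mathbf p_{t-s}^R(x,z)-\mathbf p_{t-s}^R(y,z)$, bounds $\sup_{|u-s|\le1}F(u,z)^2$ via Proposition \ref{3}/\ref{15} and Proposition \ref{33}/\ref{26}, and inserts the bound \eqref{46} (or, where needed, its sharper function-valued form $V_s(z)\le C(\tau)\epsilon^{-1}(1\wedge s^{-1/2})L_s(z)$ obtained above) together with Corollary \ref{4}/\ref{16}; for the temporal increment one uses the semigroup property to write $Z_t(x)=\mathbf p_{t-s}^R*Z_s(x)+\int_s^t\sum_y\mathbf p_{t-u}^R(x,y)\,dM_u(y)$, splits into the analogues of the three terms $J_1,J_2,J_3$ of Proposition \ref{85}, and invokes \eqref{46}, Proposition \ref{7}/\ref{21} (for the mass defect $\sum_y\mathbf p_t^R(x,y)-1$), and Lemma \ref{73}. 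The main obstacle throughout is the one isolated above: because the forcing term for the narrow-wedge height function is singular in time rather than $O(1)$, one cannot run the Proposition \ref{85} iteration verbatim, and the bulk of the new work lies in carrying out the function-valued Volterra iteration and the heat-kernel telescoping carefully enough that the clean exponents $(\epsilon^2t)^{-1/2}$ and $(\epsilon^2t)^{-(1+\alpha)/2}$ survive without logarithmic inflation.
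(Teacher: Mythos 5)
Your overall route is in fact the same as the paper's, including the step you present as a departure: the paper also keeps the iteration function-valued, using the semigroup property to telescope the kernels, and arrives at exactly the expansion you describe (a series $\sum_k C^k\epsilon^k\big(\int_{\Delta_k(t)}\cdots\big)(\mathbf p_{t+k}^R*Z_0)(x)$ with Beta-type time integrals bounded by $Ct^{k/2}/(k/2)!$, cf.\ Equation \eqref{97}), and then handles \eqref{47}, \eqref{48} by the same $J_1,J_2$ / $I_1,I_2$ decompositions you sketch, inserting the sharper bound $\|Z_s(z)\|_p^2\le C(\epsilon^2s)^{-1/2}\big[(\mathbf p_s^R*Z_0)(z)+1\big]$ where needed. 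So there is no methodological novelty to weigh; the question is whether each step is justified.

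There is one genuine gap, and it sits exactly where the paper's new technical input enters. You close the series by asserting that $L_{t+n}(x)\le C\epsilon^{-1}(1\wedge t^{-1/2})$ \emph{uniformly in} $n$, citing only Proposition \ref{2} (resp.\ \ref{14}). Those estimates are valid only for times in $[0,\epsilon^{-2}T]$, with constants depending on the terminal time, whereas the telescoped kernels appear at times $t+n$ which exceed $\epsilon^{-2}\tau$ for large $n$. Worse, for negative boundary parameters --- the very case this paper is written to handle --- the claim is false: when $A<0$ the kernel contains the branching term $(1-\mu_A^{-2})\sum_{z\ge2}p_t(x+y+z)\mu_A^z$, whose mass grows like $e^{K\epsilon^2t}$ (Lemma \ref{5}), so for fixed $\epsilon$ one has $L_{t+n}(x)\to\infty$ as $n\to\infty$. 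The correct substitute is the long-time estimate of Proposition \ref{lt1} (resp.\ \ref{lt2}), $\mathbf p_t^R(x,y)\le C(t^{-1/2}+\epsilon)e^{K\epsilon^2t}$ with constants independent of any terminal time, which gives $L_{t+n}(x)\le C\big(\epsilon^{-1}(t+n)^{-1/2}+1\big)e^{K\epsilon^2(t+n)}\le C(\tau)(\epsilon^2t)^{-1/2}e^{Kn}$; the extra exponential in $n$ is still beaten by your factor $1/\Gamma(\tfrac{n+1}{2})$, so the series converges and the rest of your argument goes through. You never invoke these long-time estimates, so as written the key summability step is unjustified (and literally wrong for $A<0$ or $B<0$); adding them repairs the proof and makes it coincide with the paper's.
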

	
	\begin{proof} The proof here will be loosely based on the one given in [CST16, Proposition 1.8], however we found a small mistake there (the same one mentioned in the proof of Proposition \ref{85}) so several new ideas will also be used. These will involve the long-time estimates, Propositions \ref{lt1} and \ref{lt2}.
		\\
		\\
		We have from Theorem 2.9 that 
		\begin{align*}
		Z_t(x) &= \mathbf p_t^R * Z_0 (x) + \int_0^t \sum_{y \geq 0} \mathbf p_{t-s}^{R} (x,y) dM_s(y)
		\end{align*}
		
		Using Lemma \ref{73} and (\ref{76}), and we obtain for $t \geq 1$ that $$\bigg\| \int_0^t \sum_{y \geq 0} \mathbf p_s^R(x,y)dM_s(y) \bigg\|_p^2 \leq C \epsilon \int_0^t (t-s)^{-1/2}\sum_{y \geq 0} \mathbf p_{t-s+1}^R(x,y) \|Z_s(y)\|_p^2 ds.$$ 
		
		Since $(x+y)^2 \leq 2x^2+2y^2$, we have \begin{align*}
		\|Z_t(x)\|_p^2 \leq 2 \big|\mathbf p_t * Z_0(x)\big|^2 + C \epsilon \int_0^t (t-s)^{-1/2} \sum_{y \geq 0}  \mathbf p_{t-s+1}^R(x,y) \|Z_s(y)\|_p^2 ds.
		\end{align*}
		
		Using $\mathbf p_t^R(x,y) \leq Ct^{-1/2}$ together with $\sum_{y \geq 0} e^{-\epsilon^{1/2}y} =(1-e^{-\epsilon^{1/2}})^{-1} \sim \frac{1}{2}\epsilon^{-1/2}$ implies that \begin{align}\label{38}\mathbf p_t^R * Z_0(x) = \varrho \epsilon^{-1/2} \sum_y \mathbf p_t^R(x,y) e^{-\epsilon^{1/2}y} \leq C \epsilon^{-1} t^{-1/2} =C(\epsilon^2t)^{-1/2}\end{align} which in turn implies that $|\mathbf p_t^R * Z_0(x)|^2 \leq C (\epsilon^2 t)^{-1/2} (\mathbf p_t*Z_0)(x)$.
		\\
		\\
		Summarizing the above computations, we have proved that for all $t \in [1, \epsilon^{-2}T]$, we have \begin{equation}\label{41}\|Z_t(x)\|_p^2 \leq C(\epsilon^2 t)^{-1/2} (\mathbf p_t^R *Z_0)(x) + C \epsilon \int_0^t (t-s)^{-1/2} \sum_{y \geq 0}  \mathbf p_{t-s+1}^R(x,y) \|Z_s(y)\|_p^2 ds.\end{equation}
		Now we would like to iterate this inequality, but the problem is that we have only proved (\ref{41}) for $t \geq 1$, however we need to prove it for all $t \geq 0$ in order to apply the iteration. By Lemma \ref{34} and Proposition \ref{33} (or \ref{26}), for $t \leq 1$ we have \begin{equation}\label{37}\|Z_t(x)\|_p \leq C Z_0(x) \leq Ce^{t}\mathbf p_t(x,x) Z_0(x) \leq Ce^{1} \sum_{y} \mathbf p_t(x,y) Z_0(y) =C (\mathbf p_t *Z_0)(x) .\end{equation} Now squaring the LHS and RHS of (\ref{37}), and then applying (\ref{38}), we obtain that $\|Z_t(x)\|_p^2 \leq C(\epsilon^2 t)^{-1/2} (\mathbf p_t^R * Z_0)(x) $ for $t \leq 1$. This proves that (\ref{41}) still holds when $t\leq 1$, thus justifying our ability to iterate it.
		\\
		\\
		Iterating (\ref{41}) and applying the semigroup property of $\mathbf p_t^R$ proves that \begin{align}\label{97}\|Z_t(x)\|_p^2 &\leq C(\epsilon^2 t)^{-1/2}(\mathbf p_t^R*Z_0)(x)\\& \;\;\;\;+ \sum_{k=0}^{\infty} C^k\epsilon^k \bigg(\int_{\Delta_k(t)} t_0^{-1/2} \prod_{j=1}^k (t_j-t_{j-1})^{-1/2}(t-t_k)^{-1/2} dt_0\cdots dt_k \bigg) (\mathbf p_{t+k}^R*Z_0)(x)\nonumber\end{align} where $\Delta_k(t) = \{(t_0,...,t_k) \in \Bbb R^{k+1}\; |\; t_0 < \cdots <t_k<t\}$. A recursion will reveal that the integral within the parentheses is bounded $Ct^{k/2}/(k/2)!$. Recall the long-time estimates (Proposition \ref{lt1} or \ref{lt2}) which show that $$\mathbf p_t^R(x,y) \leq C(t^{-1/2}+\epsilon)e^{K\epsilon^2t}$$where $C,K$ are constants \textbf{not} depending on the terminal time $\tau$. This in turn implies that \begin{equation}\label{lt3}\mathbf p_t^R * Z_0(x) \leq C(\epsilon^{-1}t^{-1/2}+1)e^{K\epsilon^2t}\end{equation}Using (\ref{97}) and the remark underneath, we then see that \begin{align}\label{98}
		\|Z_t(x)\|_p^2 &\leq C(\epsilon^2t)^{-1/2}(\mathbf p_t^R * Z_0)(x)+C\sum_{k=0}^{\infty} \frac{C^k \epsilon^k t^{k/2}}{(k/2)!} (\epsilon^{-1}t^{-1/2}+1)e^{K\epsilon^2(t+k)}\nonumber \\ &\leq C(\epsilon^2t)^{-1/2} (\mathbf p_t^R * Z_0)(x) + C\big[(\epsilon^2t)^{-1/2}+1\big]e^{K'\epsilon^2t}\nonumber \\ &\leq C(\epsilon^2t)^{-1/2} \big[(\mathbf p_t^R * Z_0)(x)+1 \big].\end{align} In the final inequality we used the fact that $\epsilon^2t \leq \tau$, and $1 \leq \tau^{1/2}(\epsilon^2t)^{-1/2}$. The proof of the first bound (\ref{46}) is completed by noting from (\ref{38}) that $(\mathbf p_t^R * Z_0)(x) \leq C(\epsilon^2t)^{-1/2}$, so that (\ref{98}) is bounded by $C(\epsilon^2t)^{-1}$.
		\\
		\\
		To prove the second bound (\ref{47}), note that $$\|Z_t(x)-Z_t(y)\|_p \leq |\mathbf p_t^R * Z_0(x) - \mathbf p_t^R * Z_0(y) | + \bigg\| \int_0^t \sum_z \big(\mathbf p_t^R(x,z)-\mathbf p_t^R(y,z)\big) dM_s(z) \bigg\|_p .$$ Let us call the terms on the RHS as $J_1,J_2$, respectively. We will show that each $J_i$ satisfies a bound of the desired type. For $J_1$, note by Proposition \ref{3} (or \ref{15}) that $$J_1 \leq \epsilon^{-1/2} \sum_z \big|\mathbf p_t^R(x,z)-\mathbf p_t^R(y,z)\big|e^{-\epsilon^{1/2}z} \leq C \epsilon^{-1/2} t^{-(1+\alpha)/2} |x-y|^{\alpha} \sum_z e^{-\epsilon^{1/2}z} $$$$ \leq C \epsilon^{-1} t^{-(1+\alpha)/2} |x-y|^{\alpha} = C (\epsilon^2 t)^{-(1+\alpha)/2} (\epsilon |x-y|)^{\alpha}.$$ 
		For $J_2$, we are going to apply Lemma \ref{73} with $F(s,z) = \mathbf p_{t-s}^R(x,z)-\mathbf p_{t-s}^R(y,z)$. Note that if $|t-s-u| \leq 1$, then by the triangle inequality, Proposition \ref{3} (or \ref{15}), and Proposition \ref{33} (or \ref{26}), we have \begin{align*}(\mathbf p_u^R(x,z)-\mathbf p_u^R(y,z))^2 &\leq |\mathbf p_u^R(x,z)-\mathbf p_u^R(y,z)| (\mathbf p_u^R(x,z) + \mathbf p_u^R(y,z)) \\ & \leq C (1\wedge u^{-(1+\alpha)/2}) |x-y|^{\alpha} \cdot e^{1} (\mathbf p_{t-s+1}^R(x,z) + \mathbf p_{t-s+1}^R(y,z)) \\ & \leq C(t-s)^{-(1+\alpha)/2} |x-y|^{\alpha} \cdot (\mathbf p_{t-s+1}^R(x,z)+\mathbf p_{t-s+1}^R(y,z)).
		\end{align*}
		Hence $\sup_{|s-u| \leq 1} F_u(s,z)^2$ is bounded by the last expression. Thus by Lemma \ref{73} and Equation (\ref{98}) above, we find that for $t \geq 1$ and $\alpha \in [0,1)$ we have:
		\begin{align*}
		J_2^2 & \leq C \epsilon |x-y|^{\alpha} \int_0^t (t-s)^{-(1+\alpha)/2} \sum_z \big(\mathbf p_{t-s+1}^R(x,z)+\mathbf p_{t-s+1}^R(y,z)\big) \|Z_s(z)\|_p^2 ds \\ & \leq C|x-y|^{\alpha} \int_0^t (t-s)^{-(1+\alpha)/2} s^{-1/2} \sum_z \big(\mathbf p_{t-s+1}^R(x,z)+\mathbf p_{t-s+1}^R(y,z)\big) \big[ (\mathbf p_{s}^R * Z_0)(z)+1 \big] ds \\ &= C|x-y|^{\alpha} t^{-\alpha/2} \big( (\mathbf p_{t+1}^R * Z_0)(x) + (\mathbf p_{t+1}^R * Z_0)(y) +1 +1\big) \\ & \leq C (\epsilon |x-y|)^{\alpha} (\epsilon^2t)^{-(1+\alpha)/2} .
		\end{align*}
		Let us justify each of the inequalities above. In the second line we used (\ref{98}) to bound $\|Z_s(z)\|_p^2$ by $C(\epsilon^2s)^{-1/2}\big[ (\mathbf p_{s}^R * Z_0)(z)+1 \big]$. In the third line, we used the semigroup property to rewrite the sum over $z$, moreover we applied Corollaries \ref{4} and \ref{16} with $a_i=0$ in order to bound the $+1$ term next to $\mathbf p_s * Z_0$, and then we used the fact that $\int_0^t(t-s)^{-(1+\alpha)/2} s^{-1/2}ds = Ct^{-\alpha/2}$ which can be proved by making the substitution $s=tu$. In the final line, we used (\ref{38}) to bound $\mathbf p_{t+1}^R * Z_0$ by $C(\epsilon^2t)^{-1/2}$. Taking square roots in the above expression, we find that for $\alpha<1$, $$J_2 \leq C (\epsilon |x-y|)^{\alpha/2} (\epsilon^2t)^{-(1+\alpha)/4}$$ and now we note that $(\epsilon^2t)^{-(1+\alpha)/4} \leq T^{(1+\alpha)/4} (\epsilon^2t)^{-(1+\alpha)/2}$ which completes the proof of the second bound (\ref{47}).
		\\
		\\
		Now for the final bound (\ref{48}). We again use the semigroup property and Proposition \ref{74} to write $$Z_t(x) = (\mathbf p_{t-s}^R * Z_s)(x)+ \int_s^t \sum_{y} \mathbf p_{t-u}^R (x,y)dM_s(y).$$ Therefore $$\|Z_t(x)-Z_s(x)\|_p \leq \big\|(\mathbf p_{t-s}^R * Z_s)(x)-Z_s(x)\big\|_p + \bigg\| \int_s^t \sum_y \mathbf p_{t-u}^R(x,y) dM_s(y) \bigg\|_p.$$
		Let us call the terms on the RHS as $I_1,I_2$ respectively. Note that 
		\begin{align}\label{45}
		I_1 &= \bigg\| \sum_y \mathbf p_{t-s}^R (x,y) Z_s(y) -Z_s(x)\bigg\|_p \nonumber \\ & \leq \bigg\| \sum_y \mathbf p_{t-s}^R(x,y) \big( Z_s(y)-Z_s(x) \big) \bigg\|_p + \bigg| \sum_y \mathbf p_{t-s}^R(x,y)-1 \bigg| \cdot \|Z_s(x)\|_p \nonumber \\ & \leq \sum_y \mathbf p_{t-s}^R(x,y) (\epsilon^2s)^{-(1+\alpha)/2} (\epsilon|y-x|)^{\alpha} + C \epsilon (t-s)^{1/2} \cdot (\epsilon^2s)^{-1/2}.
		\end{align}
		In the final inequality we applied Proposition \ref{7} (or \ref{21}) together with (\ref{46}) and (\ref{47}). Just as in (\ref{44}) with $a=0$, we have that $$\sum_y \mathbf p_{t-s}^R (x,y)(\epsilon |x-y|)^{\alpha} \leq C \epsilon^{\alpha} ( 1 \vee |t-s|^{\alpha/2}).  $$ Also, since $s,t \leq \epsilon^{-2}T$, it follows that $\epsilon (t-s)^{1/2}\leq T^{(1-\alpha)/2} \epsilon^{\alpha} (t-s)^{\alpha/2}$ and similarly, $(\epsilon^2 s)^{-1/2} \leq T^{\alpha/2} (\epsilon^2s)^{-(1+\alpha)/2}$ for $\alpha <1$, hence by absorbing those powers of $T$ into the constant $C$, we get \begin{equation*} \epsilon (t-s)^{1/2} (\epsilon^2 s)^{-1/2} \leq C \epsilon^{\alpha}  (t-s)^{\alpha/2} (\epsilon^2s)^{-(1+\alpha)/2}. \end{equation*}
		Together with (\ref{45}), the preceding two expressions give the desired bound on $I_1$. In fact the bound holds for all $\alpha<1$, not just $\alpha<1/2$.
		\\
		\\
		Now we consider $I_2$. By the first bound (\ref{46}), note that $\|Z_u(y)\|_p \leq C(\epsilon^2u)^{-1/2} \leq C (\epsilon^2 s)^{-1/2}$ whenever $u \in [s,t]$. Furthermore, if $r \leq \epsilon^{-2}T$, then we know by Corollary \ref{4} (with $a_1=a_2=0$) that $\sum_y \mathbf p_r^R(x,y)$ is bounded by a constant independent of $x$ and $r$. Using these two facts, \begin{align*}
		I_2^2 &\leq C \epsilon \int_s^t (t-u)^{-1/2} \sum_y \mathbf p_{t-u+1}^R (x,y) \|Z_u(y)\|_p^2 du \\ & \leq C \epsilon (\epsilon^2 s)^{-1} \int_s^t (t-u)^{-1/2} \sum_y \mathbf p_{t-u+1}^R(x,y) du \\ & = C \epsilon (\epsilon^2s)^{-1} \int_s^t (t-u)^{-1/2}du \\ &= C(\epsilon^2s)^{-1} \cdot \epsilon (t-s)^{1/2}. 
		\end{align*}
		So taking square roots, we see that for $\alpha<1$ we have $$I_2 \leq C (\epsilon^2s)^{-1/2} \epsilon^{1/2} (t-s)^{1/4} \leq C(\epsilon^2 s)^{-\alpha/2} \epsilon^{\alpha/2} (t-s)^{\alpha/4}$$ where we again used the fact that $s, t \leq \epsilon^{-2}T$ in the final line. This is equivalent to (\ref{48}), thus completing the proof of the estimates.
	\end{proof}
	
	\begin{cor}\label{50}
		For any $0<\delta \leq \tau$, the laws of the rescaled processes $\{\mathcal Z^{\epsilon}\}_{\epsilon>0}$ are tight on the Skorokhod space $D([\delta, \tau],C(I))$. Moreover, any limit point lies in $C([\delta,\tau],C(I))$.
		\\
		\\
		For $T \in [\delta,\tau]$, let $\mathcal L(T): C([\delta,\tau], C(I)) \to C(I)$ denote the evaluation map at $T$. Let $\Bbb P$ be a limit point of the $\{\mathcal Z^{\epsilon}\}$. Then the process $(\mathcal L(T+\delta))_{T \in [0,\tau-\delta]}$ has the same distribution under $\Bbb P$ as the solution of the (multiplicative) SHE started from initial data whose distribution is that of $\mathcal L(\delta)$ under $\Bbb P$. 
	\end{cor}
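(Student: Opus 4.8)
The plan is to reduce everything to results already in hand: tightness will come from Proposition \ref{36} the same way it came from Assumption \ref{51} in Proposition \ref{85}, and the identification of the limit will come from re-running the proof of Theorem \ref{126} with the time origin moved from $0$ to $\delta$, using the $L^p$-estimates of Proposition \ref{36} in place of \eqref{77}--\eqref{79}.

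\textbf{Tightness.} Fix $0<\delta\le\tau$. For $t\in[\epsilon^{-2}\delta,\epsilon^{-2}\tau]$ (and $\epsilon$ small enough that $\epsilon^{-2}\delta\ge 1$) Proposition \ref{36} gives bounds of exactly the shape \eqref{77}--\eqref{79}, the only difference being the prefactors $(\epsilon^2 t)^{-1/2}$, $(\epsilon^2 t)^{-(1+\alpha)/2}$, $(\epsilon^2 s)^{-(1+\alpha)/2}$, which on this range of times are bounded by a constant $C(\delta,\tau,p,\alpha)$. Passing to macroscopic variables $X=\epsilon x$, $T=\epsilon^2 t$, this says that for all $S,T\in[\delta,\tau]$ and $X,Y\in I$ one has $\|\mathcal Z^\epsilon(T,X)\|_p\le C$, $\|\mathcal Z^\epsilon(T,X)-\mathcal Z^\epsilon(T,Y)\|_p\le C|X-Y|^\alpha$, and $\|\mathcal Z^\epsilon(T,X)-\mathcal Z^\epsilon(S,X)\|_p\le C(|T-S|^{\alpha/2}\vee\epsilon^\alpha)$, uniformly in (small) $\epsilon$. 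By Kolmogorov's continuity criterion together with the Skorokhod version of Arzelà--Ascoli and Prohorov's theorem --- word for word the argument closing the proof of Proposition \ref{85} --- the laws of $\{\mathcal Z^\epsilon\}$ are tight on $D([\delta,\tau],C(I))$ and every limit point is carried by $C([\delta,\tau],C(I))$.

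\textbf{Identification.} Let $\Bbb P$ be a limit point and pass to a subsequence along which $\mathcal Z^\epsilon\Rightarrow\Bbb P$; since the limit is continuous, evaluation at $\delta$ is continuous, so jointly with the trajectory we have $\mathcal Z^\epsilon(\delta,\cdot)\Rightarrow\mathcal L(\delta)$ in $C(I)$. By the Duhamel representation of Lemma \ref{74} and the semigroup property, for $t\ge\epsilon^{-2}\delta$ one has $Z_t(x)=\mathbf p^R_{t-\epsilon^{-2}\delta}*Z_{\epsilon^{-2}\delta}(x)+\int_{\epsilon^{-2}\delta}^t\sum_y\mathbf p^R_{t-u}(x,y)\,dM_u(y)$ with the same martingale $M$, whose bracket still obeys \eqref{75}. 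The crucial point is that the proof of Theorem \ref{126} --- convergence of $Y^\epsilon_T(\varphi)$ to a martingale, and, above all, the ``crucial cancellation'' \eqref{claim1}--\eqref{claim2} --- uses the near-equilibrium hypothesis \emph{only} through $L^p$-bounds of the type \eqref{77}--\eqref{79}, and its iterations integrate time only over $[\epsilon^{-2}\delta,t]$. Proposition \ref{36} supplies exactly such bounds uniformly on $[\epsilon^{-2}\delta,\epsilon^{-2}\tau]$ (the $(\epsilon^2 t)^{-1/2}$-type factors being bounded there), so every step transfers verbatim with the origin at $\delta$. Hence, writing $\mathscr T$ for the test functions of Definition \ref{57}, for every $\varphi\in\mathscr T$ the processes
\[
Y^\delta_T(\varphi):=(\mathcal L(T+\delta),\varphi)-(\mathcal L(\delta),\varphi)-\tfrac12\int_0^T(\mathcal L(S+\delta),\varphi'')\,dS,
\]
\[
Q^\delta_T(\varphi):=Y^\delta_T(\varphi)^2-\int_0^T\|\mathcal L(S+\delta)\,\varphi\|_{L^2(I)}^2\,dS
\]
are $\Bbb P$-local martingales in $T\in[0,\tau-\delta]$ for the filtration $\mathcal F_T:=\sigma(\mathcal L(S+\delta):S\le T)$; that is, $\Bbb P$ solves the martingale problem of Definition \ref{57} with initial datum $\mathcal L(\delta)$.

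\textbf{Conclusion and the main obstacle.} By Proposition \ref{58}, on a possibly enlarged space there is a white noise $W$ --- a white noise for a filtration under which $\mathcal L(\delta)$ is already measurable at time $0$ --- with $\mathcal L(T+\delta)=P_T*\mathcal L(\delta)+\int_0^T P_{T-S}*(\mathcal L(S+\delta)\,dW_S)$. Conditioning on $\mathcal L(\delta)$ keeps $W$ a white noise and makes the datum deterministic, so by the uniqueness part of Proposition \ref{70} (applicable with $a=0$, since $\sup_X\Bbb E[\mathcal L(\delta)(X)^2]<\infty$ by the bounds above) the conditional law of $(\mathcal L(T+\delta))_{T\in[0,\tau-\delta]}$ given $\mathcal L(\delta)$ is that of the mild SHE solution started from that deterministic datum; integrating over the law of $\mathcal L(\delta)$ under $\Bbb P$ yields the stated identity of laws. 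I expect the real work to be the bookkeeping inside the previous paragraph: checking that nothing in the proof of Theorem \ref{126} silently used the lower time limit $0$ or an $L^p$-bound stronger than Proposition \ref{36}'s --- in particular that the $\delta$-dependent constants stay finite and that the Gronwall/iteration steps behind \eqref{claim1} still close when time runs over $[\epsilon^{-2}\delta,t]$ --- which is precisely why Proposition \ref{36} was formulated with those decaying weights.
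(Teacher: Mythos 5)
Your proposal is correct and follows essentially the same route as the paper: tightness via the macroscopic rewriting of Proposition \ref{36} (whose prefactors are bounded by $\delta$-dependent constants on $[\delta,\tau]$) plus the Skorokhod Arzel\`a--Ascoli/Prohorov argument, and identification by restarting the dynamics at time $\delta$, where the bounds show the data is of near-equilibrium type so the Section 5 machinery (martingale problem, Proposition \ref{58}, uniqueness) applies. The paper compresses your identification step into the single observation that $\{\mathcal Z^{\epsilon}(\delta,\cdot)\}$ satisfies Assumption \ref{51}, after which Theorem \ref{126} is cited as a black box rather than re-run with shifted time origin.
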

	
	\begin{proof}
		To show tightness on $D([\delta,\tau], C(I))$ for $0<\delta \leq \tau$, we rewrite the estimates of Proposition \ref{36} in terms of the rescaled macroscopic processes $\mathcal Z^{\epsilon}$: for $\alpha<1/2$, $S,T\in [\delta,\tau]$, and $X,Y \in I$ we have $$\|\mathcal Z^{\epsilon}(T,X)\|_p \leq CT^{-1/2}\;\;\;\;\;\;\;\;\;\;\;\;\;\;$$ $$\|\mathcal Z^{\epsilon}(T,X)-\mathcal Z^{\epsilon}(T,Y)\|_p \leq C T^{-(1+\alpha)/2} |X-Y|^{\alpha}\;\;\;\;\;\;\;\;\;\;\;\;$$ $$\|\mathcal Z^{\epsilon}(T,X)-\mathcal Z^{\epsilon}(S,X) \|_p \leq CS^{-(1+\alpha)/2} (|T-S|^{\alpha/2} \vee \epsilon^{\alpha})$$ With the assumption that $S,T \geq \delta$, it follows that $T^{-1/2} \leq \delta^{-1/2}$, $T^{-(1+\alpha)/2}, S^{-(1+\alpha)/2} \leq \delta^{-(1+\alpha)/2}$. Therefore. The $\{\mathcal Z^{\epsilon}\}_{\epsilon\in (0,1]}$ are uniformly bounded, uniformly spatially Hölder, and uniformly temporally Hölder (except for jumps of order $\epsilon^{\alpha}$) with large probability. Now we apply the version of Arzela-Ascoli for Skorokhod spaces (together with Prohorov's theorem) to obtain tightness, see [Bil97, Chapter 3].
		\\
		\\
		The fact that any limit point lies in $C([\delta,\tau],C(I))$ is a straightforward consequence of Kolmogorov's continuity criterion.
		\\
		\\
		To prove the final statement, note that the above bounds imply that the sequence of initial data $\{\mathcal Z^{\epsilon}( \delta, \cdot)\}$ is ``near-equilibrium'' as defined in Assumption \ref{51}. Thus the results of Section 4 apply, and the proof is finished.
	\end{proof}
	
	\begin{defn}\label{127} Let $D((0,\infty),C(I))$ denote the set of all functions from $ (0,\infty) \to C(I)$ which are continuous on the right and have left limits. For $0< \delta \leq \tau$, let $\mathcal L_{\delta, \tau}: D((0,\infty) , C(I)) \to D([\delta,\tau],C(I))$ be given by $\varphi \mapsto \varphi|_{[\delta, \tau]}$. Henceforth, we will equip $D((0,\infty),C(I))$ with the smallest topology for which each of the maps $\mathcal L_{\delta,\tau}$ is continuous. We will also equip $C((0,\infty),C(I))$ with the corresponding topology.
		
	\end{defn}
	
	As a notational convention, $f_*\mu$ will denote the pushforward of a measure $\mu$ under a map $f$.
	\\
	\\
	A few remarks about the topology on $D((0,\infty),C(I))$: 
	
	\begin{itemize}
		\item Note that this topology is metrizable via $\sum_n 2^{-n} (1 \wedge \rho_n)$, where $\rho_n$ is a metric inducing the topology of $D([n^{-1},n], C(I))$.
		
		\item Note $\phi_n \stackrel{n \to \infty}{\longrightarrow} \phi$ in $D((0,\infty), C(I))$ if and only if $\phi_n|_{[\delta,\tau]} \stackrel{n \to \infty}{\longrightarrow} \phi|_{[\delta,\tau]}$ in $D([\delta,\tau],C(I))$, for every $0< \delta \leq \tau$. Similarly, for a sequence of probability measures $\Bbb P_n$ on $D((0,\infty), C(I))$, we have that $\Bbb P_n \to \Bbb P$ weakly iff $(\mathcal L_{\delta,\tau})_*\Bbb P_n \to (\mathcal L_{\delta,\tau})_*\Bbb P$ weakly for all $0<\delta \leq \tau$.
		
		\item This topology is an analogue of the topology of uniform convergence on compact sets, but for the Skorokhod Space.
	\end{itemize}
	
	\begin{lem}\label{62} Let $\Bbb Q^{\epsilon}$ denote the law of $\mathcal Z^{\epsilon}$ on $C((0,\infty),\Bbb R)$. Then there exists a measure $\Bbb Q$ on $C((0,\infty), C(I))$ which is a limit point of the $\Bbb Q^{\epsilon}$ on  $D((0,\infty),C(I))$.
		
	\end{lem}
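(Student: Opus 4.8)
The plan is to deduce the existence of a limit point of $\{\Bbb Q^\epsilon\}$ on $D((0,\infty),C(I))$ from the tightness results already established on each finite window $D([\delta,\tau],C(I))$, using the very definition of the topology on $D((0,\infty),C(I))$ given in Definition \ref{127}. Recall that this topology is generated by the restriction maps $\mathcal L_{\delta,\tau}$, and (as remarked after Definition \ref{127}) it is metrizable via $\sum_n 2^{-n}(1\wedge\rho_n)$ where $\rho_n$ is a metric on $D([n^{-1},n],C(I))$; in particular a sequence of measures converges weakly iff all its pushforwards under the $\mathcal L_{\delta,\tau}$ converge weakly.

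First I would fix the exhausting sequence of windows $[\delta_n,\tau_n] := [n^{-1},n]$ for $n\in\Bbb N$. By Corollary \ref{50}, for each fixed $n$ the family of pushforward measures $\{(\mathcal L_{\delta_n,\tau_n})_*\Bbb Q^\epsilon\}_{\epsilon>0}$ is tight on $D([n^{-1},n],C(I))$, and moreover any limit point is supported on $C([n^{-1},n],C(I))$. Now I would extract a limit point by a diagonal argument: using tightness on $[1^{-1},1]$ choose a subsequence $\epsilon_k^{(1)}\to 0$ along which $(\mathcal L_{1,1})_*\Bbb Q^{\epsilon}$ converges weakly; from that subsequence extract a further subsequence $\epsilon_k^{(2)}$ along which $(\mathcal L_{2^{-1},2})_*\Bbb Q^{\epsilon}$ also converges; and so on, then take the diagonal subsequence $\epsilon_k := \epsilon_k^{(k)}$. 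Along $\epsilon_k$, every pushforward $(\mathcal L_{n^{-1},n})_*\Bbb Q^{\epsilon_k}$ converges weakly to some limit measure $\Bbb Q_n$ on $D([n^{-1},n],C(I))$, and by the second remark after Definition \ref{127}, weak convergence of all these pushforwards is exactly weak convergence of $\Bbb Q^{\epsilon_k}$ to a measure $\Bbb Q$ on $D((0,\infty),C(I))$; the consistency of the $\Bbb Q_n$ under further restriction is automatic since they all arise as limits of restrictions of the same sequence $\Bbb Q^{\epsilon_k}$. (One should invoke here that $D((0,\infty),C(I))$, being a countable-product-type space built from Polish spaces $D([n^{-1},n],C(I))$, is itself Polish, so the projective limit of the consistent family $\{\Bbb Q_n\}$ is a well-defined Borel probability measure — this is a standard Kolmogorov-type extension for the metric $\sum_n 2^{-n}(1\wedge\rho_n)$.)

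Finally I would upgrade the support statement from $D$ to $C$: since each $\Bbb Q_n$ is supported on $C([n^{-1},n],C(I))$ by Corollary \ref{50}, and a function in $D((0,\infty),C(I))$ is continuous iff its restriction to each $[n^{-1},n]$ is continuous, it follows that $\Bbb Q$ is supported on $C((0,\infty),C(I))$, as claimed. This is the content of the lemma.

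The only mildly delicate point — and the one I would be most careful about — is the passage from "tightness and limit points on each finite window" to "a genuine limit point on the inductive-limit space." The potential pitfall is that a priori the windowwise limits might only be finitely-additively consistent or might fail to assemble into a Borel measure; this is resolved by the diagonal extraction (which forces all windowwise limits to come from one subsequence, hence to be automatically consistent) together with the fact that the metric $\sum_n 2^{-n}(1\wedge\rho_n)$ makes $D((0,\infty),C(I))$ a Polish space on which such projective limits exist and are unique. Everything else is a routine application of Prohorov's theorem plus Corollary \ref{50}.
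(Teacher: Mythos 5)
Your proposal is correct and follows essentially the same route as the paper: windowwise tightness from Corollary \ref{50}, a nested/diagonal subsequence extraction over the windows $[n^{-1},n]$, assembly of the consistent windowwise limits into a measure on $C((0,\infty),C(I))$ via the Kolmogorov extension theorem, and identification of the diagonal sequence's weak limit through the characterization of the topology in Definition \ref{127}. The only cosmetic difference is that you upgrade the support from $D$ to $C$ at the end, whereas the paper works with the limits directly on $C([k^{-1},k],C(I))$.
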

	
	\begin{proof} The basic idea is to use the Kolmogorov Extension Theorem in conjunction with the previous results. For $0<\delta\leq \tau$, we will let $\Bbb P^{\epsilon}_{\delta,\tau}$ denote the law of $\mathcal Z^{\epsilon}$ on $D([\delta, \tau],C(I))$. For $0<\delta'\leq \delta \leq \tau \leq \tau'$, we define $\mathcal L^{\delta',\tau'}_{\delta,\tau}:D([\delta',\tau'],C(I)) \to D([\delta,\tau],C(I))$ be the map $\varphi \mapsto \varphi|_{[\delta,\tau]}$.
		\\
		\\
		We will use an inductive construction. Let $\Bbb P_1$ be a limit point of the $\{\Bbb P_{1,1}^{\epsilon}\}_{\epsilon\in(0,1]}$ on $D(\{1\},C(I))$. Then by Corollary \ref{50} we can find a sequence $\epsilon_j \downarrow 0$ such that $\Bbb P_{1,1}^{\epsilon_j} \to \Bbb P_1$ weakly. Suppose (for the inductive hypothesis) that for each $k \leq n-1$ we have constructed a probability measure $\Bbb P_k$ on $C([k^{-1},k], C(I))$ and a sequence $(\epsilon^k_j)_{j=1}^{\infty}$ with the following two properties:
		
		\begin{enumerate}
			\item For each $1 \leq k \leq n-1$, the sequence $\Bbb P_{k^{-1},k}^{\epsilon^k_j}$ converges weakly to $\Bbb P_k$ as $j \to \infty$.
			
			\item For each $2 \leq k \leq n-1$, $(\epsilon^k_j)_{j=1}^{\infty}$ is a subsequence of $(\epsilon^{k-1}_j)_{j=1}^{\infty}$
		\end{enumerate}
		
		By Corollary \ref{50}, the sequence $\{ \Bbb P^{\epsilon^{n-1}_j}_{n^{-1},n} \}_{j \geq 1}$ is tight, therefore we can find a probability measure $\Bbb P_{n}$ on $C([n^{-1},n],C(I))$ and subsequence $(\epsilon^{n}_j)_{j=1}^{\infty}$ of $(\epsilon_j^{n-1})_j$ such that $\Bbb P^{\epsilon^{n}_j}_{n^{-1},n} \to \Bbb P_{n}$ weakly as $j \to \infty$. Therefore the inductive hypothesis holds true for $k=n$.
		\\
		\\
		In the end, we obtain a sequence $\Bbb P_k$ of probability  measures on $C([k^{-1},k], C(I))$ which is consistent in the sense that $(\mathcal L^{(k+1)^{-1},k+1}_{k^{-1},k})_* \Bbb P_{k+1} = \Bbb P_k$, for every $k$. By the Kolmogorov Extension Theorem, there exists a unique probability measure $\Bbb Q$ on $C((0,\infty),C(I))$ such that $(\mathcal L_{k^{-1},k})_*\Bbb Q = \Bbb P_k $ for all $k$.
		\\
		\\
		To show that the measure $\Bbb Q$ is actually a limit point of the $\Bbb Q^{\epsilon}$, consider the sequences $(\epsilon^n_j)$ from before. Then the ``diagonal" sequence $\Bbb Q^{\epsilon^n_n}$ converges weakly to $\Bbb Q$ as $n \to \infty$. Indeed $\epsilon^n_n$ is an eventual subsequence of $\epsilon^k_n$ for every $k$, and therefore $(\mathcal L_{k^{-1},k})_* \Bbb Q^{\epsilon^n_n}=\Bbb P^{\epsilon_n^n}_{k^{-1},k}$ converges weakly (as $n \to \infty$) to $ (\mathcal L_{k^{-1},k})_* \Bbb Q=\Bbb P_k$ for every $k$.
	\end{proof}
	
	\begin{lem}\label{56} Let $\Bbb Q$ be a limit point of the $\{\mathcal Z^{\epsilon}\}$ on $C((0,\infty),C(I))$, as constructed in the previous lemma, and let $\mathcal L_T : C((0,\infty),C(I)) \to C(I)$ denote the evaluation map at $T$. For any $p \geq 1$, there exists a constant $C=C(p)$ such that for $X \in I$ and $T \leq 1$, $$\|\mathcal L_T(X)\|_p^2 \leq CT^{-1/2} P_T(X,0)$$$$\|\mathcal L_T(X) - P_T(X,0)\|_p^2 \leq C P_T(X,0)\;\;\;\;\;\;\;\;\;\;\;\;\;\;\;\;\;\;\;\;\;\;\;\;\;\;\;$$ where as usual, $\|F\|_p = \big(\int |F|^p d\Bbb Q\big)^{1/p} $.
		
	\end{lem}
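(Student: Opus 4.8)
The plan is to establish $\epsilon$-uniform \emph{discrete} analogues of the two bounds for the Gärtner-transformed height function and then pass to the limit $\epsilon\to 0$ along the subsequence constructed in Lemma \ref{62}. Concretely, with $t=\epsilon^{-2}T$ and $x=\epsilon^{-1}X$, I aim to prove that for all $t\le\epsilon^{-2}$ (i.e.\ $T\le 1$) and $x\in\Lambda$,
$$\|Z_t(x)\|_p^2\le C(\epsilon^2t)^{-1/2}\,\mathbf p_t^R*Z_0(x),\qquad \|Z_t(x)-\mathbf p_t^R*Z_0(x)\|_p^2\le C\,\mathbf p_t^R*Z_0(x),$$
together with the deterministic convergence $\mathbf p_{\epsilon^{-2}T}^R*Z_0(\epsilon^{-1}X)\to P_T(X,0)$ as $\epsilon\to 0$. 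The latter is a change of variables plus the uniform convergence $P_T^\epsilon\to P_T$ of Theorem \ref{controb}: one writes $\mathbf p_t^R*Z_0(x)=\varrho\int_I P_T^\epsilon(X,Y)\,\nu_\epsilon(dY)$ with $\nu_\epsilon$ a positive measure of total mass tending to $1$ and concentrating at the origin, and this is exactly the computation that fixes the constant $\varrho$ so that the limit is $P_T(X,0)$. Granting the discrete bounds, the passage to the limit is routine: along the subsequence of Lemma \ref{62} one has $\mathcal Z^\epsilon(T,X)\Rightarrow\mathcal L_T(X)$ weakly, hence also $\mathcal Z^\epsilon(T,X)-\mathbf p_{\epsilon^{-2}T}^R*Z_0(\epsilon^{-1}X)\Rightarrow\mathcal L_T(X)-P_T(X,0)$; the uniform $L^{2p}$-bounds of Proposition \ref{36} furnish uniform integrability of the $p$-th powers, so $p$-th moments converge and both discrete bounds descend to the claimed continuum bounds.

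For the first discrete bound the strategy is to re-run the Duhamel iteration from the proof of Proposition \ref{36}. Starting from $Z_t(x)=\mathbf p_t^R*Z_0(x)+\int_0^t\sum_y\mathbf p_{t-s}^R(x,y)\,dM_s(y)$ (Proposition \ref{74}) and applying Lemma \ref{73} with (\ref{76}) gives $\|Z_t(x)\|_p^2\le C(\epsilon^2t)^{-1/2}\mathbf p_t^R*Z_0(x)+C\epsilon\int_0^t(t-s)^{-1/2}\sum_y\mathbf p_{t-s+1}^R(x,y)\|Z_s(y)\|_p^2\,ds$ for all $t$ (for $s\le 1$ the crude bound (\ref{37}) of Proposition \ref{36}, together with $\mathbf p_s^R*Z_0\le C\epsilon^{-1/2}$, gives the starting estimate in the required form). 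Iterating produces the explicit series $C(\epsilon^2t)^{-1/2}\mathbf p_t^R*Z_0(x)+\sum_{k\ge 1}C^k\epsilon^k\big(\textstyle\int_{\Delta_k(t)}\cdots\big)\,\mathbf p_{t+k}^R*Z_0(x)$ with the simplex integral bounded by $Ct^{k/2}/(k/2)!$. The crucial difference from Proposition \ref{36} is that we do \emph{not} throw away the spatial profile via the long-time estimate; instead we bound $\mathbf p_{t+k}^R*Z_0(x)\le C_1^k\,\mathbf p_t^R*Z_0(x)$ for an absolute constant $C_1$ (see below), so that $\sum_{k\ge 1}\frac{(CC_1)^k(\epsilon\sqrt t)^k}{(k/2)!}$ converges (using $\epsilon\sqrt t\le 1$ for $t\le\epsilon^{-2}$) and the whole $k\ge 1$ contribution is $\le C'\mathbf p_t^R*Z_0(x)$, which is absorbed since $(\epsilon^2t)^{-1/2}\ge 1$ in this range. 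The second discrete bound is then immediate: $Z_t(x)-\mathbf p_t^R*Z_0(x)$ is exactly the martingale term, so Lemma \ref{73}, (\ref{76}), the first bound, and the semigroup identity $\sum_y\mathbf p_{t-s+1}^R(x,y)\mathbf p_s^R*Z_0(y)=\mathbf p_{t+1}^R*Z_0(x)\le C_1\mathbf p_t^R*Z_0(x)$ give $\|Z_t(x)-\mathbf p_t^R*Z_0(x)\|_p^2\le C\epsilon\,\mathbf p_t^R*Z_0(x)\int_0^t(t-s)^{-1/2}(\epsilon^2s)^{-1/2}\,ds=C\pi\,\mathbf p_t^R*Z_0(x)$, the Beta integral being independent of $t$.

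The one genuinely new ingredient, and the step I expect to be the main obstacle, is the comparison $\mathbf p_{t+k}^R*Z_0(x)\le C_1^k\,\mathbf p_t^R*Z_0(x)$. I would deduce it from the pointwise bound $\mathbf p_1^R*Z_0\le C_1 Z_0$ on $\Lambda$, with $C_1$ depending only on $A,B$: writing $\mathbf p_1^R*Z_0(x)=Z_0(x)\sum_y\mathbf p_1^R(x,y)e^{\sqrt\epsilon(x-y)}\le Z_0(x)\sum_y\mathbf p_1^R(x,y)e^{\sqrt\epsilon|x-y|}$ and invoking the $\epsilon$-uniform off-diagonal decay $\mathbf p_1^R(x,y)\le Ce^{-|x-y|}$ from Proposition \ref{2} (or \ref{14}) at time $t=1$, the sum is $\le C\sum_m e^{-(1-\sqrt\epsilon)|m|}\le C_1$ for $\epsilon$ small. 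Since $\mathbf p_t^R\ge 0$ the map $f\mapsto\mathbf p_t^R*f$ is monotone, so induction gives $\mathbf p_k^R*Z_0\le C_1^k Z_0$, whence $\mathbf p_{t+k}^R*Z_0(x)=\sum_z\mathbf p_t^R(x,z)(\mathbf p_k^R*Z_0)(z)\le C_1^k\sum_z\mathbf p_t^R(x,z)Z_0(z)=C_1^k\,\mathbf p_t^R*Z_0(x)$. The delicate point throughout is keeping every constant (the heat-kernel constant, hence $C_1$, hence the series bound $C'$) uniform in $\epsilon$, which is precisely what Propositions \ref{2}, \ref{3}, \ref{14}, \ref{15} guarantee; with that in hand, combining the discrete bounds with the limit $\mathbf p_{\epsilon^{-2}T}^R*Z_0(\epsilon^{-1}X)\to P_T(X,0)$ and the uniform integrability from Proposition \ref{36} completes the proof.
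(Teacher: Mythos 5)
Your argument is correct, but it reaches the two bounds by a genuinely different route than the paper. The paper keeps the full iterated series: it rewrites (\ref{97}) in macroscopic variables as $F(T,X,\epsilon)$ (see (\ref{89})), shows each term $(P^{\epsilon}_{T+\epsilon^2k}*_{\epsilon}\mathcal Z_0^{\epsilon})(X)\to P_T(X,0)$ by exactly the change-of-variables computation you describe (this is (\ref{54})), and justifies interchanging $\lim_{\epsilon\to 0}$ with $\sum_k$ by the long-time estimates of Propositions \ref{lt1}/\ref{lt2} (through (\ref{lt3}) and (\ref{95})); the second bound is then obtained from (\ref{101}) by a further dominated-convergence argument in the time integral. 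You instead collapse the series already at the discrete level via the comparison $\mathbf p^R_{t+k}*Z_0\le C_1^k\,\mathbf p^R_t*Z_0$, which you correctly reduce (using positivity of $\mathbf p^R_t$, monotonicity of convolution against a nonnegative kernel, and the semigroup property) to the single estimate $\mathbf p_1^R*Z_0\le C_1Z_0$; this does hold uniformly in $\epsilon$, since $Z_0(y)/Z_0(x)=e^{\sqrt\epsilon(x-y)}$ and Proposition \ref{2} (resp.\ \ref{14}) at $t=1$ gives $\mathbf p_1^R(x,y)\le Ce^{-|x-y|}$ with an $\epsilon$-independent constant. What your route buys: an $\epsilon$-uniform discrete bound $\|Z_t(x)\|_p^2\le C(\epsilon^2t)^{-1/2}\mathbf p_t^R*Z_0(x)$ which actually sharpens the paper's (\ref{98}) (the additive $+1$ disappears), a one-line Beta-integral derivation of the second bound at the discrete level, and no need for Propositions \ref{lt1}, \ref{lt2} or either dominated-convergence interchange. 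What the paper's route buys: its long-time estimates are generic (they do not exploit the exponential form of the narrow-wedge data, which is what makes your $\mathbf p_1^R*Z_0\le C_1 Z_0$ work) and are reused elsewhere, so the machinery is not special to this lemma.

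Two small points to tidy. First, Lemma \ref{73} is stated only for $t>1$, so your second discrete bound is available only for $t\ge 1$; this is harmless because for fixed $T>0$ one has $t=\epsilon^{-2}T\to\infty$, but you should not claim it for all $t\le\epsilon^{-2}$. Second, you should record positivity of $\mathbf p^R_t$ explicitly (it follows, e.g., from the discrete-time representation in the proof of Proposition \ref{33} together with $\mu_A,\mu_B>0$), and in the final passage to the limit note that since $\Bbb Q$ is supported on continuous paths, evaluation at the fixed time $T$ is $\Bbb Q$-a.s.\ continuous, so $\mathcal Z^{\epsilon}(T,X)$ converges in law to $\mathcal L_T(X)$ along the subsequence of Lemma \ref{62}; lower semicontinuity of moments under weak convergence (or your uniform-integrability argument via Proposition \ref{36}) then transfers both discrete bounds to $\mathcal L_T$, exactly as you indicate.
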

	
	\begin{proof} Recall Equation (\ref{97}) and the remark underneath, which say that $$\|Z_t(x)\|_p^2 \leq C(\epsilon^2t)^{-1/2}(\mathbf p_t^R * Z_0)(x)+C\sum_{k=0}^{\infty} \frac{C^k \epsilon^k t^{k/2}}{(k/2)!}(\mathbf p_{t+k}^R * Z_0)(x).$$
		Rewriting the preceding expression in terms of the macroscopic variables, we have 
		\begin{equation}\label{89}
		\|\mathcal Z_T^{\epsilon}(X) \|_p^2 \leq CT^{-1/2}(P_T^{\epsilon} *_{\epsilon}\mathcal Z_0^{\epsilon})(X)+\sum_{k=0}^{\infty}\frac{C^kT^{k/2}}{(k/2)!} (P_{T+\epsilon^2k}^{\epsilon} *_{\epsilon}\mathcal Z_0^{\epsilon})(X) 
		\end{equation} where $(f *_{\epsilon}g) (X):=\epsilon \sum_{y \in \epsilon \Lambda} f(X,Y)g(Y)$, for any functions $f:(\epsilon \Lambda)^2 \to \Bbb R$ and $g: \epsilon \Lambda \to \Bbb R $.
		\\
		\\
		We define the quantity $F(T,X, \epsilon)$ to be the RHS of (\ref{89}). We are going to prove that $$\limsup_{\epsilon \to 0} F(T,X,\epsilon) \leq CT^{-1/2}P_T(X,0)$$ where the $P_T$ on the RHS is the continuum heat kernel. As a first step, we claim that for any $k>0$, \begin{equation}\label{90}
		\lim_{\epsilon \to 0} (P_{T+\epsilon^2k}^{\epsilon} *_{\epsilon} \mathcal Z_0^{\epsilon})(X) = P_T(X,0)
		\end{equation} Indeed, we have that
		\begin{align}\label{54} \varrho
		\epsilon^{1/2} \sum_{Y \in \epsilon \Lambda} P_{T+\epsilon^2k}^{\epsilon}(X,Y)e^{-\epsilon^{-1/2}Y} &= \varrho\epsilon^{-1/2} \int_I P_{T+\epsilon^2k}^{\epsilon}(X, \epsilon \lfloor \epsilon^{-1}Y \rfloor) e^{-\epsilon^{1/2}\lfloor \epsilon^{-1}Y \rfloor} dY \nonumber\\ &=\varrho\int_I P_{T+\epsilon^2k}^{\epsilon}(X, \epsilon \lfloor \epsilon^{-1/2}Z \rfloor)e^{-\epsilon^{1/2} \lfloor \epsilon^{-1/2}Z \rfloor}dZ \nonumber\\& \stackrel{\epsilon \to 0}{\longrightarrow} \varrho\int_I P_T (X, 0) e^{-Z}dZ \nonumber\\ &= P_T(X,0).
		\end{align}
		In the first equality we used that $\epsilon \sum_{Y \in \epsilon \Lambda}f(Y) = \int_I f(\epsilon \lfloor \epsilon^{-1}Y \rfloor)dY$, for any function $f$. In the second equality we made the substitution $Z=\epsilon^{-1/2}Y$. In the third line we used uniform convergence (see Theorem \ref{controb}) of $P_{T+\epsilon^2k}^{\epsilon}(X,\cdot)$ to $P_{T}(X, \cdot)$ together with the fact that $\epsilon \lfloor \epsilon^{-1/2}Z \rfloor \to 0$ and $\epsilon^{1/2} \lfloor \epsilon^{-1/2}Z \rfloor \to Z$. In the final line we used the fact that $\varrho = \big(\int_I e^{-Z}dZ \big)^{-1}$, as defined earlier in this section. This proves (\ref{90}).
		\\
		\\
		In order to finish showing that $\lim_{\epsilon \to 0} F(T,X,\epsilon) \leq CT^{-1/2}P_T(0,X)$, we will take the limit as $\epsilon \to 0$ on the RHS of Equation (\ref{89}), and then pass the limit through the infinite sum and apply (\ref{90}). However, we need to justify interchanging the $\lim_{\epsilon \to 0}$ with the infinite sum $\sum_{k=0}^{\infty}$. To justify this interchange, we recall Equation (\ref{lt3}) which says (after passing to macroscopic variables) that \begin{align*}\sup_{\epsilon \in (0,1]} (P_{T}^{\epsilon} *_{\epsilon} \mathcal Z_0^{\epsilon})(X) \leq C(T^{-1/2}+1)e^{KT} \end{align*} where $C,K$ do \textbf{not} depend on the terminal time $\tau$. Note that $T^{-1/2}+1 \leq CT^{-1/2}e^{KT}$, hence the RHS of the last expression may be further bounded by $C'T^{-1/2}e^{2KT}$. Using this bound shows that \begin{align}\label{95}\sum_{k=0}^{\infty}\frac{C^kT^{k/2}}{(k/2)!} \sup_{\epsilon \in (0,1]} (P_{T+\epsilon^2k}^{\epsilon} *_{\epsilon}\mathcal Z_0^{\epsilon})(X) &\leq CT^{-1/2}e^{2KT}\sum_{k=0}^{\infty}\frac{C^kT^{k/2}}{(k/2)!} e^{2Kk} \nonumber \\ &\leq CT^{-1/2}e^{K'T}.\end{align} By (\ref{95}) and the dominated convergence theorem, we can interchange the limit as $\epsilon \to 0$ with the infinite sum in (\ref{89}), as discussed before. This completes the proof that \begin{equation}\label{100}\limsup_{\epsilon \to 0} F(T,X, \epsilon)\leq CT^{-1/2}P_T(X,0).\end{equation}
		In order to prove the second bound, we note from Lemma \ref{73} and (\ref{76}) that for $t \geq 1$, \begin{align*}\big\|Z_t(x)-\mathbf p_t^R * Z_0(x) \big\|_p^2 &\leq C \epsilon\int_0^t (t-s)^{-1/2} \sum_y \mathbf p_{t-s+1}^R(x,y)\|Z_s(y)\|_p^2ds .\end{align*}
		In terms of macroscopic variables, this says that for $T\geq \epsilon^2$ \begin{align}\label{101}\big\| \mathcal Z_T^{\epsilon}(X) - (P_T^{\epsilon} *_{\epsilon} \mathcal Z_0^{\epsilon})(X) \big\|_p^2 &\leq C\int_0^T (T-S)^{-1/2} \big(P_{T-S+\epsilon^2}^{\epsilon} *_{\epsilon} \|\mathcal Z_S\|_p^2 \big)dS \nonumber\\ &\leq C \int_0^T (T-S)^{-1/2} \big(P_{T-S + \epsilon^2}^{\epsilon} *_{\epsilon} F(S, \cdot, \epsilon) \big)(X)dS\end{align} where we used (\ref{89}) in the last line. Now using (\ref{95}), it is easily shown that $$S\;\mapsto\; (T-S)^{-1/2} \cdot \sup_{\epsilon \in (0,1]}\big(P_{T-S + \epsilon^2}^{\epsilon} *_{\epsilon} F(S, \cdot, \epsilon) \big)(X)$$ is a function which is integrable over $[0,T]$. Moreover, by (\ref{100}) we have that $$\limsup_{\epsilon \to 0} \big(P_{T-S+\epsilon^2}^{\epsilon} *_{\epsilon} F(S,\cdot, \epsilon)\big)(X) \leq S^{-1/2} C\big(P_{T-S} * P_S(\cdot,0)\big)(X) = CS^{-1/2}P_T(X,0).$$ To finish the proof, let $\epsilon \to 0$ in (\ref{101}) and apply the dominated convergence theorem to interchange the $\lim$ and the $\int_0^T$, and finally note that $\int_0^T (T-S)^{-1/2}S^{-1/2}dS$ is a constant not depending on $T$.
	\end{proof}
	
	\begin{lem}\label{63} Let $\Bbb Q$ and $\mathcal L_T$ be as in the previous lemma. Then we can enlarge the probability space $\big(C((0,\infty),C(I)), \;\mathcal{B} orel, \;\Bbb Q)$ so as to admit a space-time white noise $W$ such that $\Bbb Q$-almost surely, for every $0<S<T$: $$\mathcal L_T = P_{T-S}*\mathcal L_S + \int_S^T P_{T-U}*(\mathcal L_U dW_U) .$$
		
	\end{lem}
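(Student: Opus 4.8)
The plan is to show that the limiting measure $\Bbb Q$ solves the martingale problem for the SHE on each interval $[\delta,\tau] \subset (0,\infty)$, and then invoke Proposition \ref{58} (the uniqueness/representation result for the martingale problem) to obtain the white noise and the mild-solution identity. The point is that Lemma \ref{56} gives us precisely the near-equilibrium-type bounds on $\mathcal L_\delta$ that we need. More precisely, for each fixed $\delta>0$, consider the shifted process $(\mathcal L_{\delta+T})_{T \in [0,\tau-\delta]}$ under $\Bbb Q$. By Lemma \ref{56}, $\|\mathcal L_\delta(X)\|_p \leq C\delta^{-1/4}P_\delta(X,0)^{1/2}$, and by the spatial/temporal Hölder bounds (Corollary \ref{50}, which we inherit from Proposition \ref{36}), the random function $\mathcal L_\delta$ satisfies the moment bounds required by Assumption \ref{51} with $a=0$ — indeed $P_\delta(X,0)$ decays rapidly in $X$, so the exponential weight is unnecessary. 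Hence the initial data $\mathcal L_\delta$ is ``near-equilibrium'' in the sense of Assumption \ref{51}, and the proof of Theorem \ref{126} (identification of the limit via the martingale problem for $Y_T(\varphi)$ and $Q_T(\varphi)$) applies verbatim to the process $(\mathcal L_{\delta+T})_T$: one uses that $\mathcal Z^\epsilon$ restricted to $[\delta,\tau]$ converges weakly to $\mathcal L|_{[\delta,\tau]}$ under $\Bbb Q$ (this is built into the construction in Lemma \ref{62}), and that the discrete martingale identities of Proposition \ref{74} survive the limit, with the crucial cancellation $R_T^\epsilon(\varphi) \to 0$ in $L^2$ established exactly as in the proof of Theorem \ref{126}.

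The key steps, in order, are: (i) fix $0<\delta<\tau$; (ii) verify that $\mathcal L_\delta$ under $\Bbb Q$ satisfies Assumption \ref{51} with $a=0$ using Lemma \ref{56} and Corollary \ref{50} together with Kolmogorov's criterion — note we only need the $L^p$ bounds on $\mathcal L_\delta$ itself, which Lemma \ref{56} supplies for $T=\delta\le 1$, and for $\delta>1$ the bound $\|\mathcal L_\delta(X)\|_p\le C\delta^{-1/2}$ from \eqref{46} rescaled; (iii) apply the argument of Theorem \ref{126} to conclude that, under $\Bbb Q$, the process $(\mathcal L_{\delta+T}, \varphi) - (\mathcal L_\delta,\varphi) - \tfrac12\int_0^{T}(\mathcal L_{\delta+S},\varphi'')dS$ and its associated square-minus-bracket process are local martingales for every $\varphi \in \mathscr T$ (this is where all the heat kernel cancellation estimates of Section 3 get used); (iv) invoke Proposition \ref{58} to produce a space-time white noise $W^\delta$ on an enlargement of the probability space such that $\mathcal L_T = P_{T-\delta}*\mathcal L_\delta + \int_\delta^T P_{T-U}*(\mathcal L_U\, dW^\delta_U)$ for all $T \in [\delta,\tau]$; (v) a consistency/uniqueness argument to patch the $W^\delta$ together into a single white noise $W$ on $(0,\infty)$ so that the Duhamel identity holds simultaneously for all $0<S<T$.

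The main obstacle I anticipate is step (v): a priori the noise $W^\delta$ is constructed separately on each window $[\delta,\tau]$, and one must check these are consistent restrictions of a single noise. The natural fix is to note that in the construction of Proposition \ref{58}, the white noise is defined from the orthogonal martingale measure $Y(dX\,dS)$ via an explicit formula \eqref{55}-type recipe (dividing $Y$ by $\mathcal L_S$ where nonzero, and filling in with an independent noise where $\mathcal L_S$ vanishes); since positivity of $\mathcal L_S$ for $S>0$ is guaranteed (by the positivity in Proposition \ref{71}, or directly from $Z_t(x)>0$ for the particle system), the ``filling in'' part is a.s. absent on $(0,\infty)$, so $W^\delta$ is literally determined by $Y$ restricted to $[\delta,\tau]\times I$ and the windows are automatically compatible. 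One then defines $W$ on all of $(0,\infty)\times I$ by this same formula applied to the full martingale measure $Y$ (whose existence on $(0,\infty)$ follows by the same patching/Kolmogorov-extension reasoning used in Lemma \ref{62}), and the stated identity follows for every pair $0<S<T$ by the semigroup property of $P$ and the fact that it holds on each window. A secondary, more routine subtlety is making sure the enlargement of the probability space in step (iv) can be done consistently across $\delta$, which is handled by the same observation that no genuine enlargement is needed once positivity is known.
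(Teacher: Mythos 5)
Your overall route is the same as the paper's: for each window $[\delta,\tau]$ use the fact that the time-shifted process solves the SHE martingale problem and then invoke Proposition \ref{58} to produce a noise $W^{\delta}$ together with the Duhamel identity on that window, and finally patch the noises. Note, however, that your steps (i)--(iii) are already packaged in Corollary \ref{50}: what must be ``near-equilibrium'' in the sense of Assumption \ref{51} is the prelimiting data $\mathcal Z^{\epsilon}(\delta,\cdot)$ (this is exactly what the bounds of Proposition \ref{36} give), not the limit $\mathcal L_{\delta}$, so re-running the proof of Theorem \ref{126} and appealing to Lemma \ref{56} for this purpose is unnecessary and slightly misdirected.

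The genuine gap is in your step (v). Your consistency argument rests on a.s.\ strict positivity of $\mathcal L_S$ for $S>0$, which is not available at this stage: positivity of $Z_t(x)$ for the particle system only yields nonnegativity of the weak limit, and citing Proposition \ref{71} is circular, since identifying $\Bbb Q$ with the $\delta_0$-solution is precisely what Theorem \ref{61} is in the process of proving (even via Corollary \ref{50} and Mueller's theorem you would still have to rule out the event $\mathcal L_{\delta}\equiv 0$). The paper sidesteps positivity entirely: it fixes a single auxiliary white noise $\overline W$, independent of $S$, in the recipe of Proposition \ref{58}, and then the consistency $W^{S_2}_T = W^{S_1}_{S_2-S_1+T}-W^{S_1}_{S_2-S_1}$ is automatic because the martingale measures $Y^S$ satisfy the same relation by their very definition. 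Moreover, asserting that ``the full martingale measure $Y$ exists on $(0,\infty)$ by the Kolmogorov-extension reasoning of Lemma \ref{62}'' does not address the actual difficulty, which is to produce one cylindrical Wiener process $W$ defined down to time $0$ from the family $\{W^S\}_{S>0}$: the paper does this by writing $W_T$ as a dyadic telescoping series of independent increments of the $W^{2^{-k}}$ and invoking the martingale convergence theorem (equivalently, exhibiting $W$ as the a.s.\ and $L^2$ limit of $W^S$ as $S\to 0$). With these two repairs --- one fixed $\overline W$ for all windows, and an explicit convergence argument for the patched noise near time zero --- your argument becomes the paper's proof.
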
 
	
	\begin{proof}Let us fix $S \geq 0$ for now. Define $$Y^S_T(\varphi) = (\mathcal L_{T+S}, \varphi) - (\mathcal L_S, \varphi) - \frac{1}{2} \int_S^{S+T} (\mathcal L_U, \varphi'')dU.$$ By Corollary \ref{50}, we know that $(\mathcal L_{S+T})_{T \geq 0}$ has the same distribution (under $\Bbb Q)$ as the solution of the SHE started from $\mathcal L(S)_*\Bbb Q$.
		Therefore (see (\ref{49})), the law of $\mathcal (\mathcal L_{S+T})_{T \geq 0}$ solves the martingale problem for the SHE, as defined in \ref{57}. Just as in the proof of Proposition \ref{58}, this shows that $Y^S$ is an orthogonal martingale measure. So we let $\overline{W}$ be an independent white noise on $[0,\infty) \times I$ (\textit{not} depending on $S$), then define the following space-time process for $T \geq 0$: $$W^S_T(\varphi):= \int_0^T \int_I \mathcal L_{S+U}(X)^{-1} \varphi(X) 1_{\{L_{S+U}(X) \neq 0\}} Y^S(dX\; dU) + \int_S^{S+T} \int_I 1_{\{\mathcal L_U(X)=0\}}\varphi(X) \overline W(dX\;dU).$$Just as in the proof of proposition \ref{58}, $W^S$ is a cylindrial Wiener process such that $\Bbb Q$-almost surely, for any $T \geq 0$ we have that \begin{equation}\label{60}\mathcal L_{S+T} = P_T * \mathcal L_S + \int_0^{T} P_{T-U} *( \mathcal L_U dW^S_U).\end{equation} This construction actually shows that the white noises $W^S$ are consistent, in the sense that for any $S_1<S_2$ we have that $W^{S_2}_T = W^{S_1}_{S_2-S_1+T}-W^{S_1}_{S_2-S_1}$ (because the $Y^S$ satisfy the same relation).
		\\
		\\
		Finally, if $T >0$, then we define $$W_T := W_{(T-1)\vee 0}^1 + \sum_{k=1}^{\infty} W^{2^{-k}}_{2^{-k} \wedge (T-2^{-k}) \vee 0}. $$ Using the property that that $W^{S_2}_T = W^{S_1}_{S_2-S_1+T}-W^{S_1}_{S_2-S_1}$ for $S_1<S_2$, it follows that the terms appearing in the infinite sum are independent. Hence the convergence of the infinite series may be checked by the martingale convergence theorem. The noise $W$ may be equivalently described as the a.s. and $L^2$ limit of the $W^S$ as $S \to 0$. One may then directly check that $W$ is a cylindrical Wiener process with the property that $W^S_T = W_{S+T}-W_S$. This, together with (\ref{60}), shows that the desired relation holds.
	\end{proof}
	
	\begin{thm}[Main Result of this Section]\label{61} The rescaled processes $\mathcal Z^{\epsilon}$ converge weakly in $D((0,\infty), C(I))$ to the solution of the SHE with $\delta_0$ initial data (as constructed in Proposition \ref{71}).
		
	\end{thm}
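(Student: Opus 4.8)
The plan is to combine Lemmas \ref{62}, \ref{56}, and \ref{63} with the uniqueness statement of Proposition \ref{71}. By Lemma \ref{62} the laws $\Bbb Q^{\epsilon}$ of the $\mathcal Z^{\epsilon}$ admit at least one limit point $\Bbb Q$ supported on $C((0,\infty),C(I))$; write $\mathcal L_T$ for the evaluation map, and recall from Corollary \ref{50} that the macroscopic $L^p$-bounds $\|\mathcal L_T(X)\|_p\le CT^{-1/2}$, etc., pass to $\Bbb Q$. By Lemma \ref{63}, on an enlarged probability space we may realize an adapted space-time white noise $W$ such that $\Bbb Q$-a.s. $\mathcal L_T = P_{T-S}*\mathcal L_S + \int_S^T P_{T-U}*(\mathcal L_U\,dW_U)$ for all $0<S<T$. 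The core of the argument is to send $S\downarrow 0$ in this identity and recover the Duhamel equation with $\delta_0$ initial data, $\mathcal L_T = P_T(\cdot,0) + \int_0^T P_{T-U}*(\mathcal L_U\,dW_U)$, which by Proposition \ref{71} (together with the uniqueness class check below) forces $\mathcal L$ to be the narrow-wedge solution.

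For the deterministic term, write $P_{T-S}*\mathcal L_S = P_{T-S}*P_S(\cdot,0) + P_{T-S}*(\mathcal L_S - P_S(\cdot,0)) = P_T(\cdot,0) + \mathcal E_S$, the first equality using the semigroup property of $P$ from Theorem \ref{controb}. By the second bound of Lemma \ref{56}, $\|\mathcal L_S(Y)-P_S(Y,0)\|_2 \le C\,P_S(Y,0)^{1/2}$, and the Gaussian-type estimate of Proposition \ref{27} gives $P_S(Y,0)^{1/2}\le CS^{-1/4}e^{-b|Y|/(2\sqrt S)}$; integrating against $P_{T-S}(X,Y)\le C(T-S)^{-1/2}$ yields $\|\mathcal E_S(X)\|_2 \le C(T)\,S^{1/4}\to 0$ as $S\downarrow 0$ for fixed $T>0$. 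For the stochastic term, Itô's isometry, the first bound of Lemma \ref{56}, and the semigroup property give $\Bbb E\big[\big(\int_0^S P_{T-U}*(\mathcal L_U\,dW_U)(X)\big)^2\big] = \int_0^S\int_I P_{T-U}(X,Y)^2\,\Bbb E[\mathcal L_U(Y)^2]\,dY\,dU \le C\,P_T(X,0)\int_0^S (T-U)^{-1/2}U^{-1/2}\,dU \to 0$, since the integrand is integrable near $0$ whenever $S<T$. Hence $\int_S^T P_{T-U}*(\mathcal L_U\,dW_U)\to\int_0^T P_{T-U}*(\mathcal L_U\,dW_U)$ in $L^2$, and $\mathcal L$ is a mild solution of the SHE with $\mathcal Z_0=\delta_0$.

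To identify $\mathcal L$ with \emph{the} solution of Proposition \ref{71} one verifies the uniqueness condition $\sup_{X\in I,\,S\le T} S\,\Bbb E[\mathcal L_S(X)^2]<\infty$: for $S\le 1$ this is the first bound of Lemma \ref{56} combined with $P_S(X,0)\le CS^{-1/2}$ (Proposition \ref{27}), and for $S\in[1,T]$ it follows from the bounds of Corollary \ref{50}; positivity is inherited from Corollary \ref{50}. Consequently every subsequential limit $\Bbb Q$ coincides with the law of the narrow-wedge SHE. Since (by Corollary \ref{50} and the characterization of the topology in Definition \ref{127}) the restrictions of $\{\mathcal Z^{\epsilon}\}$ to each compact $[\delta,\tau]\subset(0,\infty)$ are tight, relative compactness plus uniqueness of the limit point upgrades subsequential convergence to full weak convergence in $D((0,\infty),C(I))$.

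I expect the main obstacle to be the $S\downarrow0$ limit of the deterministic term, $P_{T-S}*\mathcal L_S\to P_T(\cdot,0)$: this is the rigorous form of the heuristic that the SHE started from $\varrho\epsilon^{-1/2}e^{-\epsilon^{-1/2}X}$ stabilizes in short time to the $\delta_0$-profile, and it rests entirely on the delicate near-$T=0$ estimates of Lemma \ref{56}, which themselves required the $\tau$-independent long-time heat-kernel bounds of Propositions \ref{lt1} and \ref{lt2}. The remaining ingredients — consistency of the noises $W^S$ across base times, membership in the uniqueness class, and the passage from subsequential to full convergence — are routine given the machinery already assembled.
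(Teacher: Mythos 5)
Your proposal is correct and follows essentially the same route as the paper: take a limit point via Lemma \ref{62}, use the white noise from Lemma \ref{63}, and send $S\downarrow 0$ in the Duhamel identity, controlling $P_{T-S}*\mathcal L_S - P_T(\cdot,0)$ with the second bound of Lemma \ref{56} plus Proposition \ref{27} (giving $O(S^{1/4})$) and the stochastic tail $\int_0^S$ via It\^o isometry and the first bound of Lemma \ref{56} (giving $O(S^{1/2})$), exactly as in the paper's estimate (\ref{39}). Your explicit verification of the uniqueness class of Proposition \ref{71} and of the passage from subsequential to full convergence are points the paper leaves implicit, but they do not change the argument.
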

	
	\begin{proof} Let $\Bbb Q$ be \textit{any} limit point of the laws of the $\{\mathcal Z^{\epsilon}\}$ on $C((0,\infty),C(I))$. By Lemma \ref{62}, we know that at least one such $\Bbb Q$ exists.
		\\
		\\
		As usual, we let $\mathcal L_T: C((0,\infty),C(I)) \to C(I)$ denote the canonical $T$-coordinate, and we will let $\|F\|_p:=\big(\int |F|^pd\Bbb Q\big)^{1/p}$. By Lemma \ref{63}, after possibly extending the probability space, there exists a white noise $W$ such that $\Bbb Q$-a.s., for any $0<S<T$ we have that \begin{equation*}\mathcal L_T = P_{T-S}*\mathcal L_S + \int_S^T P_{T-U}*(\mathcal L_U dW_U) .\end{equation*}
		Therefore, if we can prove that $\int_0^T P_{T-U} * (\mathcal L_U dW_U)$ exists and is in $L^2(\Bbb Q)$ for any $T>0$, then we would have that \begin{align}\label{39}
		&\;\;\;\;\;\bigg\| \mathcal L_T(X) - P_T(X,0)- \int_0^T P_{T-U} * (\mathcal L_U dW_U) \big|_X \bigg\|_2 \nonumber \\ &\leq \big\| P_{T-S}* \mathcal L_S(X) - P_T(X,0) \big\|_2 +\bigg\| \int_0^S P_{T-U} * (\mathcal L_U dW_U) \big|_X \bigg\|_2 .
		\end{align} Notice that the top expression does not depend on $S$, whereas the bottom one does. So if we can prove that (for any fixed $T$) both of the terms in (\ref{39}) tend to $0$ as $S \to 0$, then it will immediately follow that $\Bbb Q$ is the distribution of the solution of the SHE started from $\delta_0$. Thus the remainder of the proof will be dedicated to this purpose.
		\\
		\\
		For the remainder of this proof, we will fix some time $T >0$, and we will always consider $S<1 \wedge T$.
		\\
		\\
		Let us start with the first term in (\ref{39}). By the semigroup property and Minkowski's inequality, we see that \begin{align*}\big\| P_{T-S}* \mathcal L_S(X) - P_T(X,0) \big\|_p &= \bigg\| \int_I P_{T-S}(X,Y) \big[ \mathcal L_S(Y) - P_S(Y,0) \big] dY \bigg\|_p \\ &\leq \int_I P_{T-S}(X,Y) \big\| \mathcal L_S(Y) - P_S(Y,0) \big\|_p dY \\ & \leq C\int_I P_{T-S}(X,Y) \cdot P_S(Y,0)^{1/2} dY\end{align*}
		where we used Lemma \ref{56} in the final line. By applying Proposition \ref{27} with $b=0$, we see that $P_{T-S}(X,Y) \leq C(T-S)^{-1/2}$. Similarly, applying Proposition \ref{27} with $b=2$, we see that $P_S(Y,0) \leq CS^{-1/2} e^{-2Y/\sqrt{S}}$. Therefore, continuing the above chain of inequalities, we find that \begin{align*}
		\int_I P_{T-S}(X,Y) \cdot P_S(Y,0)^{1/2} dY & \leq C(T-S)^{-1/2}S^{-1/4} \int_I e^{-Y/\sqrt{S}}dY \\ & \leq C(T-S)^{-1/2} S^{-1/4} \int_0^{\infty} e^{-Z}(S^{1/2}dZ) \\ &= C (T-S)^{-1/2} S^{1/4}
		\end{align*}
		where we made the substitution $Z=S^{-1/2}Y$ in the second line. Since $T$ was assumed to be fixed, and since $(T-S)^{-1/2} \leq \sqrt{2}T^{-1/2}$ whenever $S<T/2$, this computation shows that \begin{equation*}\limsup_{S\to 0} \big\| P_{T-S}* \mathcal L_S(X) - P_T(X,0) \big\|_p\; \lesssim \;\lim_{S \to 0} S^{1/4} = 0.\end{equation*} This shows that the first term appearing on the RHS of (\ref{39}) approaches $0$ as $S \to 0$.
		\\
		\\
		Next, we are going to consider the second term appearing on the RHS of (\ref{39}). As stated above, we first need to prove existence of the stochastic integral $\int_0^T P_{T-U} * (\mathcal L_U dW_U)$. This amounts to showing that $$\int_0^T \int_I P_{T-U}(X,Y)^2 \Bbb E[\mathcal L_U(Y)^2] dYdU<\infty $$ where the expectation is with respect to $\Bbb Q$. Using Lemma (\ref{56}), it holds that $\Bbb E[ \mathcal L_U(Y)^2 ] \leq C U^{-1/2} P_U(Y,0) $. Using the first bound in Proposition \ref{27} with $b=0$, we also have that $P_{T-U}(X,Y)^2 \leq C(T-U)^{-1/2} P_{T-U}(X,Y)$. Therefore, \begin{align*}
		\int_0^T \int_I P_{T-U}(X,Y)^2 \Bbb E[\mathcal L_U(Y)^2] dYdU & \leq C \int_0^T (T-U)^{-1/2} U^{-1/2} \bigg[\int_I P_{T-U}(X,Y) P_U(Y,0)dY\bigg]dU \\ &\leq C \int_0^T (T-U)^{-1/2}U^{-1/2} dU \cdot P_T(X,0) \\ &= CP_T(X,0)<\infty.
		\end{align*}
		In the second line, we used the semigroup property of the $P_T$, and in the third line we used the fact that $\int_0^T (T-U)^{-1/2}U^{-1/2}dU$ is a constant not depending on $T$, which can be proved by making the substitution $V=U/T$. This proves that the stochastic integral $\int_0^T P_{T-U} * (\mathcal L_U dW_U)$ exists. Now we just need to show that the second term in (\ref{39}) tends to $0$ as $S \to 0$. Applying the Itô isometry and then using the same exact arguments as above, we see that \begin{align*}
		\bigg\| \int_0^S P_{T-U} * (\mathcal L_U dW_U) \big|_X \bigg\|_2^2 &= \int_0^S \int_I P_{T-U}(X,Y)^2 \Bbb E[\mathcal L_U(Y)^2] dYdU \\ &\leq C \int_0^S (T-U)^{-1/2}U^{-1/2} dU \cdot P_T(X,0).
		\end{align*}
		Clearly $P_T(X,0)$ does not depend on $S$, and as long as $U<S<T/2$ we have $(T-U)^{-1/2} \leq (T-S)^{-1/2} \leq (T/2)^{-1/2}$, which no longer depends on $U$ or $S$. Therefore the last integral is $O(S^{1/2})$ as $S \to 0$, which completes the proof.
	\end{proof}

\end{document}